\theoremstyle{plain}
\newtheorem{theorem}{Theorem}[section]
\newtheorem{lemma}[theorem]{Lemma}
\newtheorem{proposition}[theorem]{Proposition}
\newtheorem{remark}[theorem]{Remark}
\theoremstyle{definition}
\theoremstyle{remark}
\numberwithin{equation}{section}
\newcommand{\Jgsn}{\tilde J^{g,s}_{r_n}}
\newcommand{\Jgsr}{\tilde J^{g,s}_r}
\newcommand{\kg}{k^{g,s}_r}
\newcommand{\Gbar}{\mathcal{G}}
\newcommand{\D}{\mathrm{D}}
\newcommand{\Funo}{\mathcal{F}^1_{r,\delta}}
\newcommand{\Funon}{\mathcal{F}^1_{n,\delta}}
\newcommand{\Fdue}{\mathcal{F}^2_{r,\delta}}
\newcommand{\Fduen}{\mathcal{F}^2_{n,\delta}}
\newcommand{\ep}{\varepsilon}
\newcommand{\ffi}{\varphi}
\newcommand{\R}{\mathbb{R}}
\newcommand{\N}{\mathbb{N}}
\newcommand{\E}{\mathcal{E}}
\newcommand{\di}{\textrm{dist}}
\newcommand{\Me}{\mathrm{M}(\R^d)}
\newcommand{\Mf}{\mathrm{M}_f(\R^d)}
\newcommand{\Per}{\mathrm{Per}}
\newcommand{\ud}{\;\mathrm{d}}
\newcommand{\supp}{\mathrm{supp}\,}
\newcommand{\Div}{\mathrm{Div}}
\newcommand{\weakly}{\rightharpoonup}           
\newcommand{\weakstar}{\stackrel{*}{\weakly}}   
\newcommand{\loc}{\mathrm{loc}}
\newcommand{\tildeJ}{\tilde J_r^s}
\newcommand{\tildeJn}{\tilde J_{r_n}^s}
\newcommand{\Hr}{\bar{J}_{r}^{s}}
\newcommand{\Hrn}{\bar J_{r_n}^s}
\newcommand{\sca}{\sigma}
\newcommand{\Ku}{\mathcal{K}}
\newcommand{\Reg}{\mathfrak{C}}
\newcommand{\J}{\mathcal{J}}
\title[Convergence of supercritical fractional flows to MCF]{Convergence of supercritical fractional flows to the mean curvature flow}
\author[L. De Luca]
{L. De Luca}
\address[Lucia De Luca]{Istituto per le Applicazioni del Calcolo ``Mauro Picone" IAC-CNR. Via dei Taurini 19, I-00185 Roma, Italy}
\email[L. De Luca]{lucia.deluca@cnr.it}
\author[A. Kubin]
{A. Kubin}
\address[Andrea Kubin]{Dipartimento di Matematica ``Guido Castelnuovo", Sapienza Universit\`a di Roma. P.le  Aldo Moro 5, I-00185 Roma, Italy}
\email[A. Kubin]{kubin@mat.uniroma1.it}
\author[M. Ponsiglione]
{M. Ponsiglione}
\address[Marcello Ponsiglione]{Dipartimento di Matematica ``Guido Castelnuovo", Sapienza Universit\`a di Roma. P.le  Aldo Moro 5, I-00185 Roma, Italy}
\email[M. Ponsiglione]{ponsigli@mat.uniroma1.it}
\begin{document}
\maketitle
\begin{abstract}
We consider a core-radius approach to nonlocal perimeters governed by isotropic kernels having critical and supercritical exponents, extending the nowadays classical notion of  $s$-fractional perimeter, defined for $0<s<1$, to the case $s\ge 1$\,. 

We show that, as the core-radius vanishes, such core-radius regularized $s$-fractional perimeters, suitably scaled, $\Gamma$-converge to the standard Euclidean perimeter. 
Under the same scaling, the first variation of such nonlocal perimeters gives back regularized $s$-fractional curvatures which, as the core radius vanishes, converge to the standard mean curvature; as a consequence, 
we show that the level set solutions to the corresponding nonlocal geometric flows, suitably reparametrized in time, converge to the standard mean curvature flow. 

Finally, we prove analogous results in the case of anisotropic kernels with applications to dislocation dynamics.
\vskip5pt
\noindent
\textsc{Keywords:}  Fractional perimeters; $\Gamma$-convergence; Local and nonlocal geometric evolutions; Viscosity solutions; Level set formulation; Fractional mean curvature flow; Dislocation dynamics
\vskip5pt
\noindent
\textsc{AMS subject classifications: }
35D40   
49J45    
35K93   
35R11   
35Q74  
35B40   
\end{abstract}
\tableofcontents
\section*{Introduction} 
This paper deals with systems governed by strongly attractive nonlocal potentials. Our analysis is geometrical, so that the energy functionals are defined on measurable sets rather than on densities, and can be understood as nonlocal perimeters, whose first variation
are nonlocal curvatures, driving the corresponding geometric flows.

We focus on power law pair potentials acting on measurable sets $E\subset \R^d$, whose corresponding nonlocal energy is  of the type
\begin{equation}\label{i1}
J^{s}(E):=\int_{E} \int_{E} -\frac{1}{|x-y|^{d+s}}\ud y\ud x.
\end{equation}
For $-d<s<0$ the interaction kernel is nothing but Riesz potential; in such a case, the functionals $J^s$ are nonlocal perimeters in the sense of  \cite{CMP}. Such a geometric interpretation is supported by the fact that, as a consequence of Riesz inequality, balls are minimizers of $J^s$ under volume constraints; moreover, the first variation of $J^s$, referred to as nonlocal curvature, is monotone with respect to set inclusion. The latter provides a parabolic maximum principle which 
yields global existence and uniqueness of level set solutions
to the corresponding geometric evolutions \cite{CMP, CDNP}.

For positive $s$ the kernel in \eqref{i1} is not integrable, and the corresponding energy is infinite. Nevertheless, for $0<s<1$, changing sign to the interaction and letting $E$ interact with its complementary set instead of itself, gives a finite quantity: the well-known fractional perimeter \cite{CRS}

\begin{equation}\label{i2}
\tilde{J}^{s}(E):=\int_{E} \int_{E^c} \frac{1}{|x-y|^{d+s}}\ud y\ud x.
\end{equation}

In fact, fractional perimeters can been rigorously obtained as limits of renormalized Riesz energies by removing the infinite core energy and letting the core radius tend to zero. This has been done in \cite{DNP}, showing that the energies in \eqref{i1} and \eqref{i2} belong to a one parameter family of nonlocal $s$-perimeters, with $-d<s<1$ (see also \cite{KubPons}); in particular, for $s=0$ a new perimeter emerges, referred to as $0$-fractional perimeter.

Remarkably, as $s\to 1^{-}$, $s$-fractional perimeters, suitably scaled, converge to the standard perimeter \cite{BoBrezMir, BBM2, D, Po, ADM, CN}, and the corresponding (reparametrized in time) geometric flows converge to the standard mean curvature flow \cite{Imb,CDNP}.

For $s\ge 1$ fractional perimeters are always infinite. Nevertheless, as discussed above, the critical case $s=1$ corresponds, at least formally, to the Euclidean perimeter. Notice that for $s=1$ the fractional perimeter can be seen, again formally, as the square of the (infinite) $\dot{H}^{\frac 12}$ Gagliardo seminorm of the characteristic function of $E$. This fractional energy is particularly relevant in Materials Science, for instance in the theory of dislocations. This is why much effort has been done to derive the Euclidean perimeter directly as the limit of suitable regularizations of the  $\dot{H}^{\frac 12}$ seminorm, mainly through phase field approximations \cite{ABS}. 
A specific phase field model for the energy induced by planar dislocations within the Nabarro-Peierls theory has been introduced in \cite{KCO} and studied in \cite{GM05}, where the line tension energy induced by planar dislocations is derived in terms of $\Gamma$-convergence.
In \cite{DFM,AHBR,CDFM}  the authors study dislocation dynamics within the level set formulation \'a la Slep\v cev \cite{Sl}, by considering the geometric flow associated to a suitable regularization of the formal first variation of the line tension energy. Numerical schemes have been implemented in \cite{ACMR, CFM}.
%

In this paper we introduce a core-radius approach to renormalize by scaling the generalized $s$-fractional perimeters and curvatures in the critical and supercritical cases $s\ge 1$. We show that,  as the core-radius tends to zero, the $\Gamma$-limit of the nonlocal perimeters is the Euclidean perimeter, the nonlocal curvatures converge to the standard mean curvature, and the corresponding geometric flows converge to the mean curvature flow. Moreover, we consider also the anisotropic variants of such perimeters, with applications to dislocation dynamics. 
 Now we discuss our results in more detail.
 
 \vskip5pt 
In Section \ref{scp} we introduce the core-radius regularized critical and supercritical perimeters (see \eqref{Jtildefrac}). In Theorem \ref{mainthm}  we show that, suitably scaled, they $\Gamma$-converge to the Euclidean perimeter. This analysis is very related with, and in some respects generalizes, many results scattered in the literature, mainly for $s>1$ \cite{MRT, BV, P}. 

\vskip5pt
Sections \ref{sc:comp} and \ref{sc:gammalim} are devoted to the proof of Theorem \ref{mainthm}. 
The proof of the lower bounds providing compactness and $\Gamma$-liminf inequality  rely on  techniques developed in \cite{AB} and for $s=1$ in \cite{GM05}.   As a byproduct of our $\Gamma$-convergence analysis, we provide a characterization of finite perimeter sets (Theorem \ref{thmfondcarPer}) in terms of uniformly bounded renormalized supercritical fractional perimeters. Analogous results for $0<s<1$ have been obtained in \cite{BoBrezMir, D, Po, LS}.  

\vskip5pt
In Section \ref{curf} we compute the first variations of the renormalized critical and supercritical perimeters, and we show that they converge, as the core-radius vanishes, to the standard mean curvature. The estimates are robust enough to apply the theory of stability for geomertic flows developed in \cite{CDNP}. As a consequence, in Theorem \ref{maincur} we prove that the level-set solutions of supercritical fractional geometric flows, suitably reparametrized in time, locally uniformly converge to the level set solution of the classical mean curvature flow. This result extends somehow the analysis done in \cite{DFM} for $s=1$ and in \cite{CS} for $1\le s<2$\,; in the latter, the authors consider a threshold dynamics based on the $s$-parabolic flow and, in turn, on the notion of $s$-Laplacian, which is well defined only for $0<s<2$\,.

\vskip5pt
In Section \ref{doli} we show that our renormalization procedures are robust enough to treat also the double limit as $s\to 1^{+}$ and the core-radius vanishes simultaneously (see Theorem \ref{mainthm111}). 
 
 \vskip5pt
 In Section \ref{anke} we generalize our results to the case of possibly anisotropic kernels (Subsection \ref{sec:ani}) and we present a relevant application to dislocation dynamics  (Subsection \ref{ssc:appdis}). It is well known that planar dislocation loops formally induce an infinite elastic energy that can be seen as an anisotropic version of the (squared) $\dot{H}^{\frac 12}$ seminorm of the characteristic function of the slip region enclosed by the dislocation curve. As mentioned above, renormalization procedures are needed to cut off the infinite core energy. In \cite{DFM,AHBR,CDFM}, the authors consider the geometric evolution of dislocation loops and face the corresponding renormalization issues: their approach consists in formally computing the first variation of the infinite energy induced by dislocations, deriving a nonlocal infinite curvature. 
 Then, they regularize such a curvature through convolution kernels, obtaining a finite curvature driving the dynamics. 
 As the convolution regularization kernel concentrates to a Dirac mass, they recover in the limit a local anisotropic mean curvature flow. The main issue in their analysis is that the convolution regularization produces a positive part in the nonlocal curvature (corresponding to a negative contribution in the normal velocity), concentrated on the scale of the core of the dislocation, giving back an evolution which does not satisfy the inclusion principle. Therefore, solutions exist only for short time. Moreover, adding strong enough forcing terms, or assuming that the positive part of the curvature is already concentrated on a point (instead of being diffused on the core region), they show that the curvature is in fact monotone with respect to inclusion of sets; as a consequence, they get a globally defined dynamics, converging, as the core-radius vanishes, to the correct anisotropic local mean curvature flow. Here we show that,  if one first regularizes the nonlocal perimeters removing the core energy and then computes the corresponding first variation, then the positive part of the curvature is actually concentrated on a point (see Remark \ref{rm:ovvi}), so that the mathematical assumption in \cite{DFM} is physically correct and fully justified through the solid core-radius formalism. 
Finally, in Subsection \ref{ssc:appdis} we show that 
  the convergence analysis of the geometric flows done in \cite{DFM} using the approach \cite{Sl} can be directly deduced from the analysis developed  in Section \ref{curf} and Subsection \ref{sec:ani}, providing then a self-contained proof relying on the general theory of nonlocal evolutions and their stability developed in \cite{CMP, CDNP}.

 \vskip10pt
 This paper, together with \cite{DNP, CDNP} completes the variational analysis of nonlocal interactions \eqref{i1} and of the corresponding geometric flows for all positive values of $d+s$. Such functionals account Riesz functionals for  $-d<s<0$. After suitable renormalization procedures, the case $s=0$ leads to  the $0$-fractional perimeter, while for $0<s<1$ we have the nowadays classical $s$-fractional perimeters. This paper focuses on the  supercritical case $s\ge 1$ leading to the classical perimeter and mean curvature flow, together with their anisotropic variants,  in the limit as the core-radius regularization vanishes. 
 
 \vskip20pt
 
 
 \textsc{Acknoledgments:} The authors are grateful to Annalisa Cesaroni and Matteo Novaga for preliminary discussions at the early stage of the project. The authors are members of the Gruppo Nazionale per l'Analisi Matematica, la Probabilit\`a e le loro Applicazioni (GNAMPA) of the Istituto Nazionale di Alta Matematica (INdAM).
 \vskip20pt
  
  
{\bf Notation:} We work in the space $\R^d$ where $d\ge 2$ and we denote by $\{e_j\}_{j=1,\ldots,d}$ the canonical basis of $\R^d$.
We denote by $\mathbb{R}^{d \times p}$ the set of the matrices having $d$ rows and $p$ columns. 
The symbol $|\cdot|$ stands for the Lebesgue measure in $\R^d$, $\Me$ is the family of measurable subsets of $\R^d$, whereas $\Mf\subset\Me$ is the family of subsets if $\R^d$ having finite measure.  We will always assume that every measurable set $E$ coincides with its Lebesgue representative, i.e., with the set of points at which $E$ has density equal to one.
 Moreover, for every $p>0$, we denote by $\mathcal H^p$ the $p$-Hausdorff measure.

For every $x\in\R^d$ and for every $r>0$, we denote by $B(x,r)$ the open ball of radius $r$ centered at $x$ and by $\overline{B}(x,r)$ its closure. Moreover, we set $\mathbb{S}^{d-1}:=\partial B(0,1)$. Following the standard convention, we set $\omega_d:=|B(x,1)|$ and we recall that $\mathcal H^{d-1}(\partial B(x,r))=d\omega_d r^{d-1}$. Sometimes, we will consider also subsets of $\R^{d-1}$\,. In such a case, we denote by $B'(\xi,\rho)$ the ball centered at $\xi\in\R^{d-1}$ and having radius equal to $\rho>0$\,; we set $\omega_{d-1}:=\mathcal H^{d-1}(B'(\xi,1))$ so that 
$\mathcal H^{d-1}(B'(\xi,\rho))=\omega_{d-1} \rho^{d-1}$ and $\mathcal H^{d-2}(\partial B'(\xi,\rho))=(d-1)\omega_{d-1} \rho^{d-2}$\,.
Furthermore, we set $Q:=[-\frac 1 2,\frac 1 2)^d$ and for every $\nu\in\mathbb{S}^{d-1}$ we 
set $Q^\nu:=R^\nu Q$\,, where $R^\nu$ is a (arbitrarily chosen) rotation such that $R^\nu e_d=\nu$\,.

For every set $E\in\Me$ we denote by $\Per(E)$ the De Giorgi perimeter of $E$ defined by
\begin{equation*}
\Per(E):=\sup \bigg\{ \int_{E} \Div\Phi(x)\ud x: \; \Phi \in C_{0}^1(\mathbb{R}^d; \; \mathbb{R}^d), \; \|\Phi \|_{\infty} \le 1 \bigg\}.
\end{equation*}
 For every $E\in\Me$\,, the set $\partial^* E$ identifies the reduced boundary of $E$ and $\nu_{E}: \partial^* E \rightarrow \mathbb{R}^d$ the outer normal vector field. For all $y \in \R^d$ and for every $\nu\in \mathbb{S}^{d-1}$ we set 
\begin{eqnarray}\label{semispaziotang}
H_\nu^{-}(x)&:=& \bigl\{y \in \mathbb{R}^d: \; \nu\cdot (y-x) \leq 0 \bigr\}\,,\\ \label{uppersemispazio}
H_\nu^{+}(x)&:=& \bigl\{y \in \mathbb{R}^d: \; \nu\cdot (y-x) \geq 0 \bigr\}\,,\\ \label{piatto}
H_\nu^{0}(x)&:=& \bigl\{ y \in \mathbb{R}^d: \; \nu\cdot (y-x) = 0 \bigr\}\, .
\end{eqnarray}
For every subset $E\subset \R^d$ the symbol $E^c$ denotes its complementary set in $\R^d$\,, i.e., $E^c:=\R^d\setminus E$\,.

Finally, we denote by $C(*,\cdots,*)$ a constant that depends on $*,\cdots,*$; such a constant may change from line to line.

\section{Supercritical perimeters}\label{scp}
Let $s\ge 1$\,. For every $r>0$,  we define the interaction kernel $k^s_r:[0,+\infty)\to [0,+\infty)$ as
\begin{equation}\label{kernel}
k^s_{r}(t):=
\left\{
\begin{array}{ll}
\displaystyle \frac{1}{r^{d+s}} & \text{ for $ 0 \leq t  \leq r $ }  \, , \\
\displaystyle \frac{1}{ t^{d+s}} & \text{ for }   t  > r \, , 
\end{array}
\right.
\end{equation}
We note that 
\begin{equation}\label{scaling}
k^s_r(lt)=l^{-d-s}k^s_{\frac{r}{l}}(t) \qquad\textrm{for every }r,l,t>0.
\end{equation} 
For all $r>0$, we define the functional $J_r^{s}:\Me\to [-\infty,0]$ as 
$$
J_{r}^{s}(E):=\int_{E} \int_{E} -k^s_{r}(\vert x-y \vert ) \ud y\ud x
$$
and for every $E\in\Mf$ we set
\begin{equation}\label{defJtilde}
\tildeJ(E):=J_{r}^{s}(E) +\lambda_{r}^{s}\vert E \vert\,, 
\end{equation}
where
$$
\lambda_{r}^{s}:=\int_{\R^d} k^s_r(\vert z \vert )\ud z=\frac{(d+s)\omega_d}{sr^s}\,. 
$$
Notice that for every $E\in\Mf$ 
\begin{align*}
J^s_r(E)
\ge -\int_E\int_{\R^d}k^s_r(|x-y|)\ud y\ud x=-\lambda^s_r|E|\,,
\end{align*}
and hence $\tilde J^{s}_r:\Mf\to[0,+\infty)$\,.
Moreover, by the very definition of $\tildeJ$ in \eqref{defJtilde}, for every $E\in\Mf$ we have
\begin{equation}\label{Jtildefrac}
\tildeJ(E)=\int_{E}\int_{E^c}k^s_r(|x-y|)\ud y\ud x\,.
\end{equation}

We first state the following result concerning the pointwise limit of the functionals $\tildeJ$ as $r\to 0^+$. To this purpose, for every $s\ge 1$ we set
\begin{equation}\label{scalingper}
\sca^s(r):=\left\{\begin{array}{ll}
\displaystyle |\log r|&\textrm{if }s=1\\
&\\
\displaystyle \frac{d+s}{d+1}\frac{r^{1-s}}{s-1} &\textrm{if }s>1\,.
\end{array}\right.
\end{equation}
\begin{proposition}\label{prop:point}
Let $s\ge 1$ and let $E\in\Mf$ be a smooth set. Then,
\begin{equation}\label{pointwise_limit}
\lim_{r\to 0^+}\frac{\tildeJ(E)}{\sca^s(r)}=\omega_{d-1}\Per(E)\,,
\end{equation}
where $\sca^s$ is defined in \eqref{scalingper}. In fact, for $s>1$ formula \eqref{pointwise_limit} holds for every set $E\in\Mf$ of finite perimeter. 
\end{proposition}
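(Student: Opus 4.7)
The plan is to rewrite $\tildeJ(E)$ in a symmetric ``convolution'' form, switch to polar coordinates on the displacement $z=y-x$, and exploit the fact that for finite-perimeter sets the directional difference $|E\Delta(E+z)|$ grows linearly in $|z|$ as $|z|\to 0$. Concretely,
\[
\tildeJ(E)=\frac{1}{2}\int_{\R^d}k^s_r(|z|)\,|E\Delta(E+z)|\,\ud z
=\frac{1}{2}\int_0^{\infty}\rho^{d-1}k^s_r(\rho)\,\Phi_E(\rho)\,\ud\rho,
\]
where $\Phi_E(\rho):=\int_{\mathbb{S}^{d-1}}|E\Delta(E+\rho\omega)|\,\ud\mathcal{H}^{d-1}(\omega)$.

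The whole analysis rests on two properties of $\Phi_E$. First, the uniform upper bound $\Phi_E(\rho)\le \min\{2\omega_{d-1}\,\rho\,\Per(E),\,2d\omega_d|E|\}$: the linear-in-$\rho$ estimate comes from the $BV$ inequality $|\chi_E(x+\rho\omega)-\chi_E(x)|\le\int_0^{\rho}|D\chi_E\cdot\omega|(x+t\omega)\,\ud t$ integrated in $x$, combined with Fubini and the identity $\int_{\mathbb{S}^{d-1}}|\omega\cdot\nu|\,\ud\mathcal{H}^{d-1}(\omega)=2\omega_{d-1}$. Second, the asymptotic $\lim_{\rho\to 0^+}\Phi_E(\rho)/\rho=2\omega_{d-1}\Per(E)$: for smooth $E$ this follows from the $\omega$-uniform expansion $|E\Delta(E+\rho\omega)|=\rho\int_{\partial E}|\omega\cdot\nu_E|\,\ud\mathcal{H}^{d-1}+O(\rho^2)$ obtained from a tubular-neighbourhood parametrization of $\partial E$; for a general finite-perimeter set (needed only when $s>1$) it follows by Fubini from the classical pointwise-in-$\omega$ fact $\|\chi_E(\cdot+\rho\omega)-\chi_E\|_{L^1}/\rho\to\int_{\partial^*E}|\omega\cdot\nu_E|\,\ud\mathcal{H}^{d-1}$, promoted to the integral in $\omega$ by dominated convergence against the first bound.

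Given these, I would split $\int_0^{\infty}=\int_0^\delta+\int_\delta^\infty$ at some small $\delta>0$. The tail is bounded using $\Phi_E\le 2d\omega_d|E|$ and $\int_\delta^\infty \rho^{-s-1}\,\ud\rho=\frac{1}{s\delta^s}$, so it contributes an amount $C(\delta,E,d,s)$ which, after division by $\sca^s(r)\to+\infty$, is negligible. On $(0,\delta)$ the asymptotic gives
\[
\int_0^\delta \rho^{d-1}k^s_r(\rho)\Phi_E(\rho)\,\ud\rho =2\omega_{d-1}\Per(E)\int_0^\delta \rho^d k^s_r(\rho)\,\ud\rho+\mathrm{Err}_r(\delta),
\]
with $|\mathrm{Err}_r(\delta)|\le \eta\int_0^\delta \rho^d k^s_r(\rho)\,\ud\rho$ for any prescribed $\eta>0$, uniformly in $r$, by a further splitting at a smaller scale where $\Phi_E/\rho$ is $\eta$-close to its limit and by using the $\rho$-linear dominating bound on the complement. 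The elementary primitive
\[
\int_0^\delta \rho^d k^s_r(\rho)\,\ud\rho=\frac{r^{1-s}}{d+1}+\int_r^\delta \rho^{-s}\,\ud\rho
\]
is asymptotic to $|\log r|=\sca^1(r)$ when $s=1$ and to $\frac{d+s}{(d+1)(s-1)}r^{1-s}=\sca^s(r)$ when $s>1$; dividing by $\sca^s(r)$, sending $r\to 0^+$ and then $\delta\to 0^+$, yields \eqref{pointwise_limit}.

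The only non-routine ingredient is the asymptotic for $\Phi_E$ in the general finite-perimeter setting (case $s>1$): the $\omega$-pointwise BV limit is classical, but it must be upgraded to the integral in $\omega\in\mathbb{S}^{d-1}$, which is exactly where the uniform dominating bound $\Phi_E(\rho)/\rho\le 2\omega_{d-1}\Per(E)$ is decisive. Everything else is careful bookkeeping and direct integration of $\rho^d k^s_r(\rho)$.
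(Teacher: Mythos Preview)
Your proof is correct and takes a genuinely different route from the paper. The paper proceeds via an exact decomposition formula (their Lemma~\ref{porpsvilen}): using the vector field $T^s_r$ with $\Div T^s_r=k^s_r$ and Gauss--Green, they write $\tildeJ(E)$ as $\omega_{d-1}\Per(E)(\sca^s(r)+\bm{\alpha}^s)$ plus five explicit error integrals, each of which is then shown to be $o(\sca^s(r))$. For $s=1$ one of these error terms (an integral over the annulus $B(y,1)\setminus B(y,r)$) is handled by exploiting interior/exterior ball conditions, which is precisely where the smoothness of $\partial E$ enters. Your approach avoids all of this: you recast $\tildeJ(E)$ as a one-dimensional integral of $\rho^{d-1}k^s_r(\rho)\Phi_E(\rho)$, reduce the problem to the classical $BV$ asymptotic $\Phi_E(\rho)/\rho\to 2\omega_{d-1}\Per(E)$, and compute $\int_0^\delta\rho^d k^s_r(\rho)\,\ud\rho$ explicitly. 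This is considerably more elementary and, notably, your argument actually works for \emph{every} finite-perimeter set also when $s=1$ (the directional limit $\|\chi_E(\cdot+t\omega)-\chi_E\|_{L^1}/t\to|D_\omega\chi_E|(\R^d)$ holds for all $\omega$, and dominated convergence in $\omega$ applies); the paper explicitly leaves this case open in the remark following Theorem~\ref{thmfondcarPer}. On the other hand, the paper's exact decomposition is reused later (e.g.\ in the double-limit analysis of Section~\ref{doli}), so their more laborious formula has independent value within the paper.
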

The proof of Proposition \ref{prop:point} is postponed and will use, in particular, 
Proposition \ref{porpsvilen} below.
For every $E\in\Mf$ we define the functionals
\begin{align}\label{F11}
& {F}_{1}^{s}(E):=\int_{E} \int_{E^c \cap B^c(x,1)} \frac{1}{\vert x-y \vert^{d+s}} \ud y \ud x\,, \\ \label{G1r}
& G_{r}^{s}(E):= \int_{E} \int_{E^c \cap B(x,1)} k^s_{r}(\vert x-y \vert )\ud y \ud x\,,
\end{align}
and we notice that for every $0<r<1$ it holds
\begin{equation}\label{deco}
\tilde{J}_r^s(E)= F_{1}^{s}(E)+ G_{r}^{s}(E) \,. 
\end{equation}
\begin{remark}\label{zeroor}
\rm{It is easy to see that, for every $E\in\Mf$, it holds
$$
F^s_1(E)\le \int_{E}\int_{B^c(0,1)}\frac{1}{|z|^{d+s}}\ud z=\frac{d\omega_d}{s}|E|\,.
$$
}
\end{remark}
Let $s\ge 1$\,. For all $r>0$ we define the function $T^s_r :\mathbb{R}^d \setminus \{0\} \rightarrow \mathbb{R}^d $ as
\begin{equation}\label{Trfunzdef}
T^s_r(x):=
\left\{
\begin{aligned}
& -\frac 1 s\frac{x}{\vert x \vert^{d+s}} & \text{ if }\vert x \vert \in [r, +\infty)   \, , \\
& \frac{x}{d r^{d+s}} - \frac{d+s}{ ds r^s}\frac{x}{\vert x \vert^{d}} & \text{ if }  \vert x \vert \in (0,r) \, .
\end{aligned}
\right.
\end{equation} 
A direct computation shows that
\begin{equation} \label{divcamp} 
\Div(T^s_r(x))= k^s_r(\vert x \vert).
\end{equation}
\begin{lemma}\label{lemmaprimosviluttoenergia}
Let $E\in\Mf$ be a set of finite perimeter. Then, for every $0<r<1$, we have
	\begin{equation}\label{ensvil1}
	\begin{split}
G_r^s(E)    =&  \frac{d+s}{d s r^s}\int_{\partial^* E} \ud\mathcal{H}^{d-1}(y) \int_{E \cap B(y,r)}\frac{y-x}{|x-y|^d}\cdot \nu_{E}(y) \ud x\\
& - \frac{1}{dr^{d+s}}\int_{\partial^* E} \ud\mathcal{H}^{d-1}(y) \int_{E \cap B(y,r)}(y-x)\cdot \nu_{E}(y) \ud x\\
    &+\frac 1 s \int_{\partial^* E} \ud\mathcal{H}^{d-1}(y) \int_{E\cap (B(y,1)\setminus B(y,r))} \frac{y-x}{\vert x-y \vert^{d+s}}\cdot \nu_{E}(y)\ud x\\
    &-\frac 1 s\int_{E}\mathcal{H}^{d-1}(E^c\cap \partial B(x,1))\ud x\,,
	\end{split}
	\end{equation} 
	where in the last addendum we recall that $E$ coincides with its Lebesgue representative.
\end{lemma}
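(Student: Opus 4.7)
The plan is to rewrite $G_r^s(E)$ via integration by parts in the $y$-variable, exploiting the identity $\Div(T_r^s(\cdot-x)) = k^s_r(|\cdot - x|)$ from \eqref{divcamp}, and then to reverse the order of integration so that surface integrals on $\partial^* E$ appear. Fix $x \in E$. The set $E^c \cap B(x,1)$ has finite perimeter, and its reduced boundary decomposes, up to $\mathcal H^{d-1}$-negligible sets, into $\partial^* E \cap B(x,1)$, on which the outer unit normal of $E^c$ is $-\nu_E$, and $E^c \cap \partial B(x,1)$, on which it is $(y-x)/|y-x|$. The Gauss--Green formula (with the singularity of $T_r^s$ handled as in the last paragraph) then yields
\begin{equation*}
\int_{E^c \cap B(x,1)} k^s_r(|x-y|)\ud y = -\int_{\partial^* E \cap B(x,1)} T_r^s(y-x)\cdot \nu_E(y)\ud\mathcal H^{d-1}(y) + \int_{E^c \cap \partial B(x,1)} T_r^s(y-x)\cdot \frac{y-x}{|y-x|}\ud\mathcal H^{d-1}(y).
\end{equation*}

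On the spherical piece $|y-x| = 1 \geq r$, so by \eqref{Trfunzdef} the dot product on the second integral equals $-1/s$; integrating in $x\in E$ produces exactly the last addendum $-\frac{1}{s}\int_E \mathcal H^{d-1}(E^c \cap \partial B(x,1))\ud x$ of \eqref{ensvil1}. For the remaining piece I would invoke Fubini (justified by the local integrability of $T_r^s$ together with $\Per(E) < +\infty$) to swap the roles of $x$ and $y$, replacing $\int_E \ud x\int_{\partial^* E \cap B(x,1)}$ by $\int_{\partial^* E}\ud\mathcal H^{d-1}(y)\int_{E \cap B(y,1)}$. Finally I would split $E \cap B(y,1)$ at radius $r$: on $E \cap B(y,r)$ the two subtractive summands of the first line of \eqref{Trfunzdef} generate the two leading terms of \eqref{ensvil1} with coefficients $\tfrac{d+s}{ds r^s}$ and $-\tfrac{1}{d r^{d+s}}$, while on the annulus $E\cap(B(y,1)\setminus B(y,r))$ the second line of \eqref{Trfunzdef} generates the third term with coefficient $\tfrac{1}{s}$.

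The main technical obstacle is that $T_r^s$ has a nonintegrable gradient at the origin and $|T_r^s(z)| = O(|z|^{1-d})$ near $z=0$, so Gauss--Green for finite-perimeter sets does not apply verbatim to the field $T_r^s(\cdot - x)$ on $E^c \cap B(x,1)$. I would circumvent this by excising a ball $B(x,\varepsilon)$, applying Gauss--Green on the admissible set $E^c \cap (B(x,1) \setminus B(x,\varepsilon))$, and then letting $\varepsilon \to 0^+$. At every point of density one of $E$ --- hence for a.e.\ $x\in E$ --- the estimate $|B(x,\rho)\cap E^c| = o(\rho^d)$ and Fubini in polar coordinates force $\liminf_{\rho\to 0^+}\mathcal H^{d-1}(\partial B(x,\rho)\cap E^c)/\rho^{d-1} = 0$, so along a suitable subsequence $\varepsilon_n\to 0^+$ the $O(\varepsilon_n^{1-d})$ blowup of $|T_r^s|$ on $\partial B(x,\varepsilon_n)$ is exactly offset and the spurious inner-sphere contribution disappears. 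The limits in the $\partial^* E$-integral and in the outer spherical integral are then immediate by dominated convergence.
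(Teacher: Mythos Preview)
Your proposal is correct and follows essentially the same route as the paper: apply Gauss--Green to the field $T_r^s(\cdot-x)$ on $E^c\cap B(x,1)$, read off the boundary contributions on $\partial^*E$ and on $\partial B(x,1)$, integrate in $x\in E$, and swap the order of integration via the symmetry $\chi_{B(x,R)}(y)=\chi_{B(y,R)}(x)$. The only difference is that you spell out the excision argument at the singularity of $T_r^s$, which the paper's proof simply absorbs into the phrase ``by Gauss--Green formula''.
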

\begin{proof}
Let $T_{r}^{s}$ be the function defined in \eqref{Trfunzdef}; then, by Gauss-Green formula and equation \eqref{divcamp}, for every $x\in E$ (and, in fact, for every $x\in\R^d$) we have
\begin{equation*}
\begin{split}
\int_{E^c \cap B(x,1)}k^s_r(\vert x-y \vert)\ud y= &\int_{E^c \cap B(x,1)} \Div(T^s_r(y-x))\ud y \\ 
=& \frac 1 s\int_{\partial^* E\cap(B(x,1)\setminus B(x,r))} \frac{(y-x)\cdot \nu_{E}(y)}{\vert x-y \vert^{d+s}} \ud\mathcal{H}^{d-1}(y) \\
&+ \frac{d+s}{dsr^s} \int_{\partial^* E \cap B(x,r)}  \frac{(y-x)\cdot \nu_{E}(y)}{\vert x-y \vert^{d}} \ud \mathcal{H}^{d-1}(y)\\
& - \frac{1}{dr^{d+s}} \int_{\partial^* E \cap B(x,r)}  (y-x)\cdot \nu_{E}(y) \ud \mathcal{H}^{d-1}(y) 
\\
& -\frac 1 s\mathcal{H}^{d-1}(E^c\cap\partial B(x,1))\,.
\end{split}
\end{equation*}
The conclusion comes by integrating with respect to $x \in E$, noticing that $\chi_{B(x,R)}(y)= \chi_{B(y,R)}(x)$ for all $x,  y\in\R^d, \, R>0$ and exchanging the order
of integration.
\end{proof}
For every $s\ge 1$ we set
\begin{equation}\label{error}
\bm{\alpha}^s:=\left\{\begin{array}{ll}
\frac{d+2}{d+1}&\textrm{if }s=1\\
-\frac{1}{s-1}&\textrm{if }s>1\,.
\end{array}\right.
\end{equation}
\begin{lemma}\label{porpsvilen}
Let $E \in\Mf$ be a set of finite perimeter. Then, for every $0<r<1$, the following formula holds true
\begin{equation*}
\begin{split}
\tilde J_{r}^{s}(E) =&  \omega_{d-1}\Per(E)\big( \sca^s(r) +\bm{\alpha}^s \big)+F_1^{s}(E)  \\ 
& - \frac{d+s}{dsr^s }\int_{\partial^* E} \ud\mathcal{H}^{d-1}(y) \int_{\big(E \triangle H_{\nu_E(y)}^{-}(y)\big)\cap B(y,r)} \frac{\vert (y-x)\cdot \nu_{E}(y) \vert} {\vert x-y \vert^{d}}
\ud x \\
&+\frac{1}{dr^{d+s}} \int_{\partial^* E} \ud\mathcal{H}^{d-1}(y) \int_{\big(E \triangle H_{\nu_E(y)}^{-}(y)\big)\cap B(y,r)} \vert (y-x)\cdot \nu_{E}(y) \vert \ud x
\\
&- \frac 1 s\int_{\partial^* E} \ud\mathcal{H}^{d-1}(y) \int_{\big(E \triangle H_{\nu_E(y)}^{-}(y)\big)\cap\big(B(y,1)\setminus B(y,r)\big)} \frac{\vert (y-x)\cdot \nu_{E}(y)\vert }{\vert x-y \vert^{d+s}}\ud x 
\\
& -\frac 1 s\int_{E}\mathcal{H}^{d-1}(E^c \cap \partial B(x,1))\ud x\,.
\end{split}
\end{equation*}	
\end{lemma}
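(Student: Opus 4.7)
\textbf{Proof plan for Lemma \ref{porpsvilen}.}
The plan is to start from the decomposition \eqref{deco}, $\tilde J_r^s(E)=F_1^s(E)+G_r^s(E)$, and rewrite $G_r^s(E)$ using formula \eqref{ensvil1} of Lemma \ref{lemmaprimosviluttoenergia}. The three integrals in \eqref{ensvil1} are all of the form
$$ \int_{\partial^*E}d\mathcal H^{d-1}(y)\int_{E\cap A(y)}\psi(y-x)\cdot\nu_E(y)\,dx,$$
where $A(y)$ is either $B(y,r)$ or $B(y,1)\setminus B(y,r)$ and $\psi$ is a radial scalar times the vector $y-x$. The first step is to replace $E$ by the tangent half-space $H^-_{\nu_E(y)}(y)$ in each inner integral: writing $\chi_E=\chi_{H^-}+(\chi_E-\chi_{H^-})$ and observing that $(y-x)\cdot\nu_E(y)\geq 0$ on $H^-_{\nu_E(y)}(y)$ while $\leq 0$ on $H^+_{\nu_E(y)}(y)$, one obtains
$$\int_{E\cap A(y)}\frac{(y-x)\cdot\nu_E(y)}{|x-y|^\alpha}\,dx=\int_{H^-_{\nu_E(y)}(y)\cap A(y)}\frac{(y-x)\cdot\nu_E(y)}{|x-y|^\alpha}\,dx-\int_{(E\triangle H^-_{\nu_E(y)}(y))\cap A(y)}\frac{|(y-x)\cdot\nu_E(y)|}{|x-y|^\alpha}\,dx,$$
for the relevant exponents $\alpha$. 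This identity produces exactly the three error addenda in the statement; the sign discrepancy between the two pieces on $H^-\setminus E$ and $E\setminus H^-$ is what converts the vector integrand into its absolute value.

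The second step is to compute explicitly the three resulting half-space integrals. By rotation invariance one may take $\nu_E(y)=e_d$ and change variables $z=x-y$; passing to spherical coordinates $z=\rho\omega$ and using the identity $\int_{\{\omega_d\leq 0\}\cap\mathbb{S}^{d-1}}|\omega_d|\,d\mathcal H^{d-1}(\omega)=\omega_{d-1}$ (which follows by the projection parametrization of the lower hemisphere), one gets
$$\int_{H^-\cap B(0,r)}\frac{-z_d}{|z|^d}\,dz=r\,\omega_{d-1},\qquad \int_{H^-\cap B(0,r)}(-z_d)\,dz=\frac{\omega_{d-1}\,r^{d+1}}{d+1},$$
$$\int_{H^-\cap(B(0,1)\setminus B(0,r))}\frac{-z_d}{|z|^{d+s}}\,dz=\omega_{d-1}\int_r^1\rho^{-s}\,d\rho,$$
where the last integral evaluates to $|\log r|$ if $s=1$ and to $(r^{1-s}-1)/(s-1)$ if $s>1$.

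The third step is to plug these values into the three leading terms arising from \eqref{ensvil1}, integrate the constant contributions (which no longer depend on $y$) against $\mathcal H^{d-1}\res\partial^*E$ to produce a factor $\Per(E)$, and collect. The coefficient of $r^{1-s}$ (resp. of $|\log r|$ when $s=1$) simplifies algebraically to $\omega_{d-1}\,\sca^s(r)\,\Per(E)$; the remaining $r$-independent part yields $\omega_{d-1}\,\bm\alpha^s\,\Per(E)$ after an elementary rearrangement (most easily verified directly for $s=1$, where $(d+1)/d-1/(d(d+1))=(d+2)/(d+1)$, and analogously for $s>1$). The last addendum of \eqref{ensvil1}, namely $-\tfrac{1}{s}\int_E\mathcal H^{d-1}(E^c\cap\partial B(x,1))\,dx$, passes unchanged into the statement.

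The main bookkeeping obstacle is the decomposition into tangent half-space plus symmetric-difference correction: one must carefully track the sign of $(y-x)\cdot\nu_E(y)$ on the two pieces $H^-_{\nu_E(y)}(y)\setminus E$ and $E\setminus H^-_{\nu_E(y)}(y)$ in order to recover absolute values with the correct overall sign, so that the error terms appear with the signs displayed in the statement. Everything else is direct computation of elementary integrals over half-balls and spherical shells, combined with an algebraic simplification of the resulting $\Per(E)$-coefficient.
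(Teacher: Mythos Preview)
Your proposal is correct and follows essentially the same route as the paper: start from \eqref{deco} and \eqref{ensvil1}, split each inner integral over $E$ into the tangent half-space $H^-_{\nu_E(y)}(y)$ plus a symmetric-difference correction (yielding the absolute-value error terms), evaluate the three half-space integrals explicitly via \eqref{engsvilint1}--\eqref{engsvilint2}, and collect the resulting $\Per(E)$-coefficients into $\omega_{d-1}(\sca^s(r)+\bm\alpha^s)$. The paper carries out exactly these steps in \eqref{ensvil2}--\eqref{ensvil3}, so there is no substantive difference.
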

\begin{proof}
First, we notice that, using polar coordinates, for every $0<r<1$ and for every $\nu\in\mathbb{S}^{d-1}$, it holds
\begin{eqnarray}
\label{engsvilint1}
  &&\int_{H_{\nu}^{-}(0)\cap B(0,r)}  \frac{x}{\vert x \vert^{d}}\cdot\nu\ud x
=-\omega_{d-1}r\,,\\ \label{engsvilint3}
&&\int_{H_{\nu}^{-}(0)\cap B(0,r)} (x)\cdot \nu\ud x=
-\frac{\omega_{d-1}}{{d+1}}r^{d+1}\,,\\ \label{engsvilint2}
&&
 \int_{H_{\nu}^{-}(0)\cap(B(0,1)\setminus B(0,r))} \frac{x}{\vert x \vert^{d+s}}\cdot \nu\ud x
=-\omega_{d-1}\gamma^s(r)\,,
\end{eqnarray}
where
\begin{equation*}
\gamma^s(r):=\left\{\begin{array}{ll}
|\log r|&\textrm{ if }s=1\,,\\
\frac{r^{1-s}-1}{s-1}&\textrm{ if }s>1\,.
\end{array}\right.
\end{equation*}
Now we rewrite in a more convenient way the first three addends in the righthand side of \eqref{ensvil1}.
By \eqref{engsvilint1}, we get
\begin{equation}\label{ensvil2}
\begin{split}
&\frac{d+s}{dsr^s} \int_{\partial^* E} \ud\mathcal{H}^{d-1}(y) \int_{E \cap B(y,r)} \frac{(y-x)\cdot \nu_{E}(y)}{\vert x-y \vert^{d}}\ud x  \\
=&\frac{d+s}{dsr^s} \int_{\partial^* E} \ud\mathcal{H}^{d-1}(y) \int_{\big(E\setminus H_{\nu_E(y)}^{-}(y)\big) \cap B(y,r)} \frac{(y-x)\cdot \nu_{E}(y)}{\vert x-y \vert^{d}}\ud x\\
&-\frac{d+s}{dsr^s} \int_{\partial^* E} \ud\mathcal{H}^{d-1}(y) \int_{\big(H_{\nu_E(y)}^{-}(y)\setminus E\big) \cap B(y,r)} \frac{(y-x)\cdot \nu_{E}(y)}{\vert x-y \vert^{d}}\ud x\\
&+\frac{d+s}{dsr^s} \int_{\partial^* E} \ud\mathcal{H}^{d-1}(y) \int_{H_{\nu_E(y)}^{-}(y) \cap B(y,r)} \frac{(y-x)\cdot \nu_{E}(y)}{\vert x-y \vert^{d}}\ud x\\
=&-\frac{d+s}{dsr^s} \int_{\partial^* E} \ud \mathcal{H}^{d-1}(y) \int_{\big(E \triangle H_{\nu_E(y)}^{-}(y)\big) \cap B(y,r)} \frac{\vert (y-x)\cdot \nu_{E}(y) \vert }{\vert x-y \vert^{d}}\ud x \\
&+ \omega_{d-1}\Per(E)\frac{d+s}{ds}r^{1-s}\,.
\end{split}
\end{equation}
Analogously,  by \eqref{engsvilint3}, we have 
\begin{equation}\label{ensvil4}
\begin{split}
& -\frac{1}{dr^{d+s}} \int_{\partial^* E} \ud \mathcal{H}^{d-1}(y)\int_{E \cap B(y,r)}(y-x)\cdot \nu_{E}(y)\ud x \\ 
=& \frac{1}{dr^{d+s}} \int_{\partial^* E} \ud \mathcal{H}^{d-1}(y) \int_{\big(E \triangle H_{\nu_E(y)}^{-}(y)\big) \cap B(y,r)} \vert (y-x) \cdot \nu_{E}(y) \vert \ud x\\
&-\omega_{d-1}\Per(E) \frac{1}{d(d+1)}r^{1-s}.
\end{split}
\end{equation}
Furthermore, by using \eqref{engsvilint2}, we obtain
\begin{equation}\label{ensvil3}
\begin{split}
&\frac 1 s \int_{\partial^* E} \ud\mathcal{H}^{d-1}(y) \int_{E\cap\big(B(y,1) \setminus B(y,r)\big)} \frac{(y-x)\cdot \nu_{E}(y)}{\vert x-y \vert^{d+s}}\ud x \\
=& - \frac 1 s\int_{\partial^* E} \ud\mathcal{H}^{d-1}(y) \int_{\big(E \triangle H_{\nu_E(y)}^{-}(y)\big)\cap\big(B(y,1)\setminus B(y,r)\big)} \frac{\vert (y-x)\cdot \nu_{E}(y)\vert }{\vert x-y \vert^{d+s}}\ud x\\
&+\omega_{d-1}\Per(E)\frac 1 s\gamma^s(r)\,.
\end{split}
\end{equation}
We notice that
$$
 \frac 1 s\gamma^s(r)+\frac{d+s}{ds}r^{1-s}- \frac{1}{d(d+1)}r^{1-s}=\sca^s(r)+\bm{\alpha}^s\,;
$$
therefore, plugging \eqref{ensvil2}, \eqref{ensvil4}, \eqref{ensvil3} into \eqref{ensvil1}, and using \eqref{deco}, we obtain the claim.
\end{proof}
We are now in a position to prove Proposition \ref{prop:point}.
\begin{proof}[Proof of Proposition \ref{prop:point}]
We prove the claim under the assumption that $E$ is smooth. For $s>1$, the same proof, with $\partial E$ replaced by $\partial^* E$, works also for sets $E\in\Mf$ having finite perimeter.
We will use the decomposition of $\tildeJ$ in Lemma  \ref{porpsvilen}.
Clearly the first contribution 
$
\omega_{d-1}\Per(E)\big(\sca^s(r)+\bm{\alpha}^s)
$\,, once scaled by $\sca^s(r)$ converges to $\omega_{d-1}\Per(E)$\,.
Now we will prove that all the other contributions, scaled by $\sca^s(r)$, vanish as $r\to 0^+$\,.
\vskip5pt
{\it $2^{\mathrm{nd}}$ addend:} By Remark \ref{zeroor}, we have that 
$$
\lim_{r\to 0^+}\frac{F^s_1(E)}{\sca^s(r)}=0\,.
$$
\vskip5pt
 	\textit{$3^{\mathrm{rd}}$ addend}. 
	By the very definition of $\sca^s(r)$ in \eqref{scaling} we have that $\sca^s(r)r^{s-1}$ is uniformly bounded from below by a positive constant for every $0<r<\frac 1 2$, so that by the change of variable $z=\frac{x-y}{r}$, we have
	 	\begin{equation*}
 	 \begin{split}
	 &\frac{1}{\sca^s(r)}\frac{d+s}{dsr^s}\int_{\partial E} \ud\mathcal{H}^{d-1}(y)\int_{\big(E \triangle H^{-}_{\nu_E(y)}(y) \big)\cap B(y,r)} \frac{\vert (y-x) \cdot \nu_{E}(y) \vert  }{\vert x-y \vert^{d}}\ud x \\
\le & C(d,s)\int_{\partial E} \ud\mathcal{H}^{d-1}(y) \int_{ \big(E \triangle H^{-}_{\nu_E(y)}(y)\big) \cap B(y,r)} \frac{1}{r} \frac{\vert (y-x)\cdot \nu_{E}(y) \vert }{\vert x-y \vert^{d}}\ud x\\
=& C(d,s) \int_{\partial E} \ud\mathcal{H}^{d-1}(y)\int_{\big(\frac{E-y}{r}\triangle (H_{\nu_E(y)}^-(y)-\frac y r)\big)\cap B(0,1)}  \frac{\vert z\cdot \nu_{E}(y) \vert }{ \vert z \vert^{d}}\ud z\,,
 \end{split}
 \end{equation*}
where the last integral vanishes as $r\to 0^+$ in virtue of the Lebesgue's Dominated Convergence Theorem since ${\chi}_{\frac{E-y}{r}}\to {\chi}_{H_{\nu_E(y)}^-(y)-\frac y r}$ in $L^1_{\loc}$\,.
\vskip5pt
 	\textit{$4^{\mathrm{th}}$ addend}. Trivially, we have
 	\begin{equation*}
 	\begin{split}
 	&\frac{1}{\sca^s(r)}\frac{1}{dr^s}\int_{\partial E} \ud\mathcal{H}^{d-1}(y) \int_{\big (E \triangle H_{\nu_E(y)}^{-}(y)\big) \cap B(y,r)}  \frac{\vert (y-x)\cdot \nu_{E}(y) \vert }{r^{d}}\ud x\\
	 \leq &\frac{1}{\sca^s(r)} \frac{1}{dr^s}\int_{\partial E} \ud\mathcal{H}^{d-1}(y)
 \int_{ \big(E \triangle H_{\nu_E(y)}^{-}(y)\big) \cap B(y,r)} \frac{\vert (y-x)\cdot \nu_{E}(y) \vert }{\vert x-y \vert^{d}}\ud x, 
 	\end{split}
 	\end{equation*}  
	where the last integral vanishes as shown above.
 	\vskip5pt
 	\textit{$5^{\mathrm{th}}$ addend}.
	We first discuss the simpler case $s>1$\,.
	In such a case, for every $y\in\partial E$, using again the change of variable $z=\frac{x-y}{r}$, we have
\begin{equation*}
\begin{split}
&\frac{1}{\sca^s(r)}\frac{1}{s}\int_{\partial E}\ud \mathcal{H}^{d-1}(y)\int_{\big(E \triangle H_{\nu_E(y)}^{-}(y)\big)\cap \big(B(y,1)\setminus B(y,r)\big)} \frac{\vert (y-x)\cdot \nu_{E}(y)\vert }{\vert x-y \vert^{d+s}}\ud x\\
=&\frac{r^{1-s}}{\sca^s(r)}\frac{1}{s}\int_{\partial E}\ud \mathcal{H}^{d-1}(y)\int_{\big(\frac{E-y}{r}\triangle (H_{\nu_E(y)}^-(y)-\frac{y}{r})\big)\cap \big(B(0,\frac 1 r)\setminus B(0,1)\big)} \frac{|z\cdot \nu_E(y)|}{|z|^{d+s}}\ud z\\
\le&C(d,s)\int_{\partial E}\ud \mathcal{H}^{d-1}(y)\int_{\big(\frac{E-y}{r}\triangle (H_{\nu_E(y)}^-(y)-\frac{y}{r})\big)\setminus B(0,1)} \frac{1}{|z|^{d+s-1}}\ud z\,,
\end{split}
\end{equation*}
where the last double integral vanishes as $r\to 0^+$ in virtue of the Lebesgue's Dominated Convergence Theorem  using that ${\chi}_{\frac{E-y}{r}}\to {\chi}_{H_{\nu_E(y)}^-(y)-\frac y r}$ in $\mathrm{L}^1_{\loc}$ as $r\to 0^+$ and the fact that
the function $h(z):=\frac{1}{|z|^{d+s-1}}$ is in $\mathrm{L}^{1}(\R^d\setminus B(0,1))$ for $s>1$.

Notice that the reasoning above does not apply to the case $s=1$ since for $s=1$ the function $h(z)=\frac{1}{|z|^d}$ is not in   $\mathrm{L}^{1}(\R^d\setminus B(0,1))$\,. Let now $s=1$ and recall that $\sca^1(r)=|\log r|$\,. 
	Since $E$ has smooth boundary, there exists $0<\delta<1$ such that for all $y \in \partial E$ the sets $B^-:= B(y-\delta \nu_{E}(y), \delta) $ and $B^+:= B(y+\delta \nu_{E}(y), \delta)$ satisfy
$$
B^-\subset E \setminus \partial E\,,\qquad B^+	 \subset E^c \setminus \partial E\,,\qquad y \in \partial B^- \cap \partial B^+\,.
$$	
Therefore, we have that 
 \begin{equation}\label{useful}
 E \triangle H_{\nu_E(y)}^{-}(y) \subset (H_{\nu_E(y)}^{-}(y)\setminus B^-) \cup (H_{\nu_E(y)}^{+}(y) \setminus B^+)\,,
\end{equation}
where $H_\nu^\pm(y)$ are defined in \eqref{uppersemispazio} and \eqref{semispaziotang}.
 Fix $y \in \partial E$ and let $R_y$ be a rotation of $\R^d$ such that $ R_y \nu_{E}(y)= e_d$\,. Moreover, we denote by  $z=(z' ,z_d )$ the points in $\R^d$, so that $z'=(z_1,\ldots,z_{d-1})\in\R^{d-1}$. Furthermore, we set $\R^d_+:=\{z\in\R^d\,:\,z_d\ge 0\}$\,. By \eqref{useful} we have	
 \begin{equation}\label{ciao}
 \begin{aligned}
 	& \frac{1}{\vert \log r \vert} \int_{\partial E}\ud \mathcal{H}^{d-1}(y)\int_{\big(E \triangle H_{\nu_E(y)}^{-}(y)\big) \cap \big(B(y,1) \setminus B(y,r)\big)} \frac{\vert (y-x) \cdot \nu_{E}(y) \vert }{\vert x-y \vert^{d+1}}\ud x   \\
 	\le & \frac{1}{\vert \log r \vert}\int_{\partial E}\ud \mathcal{H}^{d-1}(y) \int_{\big(H_{\nu_E(y)}^{-}(y) \setminus B^-\big) \cap B(y,1)} \frac{\vert (y-x) \cdot \nu_{E}(y) \vert }{\vert x-y \vert^{d+1}}\ud x   \\
 	& +\frac{1}{\vert \log r \vert}\int_{\partial E}\ud \mathcal{H}^{d-1}(y) \int_{\big(H_{\nu_E(y)}^{+}(y) \setminus B^+\big) \cap B(y,1)} \frac{\vert (y-x) \cdot \nu_{E}(y) \vert }{\vert x-y \vert^{d+1}}\ud x  \\
=&	 \frac{2}{\vert \log r \vert}\Per(E)\int_{\R^{d}_+\cap\big(B(0,1)\setminus B(\delta e_d,\delta)\big)}\frac{z_d}{(\vert z' \vert^2 + z_d^2)^{\frac{d+1}{2}}} \ud z_d\,.
 	\end{aligned}
	\end{equation} 
Therefore, in order to prove that the first double integral in \eqref{ciao} vanishes as $r\to 0^+$, it is enough to show that
\begin{equation}\label{forsepeter}
\int_{\R^{d}_+\cap\big(B(0,1)\setminus B(\delta e_d,\delta)\big)}\frac{z_d}{(\vert z' \vert^2 + z_d^2)^{\frac{d+1}{2}}} \ud z_d\le C(d,\delta)\,,
\end{equation}	
for some finite constant $C(d,\delta)>0$\,.
To this purpose, setting
$$
A_\delta:=\{z=(z',z_d)\in\R^d_+\setminus B(\delta e_d,\delta)\,:\,|z'|<\delta\,, z_d<\delta\}\,,
$$
we notice that 
\begin{equation}\label{hans1}
\R^{d}_+\cap\big(B(0,1)\setminus  B(\delta e_d,\delta)\big)\subset \big(B(0,1)\setminus B(0,\delta)\big)\cup  A_\delta\,.
\end{equation}	
Moreover, there exists a constant $c_\delta$ (take, for instance, $c_\delta=\frac 1 \delta$)
such that
\begin{equation}\label{hans2}
A_\delta\subset \tilde A_\delta:= \{z=(z',z_d)\in\R^d_+\,:\, |z'|<\delta\,,\, z_d< c_\delta|z'|^2\}\,.
\end{equation}
Therefore, by \eqref{hans1} and \eqref{hans2}, we get 	
\begin{align*}	
&\int_{\R^{d}_+\cap\big(B(0,1)\setminus B(\delta e_d,\delta)\big)}\frac{z_d}{(\vert z' \vert^2 + z_d^2)^{\frac{d+1}{2}}} \ud z_d\\
\le& \int_{\tilde A_\delta} \frac{z_d}{(\vert z' \vert^2 + z_d^2)^{\frac{d+1}{2}}} \ud z_d+\int_{B(0,1)\setminus B(0,\delta)}\frac{z_d}{(\vert z' \vert^2 + z_d^2)^{\frac{d+1}{2}}} \ud z_d\\
\le
& 
\int_{B'(0,\delta)} \ud z'  \int_{0}^{\delta-\sqrt{\delta^2-|z'|^2}} \frac{c_\delta|z'|^2}{\vert z' \vert^{d+1}} \ud z_d 
+\int_{B(0,1)\setminus B(0,\delta)}  \frac{1}{|z|^d} \ud z\\
\le&	\frac{c_\delta}{\delta}\int_{B'(0,\delta)}|z'|^{3-d}\ud z'+|\log\delta|=:C(d,\delta)\,,
\end{align*}	
i.e., \eqref{forsepeter}.
 \vskip5pt
 	\textit{$6^{\mathrm{th}}$ addend:} We have that
 	\begin{equation*}
 	\frac{1}{\sca^s(r) } \int_{E } \mathcal{H}^{d-1}(E^c \cap \partial B(x,1))\ud x \leq \frac{1}{\sca^s(r)} d\omega_{d} \vert E \vert \rightarrow 0 \quad \text{ as } r \rightarrow 0^+\,.
 	\end{equation*} 
Thus, the proof of Lemma \ref{prop:point} is concluded.
 \end{proof}

We will show that the limit \eqref{pointwise_limit} is actually a $\Gamma$-limit.
\begin{theorem}\label{mainthm}
Let $s\ge 1$ and let $\{r_n\}_{n\in\N}\subset (0,+\infty)$ be such that $r_n\to 0^+$ as $n\to +\infty$. The following $\Gamma$-convergence result holds true.
\begin{itemize}
\item[(i)] (Compactness) Let $U\subset\R^d$ be an open bounded set and let $\{E_n\}_{n\in\N}\subset\Me$ be such that $E_n\subset U$ for every $n\in\N$ and
\begin{equation}\label{energy_bound}
 \tildeJn(E_n)\le M \sca^s(r_n)\qquad\textrm{for every }n\in\N,
 \end{equation}
for some constant $M$ independent of $n$.
Then, up to a subsequence, $\chi_{E_n}\to \chi_E$ strongly in $L^1(\R^d)$ for some set $E\in\Mf$ with $\Per(E)<+\infty$.
\item[(ii)] (Lower bound) Let $E\in\Mf$. For every $\{E_n\}_{n\in\N}\subset\Mf$ with $\chi_{E_n}\to\chi_E$ strongly in $L^1(\R^d)$ it holds
\begin{equation}\label{trueliminf}
\omega_{d-1}\Per(E)\le\liminf_{n\to +\infty}\frac{\tildeJn(E_n)}{\sca^s(r_n)}\,.
\end{equation}
\item[(iii)] (Upper bound) For every $E\in\Mf$ there exists $\{E_n\}_{n\in\N}\subset\Mf$ such that $\chi_{E_n}\to\chi_E$ strongly in $L^1(\R^d)$ and
\begin{equation*}
\omega_{d-1}\Per(E)=\lim_{n\to +\infty}\frac{\tildeJn(E_n)}{\sca^s(r_n)}\,.
\end{equation*}
\end{itemize}
\end{theorem}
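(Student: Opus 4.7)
The plan is to prove the three items jointly via a slicing reduction to one dimension. For the recovery sequence (iii), the natural candidate is $E_n\equiv E$: when $s>1$, Proposition \ref{prop:point} applies to every $E\in\Mf$ of finite perimeter and the constant sequence works immediately; when $s=1$ it is stated only for smooth sets, so one approximates $E$ by smooth sets $E^k$ with $\chi_{E^k}\to\chi_E$ in $L^1$ and $\Per(E^k)\to\Per(E)$, applies Proposition \ref{prop:point} to each $E^k$, and extracts a diagonal $E_n:=E^{k_n}$ with $k_n\to+\infty$ chosen slowly enough.

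For (i) and (ii), the starting point is the symmetric representation
\[
\tilde J^s_r(E)=\tfrac12\int_{\R^d}k^s_r(|z|)\,|E\triangle(E+z)|\,\ud z,
\]
obtained from \eqref{Jtildefrac} by Fubini. Since $E_n\subseteq U$ is bounded, Remark \ref{zeroor} gives $F^s_1(E_n)\le C|U|$, which is negligible with respect to $\sca^s(r_n)$, so it suffices to estimate $G^s_r$ from below. Switching to polar coordinates $z=t\nu$ and using the 1D slicing identity $|E\triangle(E+t\nu)|=\int_{\nu^\perp}|E^\nu_y\triangle(E^\nu_y+t)|\,\ud\mathcal{H}^{d-1}(y)$ with $E^\nu_y:=\{t\in\R:y+t\nu\in E\}$ leads to
\[
\frac{G^s_r(E)}{\sca^s(r)}=\frac{1}{2\sca^s(r)}\int_{\mathbb{S}^{d-1}}\ud\mathcal{H}^{d-1}(\nu)\int_{\nu^\perp}\ud\mathcal{H}^{d-1}(y)\int_0^1 k^s_r(t)\,t^{d-1}\,|E^\nu_y\triangle(E^\nu_y+t)|\,\ud t.
\]
The core of the argument is then a one-dimensional liminf inequality: for every measurable $F\subseteq\R$ with $|F|<\infty$,
\[
\liminf_{r\to 0^+}\frac{1}{\sca^s(r)}\int_0^1 k^s_r(t)\,t^{d-1}\,|F\triangle(F+t)|\,\ud t\;\ge\;\#(\partial^*F).
\]
This rests on the calibration $\int_0^\delta k^s_r(t)\,t^d\,\ud t/\sca^s(r)\to 1$ as $r\to 0^+$ (valid for every $\delta>0$ and every $s\ge 1$, a direct computation from \eqref{kernel} and \eqref{scalingper}), together with the elementary fact that $|F\triangle(F+t)|=t\cdot\#(\partial^*F)$ whenever $t$ lies below the minimum gap of a finite subfamily of $\partial^*F$. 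Fatou's lemma applied in $y$ and $\nu$, combined with the classical identity $\int_{\mathbb{S}^{d-1}}\int_{\nu^\perp}\#(\partial^*E^\nu_y)\,\ud\mathcal{H}^{d-1}(y)\,\ud\mathcal{H}^{d-1}(\nu)=2\omega_{d-1}\Per(E)$, then yields (ii).

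Compactness (i) follows along the way: applying the same chain of inequalities at finite $r_n$ to a sequence $\{E_n\}$ satisfying \eqref{energy_bound} forces $\liminf_n\Per(E_n)<+\infty$, and since $E_n\subseteq U$ is bounded, the classical BV compactness theorem extracts a subsequence with $\chi_{E_n}\to\chi_E$ in $L^1(\R^d)$ and $\Per(E)<+\infty$ by lower semicontinuity. The main obstacle is the one-dimensional liminf inequality for a generic measurable $F$: the minimum gap between consecutive points of $\partial^*F$ can be arbitrarily small, so the truncation to a finite subfamily must be performed at an $r$-dependent scale and one must verify that the contribution coming from $|F\triangle(F+t)|$ for $t$ above this scale is absorbed into $o(\sca^s(r))$. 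The logarithmic scaling in the case $s=1$ is the delicate one and is handled in the spirit of \cite{GM05}; for $s>1$ the kernel concentrates more strongly near the origin and the scheme of \cite{AB} applies with minor adaptations.
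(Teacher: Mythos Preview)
Your upper bound (iii) is fine and matches the paper's argument. For (ii), the slicing route is a legitimate alternative to the paper's blow-up via Radon--Nikodym (Lemma~\ref{lemma18}), but the one-dimensional lemma you state is not the right one. After Fatou you must bound $\liminf_n g_n(\nu,y)$ where $g_n$ is built from the \emph{varying} slice $(E_n)^\nu_y$, so what is needed is a 1D $\Gamma$-liminf: if $F_n\to F$ in $L^1(\R)$ then
\[
\liminf_{n\to\infty}\frac{1}{\sca^s(r_n)}\int_0^1 k^s_{r_n}(t)\,t^{d-1}\,|F_n\triangle(F_n+t)|\,\ud t\ \ge\ \#(\partial^*F).
\]
Your ``minimum gap of a finite subfamily of $\partial^*F$'' argument treats only fixed $F$; the triangle inequality $|F_n\triangle(F_n+t)|\ge|F\triangle(F+t)|-2|F_n\triangle F|$ does not close the gap because $\int_0^1 k^s_r(t)t^{d-1}\,\ud t$ is of order $r^{-1}$, which diverges faster than $\sca^s(r)$. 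Proving the sequential 1D statement requires a localisation near each jump of $F$ against a nearby $F_n$---a 1D analogue of Lemma~\ref{lemma18}---so the slicing detour does not actually bypass the key estimate, it only relocates it.

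The compactness argument (i) has a genuine error. The chain of inequalities leading to (ii) is a \emph{liminf} statement as $r\to0^+$; it does not yield, at any finite $r_n$, a bound of the form $\tilde J^s_{r_n}(E_n)\ge c\,\sca^s(r_n)\Per(E_n)$. No such uniform lower bound can hold: if $E_n$ is a disjoint union of $N$ balls of radius $\rho\ll r_n$ then $\tilde J^s_{r_n}(E_n)\sim N\rho^d r_n^{-s}$ while $\sca^s(r_n)\Per(E_n)\sim N\rho^{d-1}r_n^{1-s}$, and the ratio is $\rho/r_n\to0$. Hence \eqref{energy_bound} does not force $\Per(E_n)$ to be bounded (or even finite), and BV compactness cannot be invoked on $E_n$ directly. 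The paper resolves this by a cube decomposition at an intermediate scale $l_n=r_n^\alpha$ (Section~\ref{sc:comp}): one constructs auxiliary sets $\tilde E_n$ (unions of $l_n$-cubes) with $|\tilde E_n\triangle E_n|\to0$ and $\Per(\tilde E_n)\le C\,\Hrn(E_n)$ via the localised isoperimetric inequality of Lemma~\ref{quasiiso}, and applies BV compactness to $\tilde E_n$, not to $E_n$.
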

The proof of Theorem \ref{mainthm} will be done in Sections \ref{sc:comp} and \ref{sc:gammalim} below.

To ease notation, for every $r>0$ we set $\Hr(\cdot):=\frac{\tildeJ(\cdot)}{\sca^s(r)}$\,. In view of \eqref{Jtildefrac}, for every $E\in\Mf$ we have
\begin{equation*}\label{Hfrac}
\begin{aligned}
\Hr(E)=&\frac{1}{\sca^s(r)}\int_{E}\int_{E^c}k^s_r(|x-y|)\ud y\ud x\\
=&\frac{1}{2\sca^s(r)}\int_{\R^d}\int_{\R^d}k^s_r(|x-y|)|\chi_E(x)-\chi_E(y)|\ud y\ud x\,.
\end{aligned}
 \end{equation*}
\section{Proof of Compactness}\label{sc:comp}
This section is devoted to the proof of Theorem \ref{mainthm}(i).
To accomplish this task we will need some preliminary results that are collected in Subsection \ref{ssc:preliminary} below.

\subsection{Preliminary results}\label{ssc:preliminary}
 We first recall the following classical result (see also \cite[Theorem 3.23]{AmbFuscPall}).
\begin{theorem}[Compactness in $\mathrm{BV}$] \label{compatezzainBV}
	Let $\Omega \subset \mathbb{R}^d$ be an open set and let $\{u_n\}_{n\in \mathbb{N}} \subset \mathrm{BV}_{\mathrm{loc}}(\Omega)$ with
	\begin{equation*}
	\sup_{n \in \mathbb{N}} \biggl\{ \int_{A} \vert u_n(x) \vert \ud x+ \vert D u_n \vert(A) \biggr\} < +\infty \quad \forall A \subset \subset \Omega \; \text{open}\,.
	\end{equation*}
Then, there exist a subsequence $\{n_k\}_{k \in \mathbb{N}}$ and a function $u \in \mathrm{BV}_{\mathrm{loc}}(\Omega)$ such that $u_{n_k} \rightarrow u $ in $\mathrm{L}_{\mathrm{loc}}^1(\Omega)$ as $k \rightarrow + \infty$.
\end{theorem}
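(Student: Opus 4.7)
The plan is to follow the classical route: mollify each $u_n$ to obtain smooth approximants with uniformly controlled $W^{1,1}$ norms, apply Rellich--Kondrachov to extract subsequences converging in $L^1_{\loc}$, and combine the approximations through a diagonal argument together with the uniform closeness of $u_n$ to its mollification in $L^1$, quantified in terms of $|Du_n|$. Lower semicontinuity of the total variation will then place the limit into $\mathrm{BV}_{\loc}(\Omega)$.

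More precisely, first I would fix an exhaustion $A_1 \Subset A_2 \Subset \cdots \Subset \Omega$ with $\bigcup_k A_k = \Omega$, and for each $k$ choose an intermediate open set $A_k \Subset A_k' \Subset A_{k+1}$. The hypothesis yields a uniform bound
\begin{equation*}
\sup_{n\in\N}\Bigl(\|u_n\|_{L^1(A_k')} + |Du_n|(A_k')\Bigr) \le M_k < +\infty.
\end{equation*}
Let $\rho_\varepsilon$ be a standard mollifier supported in $B(0,\varepsilon)$ and set $u_n^\varepsilon := u_n * \rho_\varepsilon$; for $\varepsilon < \dist(A_k,\partial A_k')$ these are smooth on $A_k$ with $\|u_n^\varepsilon\|_{W^{1,1}(A_k)}$ controlled by $M_k$ (through $\|\nabla u_n^\varepsilon\|_{L^1(A_k)} \le |Du_n|(A_k')$). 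The decisive quantitative estimate is
\begin{equation*}
\|u_n - u_n^\varepsilon\|_{L^1(A_k)} \le \varepsilon\, |Du_n|(A_k') \le \varepsilon M_k,
\end{equation*}
which follows from representing $u_n - u_n^\varepsilon$ via an integration in the direction of mollification against the BV measure $Du_n$.

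Next, for each fixed $k$ and each fixed $\varepsilon > 0$, the sequence $\{u_n^\varepsilon\}_n$ is bounded in $W^{1,1}(A_k)$, so by Rellich--Kondrachov it admits an $L^1(A_k)$-convergent subsequence. A diagonal extraction in $k$ and in $\varepsilon = 1/j$ produces a single subsequence $\{u_{n_j}\}_j$ such that, on each $A_k$, the mollified sequence converges in $L^1(A_k)$ for every $\varepsilon = 1/j$. Combined with the closeness estimate above, this upgrades to Cauchyness of $\{u_{n_j}\}_j$ in $L^1(A_k)$: given $\eta>0$, pick $\varepsilon$ with $\varepsilon M_k < \eta/3$, then use the $L^1(A_k)$ Cauchy property of $\{u_{n_j}^\varepsilon\}_j$ to control the middle term in a standard triangle inequality. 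Hence $u_{n_j} \to u$ in $L^1_{\loc}(\Omega)$ for some $u \in L^1_{\loc}(\Omega)$; lower semicontinuity of the total variation (which can be proved directly from the duality definition used here for $\Per$) gives $|Du|(A_k) \le \liminf_j |Du_{n_j}|(A_k) \le M_k$, so $u \in \mathrm{BV}_{\loc}(\Omega)$.

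The only delicate point I foresee is organising the diagonal extraction so that the same subsequence works simultaneously on all $A_k$ and for all mollification scales $1/j$; this is a routine but careful double diagonal and is the sole place where one must be vigilant. Everything else is a direct combination of mollification bounds, the Rellich--Kondrachov compact embedding $W^{1,1}(A_k) \hookrightarrow\hookrightarrow L^1(A_k)$, and lower semicontinuity of $|D\cdot|$.
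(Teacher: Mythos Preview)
Your proof plan is correct and follows the classical mollification-plus-diagonal route to BV compactness. Note, however, that the paper does not actually prove Theorem~\ref{compatezzainBV}: it is stated as a recalled classical result with a reference to \cite[Theorem~3.23]{AmbFuscPall}, and is used as a black box in the compactness argument of Section~\ref{sc:comp}. So there is no ``paper's own proof'' to compare against; your sketch simply supplies what the paper outsources to the literature, and the argument you outline (mollify, bound $\|u_n-u_n^\varepsilon\|_{L^1(A_k)}\le \varepsilon\,|Du_n|(A_k')$, apply Rellich--Kondrachov to the mollified sequence, diagonalize, conclude by lower semicontinuity of the total variation) is precisely the standard textbook proof.
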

Now we prove a non-local Poincar\'e-Wirtinger type inequality.
\begin{lemma}\label{lemmatipofracpoinc}
	Let $0<r<l $ be such that $\omega_{d}r^d \leq \frac{l^d}{2}$. Let  $\xi\in\R^d$ and let $u \in \mathrm{L}^{1}(lQ+\xi)$. Then, for every $s\ge 1$ we have
	\begin{equation}\label{frazpoinctipo}
	\begin{aligned}
\int_{lQ+\xi} \bigg|u(y)- \frac{1}{\vert (lQ+\xi)\setminus B(y,r) \vert} \int_{(lQ+\xi)\setminus B(y,r)} u(x)\ud x   \bigg|\ud y  \\ \le 2 d^{\frac{d+s}{2}} l^s
 \int_{lQ+\xi} \int_{lQ+\xi } \vert u(y)-u(x)\vert k^s_r(\vert x-y \vert) \ud y \ud x\,.
	\end{aligned} 
	\end{equation}
\end{lemma}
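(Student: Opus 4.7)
The strategy is to rewrite the left-hand side as an integrated difference, use Jensen-type arguments, and then compare with the kernel $k^s_r$ exploiting that $lQ+\xi$ has diameter $\sqrt d\, l$.

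First I would check that the averaging makes sense: since $|lQ+\xi|=l^d$ and $|B(y,r)\cap(lQ+\xi)|\le \omega_d r^d\le l^d/2$ by the standing assumption, we obtain the uniform lower bound
\[
|(lQ+\xi)\setminus B(y,r)|\ge \tfrac{l^d}{2}\qquad\text{for every }y\in lQ+\xi.
\]
Using the normalization of the average to rewrite $u(y)$ and then pulling the absolute value inside the integral yields, for each $y\in lQ+\xi$,
\[
\Big|u(y)-\tfrac{1}{|(lQ+\xi)\setminus B(y,r)|}\!\!\int_{(lQ+\xi)\setminus B(y,r)}\!\!u(x)\,dx\Big|\le \tfrac{1}{|(lQ+\xi)\setminus B(y,r)|}\!\!\int_{(lQ+\xi)\setminus B(y,r)}\!\!|u(y)-u(x)|\,dx.
\]

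Next, integrating over $y\in lQ+\xi$ and using the above lower bound on the denominator, the left-hand side of \eqref{frazpoinctipo} is bounded by
\[
\frac{2}{l^d}\int_{lQ+\xi}\int_{(lQ+\xi)\setminus B(y,r)}|u(y)-u(x)|\,dx\,dy.
\]
Now I would use the diameter bound: for any $x,y\in lQ+\xi$ with $|x-y|\ge r$ we have $|x-y|\le\sqrt d\, l$, so by the definition of $k^s_r$ in \eqref{kernel},
\[
k^s_r(|x-y|)=\frac{1}{|x-y|^{d+s}}\ge \frac{1}{d^{(d+s)/2}\,l^{d+s}},\qquad\text{i.e.}\quad \frac{1}{l^d}\le d^{(d+s)/2}\,l^s\,k^s_r(|x-y|).
\]
Plugging this into the previous double integral and extending the inner integration from $(lQ+\xi)\setminus B(y,r)$ to the full $lQ+\xi$ (the integrand is nonnegative) yields \eqref{frazpoinctipo}.

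There is essentially no obstacle here: the condition $\omega_d r^d\le l^d/2$ is imposed precisely to make the average well-defined and to give the uniform lower bound on $|(lQ+\xi)\setminus B(y,r)|$, while the constant $d^{(d+s)/2}l^s$ is dictated by the diameter of $lQ$. The only mild point is that one must pass from the formula $k^s_r(t)=t^{-(d+s)}$ (valid for $t\ge r$) rather than from the constant branch, which is fine since the inner integration avoids $B(y,r)$.
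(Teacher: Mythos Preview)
Your proposal is correct and follows essentially the same approach as the paper: both use the measure lower bound $|(lQ+\xi)\setminus B(y,r)|\ge l^d/2$, pull the absolute value inside the average, then exploit the diameter estimate $|x-y|\le\sqrt d\,l$ on the punctured domain to compare $1/l^d$ with $d^{(d+s)/2}l^s k^s_r(|x-y|)$ before extending the inner integral to the full cube. The only difference is presentational (the paper multiplies and divides by $|x-y|^{d+s}$ rather than writing your displayed inequality directly).
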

\begin{proof}
By translational invariance, it is enough to prove the claim only for $\xi=0$. 
By assumption, for every $y \in lQ$ we have 
$$ \vert lQ\setminus B(y,r) \vert \geq l^d - \omega_{d} r^d \geq \frac{l^d}{2}. $$	
As a consequence, we have
\begin{equation*}
\begin{split}
& \int_{lQ} \bigg|u(y)- \frac{1}{\vert lQ\setminus B(y,r) \vert} \int_{Q\setminus B(y,r)} u(x)\ud x   \bigg|\ud y  \\
\le& \int_{lQ} \frac{1}{\vert lQ\setminus B(y,r) \vert} \int_{lQ\setminus B(y,r)} \vert u(y)-u(x) \vert \ud x\ud y  \\
=& \int_{lQ} \frac{1}{\vert lQ\setminus B(y,r) \vert} \int_{lQ\setminus B(y,r)} \frac{\vert u(y)-u(x) \vert}{\vert y-x \vert^{d+s}} \vert y-x \vert^{d+s}  \ud x\ud y\\
\le & \int_{lQ} \frac{2 d^{\frac{d+s}{2}}}{l^d} \int_{lQ\setminus B(y,r)} \frac{\vert u(y)-u(x) \vert}{\vert y-x \vert^{d+s}} l^{d+s}  \ud x\ud y  \\
\le & 2d^{\frac{d+s}{2}} l^s  \int_{lQ} \int_{lQ  } \vert u(y)-u(x)\vert k^s_r(\vert y-x \vert ) \ud y \ud x,
\end{split}
\end{equation*}
i.e., \eqref{frazpoinctipo}.
\end{proof}
\begin{lemma}\label{lemmastimacubi}
	Let $0<r<l$ be such that $\omega_{d}r^d < \frac{l^d}{4}$. For every $\xi\in\R^d$ and for every $E\in\Mf$, it holds
	\begin{multline}\label{stimaneicubi}
	\frac{1}{l^d}\vert (lQ+\xi)\setminus E \vert \vert (lQ+\xi) \cap E \vert \\
	\leq \int_{lQ+\xi} \bigg| \chi_{E}(x)- \frac{1}{\vert (lQ+\xi) \setminus B(x,r) \vert} \int_{(lQ+\xi) \setminus B(x,r)} \chi_{E}(y)\ud y \bigg|\ud x\,. 
	\end{multline} 
\end{lemma}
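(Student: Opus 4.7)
The plan is to first reduce to the case $\xi = 0$ using the translational invariance of every quantity involved, and then adopt the abbreviations $Q_l := lQ$, $A := |Q_l \cap E|$, $B := |Q_l \setminus E|$ (so that $A + B = l^d$), $N(x) := |Q_l \setminus B(x,r)|$, and
$$f(x) := \frac{1}{N(x)} \int_{Q_l \setminus B(x,r)} \chi_E(y)\ud y.$$
The first observation is that
$$\chi_E(x) - f(x) = \frac{1}{N(x)} \int_{Q_l \setminus B(x,r)} \big(\chi_E(x) - \chi_E(y)\big)\ud y,$$
and that for fixed $x$ the integrand $\chi_E(x) - \chi_E(y)$ has constant sign in $y$ (being non-negative everywhere if $x\in E$ and non-positive everywhere otherwise). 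Consequently, the absolute value can be pulled inside the inner integral, giving
$$|\chi_E(x) - f(x)| = \frac{1}{N(x)} \int_{Q_l \setminus B(x,r)} |\chi_E(x) - \chi_E(y)|\ud y.$$

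Next, using the trivial bound $N(x)\le l^d$ and integrating in $x$ over $Q_l$, I would obtain
$$\int_{Q_l} |\chi_E(x) - f(x)|\ud x \;\geq\; \frac{1}{l^d} \iint_{(Q_l \times Q_l) \cap \{|x-y|>r\}} |\chi_E(x) - \chi_E(y)|\ud x\ud y.$$
Since the full double integral over $Q_l\times Q_l$ equals $2AB$, writing $S$ for the ``short-range'' piece where $|x-y|\le r$, the claim reduces to the estimate $S\le AB$.

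By Fubini, $S = 2\int_{Q_l \cap E} |(Q_l \setminus E) \cap B(x,r)|\ud x$, which is bounded both by $2A\,\omega_d r^d$ and, by symmetry (integrating first in $x$ over $Q_l\setminus E$), by $2B\,\omega_d r^d$, so $S\le 2\omega_d r^d \min(A,B)$. The hypothesis $\omega_d r^d < l^d/4$ combined with $A+B=l^d$ forces $\max(A,B) \geq l^d/2 > 2\omega_d r^d$; hence $S \leq 2\omega_d r^d \min(A,B) \leq \max(A,B)\cdot\min(A,B) = AB$, as required. The main (mild) subtlety I expect lies in this last step: one must exploit the symmetry of the integrand to bound $S$ by $\min(A,B)$ and then invoke the hypothesis on $r$ to cap $2\omega_d r^d$ by $\max(A,B)$, since a one-sided estimate would fall short by exactly a factor of $2$.
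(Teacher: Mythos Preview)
Your proof is correct and follows essentially the same approach as the paper: reduce to $\xi=0$, pull the absolute value inside using the constant sign of $\chi_E(x)-\chi_E(y)$, bound $1/N(x)$ below by $1/l^d$, and control the short-range contribution via the hypothesis $\omega_d r^d < l^d/4$. The only cosmetic difference is that the paper assumes without loss of generality that $|lQ\cap E|\ge l^d/2$ and then uses the pointwise bound $|(lQ\cap E)\setminus B(x,r)|\ge |lQ\cap E|/2$ directly, whereas you keep the symmetry and phrase the same estimate as $S\le 2\omega_d r^d\min(A,B)\le \max(A,B)\min(A,B)=AB$; these are equivalent bookkeeping choices.
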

\begin{proof}
We can assume without loss of generality that $\xi=0$\,. 
It is enough to prove \eqref{stimaneicubi} only in the case $\vert lQ \cap E \vert \geq \frac{l^d}{2}$\,; indeed, once proven the inequality \eqref{stimaneicubi} in such a case, 
 if  $\vert lQ \setminus E \vert \geq \frac{l^d}{2}$\,, then the set $\tilde E=lQ\setminus E$ satisfies $\vert lQ \cap \tilde E \vert \geq \frac{l^d}{2}$\,, and hence $\tilde E$ and, in turn, $E$ satisfy \eqref{stimaneicubi}.

Let $\vert lQ \cap E \vert \geq \frac{l^d}{2}$\,; then, for every $x\in\R^d$ we have
\begin{equation}\label{eq1stimaneicubi}
\vert (lQ \cap E) \setminus B(x,r)\vert \geq \vert lQ \cap E \vert - \omega_{d} r^d \geq \vert lQ \cap E \vert -\frac{l^d}{4} \geq \frac{\vert lQ \cap E \vert}{2}\,,
\end{equation}
so that
\begin{equation*}
\begin{split}
& \int_{lQ} \bigg| \chi_{E}(x)- \frac{1}{\vert lQ \setminus B(x,r) \vert} \int_{lQ \setminus B(x,r)} \chi_{E}(y)\ud y \bigg|\ud x\\
=& \int_{lQ \cap E} \bigg| 1- \frac{\vert (lQ\setminus B(x,r)) \cap E \vert}{\vert lQ \setminus B(x,r) \vert }\bigg|\ud x+ \int_{lQ \setminus E} \frac{\vert (lQ \setminus B(x,r)) \cap E \vert}{\vert lQ \setminus B(x,r)\vert}\ud x \\
\ge& \frac{1}{l^d}\biggl(\int_{lQ \cap E} \vert (lQ   \setminus B(x,r)) \setminus E \vert \ud x+ \int_{lQ \setminus E} \vert (lQ \setminus B(x,r) )\cap E \vert \ud x \biggr)\\
=&\frac{2}{l^d}\int_{lQ \setminus E} \vert (lQ \cap E)\setminus B(x,r) \vert \ud x\\
\ge&\frac{1}{l^d}|lQ\cap E|\,|lQ\setminus E|\,,
\end{split}
\end{equation*}	
where in the last inequality we have used formula \eqref{eq1stimaneicubi}.
\end{proof}
The following result is a localized isoperimetric inequality for the non-local perimeters $\tildeJ$\,. 
\begin{lemma}\label{quasiiso}
Let $s\ge 1$ and let $\Omega \in \mathrm{M}_f(\mathbb{R}^d)$ be a bounded set with Lipschitz continuous boundary and $\vert \Omega \vert=1$. 
For every $\eta\in(0,1)$ there exist a constant $C(\eta,d,s)>0$ and $r_0>0$ such that for every measurable set $A\subset\Omega$ with $\eta\le |A|\le 1-\eta$, it holds
\begin{equation}\label{f:quasiiso}
\int_{A}\int_{\Omega\setminus A}k^s_r(|x-y|)\ud y\ud x\ge C(\Omega,d,s,\eta)\sca^s(r)\qquad\textrm{for every }r\in (0,r_0).
\end{equation}	
\end{lemma}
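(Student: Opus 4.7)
I would rewrite the interaction integral in spherical coordinates and reduce it to a radial one-dimensional integral by means of a uniform lower bound on the corresponding spherical-crossing function. First, by the Lipschitz regularity of $\partial\Omega$, I cover a subset of $\Omega$ of measure at least $1-\eta/4$ by a finite disjoint collection $\{Q_i\}_{i=1}^N$ of axis-parallel cubes of side $l_0=l_0(\Omega)>0$ contained in $\Omega$. A pigeonhole argument on the densities $|A\cap Q_i|/|Q_i|$ produces either a single cube $Q^\star$, or the union of two neighbouring cubes (one essentially $A$-full, one essentially $A$-empty), such that both $|A\cap Q^\star|\ge c_1|Q^\star|$ and $|Q^\star\setminus A|\ge c_1|Q^\star|$ for some $c_1=c_1(\Omega,\eta)>0$. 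Since $Q^\star\subset\Omega$, the problem reduces to the intermediate-density case inside the fixed-size cube $Q^\star$.

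Passing to spherical coordinates $y=x+t\omega$ with $t>0$ and $\omega\in\mathbb{S}^{d-1}$, one has
\[
\int_{A\cap Q^\star}\int_{Q^\star\setminus A}k^s_r(|x-y|)\,dy\,dx \ = \ \int_0^\infty k^s_r(t)\,F(t)\,dt,
\]
where $F(t):=\int_{A\cap Q^\star}\mathcal{H}^{d-1}(\partial B(x,t)\cap(Q^\star\setminus A))\,dx$. The core analytic step is the uniform lower bound
\[
F(t)\ \ge\ C(\Omega,\eta)\,t^d\qquad\text{for all }t\in(0,t_0),
\]
with $t_0=t_0(\Omega,\eta)$. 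For $A$ of finite relative perimeter in $Q^\star$, this follows from the small-$t$ expansion $F(t)=\omega_{d-1}\,\mathcal{H}^{d-1}(\partial^*A\cap Q^\star)\,t^d+o(t^d)$, obtained by the same spherical-cap evaluation at points of $A$ close to $\partial^*A$ that appears in the proof of Proposition~\ref{prop:point} and Lemma~\ref{porpsvilen}, combined with the relative isoperimetric inequality $\mathcal{H}^{d-1}(\partial^*A\cap Q^\star)\ge c(c_1,l_0,d)>0$. For an arbitrary measurable $A$ of intermediate density, the bound is transferred via approximation by the mollifications $A_\varepsilon=\{\chi_A*\phi_\varepsilon>1/2\}$, which have relative perimeter bounded from below uniformly in $\varepsilon$ and whose crossing functions $F_{A_\varepsilon}$ converge pointwise to $F_A$ as $\varepsilon\to 0^+$ (the error can be estimated by $t^{d-1}|A\triangle A_\varepsilon|$).

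Inserting $F(t)\ge Ct^d$ in the radial integral gives, for $s>1$,
\[
\int_0^\infty k^s_r(t)F(t)\,dt\ \ge\ C\int_r^{t_0}t^{-s}\,dt\ =\ C\,\frac{r^{1-s}-t_0^{1-s}}{s-1}\ \sim\ C\,\sca^s(r),
\]
and analogously $C\log(t_0/r)\sim C\,\sca^s(r)$ for $s=1$, yielding the claimed inequality for $r\in(0,r_0)$ with $r_0=r_0(\Omega,d,s,\eta)$.

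The main technical obstacle is verifying $F(t)\ge C\,t^d$ uniformly over arbitrary measurable $A$ of intermediate density: the smooth case is a direct consequence of the expansion above together with the relative isoperimetric inequality in the cube $Q^\star$, but the extension to general $A$ requires a careful approximation/robustness argument to bypass the possible lack of a priori BV regularity on $A$. The localisation in Step 1 and the radial integration in Step 3 amount to the same spherical-coordinate computations already used in Lemma~\ref{porpsvilen}, and are essentially routine.
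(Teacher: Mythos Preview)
Your argument has a genuine gap at the core step, namely the uniform pointwise bound $F(t)\ge C\,t^d$ for all $t\in(0,t_0)$ with $C$ and $t_0$ depending only on $\Omega,d,s,\eta$ and \emph{not} on $A$. The route you propose (the small-$t$ expansion $F(t)=\omega_{d-1}\,\mathcal H^{d-1}(\partial^*A\cap Q^\star)\,t^d+o(t^d)$ combined with the relative isoperimetric inequality, then mollification for general $A$) does not deliver this. The remainder $o(t^d)$ in the expansion is controlled only for $t$ below a threshold determined by the fine geometry of $\partial A$ (its reach, curvature scale, etc.), not merely by a lower bound on its perimeter; hence the threshold $t_0$ is not uniform even over smooth sets. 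The mollification step makes this worse rather than better: for $A_\varepsilon=\{\chi_A*\phi_\varepsilon>1/2\}$ the relevant geometric scale is $\varepsilon$, so the expansion for $A_\varepsilon$ is valid only for $t\ll\varepsilon$, and as $\varepsilon\to 0$ the admissible range collapses. Thus from $F_{A_\varepsilon}(t)\ge C\,t^d$ on $(0,t_0(\varepsilon))$ with $t_0(\varepsilon)\to 0$, together with $F_{A_\varepsilon}(t)\to F_A(t)$, one cannot conclude $F_A(t)\ge C\,t^d$ on any fixed interval. (A subadditivity argument along lines gives $\int_{\R^d}|\chi_A(x+t\omega)-\chi_A(x)|\,dx\ge c(\eta,\Omega)\,t$ for each direction $\omega$, but after restricting both points to $\Omega$ one loses a boundary term of the \emph{same} order $C(\Omega)\,t$, so this does not obviously yield the pointwise bound either.)

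The paper avoids this issue entirely: it does not seek a pointwise lower bound on $F$. Instead it quotes Lemma~\ref{lm:l1ker} (from \cite{AB,GM05}), which is the \emph{averaged} statement $\delta^{-1}\int_A\int_{\Omega\setminus A}\phi_\delta(x-y)\,dy\,dx\ge C(\Omega,\phi,\eta)$ for any mollifier $\phi$, uniformly in $A$ with intermediate density. It then proves the pointwise kernel comparison $k^s_r(|z|)\ge C(\phi,d,s)\sum_{i=0}^{I}2^{si}\,\phi_{2^{-i}}(z)$ with $2^{-I}\sim r$, applies the lemma at each dyadic scale $2^{-i}$, and sums to obtain $\sum_{i=0}^I 2^{(s-1)i}\ge C(d,s)\,\sca^s(r)$. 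If you want to keep your radial framework, the clean fix is to replace the unproven pointwise bound $F(t)\ge C t^d$ by the integrated bound $\int_0^\delta F(t)\,dt\ge C\,\delta^{d+1}$ (which is equivalent to Lemma~\ref{lm:l1ker}) and then run the same dyadic summation on $\int k^s_r(t)F(t)\,dt$; but at that point you are effectively reproducing the paper's proof.
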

The proof of Lemma \ref{quasiiso} follows along the lines of \cite[Lemma 15]{GM05}, with slight differences due to the core radius approach adopted in this paper. 
Before  proving Lemma \ref{quasiiso}, we state the following result
 which is a consequence of \cite[Theorem 1.4]{AB}.
\begin{lemma} [\cite{GM05}] \label{lm:l1ker}
Let $\Omega \in \mathrm{M}_f(\mathbb{R}^d)$ be a bounded set with Lipschitz continuous boundary and $\vert \Omega \vert=1$ and 
let $\phi\in C_{c}^{\infty}(B(0,1);[0,+\infty))$ be such that $\int\phi\ud x=1$ and $\phi>0$ in $B(0,\frac 1 2)$. 
For every $\delta>0$ we set $\phi_\delta(\cdot):=\frac{1}{\delta^d}\phi(\frac{\cdot}{\delta})$.
For every $\eta\in(0,1)$ there exists a constant $C(\phi,\eta)>0$ such that for every measurable set $A\subset\Omega$ with $\eta\le |A|\le 1-\eta$ and for every $\delta \in (0,1)$ it holds
\begin{equation*}
\frac 1 {\delta}\int_{A}\int_{\Omega\setminus A}\phi_\delta(|x-y|)\ud y\ud x\ge C(\Omega,\phi,\eta).
\end{equation*}
 \end{lemma}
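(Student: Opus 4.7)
The plan is to argue by contradiction. Suppose the claim fails; then there exist $\eta \in (0,1)$, sets $A_n\subset \Omega$ with $\eta \le |A_n| \le 1-\eta$, and scales $\delta_n \in (0,1)$ such that
\[
I_n := \frac{1}{\delta_n}\int_{A_n}\int_{\Omega \setminus A_n}\phi_{\delta_n}(|x-y|)\ud y\ud x \to 0\,.
\]
Setting $Q_\delta(v):=\int_\Omega\int_\Omega \phi_\delta(|x-y|)(v(x)-v(y))^2\ud x\ud y$, the symmetrization identity $Q_{\delta_n}(\chi_{A_n}) = 2\delta_n I_n$ holds, so $Q_{\delta_n}(\chi_{A_n}) \to 0$. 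Up to extraction, $\delta_n \to \delta_0 \in [0,1]$; WLOG $\Omega$ is connected (otherwise $\Omega$ decomposes into finitely many Lipschitz components and the argument is applied on each).

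Consider first the case $\delta_0>0$. Dominated convergence yields $\phi_{\delta_n} \to \phi_{\delta_0}$ in $L^1(\R^d)$, whence
\[
|Q_{\delta_n}(\chi_{A_n}) - Q_{\delta_0}(\chi_{A_n})| \le |\Omega|\,\|\phi_{\delta_n} - \phi_{\delta_0}\|_{L^1(\R^d)} \to 0,
\]
so $Q_{\delta_0}(\chi_{A_n}) \to 0$. However, $\phi_{\delta_0}$ is strictly positive on $B(0,\delta_0/2)$, and any two points of the connected, bounded, Lipschitz set $\Omega$ can be linked by a finite chain of overlapping balls of radius $\delta_0/4$. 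Hence the Dirichlet form $Q_{\delta_0}$ has a one-dimensional kernel (the constants), and spectral theory of the compact Hilbert--Schmidt operator $v\mapsto\int_\Omega\phi_{\delta_0}(|x-y|)v(y)\ud y$ produces a Poincar\'e--spectral-gap constant $\lambda=\lambda(\Omega,\phi,\delta_0)>0$ such that
\[
Q_{\delta_0}(v) \ge \lambda\,\Big\|v - \textstyle\int_\Omega v\ud z\Big\|_{L^2(\Omega)}^2 \qquad \forall v\in L^2(\Omega).
\]
Plugging in $v=\chi_{A_n}$ and using $|\Omega|=1$ gives $Q_{\delta_0}(\chi_{A_n}) \ge \lambda\,|A_n|(1-|A_n|) \ge \lambda\eta(1-\eta)>0$, contradicting $Q_{\delta_0}(\chi_{A_n}) \to 0$.

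Consider now the case $\delta_0 = 0$. The uniform bound $\sup_n 2I_n<\infty$ together with $\|\chi_{A_n}\|_{L^1(\Omega)}\le 1$ triggers the Bourgain--Brezis--Mironescu-type compactness of \cite[Theorem~1.4]{AB}: along a further subsequence $\chi_{A_n} \to \chi_A$ in $L^1(\Omega)$ with $\eta \le |A| \le 1-\eta$. The same theorem furnishes the liminf inequality
\[
\liminf_{n\to\infty}\frac{1}{\delta_n}\int_\Omega\int_\Omega\phi_{\delta_n}(|x-y|)|\chi_{A_n}(x)-\chi_{A_n}(y)|\ud x\ud y \ge c(\phi)\,\Per(A;\Omega),
\]
and the relative isoperimetric inequality in the Lipschitz set $\Omega$ forces $\Per(A;\Omega) \ge c_\Omega\,\eta^{(d-1)/d}>0$. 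Hence $\liminf 2 I_n > 0$, a contradiction.

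The main obstacle is the precise invocation of \cite[Theorem~1.4]{AB} in the case $\delta_0=0$, since it must simultaneously deliver strong $L^1$-compactness of $\{\chi_{A_n}\}$ and the BBM-type $\Gamma$-liminf inequality against the De Giorgi perimeter; by comparison, the spectral-gap step used when $\delta_0>0$ is routine once the connectedness of the support graph of $\phi_{\delta_0}$ is noted.
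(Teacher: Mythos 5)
The paper does not actually prove this lemma: it is quoted from \cite[Proposition 14]{GM05} (stated there for $d=2$ and $\Omega$ a unit square), with the remark that the proof, which rests on \cite[Theorem 1.4]{AB}, is unaffected by the dimension or the shape of $\Omega$. Your skeleton --- argue by contradiction, split according to whether $\delta_n\to\delta_0>0$ or $\delta_n\to 0$, and in the second regime use the Alberti--Bellettini compactness and $\Gamma$-liminf to produce a limit set $A$ with $\eta\le|A|\le 1-\eta$ and vanishing relative perimeter --- is exactly that route, so the reliance on \cite[Theorem 1.4]{AB} that you single out as ``the main obstacle'' is not a defect relative to the paper: it is precisely the ingredient the paper itself invokes.

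Two steps, however, do not hold as written. First, the ``WLOG $\Omega$ connected'' reduction is not a reduction: take $\Omega$ to be two disjoint balls at positive distance $\rho$, each of measure $\tfrac12$, and $A$ one of them; then $\eta\le|A|\le 1-\eta$ for every $\eta\le\tfrac12$, while for $\delta<\rho$ the left-hand side is $0$ because $\phi_\delta$ is supported in $B(0,\delta)$. So the inequality is simply false for disconnected $\Omega$, and applying the argument componentwise cannot repair this, since $A$ restricted to a component need not satisfy the two-sided volume constraint. Connectedness is an implicit hypothesis inherited from \cite{GM05} (and it holds in the paper's only application, where $\Omega$ is the union of two adjacent cubes); your proof genuinely uses it --- in the chaining argument identifying the kernel of $Q_{\delta_0}$ with the constants, and in the relative isoperimetric inequality --- so it must be assumed, not ``removed''. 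Second, in the case $\delta_0>0$ the spectral gap does not follow from compactness of the Hilbert--Schmidt operator alone: writing $Q_{\delta_0}(v)=2\langle (M_\kappa-T)v,v\rangle$ with $\kappa(x)=\int_\Omega\phi_{\delta_0}(|x-y|)\ud y$, you also need the essential infimum of $\kappa$ on $\Omega$ to be positive (true for fixed $\delta_0>0$ thanks to the Lipschitz/interior-cone condition), so that $0$ lies strictly below the essential spectrum and is an isolated eigenvalue whose eigenspace is the constants; only then is there a gap $\lambda>0$. (Alternatively one can avoid spectral theory here: extract $\chi_{A_n}\rightharpoonup u$ weakly in $L^2(\Omega)$, pass to the limit in $Q_{\delta_0}(\chi_{A_n})=2\int_\Omega\kappa\chi_{A_n}\ud x-2\langle T\chi_{A_n},\chi_{A_n}\rangle$ using compactness of $T$, and conclude that $u\in\{0,1\}$ a.e.\ and is constant, contradicting $\int_\Omega u\in[\eta,1-\eta]$.) With connectedness stated as a hypothesis and the spectral-gap step completed as above, your argument is a faithful reconstruction of the cited proof.
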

The above lemma has been stated and proven in \cite[Proposition 14]{GM05} in the case $d=2$ with $\Omega=(-\frac 1 2,\frac 1 2)^2$ but in fact the same proof is not affected neither by the dimension $d$ nor by the specific shape of $\Omega$.
We are now in a position to prove Lemma \ref{quasiiso}.
\begin{proof}[Proof of Lemma \ref{quasiiso}]

Fix $\eta\in (0,1)$, $r\in (0,1)$ and let $I\in\N$ be such that $2^{-I-1}\le r\le 2^{-I}$\,.
Notice that
\begin{equation}\label{newf}
k^s_r(|z|)\ge (2^{d+s})^{\min\{i,I\}}\qquad\textrm{ if }0\le |z|\le 2^{-i}\,, \textrm{ with }i\in\N\,.
\end{equation}
Let $\phi$ and $\phi_\delta$ (for every $\delta>0$) be as in Lemma \ref{lm:l1ker}. 
Now we claim that there exists a constant $C(\phi,d,s)$ such that 
\begin{equation}\label{stimasututti}
k^s_r(|z|)\ge C(\phi,d,s)\sum_{i=0}^{I}(2^s)^i\phi_{2^{-i}}(z)\quad\textrm{for every }z\in\R^d\,.
\end{equation}
Before proving the claim we show that \eqref{stimasututti} implies \eqref{f:quasiiso}.
Indeed, first notice that
 \begin{equation*}
 \frac{|\log r|}{\log 2}-1\le I\le \frac{|\log r|}{\log 2}
 \end{equation*} 
and hence
\begin{equation*}
\begin{aligned}
\sum_{i=0}^{I}(2^{s-1})^i=\left\{\begin{array}{ll}
I+1\ge \frac{|\log r|}{\log 2}&\textrm{ if }s=1\,,\\
\frac{(2^{I+1})^{s-1}-1}{2^{s-1}-1}\ge \frac{r^{1-s}-1}{2^{s-1}-1}&\textrm{ if }s>1\,.
\end{array}\right.
\end{aligned}
\end{equation*}
so that, recalling the very definition of $\sca^s(r)$ in \eqref{scalingper}, for $r$ small enough we have
\begin{equation}\label{persomI}
\sum_{i=0}^{I}(2^{s-1})^i\ge C(d,s)\sca^s(r)\,.
\end{equation}
Therefore, by applying \eqref{stimasututti} and Lemma \ref{lm:l1ker} with $\delta$ replaced by $2^{-i}$, we get
\begin{equation}\label{endproof}
\begin{aligned}
&\int_{A}\int_{\Omega\setminus A}k_r^s(|x-y|)\ud y\ud x\\
\ge& C(\Omega, \phi,d,s)\sum_{i=0}^I(2^{s-1})^i\, 2^i\int_{A}\int_{\Omega\setminus A}\phi_{2^{-i}}(x-y)\ud y\ud x\\
\ge& C(\Omega,\phi,d,s,\eta) \sum_{i=0}^I(2^{s-1})^i\ge C(\phi,d,s,\eta)\sca^s(r)\,,
\end{aligned}
\end{equation}
where the last inequality follows from \eqref{persomI}.

Now we prove the claim \eqref{stimasututti}. Suppose first that $0\le |z|\le 2^{-I}$. 
By applying \eqref{newf} with $i=I$, we get
\begin{equation}\label{psst}
\begin{aligned}
\sum_{i=0}^{I}(2^{s})^i\phi_{2^{-i}}(z)\le&\sup\phi\sum_{i=0}^I(2^{d+s})^i=\sup\phi\sum_{i=0}^I\frac{1}{(2^{d+s})^{I-i}}(2^{d+s})^I\\
\le&\sup\phi\sum_{j=0}^{+\infty}\frac{1}{(2^{d+s})^j}\, (2^{d+s})^I
= \frac{2^{{d+s}}}{2^{{d+s}}-1}\sup\phi\, (2^{d+s})^I\\
\le& C(\phi,d,s) k^s_r(|z|)\,.
\end{aligned}
\end{equation}
Analogously, if $2^{-\bar\imath-1}<  |z|\le 2^{-\bar\imath}$ for some $\bar\imath=0,1,\ldots, I-1$\,, using that $\phi_{2^{-i}}(z)=0$ for every $i=\bar\imath+1,\ldots,I$\,, we have
\begin{equation}\label{ssst}
\begin{aligned}
\sum_{i=0}^{I}(2^{s})^i\phi_{2^{-i}}(z)=&\sum_{i=0}^{\bar\imath}(2^{s})^i\phi_{2^{-i}}(z)
\le \sup\phi\sum_{i=0}^{\bar\imath}(2^{d+s})^i
\\
\le&\sup\phi\sum_{j=0}^{+\infty}\frac{1}{(2^{d+s})^j}(2^{d+s})^{\bar\imath} = \frac{2^{{d+s}}}{2^{{d+s}}-1}\sup\phi\,(2^{d+s})^{\bar\imath}\\
\le& C(\phi,d,s)k^s_r(|z|)\,,
\end{aligned}
\end{equation}
where the last inequality is a consequence of \eqref{newf}.

Finally, if  $|z|\ge 1$ we have that $\phi_{2^{-i}}(z)=0$ for every $i$ so that
\begin{equation}\label{tsst}
\sum_{i=0}^I (2^{s})^i\phi_{2^{-i}}(z)=0\le k^s_r(|z|)\,.
\end{equation}
Therefore, by \eqref{psst}, \eqref{ssst} and \eqref{tsst},  we deduce \eqref{stimasututti}, thus concluding the proof of the lemma\,.
\end{proof}
\subsection{Proof of Theorem \ref{mainthm}(i)}\label{maincomp}
We are now in a position to prove Theorem \ref{mainthm}(i).
\begin{proof}
We divide the proof into three steps.
\vskip4pt
\textit{Step 1.} Let $\alpha\in (0,1)$ and set $l_n:= r_n^{\alpha}$ for every $n\in\N$\,.
Let $ \{ Q_h^{n} \}_{h \in \mathbb{N}}$ be a disjoint family of cubes of sidelength $l_n$ such that $\bigcup_{h \in \mathbb{N}} Q_h^{n}= \mathbb{R}^d$. Since $\vert E_n \vert \le |U|$, there exists $H(n)\in\N$, such that, up to permutation of indices,
\begin{equation}\label{famigliacubi}
\begin{aligned}
\vert Q_h^{n}\cap E_n \vert \geq \frac{l_n^d}{2} \qquad\textrm{for every }h=1, \cdots, H(n),\\
\vert Q_h^n \setminus E_n \vert >\frac{l_n^d}{2} \qquad\textrm{for every } h\ge H(n)+1\,.
\end{aligned}
\end{equation}
For every $n \in \mathbb{N}$, we set 
\begin{equation*}
\tilde{E}_n:= \bigcup_{h=1}^{H(n)} Q_h^n\,.
\end{equation*} 
Let $\tilde{n} \in \mathbb{N}$ be such that for all $n>\tilde{n}$ the pair $(r_n,l_n)$ satisfies the hypothesis of Lemmas \ref{lemmatipofracpoinc} and \ref{lemmastimacubi}. 
We claim that there exists a constant $C(d,s)>0$ such that
\begin{equation}\label{difsimmEeEtild}
\vert \tilde{E}_n \triangle E_n \vert
  \leq C(d,s) l_n^s \sca^s(r_n) M\qquad\textrm{for every }n\ge\tilde n,
\end{equation}
where $M$ is the constant in \eqref{energy_bound}.
Indeed,
\begin{equation*}
\begin{split}
\vert E_n \triangle \tilde{E}_n \vert= &  \vert \tilde{E}_n \setminus E_n \vert+  \vert E_n \setminus \tilde{E}_n \vert  \\
=& \sum_{h=1}^{H(n)} \vert Q_h^n \setminus E_n \vert+ \sum_{h=H(n)+1}^{\infty} \vert E_n \cap Q_h^{n} \vert \\
=& 2\sum_{h=1}^{H(n)} \frac{1}{l_n^d}\vert Q_h^n \setminus E_n \vert \frac{l_n^d}{2}+ 2\sum_{h=H(n)+1}^{\infty} \frac{1}{l_n^d}\vert E_n \cap Q_h^{n} \vert \frac{l_n^d}{2}\\
\le& 2\sum_{h =1}^{+\infty} \frac{1}{l_n^d}\vert  Q_h^n\setminus E_n \vert \vert Q_h^n \cap E_n \vert  \\
\le& 2\sum_{h=1}^{+\infty} \int_{ Q_h^n} \bigg| \chi_{E_n}(x)- \frac{1}{\vert  Q_h^n \setminus B(x,r_n) \vert} \int_{ Q_h^n \setminus B(x,r_n)} \chi_{E_n}(y)\ud y \bigg|\ud x  \\
\le&  \sum_{h=1}^{+\infty} 8 d^{\frac{d+s}{2}} l_n^s \int_{Q_h^n \cap E_n} \int_{Q_h^n \setminus E_n } k^s_{r_n}(\vert x-y \vert)\ud y\ud x \\
\le& C(d,s) l^s_{n}  \tildeJn(E_n) \leq C(d,s) l_{n}^s\sca^{s}(r_n) M,
\end{split}
\end{equation*}
where the second inequality follows by formula \eqref{stimaneicubi}, the third inequality is a consequence of \eqref{frazpoinctipo}, whereas the last one follows directly by \eqref{energy_bound}.
\vskip4pt
\textit{Step 2.} 
For every $n\in\N$ let $ l_n$  and  $\tilde{E}_n:= \bigcup_{h=1}^{H(n)}Q^n_h$ be as in Step 1.
 We claim that there exists a constant $C(\alpha,d,s)$ such that for $n$ large enough
\begin{equation}\label{limitatezPcubet}
 \Per(\tilde{E}_n) \leq C(\alpha,d,s) \Hrn(E_n)\,.
\end{equation}
To ease notation, we omit the dependence on $n$ by setting $r:=r_n$\,, $l:=l_n$\,, $E:=E_n$\,, $Q_h:=Q^n_h$\,, $H:=H(n)$,  and  $\tilde E:=\tilde E_n$\,.

We define the family $\mathcal R$ of rectangles $R=\tilde Q\cup\hat Q$ such that $\tilde Q$ and $\hat Q$ are adjacent cubes (of the type $Q_h$ introduced above),
 $\tilde Q\subset\tilde E$ and $\hat Q\subset \tilde E^c$\,.

Notice that
\begin{equation}\label{primastimaPscubet}
\begin{aligned}
\Per(\tilde{E})\leq & 2d l^{d-1}\sharp\mathcal R\,,\\
 \Hr(E)\ge &\frac{1}{2d\,\sca^s(r)}\sum_{R\in\mathcal R}\int_{R\cap E}\int_{R\setminus E}k^s_r(|x-y|)\ud y\ud x\,.
 \end{aligned}
\end{equation} 
We recall that, by Lemma \ref{quasiiso}, for every rectangle $\bar R$ given by the union of two adjacent unitary cubes in $\R^d$, there exists $\rho_0>0$ such that 
\begin{equation}\label{applem}
\begin{aligned}
 C(d,s):=& \inf \biggl\{ \frac{1}{\sca^s(\rho) }\int_{F} \int_{\bar R\setminus F} k^s_{\rho}(\vert x-y \vert)\ud y \ud x: \\
&\qquad 0<\rho<\rho_0,\; F\in \Mf\,,\; F\subset \bar R\,, \; \frac 1 2\le \vert F \vert\le  \frac{3}{2} \biggr\}>0\,.
\end{aligned}
\end{equation}
Furthermore, by the very definition of $\sca^s(r)$ in  \eqref{scalingper}, using that $l=r^{\alpha}$ we have
\begin{equation*}
\begin{aligned}
\frac {\sca^s(r)}{l^{1-s}} =&\left\{\begin{array}{ll}
\displaystyle \frac{|\log (r^{1-\alpha})|} {1-\alpha}&\textrm{if }s=1\\ 
\displaystyle \frac{d+s}{d+1}\frac{r^{(1-\alpha)(1-s)}}{s-1}&\textrm{if }s>1
\end{array}\right.\\
=&\left\{\begin{array}{ll}
\displaystyle \frac{1}{1-\alpha}\sca^s(r^{1-\alpha})&\textrm{if }s=1\\
\displaystyle \sca^{s}(r^{1-\alpha})&\textrm{if }s>1\,,
\end{array}
\right.
\end{aligned}
\end{equation*}
so that
\begin{equation}\label{scalingl}
\frac{l^{1-s}}{\sca^{s}(r)}\ge C(\alpha)\frac{1}{\sca^s(r^{1-\alpha})}=C(\alpha)\frac{1}{\sca^s(\frac r l)}\,.
\end{equation}
For every set $O\in\Mf$ we set $O^l:=\frac{O}{l}$.
By \eqref{primastimaPscubet}, \eqref{scaling}, \eqref{scalingl} and by applying \eqref{applem} with $\bar R=R^l$ for every $R\in\mathcal R$, for $r$ small enough we obtain
\begin{equation*}
\begin{split}
 \Hr(E)\ge 
& \frac{C(d)}{\sca^s(r) }l^{2d}\sum_{R\in\mathcal R} \int_{R^l\cap E^l}\int_{R^l\setminus E^l}k^s_{r}(|l(x-y)|)\ud y\ud x
\\
= & C(d)\frac{l^{1-s}}{\sca^s(r) }l^{d-1}\sum_{R\in\mathcal R} \int_{R^l\cap E^l}\int_{R^l\setminus E^l}k^s_\frac{r}{l}(|x-y|)\ud y\ud x\\
 \ge &C(\alpha,d)l^{d-1}\sum_{R\in\mathcal R} \frac{1}{\sca^s(\frac r l)} \int_{R^l\cap E^l}\int_{R^l\setminus E^l}k^s_\frac{r}{l}(|x-y|)\ud y\ud x\\
\ge & C(\alpha,d)l^{d-1}\sharp\mathcal R \,C(d,s)
 \ge C(\alpha,d,s) \Per(\tilde{E})\,,
\end{split}
\end{equation*}
i.e., \eqref{limitatezPcubet}.
\vskip4pt
\textit{Step 3.} Here we conclude the proof of the compactness result.
We fix $\alpha\in (1-\frac 1 s,1)$ so that, by \eqref{difsimmEeEtild}, $|E_n\triangle\tilde E_n|\to 0$ as $n\to +\infty$\,.

By assumption and by the very definition of $\tilde{E}_{n}$ in Step 1, we have that $ \tilde{E}_n \subset U $ for all $n \in \mathbb{N}$. Moreover, by formula \eqref{limitatezPcubet} and by \eqref{energy_bound} for $n$ large enough we have 
$$
\Per(\tilde{E}_n)\leq C(\alpha,d,s)\Hrn(E_n)\leq C(\alpha,d,s)M\,.
$$
It follows that the sequence $\{\chi_{\tilde{E}_n} \}_{n \in \mathbb{N}}$ satisfies the assumption of Theorem \ref{compatezzainBV}, and hence there exists a set $E \subset \mathbb{R}^d$ with $\Per(E)<+\infty$ such that, up to a subsequence, $\chi_{\tilde{E}_{n}} \rightarrow \chi_{E}$ in $\mathrm{L}^1(\R^d)$ as $n\to +\infty$.
Since $|E_n\triangle\tilde E_n|\to 0$ as $n\to +\infty$ we obtain that $\chi_{E_{n}} \rightarrow \chi_{E}$ in $\mathrm{L}^1(U)$, i.e., the claim of Theorem \ref{mainthm}(i).
\end{proof}
The following result follows by the proof of Theorem \ref{mainthm}(i).
\begin{proposition}\label{corcompthm}
	Let $s \geq 1$. Let $ \{ r_{n}\}_{n \in \mathbb{N}} \subset (0,+\infty)$ be such that $r_n \rightarrow 0^+$ as $n \rightarrow +\infty$. Let $\{ E_n\}_{n \in \mathbb{N}} \subset \mathrm{M}_{f}(\mathbb{R}^d)$ be such that $ \chi_{E_n} \rightarrow \chi_{E}$ in $\mathrm{L}^1(\mathbb{R}^d)$ as $n \rightarrow + \infty$, for some $ E \in \mathrm{M}_f(\mathbb{R}^d)$. If
	$$ 
	\limsup_{n\to +\infty} \frac{\tildeJn(E_n)}{\sigma^s(r_n)}  \le M\,,
	$$
	then $E$ has finite perimeter.
\end{proposition}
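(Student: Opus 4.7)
The statement is essentially the conclusion of Theorem \ref{mainthm}(i) with the hypothesis $E_n\subset U$ removed. The strategy is therefore to rerun Steps 1 and 2 of the proof of Theorem \ref{mainthm}(i) (which do not actually use the boundedness of $U$) in order to produce cubic approximations $\tilde E_n$ that are close to $E_n$ in measure and whose De Giorgi perimeters are uniformly bounded by the renormalized energies $\Hrn(E_n)$. Then, instead of invoking the $\mathrm{BV}$-compactness Theorem \ref{compatezzainBV} (which would require confinement in a bounded open set), we use the hypothesis $\chi_{E_n}\to\chi_E$ in $\mathrm{L}^1(\R^d)$ together with the lower semicontinuity of the De Giorgi perimeter.

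\textbf{Main steps.} First, fix $\alpha\in(1-\tfrac 1 s,1)$ (any $\alpha\in(0,1)$ if $s=1$), set $l_n:=r_n^\alpha$, and choose, as in Step 1 of the proof of Theorem \ref{mainthm}(i), the disjoint family of cubes $\{Q_h^n\}_{h\in\N}$ of sidelength $l_n$ tiling $\R^d$; since $E_n\in\Mf$, there exists $H(n)\in\N$ such that, up to reindexing, $|Q_h^n\cap E_n|\ge l_n^d/2$ precisely for $h=1,\dots,H(n)$. Define $\tilde E_n:=\bigcup_{h=1}^{H(n)}Q_h^n$. The computation in Step 1 of Theorem \ref{mainthm}(i) (which only uses $E_n\in\Mf$, Lemma \ref{lemmastimacubi}, and Lemma \ref{lemmatipofracpoinc}, none of which require confinement in $U$) yields
\begin{equation*}
|\tilde E_n\triangle E_n|\le C(d,s)\, l_n^{s}\,\Jgsn(E_n)\le C(d,s)\, l_n^s\,\sca^s(r_n)\,\Hrn(E_n).
\end{equation*}
By the hypothesis $\limsup_n \Hrn(E_n)\le M$, and since our choice of $\alpha$ makes $l_n^s\,\sca^s(r_n)\to 0$ (as verified in Step 3 of the previous proof), we get $|\tilde E_n\triangle E_n|\to 0$.

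Next, Step 2 of the proof of Theorem \ref{mainthm}(i) (which again uses only the localized isoperimetric inequality of Lemma \ref{quasiiso} applied cube by cube, and not the boundedness of $U$) gives, for $n$ large enough,
\begin{equation*}
\Per(\tilde E_n)\le C(\alpha,d,s)\,\Hrn(E_n)\le C(\alpha,d,s)(M+1).
\end{equation*}

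Finally, from $|\tilde E_n\triangle E_n|\to 0$ and $\chi_{E_n}\to\chi_E$ in $\mathrm{L}^1(\R^d)$ we deduce $\chi_{\tilde E_n}\to\chi_E$ in $\mathrm{L}^1(\R^d)$. The lower semicontinuity of the De Giorgi perimeter under $\mathrm{L}^1_{\loc}$-convergence then gives
\begin{equation*}
\Per(E)\le \liminf_{n\to+\infty}\Per(\tilde E_n)\le C(\alpha,d,s)(M+1)<+\infty,
\end{equation*}
which is the desired conclusion.

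\textbf{Expected difficulty.} There is no real obstacle: once one observes that the construction of $\tilde E_n$ and the perimeter bound \eqref{limitatezPcubet} in Steps 1--2 of Theorem \ref{mainthm}(i) are purely local and thus do not need $E_n$ to lie in a fixed bounded set, the proof reduces to replacing the use of $\mathrm{BV}$-compactness with the $\mathrm{L}^1$-lower semicontinuity of $\Per$, which is applicable because the $\mathrm{L}^1$-limit is already provided by the assumption. The only point requiring care is the choice $\alpha\in(1-\tfrac 1 s,1)$, which ensures that the symmetric-difference estimate yields $|\tilde E_n\triangle E_n|\to 0$ and hence identifies the $\mathrm{L}^1$-limit of $\{\chi_{\tilde E_n}\}$ with $\chi_E$.
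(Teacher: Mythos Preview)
Your proposal is correct and follows essentially the same approach as the paper: rerun Steps 1 and 2 of the proof of Theorem \ref{mainthm}(i) to obtain $\tilde E_n$ with $|\tilde E_n\triangle E_n|\to 0$ and uniformly bounded perimeter, then use the given $\mathrm{L}^1$-convergence and the lower semicontinuity of $\Per$ instead of $\mathrm{BV}$-compactness. (Minor typo: in your symmetric-difference estimate you wrote $\Jgsn(E_n)$ where you meant $\tildeJn(E_n)$.)
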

\begin{proof}
The proof of this corollary is fully analogous to the proof of Theorem \ref{mainthm}(i), and we adopt the same notation introduced there. 
Arguing as in the proof of Steps 1 and 2 we have that for $n$ large enough
$$
\Per(\tilde{E}_n) \leq C(\alpha,d,s) \limsup_{n\to +\infty} \frac{\tildeJn(E_n)}{\sigma^s(r_n)}  \le C(\alpha,d,s) M\,, $$
and that if $\alpha\in (1-\frac 1 s, 1)$\,, then $|\tilde E_n\triangle E_n|\to 0$ as $n\to +\infty$.
By assumption, this implies that
$$ \chi_{\tilde{E}_n} \rightarrow \chi_{E}, \; \text{ in $ \mathrm{L}^{1}(\mathbb{R}^d)$} \quad n \rightarrow + \infty\,,
$$
and by the lower semicontinuity of the perimeter, 
$$
\Per(E)\le \liminf_{n\to +\infty}\Per(\tilde E_n)\le C(\alpha,d,s) M\,.
$$
\end{proof}
\section{Proof of the $\Gamma$-limit}\label{sc:gammalim}
This section is devoted to the proofs of Theorem \ref{mainthm}(ii) and (iii), which are the content of Subsections \ref{lowerb} and \ref{upperb} respectively.

\subsection{Proof of the lower bound}\label{lowerb}
 The proof of  Theorem \ref{mainthm}(ii) closely follows the strategy used in \cite{GM05}.
We recall that for every $\nu\in\mathbb{S}^{d-1}$\,, $Q^{\nu}$ is a unit square centered at the origin with one face orthogonal to $\nu$. Moreover, we recall that $H^+_\nu(0)=\{x\in\R^d\,:\,x\cdot \nu\ge 0\}$.

The following result is the adaptation to our setting of \cite[Lemma 18]{GM05}.
\begin{lemma}\label{lemma18}
Let $s\ge 1$\,.
For every $\ep>0$, there exist $r_0,\delta_0>0$ such that for every $\nu\in\mathbb{S}^1$,  for every $E\in\Mf$ with 
\begin{equation}\label{hp}
|(E\triangle H^-_\nu(0))\cap Q^\nu|\le\delta_0\,,
\end{equation}
and for every $r<r_0$ it holds
\begin{equation}\label{quasiliminf}
\int_{Q^{\nu}\cap E}\int_{Q^\nu\cap E^c} k^s_r(|x-y|) \ud y\ud x\ge \omega_{d-1}(1-\ep)\sca^s(r)\,.
\end{equation}
\end{lemma}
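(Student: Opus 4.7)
Plan. By the rotational invariance of the kernel $k^s_r(|\cdot|)$, we may assume $\nu = e_d$, so that $Q^\nu = Q$ and $H^-_\nu(0) = \{y_d \le 0\}$. Set $Q' := [-\tfrac 1 2, \tfrac 1 2)^{d-1}$. The plan is to split the proof into two steps: (i) the sharp lower bound for the exact half-space $E = H^-_\nu$, and (ii) a stability argument for sets $E$ satisfying \eqref{hp}.

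Step (i). We first estimate $I_0(r) := \int_{Q \cap H^-_\nu} \int_{Q \cap H^+_\nu} k^s_r(|x-y|) \ud x \ud y$. Writing $y = (y', y_d)$, $x = (x', x_d)$ and introducing $w := x' - y' \in \R^{d-1}$, $z := x_d - y_d > 0$, Fubini gives
\begin{equation*}
I_0(r) = \int_{\R^{d-1}} V(w) \int_0^1 \min(z, 1-z)\, k^s_r\bigl(\sqrt{|w|^2 + z^2}\bigr) \ud z \ud w,
\end{equation*}
where $V(w) := |Q' \cap (Q' - w)| = \prod_{i=1}^{d-1} \max(1-|w_i|, 0)$. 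Restricting the integrand to the half-ball $\{(w,z) : |(w,z)| < R,\ z > 0\}$ for small fixed $R < \tfrac 1 2$, where $V(w) \ge 1 - (d-1)R$ and $\min(z, 1-z) = z$, and passing to spherical coordinates in $\R^d$, one obtains, by means of the identity $\int_{\mathbb{S}^{d-1} \cap \{\theta_d > 0\}} \theta_d \ud \mathcal{H}^{d-1}(\theta) = \omega_{d-1}$,
\begin{equation*}
I_0(r) \;\ge\; \bigl(1 - (d-1)R\bigr)\,\omega_{d-1} \int_0^R k^s_r(\rho)\, \rho^d \ud \rho.
\end{equation*}
A split integration of the latter on $[0,r]$ and $[r,R]$, treating $s = 1$ and $s > 1$ separately, matches the definition of $\sca^s(r)$ in \eqref{scalingper} up to $o(\sca^s(r))$, giving $I_0(r) \ge \omega_{d-1}(1 - \tfrac{\epsilon}{2}) \sca^s(r)$ for all $r$ sufficiently small.

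Step (ii). For the general case, put $A_1 := (Q \cap H^-_\nu) \setminus E$ and $A_2 := E \cap Q \cap H^+_\nu$, so that $|A_1| + |A_2| \le \delta_0$ by \eqref{hp}. Decomposing $Q \cap E$ and $Q \cap E^c$ according to their intersections with $H^\pm_\nu$ and discarding three non-negative cross terms yields
\begin{equation*}
I_E(r) \;\ge\; \int_{(Q \cap H^-_\nu) \setminus A_1} \int_{(Q \cap H^+_\nu) \setminus A_2} k^s_r(|x-y|)\, \ud x\, \ud y.
\end{equation*}
A naive norm-type perturbation bound yields $I_E(r) \ge I_0(r) - \delta_0 \lambda^s_r$, which is useless since $\lambda^s_r / \sca^s(r) \to +\infty$. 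To circumvent this, following \cite{GM05}, we shall perform a slicing argument in the $e_d$-direction. Set $E_{y'} := \{t \in (-\tfrac 1 2, \tfrac 1 2) : (y', t) \in E\}$ and
\begin{equation*}
G := \bigl\{y' \in Q' : |E_{y'} \triangle (-\tfrac 1 2, 0)| < \sqrt{\delta_0}\bigr\};
\end{equation*}
by Fubini and Markov's inequality $|Q' \setminus G| \le \sqrt{\delta_0}$. On each transversal $y' \in G$ the 1D trace $E_{y'}$ is $\sqrt{\delta_0}$-close to $(-\tfrac 1 2, 0)$, and one redoes the estimate of Step (i) restricted to pairs $(y', x') \in G \times G$ with tangential distance $|x' - y'|$ bounded by a suitable cutoff; this recovers the lower bound $\omega_{d-1}(1 - C\sqrt{\delta_0} - o(1)) \sca^s(r)$. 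Choosing $\delta_0$ small in terms of $\epsilon$ and then $r_0$ small concludes the proof.

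Main obstacle. The principal technical challenge is Step (ii): because $k^s_r(0) = r^{-(d+s)}$ is singular as $r \to 0^+$, any purely $L^1$-based perturbation argument incurs a factor $\lambda^s_r\delta_0$ which dominates the target asymptotic $\sca^s(r)$. The key structural input is that the $L^1$-closeness between $\chi_E$ and $\chi_{H^-_\nu}$ on $Q^\nu$ transfers, via Fubini, to 1D closeness on a large fraction of lines parallel to $\nu$, on which the half-space lower bound of Step (i) can be recovered slice-by-slice.
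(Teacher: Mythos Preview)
Your Step (i) is correct and essentially matches the paper's half-space computation. The gap is in Step (ii).

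You correctly identify that a naive $L^1$ perturbation gives the useless error $\delta_0\,\lambda^s_r$. But your slicing fix does not actually resolve this. On a good slice $y'\in G$ you only know $|E_{y'}\triangle(-\tfrac12,0)|<\sqrt{\delta_0}$ --- a measure bound with no further structure. For a fixed pair $(y',x')\in G\times G$ with $w=x'-y'$, the slice integral
\[
\int_{E_{y'}}\int_{(E_{x'})^c} k^s_r\bigl(\sqrt{|w|^2+(t-u)^2}\bigr)\ud u\ud t
\]
differs from its half-space value by at most $2\sqrt{\delta_0}\int_{\R}k^s_r(\sqrt{|w|^2+u^2})\ud u$. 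Integrating this over $(y',x')\in G\times G$ gives an error of order $\sqrt{\delta_0}\int_{\R^d}k^s_r(|z|)\ud z=\sqrt{\delta_0}\,\lambda^s_r$, which is \emph{still} $\gg\sca^s(r)$. You have merely replaced $\delta_0$ by $\sqrt{\delta_0}$ in front of the wrong quantity.

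The paper (following \cite{GM05}) avoids slicing. One rewrites the double integral as $\int_{\R^d}k^s_r(|z|)\,m(z)\ud z$ with $m(z):=|E^c\cap(E-z)\cap Q|$, and proves the pointwise bound
\[
m(z)\;\ge\; z_d\,(1-4\sqrt{\delta_0})^{d-1}\;-\;2\sqrt{\delta_0}\,|z|_\infty
\]
for small $z$ with $z_d>0$. The crucial feature is that the error is \emph{linear in $|z|$}, so it integrates against $k^s_r$ to $C\sqrt{\delta_0}\int|z|\,k^s_r(|z|)\ud z\le C(d,s)\sqrt{\delta_0}\,\sca^s(r)$, exactly as needed. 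This linear-in-$|z|$ estimate comes from averaging the elementary inequality
\[
m(z)\;\ge\; z_d\,\lambda^{d-1}\;-\;\int_{(\lambda+|z|_\infty)Q\setminus(\lambda-|z|_\infty)Q}|\chi_E-\chi_{H^-_\nu}|\ud x
\]
over $J\sim\sqrt{\delta_0}/|z|_\infty$ nested values of $\lambda$: the integral terms sum to at most $\delta_0$ (the annuli are disjoint and contained in $Q$), and dividing by $J$ turns $\delta_0$ into $|z|_\infty\sqrt{\delta_0}$. This averaging step is the missing idea in your proposal; your slicing produces a constant error per slice rather than one that scales with the interaction distance.
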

\begin{proof}
Up to a rotation, we can assume that $\nu=-e_d$ so that $Q^\nu\equiv Q=[-\frac 1 2,\frac 1 2)^d$ and $H^{-}_\nu(0)=:\R^d_+$. 
Let $0<r<1$. 
We can assume without loss of generality that $E\subset Q$\,.
Using the change of variable $y=x+z$ we have
\begin{equation}\label{start}
\begin{aligned}
&\int_{Q\cap E^c} \ud x\int_{Q\cap E} k^s_r(|x-y|)\ud y\\
=&\int_{Q\cap E^c}\ud x\int_{\{z\in\R^d\,:\,x+z\in E\}}k^s_r(|-z|)\ud z\\
=&\int_{Q\cap E^c}\ud x\int_{\R^d}k^s_r(|z|)\chi_{E}(x+z)\ud z\\
=& \int_{\R^d}k^s_r(|z|)\int_{\R^d}\chi_{E^c\cap Q}(x)\chi_{E}(x+z)\ud x\ud z\\
=&
\int_{\R^d}k^s_r(|z|)|E^c\cap (E-z)\cap Q|\ud z= \int_{\R^d}k^s_r(|z|)m(z)\ud z\,,
\end{aligned}
\end{equation}
where we have set $m(z):=|E^c\cap (E-z)\cap Q|$\,.

Let $\frac 1 2 <\lambda <1$  and let $z\in\R^d$
be such that $|z|_\infty\le \frac{1-\lambda}{2}$ and $z_d>0$. Since $|(E-z)\cap \lambda Q|=|E\cap (\lambda Q+z)|$, by triangular inequality, we get
\begin{equation*}
\begin{aligned}
&|(E-z)\cap \lambda Q|-|E\cap \lambda Q|=\int_{\lambda Q+z}\chi_{E}\ud x-\int_{\lambda Q}\chi_{E}\ud x\\
\ge&\int_{\lambda Q+z}\chi_{\R^d_+}\ud x-\int_{\lambda Q}\chi_{\R^d_+}\ud x-\int_{(\lambda Q+z)\triangle \lambda Q}|\chi_E-\chi_{\R^d_+}|\ud x\\
\ge&\lambda^{d-1}z_d-\int_{U_{\lambda,z}}|\chi_E-\chi_{\R^d_+}|\ud x\,,
\end{aligned}
\end{equation*}
where we have set $U_{\lambda,z}:=(\lambda+|z|_\infty)Q\setminus (\lambda-|z|_{\infty})Q$ and we have used that $(\lambda Q+z)\triangle \lambda Q\subset U_{\lambda,z}$. As a consequence,  we deduce that
\begin{equation}\label{est1}
\begin{aligned}
m(z)=&|E^c\cap (E-z)\cap Q|\ge|E^c\cap (E-z)\cap \lambda Q|\\
\ge& |E^c\cap \lambda Q|+|(E-z)\cap \lambda Q|-|\lambda Q|\\
\ge& |E^c\cap \lambda Q|+|E\cap \lambda Q|+\lambda^{d-1}z_d-\int_{U_{\lambda,z}}|\chi_E-\chi_{\R^d_+}|\ud x-|\lambda Q|\\
=& \lambda^{d-1}z_d-\int_{U_{\lambda,z}}|\chi_E-\chi_{\R^d_+}|\ud x\, ,
\end{aligned}
\end{equation}
where the last equality follows by noticing that  $|E\cap \lambda Q|+|E^c\cap \lambda Q|=|\lambda Q|$.

Let now  $0<\delta_0<\frac{1}{64}$ to be chosen later on  and 
 set 
\begin{equation*}
A^+_{\sqrt{\delta_0}}:=\Big\{z\in\R^d\,:\,|z|_\infty\le \frac{\sqrt{\delta_0}}{2},\,z_d>0\Big\}\,.
\end{equation*}
We fix $z\in A^+_{\sqrt{\delta_0}}$ and we set  $J:=\lfloor \frac{\sqrt{\delta_0}}{|z|_\infty}\rfloor$\,.
We set $\lambda_0:=1-4\sqrt{\delta_0}$  and we cover $(\lambda_0+2J|z|_\infty)Q\setminus \lambda_0 Q$ with $J$ squared annuli of thickness $2|z|_\infty$, namely we set $\lambda_j:=\lambda_0+2j|z|_\infty$ and $U_{j}:=\lambda_j Q\setminus \lambda_{j-1} Q$ for $j=1,\ldots,J$\,. 
Moreover,  we set  $\tilde\lambda_j:=\lambda_0+(2j-1)|z|_\infty$
for every $j=1,\ldots,J$ and we notice that $\frac 1 2<\tilde\lambda_j<1$ for every $j=1,\ldots,J$\,.
Since $z\in A^+_{\sqrt{\delta_0}}$, we have that $|z|_\infty\le \frac{1-\tilde \lambda_J}{2}\le \frac{1-\tilde \lambda_j}{2}$ for every $j=1,\ldots,J$\,.
Therefore, for every $j=1,\ldots, J$ we can apply \eqref{est1} with $\lambda=\tilde\lambda_j$ in order to get
\begin{equation}\label{spero}
\begin{aligned}
m(z)\ge&  z_d \tilde\lambda_{j}^{d-1}-\int_{U_{\tilde\lambda_j,z}}|\chi_E-\chi_{\R^d_+}|\ud x\\
\ge & z_d \lambda_{j-1}^{d-1}-\int_{U_j}|\chi_E-\chi_{\R^d_+}|\ud x \, ,
\end{aligned}
\end{equation}
where we have used also that $\tilde\lambda_j-|z|_\infty=\lambda_{j-1}$ and $\tilde\lambda_j+|z|_\infty=\lambda_j$ so that $U_{\tilde\lambda_j,z}=U_j$\,.
Summing \eqref{spero} over $j=1,\ldots,J$ we get
$$
Jm(z)\ge z_d\sum_{j=1}^J\lambda_{j-1}^{d-1}-\int_{Q}|\chi_E-\chi_{\R^d_+}|\ud x\,,
$$
which, dividing by $J$ and using discrete Jensen inequality (namely, convextiy), yields
\begin{equation}\label{est2}
m(z)\ge z_d\Big(\frac 1 J\sum_{j=1}^J\lambda_{j-1}\Big)^{d-1}-\frac 1 J \int_{Q}|\chi_E-\chi_{\R^d_+}|\ud x\ge z_d\lambda_0^{d-1}- 2|z|_\infty\sqrt{\delta_0}\,,
\end{equation}
where in the last inequality we have used \eqref{hp} and the fact that $J\ge \frac{\sqrt{\delta_0}}{|z|_\infty}-1$. 
Therefore, we have proven that \eqref{est2} holds true whenever $z\in A^+_{\sqrt{\delta_0}}$\,, which combined with \eqref{start}, yields
\begin{multline}\label{start2}
\int_{Q\cap E^c} \ud x \int_{Q\cap E} k^s_r(|x-y|) \ud y\\
\ge \lambda_0^{d-1}\int_{A^+_{\sqrt{\delta_0}}}z_dk^s_r(|z|)\ud z- 2\sqrt{\delta_0}\int_{A^+_{\sqrt{\delta_0}}}|z|_\infty k^s_r(|z|)\ud z\,.
\end{multline} 
As for the first integral on the right hand side of \eqref{start2}, by using polar coordinates $z=\rho\theta$ with $\rho>0$ and $\theta\in\mathbb{S}^{d-1}$  and using the very definition of $\sca^s(r)$ in \eqref{scalingper}, for $\delta_0$ small enough and for all $r<\delta_0$ we have
\begin{equation}\label{start3}
\begin{aligned}
 \int_{A^+_{\sqrt{\delta_0}}}z_dk^s_r(|z|)\ud z\ge&\int_{B(0,r)\cap \R^d_+}\frac{z_d}{r^{d+s}}\ud z+\int_{(B(0,\delta_0)\setminus B(0,r))\cap \R^d_+}\frac{z_d}{|z|^{d+s}}\ud z\\
 =&\frac{1}{r^{d+s}}\int_{0}^{r}\rho^{d}\ud\rho\int_{\mathbb{S}^{d-1}\cap \R^d_+}\theta_d\ud\mathcal{H}^{d-1}(\theta)\\
 &+\int_{r}^{\delta_0}\rho^{-s}\ud\rho\int_{\mathbb{S}^{d-1}\cap \R^d_+}\theta_d\ud\mathcal{H}^{d-1}(\theta)\\
 =&\omega_{d-1}\frac{r^{1-s}}{d+1}+\omega_{d-1}\int_{r}^{\delta_0}\rho^{-s}\ud\rho\\
 \ge&\omega_{d-1}\sca^s(r)-\omega_{d-1}C(\delta_0,s)\,,\\
\end{aligned}
\end{equation}
where 
\begin{equation*}
C(\delta_0,s):=\left\{\begin{array}{ll}
|\log\delta_0|&\textrm{if }s=1\\
\displaystyle \frac{\delta_0^{1-s}}{s-1}&\textrm{if }s>1\,.
\end{array}\right.
\end{equation*}

Moreover, since $|z|_{\infty}\le |z|$, it holds
 \begin{equation}\label{sint}
 \begin{aligned}
 &\int_{A^+_{\sqrt{\delta_0}}}|z|_\infty k^s_r(|z|)\ud z\le \int_{B(0,1)}|z|k^s_r(|z|)\ud z\\
 =&\frac{1}{r^{d+s}}\int_{B(0,r)}|z|\ud z+\int_{B(0,1)\setminus B(0,r)}\frac{1}{|z|^{d+s-1}}\ud z
 \le&C(d,s)\sca^s(r)\,,
  \end{aligned}
 \end{equation}
for some $C(d,s)>0$\,.
 
Now we define the function $\eta(t):=1-(1-4\sqrt{t})^{d-1}$\,, and we notice that 
$\eta(t)\to 0$ as $t\to 0^+$\,.
Therefore, by \eqref{start2}, \eqref{start3}  and \eqref{sint},
using that $\lambda_0^{d-1}=1-\eta(\delta_0)$\,,
we deduce that
\begin{equation}\label{incr2}
\begin{aligned}
&\int_{Q\cap E^c} \ud x \int_{Q\cap E} k^s_r(|x-y|) \ud y\\
\ge& \omega_{d-1}\sca^s(r) \Big(1-\eta(\delta_0)-\big(1-\eta(\delta_0)\big)\frac{C(\delta_0,s)}{\sca^s(r)}-2\sqrt{\delta_0}\frac{C(d,s)}{\omega_{d-1}}\Big)\,,
\end{aligned}
\end{equation}
so that, choosing $\delta_0>0$ such that 
$$
\eta(\delta_0)+2\sqrt{\delta_0}\frac{C(d,s)}{\omega_{d-1}}\le \frac\ep 2
$$
and $r_0>0$ such that (for every $0<r<r_0$)
$$
 \big(1-\eta(\delta_0)\big)\frac{C(\delta_0,s)}{\sca^s(r)}\le  \big(1-\eta(\delta_0)\big)\frac{C(\delta_0,s)}{\sca^s(r_0)}\le \frac{\ep} 2\,,
$$
by \eqref{incr2} we deduce \eqref{quasiliminf}, thus concluding the proof of the lemma.
\end{proof}
We are now in a position to prove the $\Gamma$-liminf inequality in Theorem \ref{mainthm}.
\begin{proof}[Proof of Theorem \ref{mainthm}(ii)]
We can assume without loss of generality that
\begin{equation}\label{ben0}
\Hrn(E_n)=\frac{1}{2\sca^s(r_n)}\int_{\R^d}\int_{\R^d}k^s_{r_n}(|x-y|)|\chi_{E_n}(x)-\chi_{E_n}(y)|\ud y\ud x\le C\,,
\end{equation}
for some constant $C>0$ independent of $n$. Then, by Corollary \ref{corcompthm} we have that $E$ has finite perimeter.
For every $n\in\N$ let $\mu_n$ be the measure on the product space $\R^d\times \R^d$ defined by
$$
\mu^s_n(A\times B):=\frac{1}{2\sca^s(r_n)}\int_{A}\int_{B}k^s_{r_n}(|x-y|)|\chi_{E_n}(x)-\chi_{E_n}(y)|\ud y\ud x
$$
for every $A,B\in\mathrm{M}(\R^d)$. Then by \eqref{ben0}, up to a subsequence, $\mu_{n}^{s}\weakstar\mu^s$ for some measure $\mu^s$. Now we show that $\mu^s$ is concentrated on the set $D:=\{(x,x)\,:\,x\in\R^d\}$\,, i.e., that $\mu^s(\Omega)=0$ if $\Omega\cap D=\emptyset$. 
Indeed, let $\ffi\in C_c(\R^d\times \R^d;[0,+\infty))$ be such that $\di(\supp\ffi, D)=\delta$ for some $\delta>0$\,; then
\begin{equation*}
\begin{split}
&\int_{\R^d\times \R^d}\ffi(x,y)\ud \mu^s(x,y)\\
=&\lim_{n\to +\infty}\frac{1}{2\sca^s(r_n)}\int_{\R^d\times \R^d}\ffi(x,y)k^s_{r_n}(|x-y|)|\chi_{E_n}(x)-\chi_{E_n}(y)|\ud y\ud x\\
\le &\lim_{n\to +\infty}\frac{1}{2\sca^s(r_n)}\frac{1}{\delta^{d+s}}\int_{\R^d\times \R^d}\ffi(x,y)\ud y\ud x=0\,.
\end{split}
\end{equation*}
Now we define the measure $\lambda^s$ on $\R^d$ as $\lambda^s(A)=\mu^s(\{(x,x)\,:\,x\in A\})$
and we claim that 
for $\mathcal H^{d-1}$ - a.e. $x_0\in\partial^* E$ it holds
\begin{equation}\label{tesi}
\liminf_{l\to 0^+}\frac{\lambda^s(\overline{Q}_l^\nu(x_0))}{l^{d-1}}\ge\liminf_{l\to 0^+}\liminf_{n\to +\infty}\frac{\mu^s_n({Q}_l^{\nu}(x_0)\times {Q}_l^{\nu}(x_0))}{l^{d-1}}\ge \omega_{d-1}\,,
\end{equation}
where we have set $\nu=\nu_E(x_0)$ and $Q_l^{\nu}(x_0)=x_0+lQ^\nu$. 
By \eqref{tesi} and Radon-Nikodym Theorem, using the lower semicontinuity of the total variation of measures with respect to the weak star convergence,  we get \eqref{trueliminf}.

We conclude by proving the claim \eqref{tesi}. We preliminarily notice that the first inequality is a consequence of the upper semicontinuity of the total variation of measures on compact sets with respect to the weak star convergence. We pass to prove the second inequality in \eqref{tesi}.
For all $x_0\in\partial^* E$, we have
\begin{equation}\label{blowup}
\lim_{l\to 0^+}\int_{Q^\nu}|\chi_{E}(x_0+lx)-\chi_{H^-_\nu(0)}(x)|\ud x=0\,.
\end{equation}
Fix such a $x_0\in\partial^* E$. We will adopt a blow-up argument. Consider the sequence of sets $\{F_{n,l}\}_{n\in\N}$ defined by $F_{n,l}=x_0+l E_n$. By the change of variable $x=x_0+l\xi$ and $y=x_0+l\eta$ we have
\begin{equation}\label{chavar}
\begin{aligned}
&\frac{1}{l^{d-1}}\mu^s_n(Q^\nu_l(x_0)\times Q^\nu_l(x_0))\\
=&\frac{1}{2\sca^s(r_n)}\int_{Q^\nu}\int_{Q^\nu}l^{d+1}k^s_{r_n}(|l\xi-l\eta|)|\chi_{F_{n,l}}(\xi)-\chi_{F_{n,l}}(\eta)|\ud\xi\ud\eta\\
=&\frac{l^{1-s}}{2\sca^s(r_n)} \int_{Q^\nu}\int_{Q^\nu}k^s_{\frac{r_n}{l}}(|\xi-\eta|) |\chi_{F_{n,l}}(\xi)-\chi_{F_{n,l}}(\eta)|\ud\xi\ud\eta\,,
\end{aligned}
\end{equation}
where in the last equality we have used \eqref{scaling}.
Let $0<\ep<1$ and let $\delta_0,r_0>0$ be the constants provided by Lemma \ref{lemma18}.  In view of \eqref{blowup} for $l$ small enough we have
\begin{equation}\label{tri1}
\int_{Q^\nu}|\chi_{E}(x_0+lx)-\chi_{H^{-}_{\nu}(0)}(x)|\ud x\le \frac{\delta_0}{2}\,.
\end{equation}
Fix such an $l$\,; then, there exists $n(l)\in\N$ such that for $n\ge n(l)$\,, it holds
\begin{equation}\label{tri2}
\int_{Q^\nu}|\chi_{F_{n,l}}(x)-\chi_{E}(x_0+lx)|\ud x=\frac{1}{l^{d}}\int_{Q_l^\nu(x_0)}|\chi_{E_n}(x)-\chi_E(x)|\ud x\le \frac{\delta_0}{2}\,.
\end{equation}
By \eqref{tri1} and \eqref{tri2}, using triangular inequality, we obtain
\begin{equation*}
|(F_{n,l}\triangle H^{-}_{\nu}(0))\cap Q^\nu|=\int_{Q^\nu}|\chi_{F_{n,l}}-\chi_{H^{-}_{\nu}(0)}|\ud x\le \delta_0\,.
\end{equation*}
Therefore, by applying Lemma \ref{lemma18} with $k^s_r=k^s_{\frac{r_n}{l}}$ and $E=F_{n,l}$\,, for 
$n$ large enough (i.e., in such a way that $n\ge n(l)$ and
$r_n<r_0 l$) we have that
\begin{equation}\label{apple}
\frac 1 2\int_{Q^\nu}\int_{Q^\nu}k^s_{\frac{r_n}{l}}(|\xi-\eta|) |\chi_{F_{n,l}}(\xi)-\chi_{F_{n,l}}(\eta)|\ud\xi\ud\eta\ge \omega_{d-1}(1-\ep)\sca^s\Big(\frac{r_n}{l}\Big)\,.
\end{equation}
Now, by the very definition of $\sca^s$ in \eqref{scalingper},  we have that  
\begin{equation*}
\frac{l^{1-s}}{\sca^s(r_n)}\sca^s\Big(\frac{r_n}{l}\Big)=\left\{
\begin{array}{ll}
\frac{\log l+|\log r_n|}{|\log r_n|}&\textrm{if }s=1\\
1&\textrm{if }s>1\,,
\end{array}
\right.
\end{equation*}
so that, in view of \eqref{chavar} and \eqref{apple}, we deduce that for every $0<\ep<1$ and for every $l$ small enough (depending on $\ep$), it holds
\begin{equation*}
\liminf_{n\to +\infty}\frac{1}{l^{d-1}}\mu_n^s(Q^\nu_l(x_0)\times Q^\nu_l(x_0))\ge \omega_{d-1}(1-\ep)\,,
\end{equation*}
whence the second inequality in claim \eqref{tesi} follows by the arbitrariness of $\ep$.
\end{proof}
\subsection{Proof of the upper bound}\label{upperb}

The $\Gamma$-limsup inequality will be a consequence of Proposition \ref{prop:point} and of 
standard density results for sets of finite perimeter.

We first recall the following fundamental approximation theorem (see, for instance, \cite[Theorem 13.8]{Maggi}).
\begin{theorem}[Approximation of set with finite perimeter by smooth sets]\label{appro}
A set $E \in \Mf$ has finite perimeter if and only if there exists a sequence $\{F_k\}_{k \in \mathbb{N}} \subset \Mf$ of open bounded sets with smooth boundary, such that
	\begin{equation}\label{appro_formula}
    \begin{split}
	& \chi_{F_k} \rightarrow \chi_{E} \quad \text{ (strongly) in $\mathrm{L}^1(\R^d)$ as $k \rightarrow +\infty$,} \\
	& \Per(F_k) \rightarrow \Per(E) \quad \text{as $k \rightarrow +\infty$} .
	\end{split}
	\end{equation} 
\end{theorem}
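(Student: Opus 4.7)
The plan is to prove the two implications of Theorem \ref{appro} separately, with the nontrivial content residing in the construction of smooth approximants for an arbitrary finite-perimeter set.

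\textbf{Easy direction.} If such a sequence $\{F_k\}$ exists, then from $\chi_{F_k}\to\chi_E$ in $L^1(\R^d)$ and the uniform bound $\Per(F_k)\to\Per(E)<+\infty$, lower semicontinuity of the De Giorgi perimeter with respect to $L^1_{\loc}$ convergence immediately yields $\Per(E)\le\liminf_k\Per(F_k)<+\infty$, so $E$ has finite perimeter.

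\textbf{Hard direction (smooth approximation of finite-perimeter sets).} Assume $E\in\Mf$ with $\Per(E)<+\infty$. Fix a standard mollifier $\rho\in C_c^\infty(B(0,1);[0,+\infty))$ with $\int\rho=1$, and set $\rho_\varepsilon(x):=\varepsilon^{-d}\rho(x/\varepsilon)$ and $u_\varepsilon:=\rho_\varepsilon*\chi_E$. Then $u_\varepsilon\in C^\infty(\R^d;[0,1])$, $u_\varepsilon\to\chi_E$ in $L^1(\R^d)$, and by a classical result on mollification in $\mathrm{BV}$,
\begin{equation*}
\lim_{\varepsilon\to 0^+}\int_{\R^d}|\nabla u_\varepsilon(x)|\ud x=|D\chi_E|(\R^d)=\Per(E).
\end{equation*}
By the coarea formula applied to $u_\varepsilon$,
\begin{equation*}
\int_{\R^d}|\nabla u_\varepsilon(x)|\ud x=\int_0^1\Per\bigl(\{u_\varepsilon>t\}\bigr)\ud t.
\end{equation*}
By Sard's theorem applied to the smooth function $u_\varepsilon$, for a.e. $t\in(0,1)$ the superlevel set $\{u_\varepsilon>t\}$ has smooth boundary. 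The strategy is then to choose, for each $\varepsilon$, a good level $t_\varepsilon$ close to $1/2$ such that the corresponding superlevel set is smooth, its perimeter is controlled by $\Per(E)+o(1)$, and it is close to $E$ in $L^1$.

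\textbf{Level selection and $L^1$ closeness.} Using Chebyshev/Markov on $(1/4,3/4)$ together with the mean-value trick on the coarea identity, one can extract $t_\varepsilon\in(1/4,3/4)$ such that $F_\varepsilon:=\{u_\varepsilon>t_\varepsilon\}$ has smooth boundary and $\Per(F_\varepsilon)\le 2\int|\nabla u_\varepsilon|\ud x$. To upgrade this to $\Per(F_\varepsilon)\to\Per(E)$, one uses lower semicontinuity for the $\liminf$ and the coarea identity to obtain
\begin{equation*}
\Per(E)\ge\limsup_{\varepsilon\to 0^+}\int_0^1\Per(\{u_\varepsilon>t\})\ud t\ge\int_0^1\liminf_{\varepsilon\to 0^+}\Per(\{u_\varepsilon>t\})\ud t,
\end{equation*}
and a Fatou-type argument combined with the $L^1$-convergence $\chi_{\{u_\varepsilon>t\}}\to\chi_E$ for each $t\in(0,1)$ (which holds because $u_\varepsilon\to\chi_E$ in $L^1$) lets one select $t_\varepsilon$ achieving $\Per(F_\varepsilon)\to\Per(E)$. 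For the $L^1$-convergence of $\chi_{F_\varepsilon}$, note that $|\chi_E-\chi_{F_\varepsilon}|\le\frac{1}{\min(t_\varepsilon,1-t_\varepsilon)}|u_\varepsilon-\chi_E|\cdot\mathbf 1$ in the usual pointwise sense away from level crossings, giving $\chi_{F_\varepsilon}\to\chi_E$ in $L^1(\R^d)$.

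\textbf{Boundedness.} The sets $F_\varepsilon$ are a priori not bounded; to fix this, note that $\spt u_\varepsilon\subset E+B(0,\varepsilon)$, so in fact $F_\varepsilon\subset E+B(0,\varepsilon)$ is already bounded whenever $E$ is bounded. For a general $E\in\Mf$ with finite perimeter, intersect with a large ball $B(0,R_\varepsilon)$ chosen so that $|E\setminus B(0,R_\varepsilon)|\to 0$ and, via another application of the coarea formula to the radial distance and an averaging over $R\in(R_\varepsilon,2R_\varepsilon)$, select $R_\varepsilon$ making $F_\varepsilon\cap B(0,R_\varepsilon)$ smooth and satisfying $\Per(F_\varepsilon\cap B(0,R_\varepsilon))\le\Per(F_\varepsilon)+o(1)$. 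A diagonal extraction along a sequence $\varepsilon_k\to 0^+$ then produces the required $\{F_k\}$ satisfying \eqref{appro_formula}.

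\textbf{Main obstacle.} The principal difficulty is the simultaneous control of \emph{three} features of the approximants: smoothness of the boundary (forcing almost-every-level choices via Sard), the sharp perimeter convergence $\Per(F_k)\to\Per(E)$ (not merely $\limsup\le$, which would be easy, but equality, for which one needs the coarea formula combined with lower semicontinuity to force the level-set perimeters to converge to $\Per(E)$), and the boundedness of $F_k$ in $\R^d$. The averaging/selection argument that simultaneously ensures all three, culminating in a diagonal extraction, is the technical core of the proof.
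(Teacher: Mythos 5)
The paper does not prove this statement at all: it is recalled as a classical result with a citation to \cite[Theorem 13.8]{Maggi}, so there is no in-paper argument to compare with. Your proposal reproduces the standard textbook proof behind that citation (lower semicontinuity for the easy implication; mollification $u_\varepsilon=\rho_\varepsilon\ast\chi_E$ with $\int|\nabla u_\varepsilon|\to\Per(E)$, the coarea formula, Sard's theorem, and a selection of levels $t_\varepsilon\in(1/4,3/4)$ for the hard one), and the level-selection step is sound: from $\liminf_\varepsilon\Per(\{u_\varepsilon>t\})\ge\Per(E)$ for every $t\in(0,1)$ and $\int_0^1\Per(\{u_\varepsilon>t\})\ud t\to\Per(E)$ one indeed gets $\Per(\{u_\varepsilon>\cdot\})\to\Per(E)$ in $L^1(0,1)$, so good regular levels exist.

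The one step that does not work as written is the boundedness fix for unbounded $E\in\Mf$: intersecting the smooth set $F_\varepsilon=\{u_\varepsilon>t_\varepsilon\}$ with a ball $B(0,R_\varepsilon)$ does not produce a set with smooth boundary, no matter how you choose $R_\varepsilon$ — for generic $R$ the sphere meets $\partial F_\varepsilon$ transversally and the resulting boundary has corners, so it is only Lipschitz, while the statement requires smooth boundary. The standard repair is to reverse the order: first replace $E$ by $E\cap B(0,R_j)$, choosing radii $R_j\to+\infty$ along which the slice term $\mathcal H^{d-1}(E\cap\partial B(0,R_j))$ is small (possible since $\int_0^{+\infty}\mathcal H^{d-1}(E\cap\partial B(0,R))\ud R=|E|<+\infty$), so that $|E\triangle(E\cap B(0,R_j))|\to 0$ and $\Per(E\cap B(0,R_j))\to\Per(E)$; then run your mollification argument on each bounded set $E\cap B(0,R_j)$, whose superlevel sets at levels $t\ge 1/4$ are automatically bounded because $\spt u_\varepsilon\subset B(0,R_j+\varepsilon)$; finally diagonalize. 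With this modification your argument is complete and is essentially the proof of the cited reference.
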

\begin{proof}[Proof of Theorem \ref{mainthm}(iii)]
Let $E\in\Mf$ be a set with finite perimeter. By Theorem \ref{appro}, there exists a sequence $\{F_k\}_{k\in\N}$ of open bounded sets with smooth boundary satisfying \eqref{appro_formula}\,.
In view of Proposition \ref{prop:point} we have that
\begin{equation*}
\lim_{n\to +\infty}\frac{\tildeJn(F_k)}{\sca^s(r_n)}=\omega_{d-1}\Per(F_k)\qquad\textrm{for every }k\in\N\,.
\end{equation*}
Therefore, by a standard diagonal argument there exists a sequence $\{E_n\}_{n\in\N}$ with $E_n=F_{k(n)}$ for every $n\in\N$ satisfying the desired properties.
\end{proof}
\subsection{Characterization of sets of finite perimeter}
As a byproduct of our $\Gamma$-convergence analysis, we prove that a set $E\in\Mf$ has finite perimeter if and only if for all $s \geq 1$
$$ \limsup_{r \rightarrow 0^+} \frac{\tildeJ(E)}{\sca^s(r)} < + \infty.$$ 

We recall the following classical theorem.
\begin{theorem}[Characterization via difference quotients]\label{bvpertrasthm}
	Let $E \in\Mf$\,. Then $E$ has finite perimeter if and only if there exists $C>0$ such that
	\begin{equation*}
	\int_{\mathbb{R}^d} \vert \chi_{E}(x+z)-\chi_{E}(x) \vert \ud x \leq C \vert z \vert \qquad \textrm{for every } z \in \mathbb{R}^d.
	\end{equation*}
Specifically, it is possible to choose $C=\Per(E)$.
\end{theorem}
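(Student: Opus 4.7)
The plan is to prove the two implications separately, with the forward direction built on mollification and the reverse one on a distributional pairing argument.

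For the forward direction (finite perimeter implies the translation bound with $C = \Per(E)$), I would mollify $\chi_E$ and apply the fundamental theorem of calculus. Let $\rho_\ep$ be a standard nonnegative mollifier of unit mass and set $u_\ep := \rho_\ep * \chi_E \in C^\infty(\R^d) \cap L^1(\R^d)$. Since $E$ has finite perimeter, $D\chi_E$ is a bounded $\R^d$-valued Radon measure with $|D\chi_E|(\R^d) = \Per(E)$, and the identity $\nabla u_\ep = \rho_\ep * D\chi_E$ yields
\begin{equation*}
\int_{\R^d}|\nabla u_\ep(x)|\ud x \le \int_{\R^d} (\rho_\ep * |D\chi_E|)(x)\ud x = |D\chi_E|(\R^d) = \Per(E).
\end{equation*}
The pointwise identity $u_\ep(x+z) - u_\ep(x) = \int_0^1 \nabla u_\ep(x+tz)\cdot z\ud t$ together with Fubini then gives
\begin{equation*}
\int_{\R^d}|u_\ep(x+z) - u_\ep(x)|\ud x \le |z|\int_0^1\int_{\R^d}|\nabla u_\ep(x+tz)|\ud x\ud t = |z|\int_{\R^d}|\nabla u_\ep|\ud x \le |z|\Per(E).
\end{equation*}
Since $u_\ep \to \chi_E$ and $u_\ep(\cdot + z) \to \chi_E(\cdot + z)$ strongly in $L^1(\R^d)$ as $\ep \to 0^+$, the left-hand side above converges to $\int_{\R^d}|\chi_E(x+z) - \chi_E(x)|\ud x$, delivering the bound with $C = \Per(E)$.

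For the reverse direction, I would test $\chi_E$ against compactly supported vector fields to show $\Per(E) < +\infty$. Fix $\Phi \in C_0^1(\R^d;\R^d)$ with $\|\Phi\|_\infty \le 1$. For each coordinate index $i$, writing the partial derivative as a difference-quotient limit and performing the change of variables $y = x+he_i$ gives
\begin{equation*}
\int_E \partial_i \Phi_i(x)\ud x = \lim_{h\to 0^+}\int_{\R^d}\frac{\chi_E(x - he_i) - \chi_E(x)}{h}\Phi_i(x)\ud x.
\end{equation*}
The hypothesized bound $\int_{\R^d}|\chi_E(x - he_i) - \chi_E(x)|\ud x \le Ch$ together with $|\Phi_i(x)| \le \|\Phi\|_\infty \le 1$ gives $\bigl|\int_E \partial_i \Phi_i\ud x\bigr| \le C$; summing over $i = 1, \dots, d$ yields $\bigl|\int_E \Div\Phi\ud x\bigr| \le dC$, and taking the supremum over admissible $\Phi$ shows $\Per(E) \le dC < +\infty$.

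The main point requiring slight care is the inequality $\int|\nabla u_\ep|\ud x \le \Per(E)$, which rests on the standard fact that convolving a bounded $\R^d$-valued Radon measure with a nonnegative $L^1$ kernel of unit mass does not increase its total variation; this can be verified directly from the dual characterization of $|D\chi_E|$ (taking the supremum against $\Phi$ and inserting $\rho_\ep$ inside the pairing via Fubini). Apart from this measure-theoretic input, both directions reduce to routine calculus.
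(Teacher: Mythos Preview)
Your proof is correct. Note, however, that the paper does not actually prove this theorem: it is stated as a classical result that is simply recalled (without proof) as a tool for the subsequent characterization of finite-perimeter sets in Theorem~\ref{thmfondcarPer}. There is therefore nothing in the paper to compare your argument against.

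For the record, both directions you give are the standard ones. In the forward direction, the mollification argument together with the bound $\int_{\R^d}|\nabla(\rho_\ep*\chi_E)|\ud x\le |D\chi_E|(\R^d)$ and the fundamental theorem of calculus is exactly the textbook route, and it delivers the sharp constant $C=\Per(E)$ as claimed. In the reverse direction your difference-quotient duality argument is also standard; the bound $\Per(E)\le dC$ you obtain is not sharp, but the statement only asks for finiteness of the perimeter, so this is sufficient. The only point worth a remark is that the interchange of limit and integral in the reverse direction is justified because the difference quotients of $\Phi_i$ converge uniformly and are supported in a common compact set, while $\chi_E\in L^1(\R^d)$ since $E\in\Mf$.
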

\begin{theorem}\label{thmfondcarPer}
	Let $E \in\Mf$\,. The following statements hold true.
	\begin{itemize}
		\item[(i)] If  $\displaystyle  \limsup_{r \rightarrow 0^+} \frac{\tildeJ(E)}{\sca^s(r)} < + \infty$ for some $s\ge 1$\,, then $E$ is a set of finite perimeter.
		\item[(ii)] If $E$ is a set of finite perimeter then $\displaystyle  \limsup_{r \rightarrow 0^+} \frac{\tildeJ(E)}{\sca^s(r)} < + \infty$ for every $s\ge 1$\,.
	More precisely, 
	\begin{equation}\label{catdiscarseteng}
	 \omega_{d-1} \Per(E) \leq \liminf_{r \rightarrow 0^+} \frac{\tildeJ(E)}{\sca^s(r)} \leq \limsup_{r \rightarrow 0^+} \frac{\tildeJ(E)}{\sca^s(r)} \leq M(s,d) \Per(E),
	 \end{equation}
	 where
	 	\begin{equation*}
	 M(s,d)=\left\{
	 \begin{array}{ll}
	 \frac{d \omega_{d}}{2}&\textrm{if }s=1\\
	 \omega_{d-1}&\textrm{if }s>1\,.
	 \end{array}
	 \right.
	 \end{equation*}
	 In particular, for  $s>1$ we have that 
	 \begin{equation}\label{carasma1}
	 \lim_{r\to 0^+} \frac{\tildeJ(E)}{\sca^s(r)} =\omega_{d-1}\Per(E)\,.
	 \end{equation}
	 \end{itemize}
\end{theorem}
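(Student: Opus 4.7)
The plan is to reduce the statement to three earlier results: Proposition \ref{corcompthm} (compactness yielding finite perimeter from an energy bound), the $\Gamma$-liminf inequality of Theorem \ref{mainthm}(ii), and the pointwise-limit Proposition \ref{prop:point}.

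For part (i), the strategy is to apply Proposition \ref{corcompthm} to the constant sequence $E_n\equiv E$ along any sequence $r_n\to 0^+$ realizing the (finite) limsup in the hypothesis; since $\chi_{E_n}=\chi_E$ trivially converges to $\chi_E$ in $\mathrm{L}^1(\R^d)$, that proposition immediately returns $\Per(E)<+\infty$. For part (ii), assume $E$ has finite perimeter. The first inequality in \eqref{catdiscarseteng} is obtained by applying Theorem \ref{mainthm}(ii) again to the constant sequence $E_n\equiv E$ along an arbitrary $r_n\to 0^+$ and exploiting the arbitrariness of $\{r_n\}$. For $s>1$, Proposition \ref{prop:point}, which as stated holds for every finite-perimeter set in $\Mf$, yields directly the pointwise limit $\lim_{r\to 0^+}\tildeJ(E)/\sca^s(r)=\omega_{d-1}\Per(E)$; this gives \eqref{carasma1} and simultaneously the upper bound of \eqref{catdiscarseteng} with $M(s,d)=\omega_{d-1}$.

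The only genuine work is the limsup bound in the borderline case $s=1$, since Proposition \ref{prop:point} for $s=1$ is proved only for smooth sets. For this case I would exploit the difference-quotient characterization of $\mathrm{BV}$ functions (Theorem \ref{bvpertrasthm}). Using the symmetric form of $\tildeJ$ and Fubini one writes
\[
\tilde J_r^1(E)=\tfrac{1}{2}\int_{\R^d}k^1_r(|z|)\Big(\int_{\R^d}|\chi_E(x+z)-\chi_E(x)|\ud x\Big)\ud z,
\]
and then splits the outer integral into $|z|\le 1$ and $|z|>1$. On $\{|z|>1\}$ one has $k^1_r(|z|)=|z|^{-d-1}$, which is integrable at infinity, while the inner integral is trivially bounded by $2|E|$; hence this contribution is uniformly bounded in $r$ and therefore vanishes after division by $\sca^1(r)=|\log r|$. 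On $\{|z|\le 1\}$ the difference-quotient bound gives the inner integral $\le \Per(E)\,|z|$, and a direct polar-coordinate computation of $\int_{|z|\le 1}k^1_r(|z|)|z|\ud z$ produces the leading behavior $d\omega_d\,|\log r|+O(1)$. Combining the two pieces and dividing by $|\log r|$ gives $\limsup_{r\to 0^+}\tilde J_r^1(E)/|\log r|\le \tfrac{d\omega_d}{2}\Per(E)=M(1,d)\Per(E)$, which closes \eqref{catdiscarseteng}.

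The main obstacle is precisely this $s=1$ upper bound; the other three inequalities are immediate consequences of results already established. The only care needed is to isolate the $|\log r|$-divergent part of the kernel integral via the polar-coordinate computation and to verify that the far-field piece $|z|>1$ is absorbed into the $o(|\log r|)$ error, which is straightforward given the explicit form of $k^1_r$ and the finiteness of $|E|$.
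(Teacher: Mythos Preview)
Your proposal is correct and follows essentially the same route as the paper: part (i) via Proposition \ref{corcompthm} with the constant sequence, the liminf bound via Theorem \ref{mainthm}(ii), the case $s>1$ via Proposition \ref{prop:point}, and the $s=1$ limsup via the difference-quotient characterization together with the splitting $|z|\le 1$ versus $|z|>1$ (which is exactly the paper's decomposition $\tildeJ=G_r^s+F_1^s$ of \eqref{deco}, with the far-field piece handled by Remark \ref{zeroor}). The polar-coordinate computation you indicate matches the paper's explicit constant $\frac{d\omega_d}{2}\Per(E)\big(1+\frac{1}{(d+1)|\log r|}\big)$.
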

\begin{remark}
		 \rm{We notice that in the case $s=1$ the constant $M(1,d)=\frac{d \omega_{d}}{2} > \omega_{d-1}$\,,
	so that the existence of the limit \eqref{carasma1} is not proven in such a case.
	}
\end{remark}
\begin{proof}[Proof Theorem \ref{thmfondcarPer}: ]
We notice that (i) is an immediate consequence of Proposition \ref{corcompthm} taking $E_n\equiv E$ for every $n\in\N$\,.
We prove (ii). The $\Gamma$-liminf inequality Theorem \ref{mainthm}(ii) implies the first inequality in \eqref{catdiscarseteng}. Being the second inequality obvious we pass to the proof of the last one.
	If $s>1$ then, by Proposition \ref{prop:point}, we have
	\begin{equation}\label{casos>1carPer}
	\lim_{r \to 0^+} \frac{\tilde{J}_r^s(E)}{\sca^s(r)}= \omega_{d-1} \Per(E).
	\end{equation}
	Let now $s=1$. Let $ G_r^1 $ be the functional defined in \eqref{G1r}; by Theorem \ref{bvpertrasthm} we obtain
	\begin{equation}\label{svilcarseteng}
	\begin{split}
	\frac{G_r^1(E) }{\sca^1(r)}=& 	\frac{1 }{\vert \log r \vert} \int_{E} \int_{E^c \cap B(x,1)} k_r^1(\vert x- y\vert)\ud y \ud x\\
	= & \frac{1 }{2\vert \log r \vert} \int_{\mathbb{R}^d} \int_{B(x,1)} \vert \chi_{E}(x)-\chi_{E}(y) \vert k_r^1(\vert x-y \vert)\ud y \ud x\\
	=&  \frac{1 }{2\vert \log r \vert}\int _{B(0,1)}  k_r^1(\vert h \vert) \int_{\mathbb{R}^d}\vert \chi_{E}(x+h)-\chi_{E}(x)\vert  \ud x \ud h\\
	\leq & \frac{1 }{2\vert \log r \vert}\Per(E) \int_{B(0,1)} \vert h \vert k_r^1(\vert h \vert) \ud h \\
	= &  \frac{d \omega_{d}}{2}\Per(E) \bigg(1+ \frac{1}{(d+1)\vert \log r \vert}\bigg).
	\end{split}
	\end{equation}
	Moreover, by Remark \ref{zeroor} we have that
	\begin{equation}\label{limsupzeroF1s(E)}
	\lim_{r \rightarrow 0^+} \frac{F_1^1(E)}{\sigma^1(r)}=0\,,
	\end{equation}
	where $F_1^1 $ is the functional defined in formula \eqref{F11}.
	
Therefore by formulas \eqref{deco}, \eqref{svilcarseteng}, and \eqref{limsupzeroF1s(E)} we have
\begin{equation}\label{limsuplimM(d,s)P}
\limsup_{r \rightarrow 0^+} \frac{\tilde{J}_r^1(E)}{\sca^1(r)}= \limsup_{r \rightarrow 0^+}\frac{G_r^1(E)}{\sca^1(r)} +\lim_{r \rightarrow 0^+}\frac{F_r^s(E)}{\sca^1(r)}
\leq  \frac{d \omega_{d}}{2}\Per(E)\,.
\end{equation}
thus concluding the proof of (ii).
By \eqref{casos>1carPer} and \eqref{limsuplimM(d,s)P} we conclude the proof of (ii).
\end{proof}
 \section{Convergence of curvatures and mean curvature flows}\label{curf}
 In this section we study the behavior of the non-local curvatures corresponding to the functionals $\tildeJ$ and of the corresponding geometric flows. Using the approach in \cite{CMP,CDNP}, it is enough to focus on smooth enough sets. To this purpose, we introduce  the class $\Reg$ as the class of the  subsets of $\R^d$, which are closures of open sets with  compact $C^2$ boundary. 
 Moreover, we define a notion of convergence in $\mathfrak{C}$ as follows.
 Let $\{E_n\}_{n\in\N} \subset \mathfrak{C}$ we say that $E_{n} \rightarrow E $ in $\mathfrak{C}$ as $n\to +\infty$, for some $E \in \mathfrak{C}$, if there exists a sequence of diffeomorphisms $\{\Phi_n\}_{n\in\N}$ converging to the identity in $C^2$ as $n\to +\infty$, such that $\Phi_{n}(E)=E_n$ for every $n\in\N$.
 In the following, we will extend this notion of convergence (in the obvious way) to families of sets $\{E_\rho\}_{\rho\in (0,1)}\subset\mathfrak{C}$ as the parameter $\rho\to 0^+$\,.
 
 Notice that if $E\in\mathfrak{C}$, then either $E$ or $E^c$ is compact. Therefore, in order to define the supercritical perimeters and the corresponding curvatures on the whole $\mathfrak{C}$, it is convenient to set $\tildeJ(E):=\tildeJ(E^c)$ for every set $E\in\Me$ with $E^c\in\Mf$.
 \subsection{Non-local $k_{r}^s$-curvatures}
Let $s\ge 1$, $r>0$ and $E\in\Reg$\,.
 For every $x\in\partial E$ we define the {\it $k^s_r$-curvature of $E$ at $x$} as
\begin{equation}\label{defcurv}
\Ku^s_r(x,E):=\int_{\R^d}(\chi_{E^c}(y)-\chi_{E}(y))k^s_r(|x-y|)\ud y.
\end{equation}
Although this fact may be immediate for the experts, we show that $\Ku^s_r$ is the first variation of the functional $\tildeJ$ in the sense specified by the following proposition. 
\begin{proposition}[First variation]\label{curvfirstvarper}
	Let $s\geq 1$, $r>0$, and $E \in \mathfrak{C}$.   
	Let $\Phi: \mathbb{R} \times \mathbb{R}^d \rightarrow \mathbb{R}^d$ be a smooth function, and let $\{\Phi_t\}_{t\in\R}$ be defined by $\Phi_t(\cdot):= \Phi(t,\cdot)$ for every $t\in\R$\,.
	Assume that $\{\Phi_t\}_{t\in\R}$ is a family of  diffeomorphisms with 
	 $\Phi_0= \mathrm{Id}$  and that there exists an open bounded set $A \subset \mathbb{R}^d$ such that
	\begin{equation}\label{assu}
	\{x \in \mathbb{R}^d: \; x \neq \Phi_t(x) \} \subset A \quad \text{for all $t \in \mathbb{R}$}\,.
	\end{equation}
	Setting $E_t:= \Phi_t(E)$ and $\Psi(\cdot):= \frac{\partial }{\partial t}\Phi_t(\cdot) \big|_{t=0}$\,, we have
	\begin{equation}\label{clai}
	\frac{\ud}{\ud t} \tildeJ(E_t){\bigg|_{t=0}}= \int_{\partial E} \Ku^s_r(x,E) \Psi(x)\cdot \nu_{E}(x) \ud\mathcal{H}^{d-1}(x)\,.
	\end{equation}	
\end{proposition}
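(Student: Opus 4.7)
The plan is to differentiate $\tilde J^s_r(E_t)=\int_{E_t}\int_{E_t^c}k^s_r(|x-y|)\,\ud y\,\ud x$ (expression \eqref{Jtildefrac}) by applying the Reynolds transport formula successively to the two integration domains, and then to combine the resulting boundary integrals through Fubini so that the kernel bracket reconstructs $\Ku^s_r(x,E)$ as defined in \eqref{defcurv}. The hypothesis \eqref{assu} guarantees that the integrand is constant in $t$ outside the bounded set $A$, so every quantity in sight is finite and the Reynolds framework applies locally near $\partial E$.

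Concretely, set $g(t,x):=\int_{E_t^c}k^s_r(|x-y|)\,\ud y$, so that $\tilde J^s_r(E_t)=\int_{E_t}g(t,x)\,\ud x$. Since $\Phi_t$ is a bijection of $\R^d$, $E_t^c=\Phi_t(E^c)$, and the outward normal velocity of $\partial E_t$ (respectively of $\partial E_t^c$) at $t=0$ equals $\Psi\cdot\nu_E$ (resp.\ $-\Psi\cdot\nu_E$). Applying Reynolds transport to the outer integral gives
\begin{equation*}
\frac{\ud}{\ud t}\tilde J^s_r(E_t)\bigg|_{t=0}=\int_E\partial_tg(0,x)\,\ud x+\int_{\partial E}g(0,x)\,\Psi(x)\cdot\nu_E(x)\,\ud\mathcal H^{d-1}(x),
\end{equation*}
while applying Reynolds to $g(\cdot,x)$ for fixed $x\in E$ produces
\begin{equation*}
\partial_tg(0,x)=-\int_{\partial E}k^s_r(|x-y|)\,\Psi(y)\cdot\nu_E(y)\,\ud\mathcal H^{d-1}(y).
\end{equation*}
Fubini and the relabeling $x\leftrightarrow y$ convert the first term in the preceding display into $-\int_{\partial E}\bigl(\int_E k^s_r(|x-y|)\,\ud y\bigr)\Psi(x)\cdot\nu_E(x)\,\ud\mathcal H^{d-1}(x)$; summing this with the remaining boundary term and invoking \eqref{defcurv} yields \eqref{clai}.

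The main technical point is justifying the interchange of the time derivative and integration in the Reynolds steps, since $k^s_r$ is only Lipschitz, with a corner on the surface $\{|x-y|=r\}$. However $k^s_r$ is uniformly bounded by $r^{-d-s}$, $\partial E$ is a compact $C^2$ hypersurface (as $E\in\Reg$), and the critical locus meets $\partial E\times\R^d$ on a set of measure zero for $\mathcal H^{d-1}\otimes\mathcal L^d$, so dominated convergence secures the interchange. A more pedestrian route, bypassing Reynolds, is to apply the change of variables $x=\Phi_t(\xi)$, $y=\Phi_t(\eta)$ directly in \eqref{Jtildefrac}, differentiate the Jacobians (producing $\Div\Psi$ terms) and the argument of $k^s_r$ at $t=0$, and then integrate by parts using $\Div T^s_r(\cdot)=k^s_r(|\cdot|)$ from \eqref{divcamp} to recover the boundary form. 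The case in which $E^c$ rather than $E$ is the bounded component reduces to the previous one via the convention $\tilde J^s_r(E):=\tilde J^s_r(E^c)$, since $\nu_{E^c}=-\nu_E$ and $\Ku^s_r(\cdot,E^c)=-\Ku^s_r(\cdot,E)$ compensate to give the same identity.
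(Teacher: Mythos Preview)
Your Reynolds-transport argument is correct and is a genuinely different route from the paper's. The paper proceeds via the change of variables $x=\Phi_t(\xi)$, $y=\Phi_t(\eta)$ in \eqref{Jtildefrac}, expands the Jacobians as $1+t\,\Div\Psi+\mathrm{o}(t)$, and then has to differentiate the kernel argument $t\mapsto k^s_r(|\Phi_t(\xi)-\Phi_t(\eta)|)$; since $k^s_r$ is only Lipschitz, this forces a careful dominated-convergence argument with the weak derivative $(k^s_r)'$ before the divergence theorem converts the resulting volume integrals (involving $(k^s_r)'$ and $\Div\Psi$) into boundary integrals. Your approach sidesteps all of this: because the kernel has no explicit $t$-dependence, the two applications of Reynolds produce boundary terms directly, and the only analytic ingredient needed is that $k^s_r$ is continuous and bounded (so traces on $\partial E$ and the flux integrals make sense) together with the localization \eqref{assu} to handle the unbounded complement. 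What the paper's longer computation buys is a fully explicit justification of every limit exchange, whereas your proof leans on a version of Reynolds for continuous integrands on $C^2$ moving domains; this is standard, but your one-line appeal to dominated convergence for the ``critical locus'' is a bit terse for the outer Reynolds step, where one should really check that $t\mapsto g(t,x)$ is $C^1$ uniformly for $x$ near $\partial E$. Your alternative ``pedestrian route'' is essentially the paper's argument (the paper uses the divergence theorem with $(k^s_r)'$ rather than the vector field $T^s_r$, but the content is the same).
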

\begin{proof}
	By Taylor expansion for every $x\in\R^d$ we have that $\Phi_t(x)=x +t \Psi(x)+\mathrm{o}(t)$. Therefore the Jacobian $\J\Phi_t$ of $\Phi_t$ is equal to
	\begin{equation*}
	\J\Phi_t(x):= \sqrt{\det(\nabla \Phi_t(x)\, \nabla\Phi_t(x)^* )}= 1+ t \Div (\Psi(x))+ \mathrm{o}(t)\,,
	\end{equation*}
	where, for every $A\in\R^{m\times k}$ ($m,k\in\N$), the symbol $A^*$ denotes the transpose of the matrix $A$\,.
By change of variable, it follows that
	\begin{equation}\label{fvsviluppodiJPHIt}
	\begin{split}
	\tildeJ(E_t) =& \int_{\Phi_t(E)} \int_{\Phi_t(E^c)} k_{r}^{s}(\vert x-y \vert)\ud y \ud x  \\
	 =& \int_{E} \int_{E^c} k_{r}^{s}(\vert \Phi_t(x)- \Phi_t(y) \vert ) \J\Phi_t(x) \J\Phi_t(y) \ud y \ud x \\
 =&\int_{E} \int_{E^c}k_r^s(\vert \Phi_t(x)-\Phi_t(y) \vert)\ud y\ud x\\
 & +t\int_{E} \int_{E^c}k_r^s(\vert \Phi_t(x)-\Phi_t(y) \vert)  \Big(\Div \Psi(x)+\Div \Psi(y)\Big)\ud y\ud x\\
 &+\mathrm{o}(t)\int_{E} \int_{E^c}k_r^s(\vert \Phi_t(x)-\Phi_t(y) \vert) \ud y\ud x\,.
	\end{split}
	\end{equation}
Let $(k_{r}^{s})': (0,+\infty) \rightarrow \mathbb{R}$ be the weak derivative of $k_{r}^s: (0,+\infty) \rightarrow \mathbb{R}$, that is equal a.e. to
\begin{equation*}
(k^s_{r})'(h):=
\left\{
\begin{array}{ll}
\displaystyle 0 & \text{ for $ 0 < h  < r $ }  \, , \\
\displaystyle -(d+s)\frac{1}{ h^{d+s+1}} & \text{ for }   h > r \, .
\end{array}
\right.
\end{equation*}
Notice that $k_r^s \in W^{1,1}(\mathbb{R})$.	
We set
\begin{equation*}
\begin{aligned}
K(t):=&
\int_{E}\int_{E^c}\Big( k_{r}^{s}(\vert \Phi_t(x)-\Phi_t(y) \vert)-k_{r}^{s}(\vert x-y \vert)\\
&-t(k_r^s)'(\vert x-y \vert) \frac{x-y}{\vert x-y \vert} \cdot (\Psi(x)-\Psi(y))\Big)\ud y\ud x
\end{aligned}
\end{equation*}
and we claim that
\begin{equation}\label{DerdiKrs1}
\lim_{t\to 0}\frac{K(t)}{t}=0\,.
\end{equation}
By the fundamental theorem of calculus, we have
\begin{equation*}
\begin{aligned}
&\int_{E}\int_{E^c} \Big(k_{r}^{s}(\vert \Phi_t(x)-\Phi_t(y) \vert)-k_{r}^{s}(\vert x-y \vert)\Big)\ud y \ud x\\
=&\int_{0}^{t}\ud\tau\bigg[\int_{E}\int_{E^c}(k_r^s)'(\vert \Phi_\tau(x)-\Phi_\tau(y) \vert) \frac{\Phi_\tau(x)-\Phi_\tau(y)}{\vert \Phi_\tau(x)-\Phi_\tau(y) \vert} \cdot \Big(\frac{\partial \Phi_{\tau}}{\partial \tau} (x)-\frac{\partial \Phi_{\tau}}{\partial \tau} (y)\Big)\ud y\ud x\bigg]\,,
\end{aligned}
\end{equation*}
so that
\begin{equation*}
\begin{split}
\bigg\vert\frac{K(t)}{t}\bigg\vert=
 & \bigg| \frac{1}{t}\int_{0}^{t} \biggl[\int_{E} \int_{E^c} (k_r^s)'(\vert \Phi_\tau(x)-\Phi_{\tau}(y) \vert) \frac{\Phi_\tau(x)-\Phi_\tau(y)}{\vert \Phi_\tau(x)-\Phi_\tau(y) \vert} \cdot \Big(\frac{\partial \Phi_{\tau}}{\partial \tau} (x)-\frac{\partial \Phi_{\tau}}{\partial \tau} (y)\Big) \\
 & - (k_r^s)'(\vert x-y \vert) \frac{x-y}{\vert x-y \vert} \cdot (\Psi(x)-\Psi(y)) \ud y \ud x \biggr]\ud \tau \bigg| \\
 \leq & \frac{1}{|t|}\int_{0}^{|t|} \bigg| \int_{E} \int_{E^c} (k_r^s)'(\vert \Phi_\tau(x)-\Phi_{\tau}(y) \vert) \frac{\Phi_\tau(x)-\Phi_\tau(y)}{\vert \Phi_\tau(x)-\Phi_\tau(y) \vert} \cdot \Big(\frac{\partial \Phi_{\tau}}{\partial \tau} (x)-\frac{\partial \Phi_{\tau}}{\partial \tau} (y)\Big) \\
 & - (k_r^s)'(\vert x-y \vert) \frac{x-y}{\vert x-y \vert} \cdot (\Psi(x)-\Psi(y)) \ud y \ud x \bigg|\ud \tau \\
 =:&  \frac{1}{|t|}\int_{0}^{|t|}\bigg| \int_{E} \int_{E^c} \big(f_\tau(x,y)-f_0(x,y)\big) \ud y\ud x\bigg|\ud\tau\,,
\end{split}
\end{equation*}
where in the last line 
 for every $\tau\in\R$ and for every $(x,y)\in\R^d\times\R^{d}$ we have set
\begin{equation*}
f_\tau(x,y):=(k_r^s)'(\vert \Phi_\tau(x)-\Phi_{\tau}(y) \vert) \frac{\Phi_\tau(x)-\Phi_\tau(y)}{\vert \Phi_\tau(x)-\Phi_\tau(y) \vert} \cdot \Big(\frac{\partial \Phi_{\tau}}{\partial \tau} (x)-\frac{\partial \Phi_{\tau}}{\partial \tau} (y)\Big)\,.
\end{equation*}
Notice that \eqref{DerdiKrs1} follows if we show that
\begin{equation}\label{ficl}
\int_{E} \int_{E^c} f_\tau(x,y) \ud y\ud x\to \int_{E} \int_{E^c}f_0(x,y) \ud y\ud x\qquad\textrm{as }\tau\to 0\,.
\end{equation}
By change of variable, we get
\begin{multline}\label{chv}
\int_{E} \int_{E^c} f_\tau(x, y)\ud y \ud x \\ =\int_{\mathbb{R}^{d}}\int_{\R^d} f_0(x,y) \J \Phi_{\tau}^{-1}(x)\J \Phi_{\tau}^{-1}(y)\chi_{E \times E^c}(\Phi_{\tau}^{-1}(x), \Phi_{\tau}^{-1}(y))\ud y \ud x\,.
\end{multline}
Setting $C(\J):=\sup_{\tau\in (0,1)}\ \| \J \Phi_{\tau}^{-1} \|_{L^\infty}$\,, by \eqref{assu}, we have that 
\begin{multline*}
|f_0(x,y)|  \J \Phi_{\tau}^{-1}(x)\J \Phi_{\tau}^{-1}(y)\chi_{E \times E^c}(\Phi_{\tau}^{-1}(x), \Phi_{\tau}^{-1}(y)) \\
  \leq [C(\J)]^2 |f_0(x,y)|[\chi_{E \cap A}( \Phi_{\tau}^{-1}(x))+\chi_{E\cap A^c}(x) ] [ \chi_{E^c \cap A}( \Phi_{\tau}^{-1}(y))+\chi_{E^c\cap A^c}(y) ]\,\\
    \leq [C(\J)]^2 |f_0(x,y)|[\chi_{A}(x)+\chi_{E}(x) ] [ \chi_{A}(y)+\chi_{E^c}(y) ]\, ,
\end{multline*}
where the right hand side term is clearly in  $\mathrm{L}^{1}(\mathbb{R}^{2d})$\,.
This fact together with \eqref{chv} and 
$$ 
\J \Phi_{\tau}^{-1}(x)\J \Phi_{\tau}^{-1}(y)\chi_{E \times E^c}(\Phi_{\tau}^{-1}(x), \Phi_{\tau}^{-1}(y)) \rightarrow \chi_{E\times E^c}(x,y) \quad \text{a.e. as $\tau \rightarrow 0$\,,}
$$
yields by Lebesgue Dominated Convergence Theorem,  \eqref{ficl} and, in turn,  \eqref{DerdiKrs1}\,.
By \eqref{fvsviluppodiJPHIt} and \eqref{DerdiKrs1}, and using the divergence theorem, we obtain that
\begin{equation*}
\begin{split}
\frac{\ud}{\ud t} \tildeJ(E_t) \bigg|_{t=0}= &  \int_{E} \int_{E^c} (k_r^s)'(\vert x-y \vert) \frac{x-y}{\vert x-y \vert} \cdot (\Psi(x)-\Psi(y))\ud y\ud x \\
 & +\int_{E} \int_{E^c}k_r^s(\vert x-y \vert) (  \Div \Psi(x)+ \Div \Psi(y))\ud y\ud x \\
=& \int_{E} \int_{E^c} (k_r^s)'(\vert x-y \vert) \frac{x-y}{\vert x-y \vert} \cdot (\Psi(x)-\Psi(y))\ud y \ud x \\
&+ \int_{E^c} \biggl[- \int_{E} (k_r^s)'(\vert x-y \vert )\,\Psi(x) \cdot   \frac{x-y}{\vert x-y \vert}\ud x \\
& + \int_{\partial E} k_{r}^{s}(\vert x-y \vert)\, \Psi(x)\cdot \nu_{E}(x) \ud  \mathcal{H}^{d-1}(x)  \biggr]\ud y \\
&+  \int_{E} \biggl[ \int_{E^c}  (k_r^s)'(\vert x-y \vert )\,\Psi(y) \cdot  \frac{x-y}{\vert x-y \vert}\ud y \\
& - \int_{\partial E}  k_{r}^{s}(\vert x-y \vert) \,\Psi(y)\cdot \nu_{E}(y)\ud  \mathcal{H}^{d-1}(y)  \biggr]\ud x \\
=& \int_{\partial E} \Psi(x)\cdot \nu_{E}(x) \int_{\R^d}(\chi_{E^c}(y)-\chi_{E}(y))k^s_r(|x-y|)\ud y \ud \mathcal{H}^{d-1}(x) \\
=& \int_{\partial E} \Ku^s_r(x,E) \Psi(x)\cdot\nu_{E}(x) \ud\mathcal{H}^{d-1}(x), 
\end{split}
\end{equation*}
whence \eqref{clai} follows.
\end{proof}
In Proposition \ref{proprietàcurvature1} we prove some qualitative properties of the curvatures $\Ku^{s}_r$ defined in \eqref{defcurv}, which imply in particular that $\Ku^{s}_r$ are non-local curvatures in the sense of \cite{CMP,CDNP}.
 \begin{proposition}\label{proprietàcurvature1}
 For every $s\ge 1$, $0<r<1$ the functionals $\Ku^s_r$ defined in \eqref{defcurv} satisfy the following properties:
 \begin{itemize}
 \item[(M)] Monotonicity: If 
$E, F\in\Reg$ with $E\subseteq F$, and if $x\in \partial F\cap \partial E$,
then $\Ku^s_r(x,F)\le\Ku^s_r(x,E)$;
\item[(T)] Translational invariance: for any $E\in\Reg$, $x\in\partial E$,
$y\in\R^d$, $\Ku^s_r(x,E)=\Ku^s_r(x+y,E+y)$;
\item[(S)] Symmetry: 
For all $E\in\Reg$ and for every $x\in\partial E$ it holds 
$$
\Ku^s_r(x,E)= -\Ku^s_r(x,\R^d\setminus \overset{\circ}{E})\,,
$$
where $\overset{\circ}{E}$ denotes the interior of $E$.
\item[(B)] Lower bound on the  curvature of the balls: 
\begin{equation}\label{lwrbdkappa}
\Ku^s_r(x,\overline{B}(0,\rho)) \geq  0   \quad \text{ for all  } x\in\partial B(0,\rho)\,,\, \rho>0\,;
\end{equation}
\item[(UC)] Uniform continuity: There exists a modulus of continuity $\omega_r$ such that the following holds.
For every $E\in\Reg$, $x\in\partial E$,
 and for every diffeomorphism $\Phi:\R^d\to \R^d$ of class {$C^2$},
with $\Phi= \mathrm{Id}$ in $\R^d\setminus B(0,1)$, we have  
$$
|\Ku^s_r(x,E) - \Ku^s_r(\Phi(x),\Phi(E))|\le \omega_r(\|\Phi - \mathrm{Id}\|_{C^2})\,.
$$
 \end{itemize}
 \end{proposition}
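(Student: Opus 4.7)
My plan rests on the representation
\begin{equation*}
\Ku^s_r(x,E)=\lambda^s_r-2\int_E k^s_r(|x-y|)\ud y,
\end{equation*}
which is well defined since $k^s_r$ is nonnegative, bounded by $r^{-d-s}$, radial, continuous on $[0,+\infty)$ (the two branches in \eqref{kernel} match at $t=r$), monotone nonincreasing and globally Lipschitz with constant $(d+s)/r^{d+s+1}$, with total mass $\lambda^s_r$. From this, (M), (T), (S) and (B) are elementary consequences. For (M) one uses that $E\subseteq F$ implies $\chi_{F^c}-\chi_F\le\chi_{E^c}-\chi_E$ pointwise, and $k^s_r\ge 0$. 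For (T) one performs the change of variable $z\mapsto z+y$ and exploits the radial symmetry of $k^s_r(|\cdot|)$. For (S) one observes that $\partial E$ has Lebesgue measure zero for $E\in\Reg$, so $\chi_{\R^d\setminus\overset{\circ}{E}}=\chi_{E^c}$ a.e., and the integrand changes sign. For (B) one normalizes $x=\rho e_1$ by (T) and the rotational invariance of the kernel, notes that $\overline{B}(0,\rho)\subset H_{e_1}^-(x)$ so that $\chi_{\overline{B}(0,\rho)^c}-\chi_{\overline{B}(0,\rho)}\ge\chi_{H_{e_1}^+(x)}-\chi_{H_{e_1}^-(x)}$ a.e., and concludes via the reflection $y-x\mapsto -(y-x)$ that $\int_{\R^d}(\chi_{H_{e_1}^+(x)}-\chi_{H_{e_1}^-(x)})k^s_r(|x-y|)\ud y=0$.

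The crux is (UC). Changing variables $z=\Phi(y)$ in the integral defining $\Ku^s_r(\Phi(x),\Phi(E))$ yields
\begin{equation*}
\Ku^s_r(x,E)-\Ku^s_r(\Phi(x),\Phi(E))=2\int_E\bigl[k^s_r(|\Phi(x)-\Phi(y)|)\J\Phi(y)-k^s_r(|x-y|)\bigr]\ud y,
\end{equation*}
so that, adding and subtracting $k^s_r(|\Phi(x)-\Phi(y)|)$ and estimating $|\chi_E|\le 1$, the absolute value is bounded by
\begin{equation*}
2\|\J\Phi-1\|_\infty\lambda^s_r+2\int_{\R^d}\bigl|k^s_r(|\Phi(x)-\Phi(y)|)-k^s_r(|x-y|)\bigr|\ud y.
\end{equation*}
The first summand is of order $\|\Phi-\mathrm{Id}\|_{C^1}$. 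For the second I would split the integration at $|x-y|=2$: on $\{|x-y|\le 2\}$ the Lipschitz constant of $k^s_r$, together with the elementary estimate $\bigl||\Phi(x)-\Phi(y)|-|x-y|\bigr|\le\|\Phi-\mathrm{Id}\|_{C^1}|x-y|$, provides a contribution of order $\|\Phi-\mathrm{Id}\|_{C^1}$ times an $r$-dependent factor. On $\{|x-y|>2\}$ the hypothesis $\Phi=\mathrm{Id}$ outside $B(0,1)$ forces the integrand to vanish whenever both $|x|>1$ and $|y|>1$; in the remaining subcase one uses the mean value inequality for $t\mapsto t^{-d-s}$ on $[r,+\infty)$, combined with $|x-y|\ge |y|/2$ at infinity, to produce an integrable bound of order $|y|^{-d-s-1}\|\Phi-\mathrm{Id}\|_\infty$. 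Summing the estimates produces a modulus $\omega_r$ depending only on $r$, $d$, $s$ (in fact, linear in $\|\Phi-\mathrm{Id}\|_{C^1}$).

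The main obstacle I foresee is the lack of differentiability of $k^s_r$ at $t=r$, which forbids a direct first-order Taylor expansion. This is, however, absorbed at no cost by the global Lipschitz property of $k^s_r$: points where $|x-y|$ is near the threshold $r$ while $|\Phi(x)-\Phi(y)|$ has crossed it (and vice versa) contribute at most a quantity controlled by the Lipschitz constant of $k^s_r$ times the small displacement, thereby producing the announced $\omega_r$ without any further refinement. All in all, (UC) is the most laborious item but presents no conceptual obstruction once the integrability, boundedness and Lipschitz regularity of $k^s_r$ have been exploited.
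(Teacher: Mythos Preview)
Your argument is correct. Properties (M), (T), (S) match the paper's verbatim, and for (B) you compare the ball to its tangent half-space while the paper compares it to the reflected ball $B(2x,\rho)\subset B(0,\rho)^c$ via the change of variable $z=2x-y$; these are two implementations of the same reflection idea.

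The genuine difference is in (UC). The paper does not use the representation $\Ku^s_r=\lambda^s_r-2\int_E k^s_r(|x-\cdot|)$ nor the global Lipschitz regularity of the truncated kernel. Instead it runs an $\varepsilon/3$ argument: fix $\varepsilon>0$, choose a radius $\eta_\varepsilon$ so small that $\int_{B(0,\eta_\varepsilon)}k^s_r<\varepsilon/3$, discard the contributions to both curvatures coming from $B(x,\eta_\varepsilon)$ and $B(\Phi(x),\eta_\varepsilon)$ by this mass bound, and compare the remaining far-field pieces via change of variable together with the local Lipschitz estimate for $t\mapsto t^{-d-s}$ on $[\eta_\varepsilon,\infty)$ and the bound on $|\J\Phi-1|$. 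This yields only an abstract modulus $\omega_r$. Your route exploits the fact that the core-radius truncation makes $k^s_r$ globally Lipschitz with constant $(d+s)r^{-d-s-1}$, which eliminates the small-ball cutoff and produces a modulus that is explicitly linear in $\|\Phi-\mathrm{Id}\|_{C^1}$. The paper's approach, on the other hand, is the standard one for nonlocal curvatures with kernels that are singular at the origin (where no global Lipschitz bound is available) and so transfers unchanged to more general settings. One small wrinkle in your write-up: $\int_{\R^d}k^s_r(|\Phi(x)-\Phi(y)|)\,\mathrm{d}y$ is not literally $\lambda^s_r$; after the change of variable $z=\Phi(y)$ it equals $\int_{\R^d}k^s_r(|\Phi(x)-z|)\J\Phi^{-1}(z)\,\mathrm{d}z$, so either bound $\J\Phi^{-1}$ by a constant close to $1$, or add and subtract $k^s_r(|x-y|)$ instead of $k^s_r(|\Phi(x)-\Phi(y)|)$, which makes the first summand exactly $2\|\J\Phi-1\|_\infty\lambda^s_r$.
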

\begin{proof}
	We prove separately the properties above.
\vskip4pt
\noindent
	\textit{Property} \textmd{(M):} Let $E,F \in \mathfrak{C}$ such that $E \subseteq F$, then  $-\chi_{F} \leq -\chi_{E}$ and 
	$\chi_{F^c} \leq \chi_{E^c}$. Therefore for all $x \in \partial E \cap \partial F$, we have
	\begin{equation*}
	\begin{split}
	& \Ku^s_r(x,F)=\int_{\R^d}(\chi_{F^c}(y)-\chi_{F}(y))k^s_r(|x-y|)\ud y \\
	&\leq \int_{\R^d}(\chi_{E^c}(y)-\chi_{E}(y))k^s_r(|x-y|)\ud y=\Ku^s_r(x,E)\,.
	\end{split}
	\end{equation*}
\vskip4pt
\noindent
	\textit{Property} \textmd{(T):} Let $E \in \mathfrak{C}$\,, $x \in \partial E$ and $y \in \mathbb{R}^d$. By the change of variable $ \zeta= \eta-y$, we obtain
	\begin{equation*}
	\begin{split}
	\Ku^s_r(x+y,E+y)=&\int_{\R^d}(\chi_{E^c+y}(\eta)-\chi_{E+y}(\eta))k^s_r(|x+y-\eta|)\ud \eta \\
	= &\int_{\R^d}(\chi_{E^c}(\zeta)-\chi_{E}(\zeta))k^s_r(|x-\zeta|)\ud \zeta=\Ku^s_r(x,E)\,.
	\end{split}
	\end{equation*}
\vskip4pt
\noindent
	\textit{Property} \textmd{(S):} Let $E \in \mathfrak{C}$ and $x \in \partial E$, then we have
	\begin{equation*}
	\begin{split}
	 \Ku_{r}^{s}(x,E)= &\int_{\mathbb{R}^d} (\chi_{E^c}(y)-\chi_{E}(y))k_{r}^{s}(\vert y-x \vert)\ud y \\
	 =&-\int_{\mathbb{R}^d}(\chi_{E}(y)-\chi_{E^c}(y))k_{r}^{s}(\vert x-y \vert)\ud y= -\Ku_{r}^{s}(x,\R^d\setminus\overset{\circ}{E}).
	\end{split}
	\end{equation*}
\vskip4pt
\noindent
	\textit{Property} \textmd{(B):} Let $\rho >0$ and $\bar x \in \partial B(0,\rho)$\,.
	Since $B(2\bar x,\rho)\subset B^{c}(0,\rho)=\R^d\setminus \overline{B}(0,\rho)$\,, we get
	\begin{equation}\label{curBpos2}
	\begin{split}
	\Ku_{r}^{s}(\bar x,\overline{B}(0,\rho)) =& \int_{\mathbb{R}^d} (\chi_{B^c(0,\rho)}(y)-\chi_{B(0,\rho)}(y))k_{r}^{s}(\vert \bar x-y \vert)\ud y \\
	\geq &\int_{\mathbb{R}^d} (\chi_{B(2\bar x,\rho)}(y)-\chi_{B(0,\rho)}(y))k_{r}^{s}(\vert \bar x-y \vert)\ud y 	= 
	0,
	\end{split}
	\end{equation}
	where in the last equality we have used the change of variable $z=2\bar x-y$ and the radial symmetry of $k^s_r$ to deduce that
	\begin{equation*}
	\int_{\mathbb{R}^d} \chi_{B(2\bar x,\rho)}(y) k_{r}^{s}(\vert \bar x-y \vert)\ud y 
	=\int_{\R^d}\chi_{B(0,\rho)}(z)k^s_r(\vert \bar x-z \vert)\ud z\,.
	\end{equation*}
	Hence, by formula \eqref{curBpos2}, \eqref{lwrbdkappa} follows.
\vskip4pt
\noindent
\textit{Property} \textmd{(UC):} 
Let $\Phi: \mathbb{R}^d \rightarrow \mathbb{R}^d$ be a diffeomorphism of class $C^2$, with $\Phi(y)=y$ for all $\vert y-x \vert \geq 1$\,. We set $\E:=\Phi(E)$\,.
Let moreover $ \theta_{k_{r}^{s}} : [0,+\infty) \rightarrow \mathbb{R}$ be the function defined by $\theta_{k_{r}^{s}}(\eta):= \int_{B(0,\eta)}k_{r}^{s}(\vert z \vert)\ud z$.
Fix $\ep>0$ and let $\eta_\ep>0$ be small enough such that 
\begin{equation}\label{defetaep}
 2 \theta_{k_{r}^{s}}(\eta_\ep), \; \theta_{k_{r}^{s}}(2 \eta_\ep) \leq \frac{\varepsilon}{3}.
 \end{equation}
Notice that
\begin{eqnarray}\label{pri}
&&\bigg| \int_{B(x,\eta_\ep)} (\chi_{E^c}(y)- \chi_{E}(y))k_{r}^{s}(\vert x-y \vert) \ud y \bigg|\le \theta_{k_{r}^{s}}(\eta_\ep)\,,\\ 	\label{pri2}
&& \bigg| \int_{B(\Phi(x),\eta_\ep)} (\chi_{\E^c}(y)- \chi_{\E}(y))k_{r}^{s}(\vert \Phi(x)-y \vert) \ud y \bigg|\le \theta_{k_{r}^{s}}(\eta_\ep)\,,\\
 \label{sei}
&& \bigg| \int_{B(\Phi(x),2\eta_\ep)}(\chi_{\E^c}(y)-\chi_{\E}(y))k_{r}^{s}(\vert \Phi(x)-y \vert)\ud y \bigg| 
 \le \theta_{k_{r}^{s}}(2\eta_\ep)\,.
\end{eqnarray}
By \eqref{pri}, \eqref{pri2}, and \eqref{defetaep}, using  triangular inequality, we have
\begin{equation}\label{propritCeq1}
\begin{split}
&\vert \Ku_{r}^s(x, E)- \Ku_{r}^{s}(\Phi(x), \Phi(E)) \vert \\
\leq & \frac{\varepsilon}{3}+\bigg| \int_{B^c(x,\eta_\ep)} (\chi_{E^c}(y)- \chi_{E}(y))k_{r}^{s}(\vert x-y \vert) \ud y  \\
&- \int_{B^c(\Phi(x),\eta_\ep)}(\chi_{\E^c}(y)-\chi_{\E}(y))k_{r}^{s}(\vert \Phi(x)-y \vert)\ud y \bigg|\\
\leq & \frac{\ep}{3}+\bigg| \int_{B^c(x,\eta_\ep)} (\chi_{E^c}(y)- \chi_{E}(y))k_{r}^{s}(\vert x-y \vert) \ud y  \\
&- \int_{\Phi(B^c(x,\eta_\ep))}(\chi_{\E^c}(y)-\chi_{\E}(y))k_{r}^{s}(\vert \Phi(x)-y \vert)\ud y \bigg| \\
+ & \bigg| \int_{\Phi(B^c(x,\eta_\ep))} (\chi_{\E^c}(y)- \chi_{\E}(y))k_{r}^{s}(\vert \Phi(x)-y \vert) \ud y  \\
&- \int_{B^c(\Phi(x),\eta_\ep)}(\chi_{\E^c}(y)-\chi_{\E}(y))k_{r}^{s}(\vert \Phi(x)-y \vert)\ud y \bigg|.
\end{split}
\end{equation}
By the change of variable $z=\Phi(y)$ and using that $\Phi(y)=y$ if $|y-x|\ge 1$\,, we have
\begin{eqnarray}
\nonumber
&& \bigg| \int_{B^c(x,\eta_\ep)} (\chi_{E^c}(y)- \chi_{E}(y))k_{r}^{s}(\vert x-y \vert) \ud y  \\ \nonumber
&&- \int_{\Phi(B^c(x,\eta_\ep))}(\chi_{\E^c}(y)-\chi_{\E}(y))k_{r}^{s}(\vert \Phi(x)-y \vert)\ud y \bigg| \\ \nonumber
&= & \bigg| \int_{B^c(x,\eta_\ep)} (\chi_{E^c}(y)- \chi_{E}(y))k_{r}^{s}(\vert x-y \vert) \ud y  \\ \nonumber
&&- \int_{B^c(x,\eta_\ep)}(\chi_{E^c}(z)-\chi_{E}(z))k_{r}^{s}(\vert \Phi(x)-\Phi(z) \vert)\J\Phi(z)\ud z \bigg| \\  \nonumber
&\le & \int_{B^c(x,\eta_\ep)}\Big|k_{r}^{s}(\vert x-y \vert) -k_{r}^{s}(\vert \Phi(x)-\Phi(y) \vert)\J\Phi(y)\Big|\ud y\\ \label{prii}
&= &\int_{B^c(x,1)}\Big|k_{r}^{s}(\vert x-y \vert)-k_{r}^{s}(\vert \Phi(x)-y \vert)\Big|\ud y\\ \label{seci}
&&+\int_{B(x,1)\setminus B(x,\eta_\ep)}\Big|k_{r}^{s}(\vert x-y \vert)-k_{r}^{s}(\vert \Phi(x)-\Phi(y) \vert)\J\Phi(y)\Big|\ud y\,.
\end{eqnarray}
Now, assuming that $\|\Phi-\mathrm{Id}\|_{C^2}$ is small enough, by using Lagrange Theorem one can show that
\begin{equation}\label{priist}
\begin{aligned}
&\int_{B^c(x,1)}\Big|k_{r}^{s}(\vert x-y \vert)-k_{r}^{s}(\vert \Phi(x)-y \vert)\Big|\ud y\\
\le &\omega(\|\Phi-\mathrm{Id}\|_{C^2}) \int_{B^c(x,1)}\frac{1}{|x-y|^{d+s+1}}\ud y
\le\frac{\ep}{6}\,,
\end{aligned}
\end{equation}
for some modulus of continuity $\omega$\,.
Analogously, for $\|\Phi-\mathrm{Id}\|_{C^2}$ small enough, one can easily check that
\begin{equation}\label{secist}
\begin{aligned}
&\int_{B(x,1)\setminus B(x,\eta_\ep)}\Big|k_{r}^{s}(\vert x-y \vert)-k_{r}^{s}(\vert \Phi(x)-\Phi(y) \vert)\J\Phi(y)\Big|\ud y\\
\le&\int_{B(x,1)\setminus B(x,\eta_\ep)}\Big|k_{r}^{s}(\vert x-y \vert)-k_{r}^{s}(\vert \Phi(x)-\Phi(y) \vert)\Big|\ud y\\
&+\int_{B(x,1)\setminus B(x,\eta_\ep)}k_{r}^{s}(\vert \Phi(x)-\Phi(y) \vert)\big|1-\J\Phi(y)\big|\ud y
\le\frac{\ep}{6}\,.
\end{aligned}
\end{equation}
Therefore, by \eqref{prii}, \eqref{seci}, \eqref{priist}, \eqref{secist} we deduce that 
\begin{equation}\label{proprietCeq3}
\begin{aligned}
& \bigg| \int_{B^c(x,\eta_\ep)} (\chi_{E^c}(y)- \chi_{E}(y))k_{r}^{s}(\vert x-y \vert) \ud y  \\ 
&- \int_{\Phi(B^c(x,\eta_\ep))}(\chi_{\E^c}(y)-\chi_{\E}(y))k_{r}^{s}(\vert \Phi(x)-y \vert)\ud y \bigg| \le\frac{\ep}{3}\,.
\end{aligned}
\end{equation}
In the end, we observe that, for $\|\Phi-\mathrm{Id}\|_{C^2}$ small enough, it holds
$$ \Phi(B^c(x, \eta_\ep)) \triangle B^c(\Phi(x),\eta_\ep) \subset B(\Phi(x), 2 \eta_\ep)\,,$$
which, in view of \eqref{sei}, yields
\begin{equation}\label{propirtàCeq4}
\begin{split}
& \bigg| \int_{\Phi(B^c(x,\eta_\ep))} (\chi_{\E^c}(y)- \chi_{\E}(y))k_{r}^{s}(\vert \Phi(x)-y \vert) \ud y  \\
&- \int_{B^c(\Phi(x),\eta_\ep)}(\chi_{\E^c}(y)-\chi_{\E}(y))k_{r}^{s}(\vert \Phi(x)-y \vert)\ud y \bigg| \leq \theta_{k_{r}^{s}}(2 \eta_\ep) \leq \frac{\varepsilon}{3}.
\end{split}
\end{equation}
Plugging \eqref{proprietCeq3} and \eqref{propirtàCeq4} into \eqref{propritCeq1}\,, we conclude the proof of property (UC) and of the whole proposition.
\end{proof}
\subsection{The classical mean curvature}
For every $E \in \mathfrak{C}$, and for every $x \in \partial E$, we denote by $\Ku^{1}(x,E)$ the (scalar) mean curvature of $\partial E$ at $x$, i.e., the sum of the principal curvatures of $\partial E$ at $x$. It is well-known that $\Ku^{1}$ is nothing but the first variation of the perimeter.
Let $ E \in \mathfrak{C}$\,, $x\in\partial E$ and assume that $\nu_E(x)=e_d$\,;
then in a neighborhood of $x=(x',x_d) \in \partial E$ we have that $\partial E$ is the graph of a $C^2$- function $f: B'(x',r) \rightarrow \mathbb{R}$, for some $r>0$ with $\mathrm{D} f(x')=0$ so that $B(x,r)\cap E=\{(x',x_d)\in B(x,r)\,:\,x_d\le f(x')\}$. In this case the mean curvature of $\partial E$ at $x$ is given by 
\begin{equation}\label{cueu}
\begin{aligned}
\Ku^1(x,E)=& \Div\bigg(\frac{-\mathrm{D} f}{\sqrt{1+\vert \mathrm{D} f \vert^2}}\bigg)(x')
= -\sum_{i=1}^{d-1} \frac{\partial^2}{\partial_{x_i^2}} f(x') \\
= &-\frac{1}{\omega_{d-1}} \int_{\mathbb{S}^{d-2}} \theta^*\mathrm{D}^2 f(x')\theta \ud \mathcal{H}^{d-2}(\theta)\,,
\end{aligned}
\end{equation}
where $\theta^*$ is the row vector obtained by transposing the (column) vector $\theta$ and $\mathrm{D}^2 f(x')$ denotes the Hessian matrix of $f$ evaluated at $x'$.
\begin{proposition}\label{procla}
The standard mean curvature $\Ku^1$ satisfies the following properties:
\begin{itemize}
	\item[(M)] Monotonicity: If 
	$E, F\in\Reg$ with $E\subseteq F$, and if $x\in \partial F\cap \partial E$,
	then $\Ku^1(x,F)\le\Ku^1(x,E)$;
	\item[(T)] Translational invariance: For every $E\in\Reg$, $x\in\partial E$,
	$y\in\R^d$, it holds: $\Ku^1(x,E)=\Ku^1(x+y,E+y)$;
	\item[(B)] Lower bound on the  curvature of the balls: 
\begin{equation*}
\Ku^1(x,\overline{B}(0,\rho)) \geq  0   \quad \text{ for all  } x\in\partial B(0,\rho)\,,\, \rho>0\,;
\end{equation*}
	\item [(S)] Symmetry: 
	For every $E\in\Reg$ and for every $x\in\partial E$ it holds 
	$$
	\Ku^1(x,E)= -\Ku^1(x,\R^d\setminus \overset{\circ}{E})\,.
	$$
	\item[(UC')] Uniform continuity: Given $R>0$, there exists a modulus of continuity $\omega_R$ such that the following holds.
	For every $E\in\Reg$, $x\in\partial E$, such that $E$ has both an internal
	and external ball condition of radius $R$ at $x$,
	and for every  diffeomorphism $\Phi:\R^d\to \R^d$ of class {$C^2$},
	with $\Phi=\mathrm{Id}$ in $\R^d\setminus B(0,1)$, we have  
	\begin{equation}\label{noia}
	|\Ku^1(x,E) - \Ku^1(\Phi(x),\Phi(E))|\le \omega_R(\|\Phi - \mathrm{Id}\|_{C^2})\,.
	\end{equation}
\end{itemize}
\end{proposition}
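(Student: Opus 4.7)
The properties (M), (T), (S) and (B) will be handled quickly via the local graph representation \eqref{cueu}, while (UC') is the only nontrivial point and will require a careful parametrization argument.

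For (T) it is enough to remark that the local graph $f$ representing $\partial E$ near $x$ also represents $\partial(E+y)$ near $x+y$ (after the obvious translation of the coordinates), so the right-hand side of \eqref{cueu} is unchanged. For (S), if locally $E=\{x_d\le f(x')\}$ with outward normal $e_d$ at $x$, then $\R^d\setminus\overset{\circ}{E}$ is locally $\{x_d\ge f(x')\}$ with outward normal $-e_d$; rotating by $e_d\mapsto -e_d$ replaces $f$ by $-f$ in the graph representation, so the trace of the Hessian flips sign and the claim follows from \eqref{cueu}. For (B) at $\bar x=\rho e_d\in\partial\overline{B}(0,\rho)$ one has $f(x')=\sqrt{\rho^2-|x'|^2}$, and a direct computation gives $\partial_{x_i}^2 f(0)=-1/\rho$, hence $\Ku^1(\bar x,\overline{B}(0,\rho))=(d-1)/\rho\ge 0$. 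For (M), if $E\subseteq F$ and $x\in\partial E\cap\partial F$, then the outward normals of $E$ and $F$ at $x$ coincide (else $E$ would locally exit $F$), so both boundaries can be written as graphs $f_E,f_F$ over the same hyperplane with $E=\{x_d\le f_E\}$ and $F=\{x_d\le f_F\}$ near $x$; the inclusion forces $f_E\le f_F$ with equality at $x'=0$, so $D^2(f_F-f_E)(0)\ge 0$ in the PSD sense, and the trace inequality together with \eqref{cueu} gives $\Ku^1(x,F)\le\Ku^1(x,E)$.

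For (UC'), fix $R>0$, $E\in\Reg$ satisfying the internal and external ball condition of radius $R$ at $x\in\partial E$, and a $C^2$ diffeomorphism $\Phi$ of $\R^d$ equal to the identity outside $B(0,1)$. By (T) we may assume $x=0$ and $\nu_E(0)=e_d$. The two-sided ball condition of radius $R$ provides a universal radius $r_0=r_0(R)$ and a uniform constant $C_0=C_0(R)$ such that $\partial E\cap B(0,r_0)$ is the graph of a function $f\in C^2(B'(0,r_0))$ with $f(0)=0$, $Df(0)=0$ and $\|f\|_{C^2}\le C_0$; this is standard (it follows by writing both osculating balls as graphs and sandwiching $f$ between them). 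Next I would use the $C^2$-closeness of $\Phi$ to the identity to conclude that, for $\|\Phi-\mathrm{Id}\|_{C^2}$ sufficiently small, the unit normal to $\Phi(\partial E)$ at $\Phi(0)$ is close to $e_d$, and via the implicit function theorem (applied to the defining equation of $\Phi(\partial E)$ after composing with a small rotation $R_\Phi$ aligning the new normal to $e_d$) one gets, on a slightly smaller ball $B'(0,r_0/2)$, a graph $\tilde f\in C^2$ representing $R_\Phi^{-1}\Phi(\partial E)$ near $\Phi(0)$ with $\tilde f(0)=0$, $D\tilde f(0)=0$ and $\|\tilde f-f\|_{C^2(B'(0,r_0/2))}\le \omega_R(\|\Phi-\mathrm{Id}\|_{C^2})$ for some modulus $\omega_R$ depending on $R$ via the $C^0$ bound $C_0$. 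Plugging these estimates into \eqref{cueu} and using that the trace and the direction change of the normal are controlled by the same modulus, yields \eqref{noia}.

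The one step that requires genuine work is the last one, namely the uniform-in-$E$ implicit function argument giving $\|\tilde f-f\|_{C^2}\le\omega_R(\|\Phi-\mathrm{Id}\|_{C^2})$: here the two-sided ball condition is essential, because it provides the universal size of the domain on which the graph representations exist and the universal bounds on the derivatives that allow the implicit function theorem to be applied with estimates independent of $E$. All the remaining computations (expressing $\Ku^1$ in terms of $(f,Df,D^2f)$ via \eqref{cueu}, transforming under the rotation $R_\Phi$, and collecting terms) are elementary once these uniform bounds are in hand.
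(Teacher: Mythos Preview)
Your plan is correct and matches the paper's approach in spirit: both dismiss (M), (T), (S), (B) as elementary and focus on (UC'), using the two-sided ball condition to get a uniform bound (of order $1/R$) on the second fundamental form at $x$, and then tracking how the curvature formula changes under $\Phi$.

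The one noteworthy difference is in how the perturbed surface is handled. You propose to rotate $\Phi(\partial E)$ so that its normal at $\Phi(0)$ is $e_d$, then invoke the implicit function theorem to write it as a new graph $\tilde f$, and finally estimate $\|\tilde f-f\|_{C^2}$. The paper takes a shorter route: it represents $E$ locally as a sublevel set $\{g<0\}$, so that $\Phi(E)$ is automatically $\{h<0\}$ with $h:=g\circ\Phi^{-1}$, and then writes the mean curvature of both surfaces via the tangential divergence formula $\Ku^1=\Div_\tau(\D g/|\D g|)$ evaluated at the point. This turns (UC') into a direct chain-rule computation of $\D h(0)$ and $\D^2 h(0)$ in terms of $\D g(0)$, $\D^2 g(0)$ and the derivatives of $\Phi^{-1}$, yielding the explicit (Lipschitz) estimate $C(1+1/R)\|\Phi-\mathrm{Id}\|_{C^2}$. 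The level-set route avoids the implicit-function step entirely and makes transparent that only the pointwise values $\D^2 g(0)$ (hence only the ball condition \emph{at} $x$) enter the bound; your graph-reparametrization argument is also valid but is a bit heavier and requires you to be careful that the uniform-in-$E$ implicit-function estimate really only needs $|D^2f(0)|\le C_0(R)$ rather than a full $C^2$ bound on a neighborhood.
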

\begin{proof}
 We prove only the property \textmd{(UC')}\,, since the check of the remaining properties is straightforward. 
 Let $R>0$ and let $E \in \mathfrak{C}$ be such that $E$ satisfies both an internal
	and external ball condition of radius $R$ at a point $x \in \partial E$\,.
In order to get the claim, we can always assume without loss of generality that $\|\Phi - \mathrm{Id}\|_{C^2}\le 1$\,. 	
	
By the Implicit Function Theorem we have  that $ E \cap B(x,r)= \{ z \in B(x,r): \; g(z)<0\}$, for some $r>0$ and $g \in C^2(B(x,r))$. Moreover, in suitable coordinates we have that  $x=0$, $ \D g(0)=e_d$ and, for all $i\neq j$, with $i, \,j =1, \cdots,d$,  $ \frac{\partial^2 g}{\partial z_i \partial z_j}(0)=0$. Then, it is well known that
\begin{equation}\label{curvmediafunzg}
\begin{aligned}
\mathcal{K}^1(0, E)=&\Div_{\tau}\Big(\frac{\D g}{|\D g|}\Big)(0)=\sum_{i=1}^{d-1}\frac{\partial^2 g}{\partial z_i^2}(0)\,,
\end{aligned}
\end{equation} 
where $\Div_\tau$ denotes the tangential divergence operator.
 Since the mean curvature is invariant by translations and rotations,  up to small perturbations of $\Phi$ in $C^2$  we may assume, without loss of generality,  that $\Phi(0)=0$ and that the normal to $\Phi(E)$ at $\Phi(0)=0$ is still $e_d$. Since  
 $$ \Phi( E) \cap B(0,\tilde{r})= \{ y \in  B(0,\tilde{r}): \; g(\Phi^{-1}(y))<0\}$$
 for some $\tilde{r}>0$, setting $h:=g\circ\Phi^{-1}$,
we have
\begin{equation}\label{curvmediafunzPhig}
\begin{aligned}
  \mathcal{K}^1(0,\Phi(E))=&\frac{1}{|\D h (0)|} \sum_{j=1}^{d-1} \frac{\partial^2 h}{\partial y_j^2}(0) -\frac{1}{|\D h(0)|^2}\sum_{j=1}^{d-1}\frac{\partial h}{\partial y_j}(0)\,\frac{\partial |\D h|}{\partial y_j}(0)\,.
  \end{aligned}
    \end{equation}
Therefore, using that
\begin{equation*}
\begin{aligned}
\D h(0)=&\D g(0)\,\D\Phi^{-1}(0)=e_d\,\D\Phi^{-1}(0)\,,\\
\frac{\partial^2 h}{\partial y_j\partial y_k}(0)=&\sum_{i=1}^{d-1}\frac{\partial^2 g}{\partial z_i^2}(0)\,\frac{\partial \Phi_i^{-1}}{\partial y_j}(0)\, \frac{\partial \Phi_i^{-1}}{\partial y_k}(0)+\frac{\partial ^2 \Phi_d^{-1}}{\partial y_j\partial y_k}(0)\,,
\end{aligned}
\end{equation*}       
we have
\begin{equation}\label{a1}
\begin{aligned}
&\frac{1}{|\D h(0)|}\bigg|\sum_{j=1}^{d-1}\frac{\partial^2 h}{\partial y^2_j}(0)-\sum_{i=1}^{d-1}\frac{\partial^2 g}{\partial z^2_i}(0)\bigg|\\
\le&\frac{1}{|\D h(0)|}\bigg|\sum_{j=1}^{d-1}\frac{\partial^2 \Phi^{-1}_d}{\partial y_j^2}(0)+\sum_{i=1}^{d-1}\frac{\partial^2 g}{\partial z^2_i}(0)\Big(\sum_{j=1}^{d-1}\Big(\frac{\partial \Phi^{-1}_i}{\partial y_j}(0)\Big)^2-1\Big)\bigg|\\
&+\frac{|\D h(0)-1|}{|\D h(0)|}\bigg|\sum_{i=1}^{d-1}\frac{\partial^2 g}{\partial z^2_i}(0)\bigg|\\
\le& C\Big[\|\D^2\Phi^{-1}\|_{C^0}+\|\D^2 g\|_{C^0}\|\mathrm{Id}-(\D\Phi^{-1})^2\|_{C^0}+\|\D^2 g\|_{C^0}\|\mathrm{Id}-\D\Phi^{-1}\|_{C^0}\Big]\\
\le& C\Big(1+\frac{1}{R}\Big)\|\mathrm{Id}-\Phi\|_{C^2}\,.
\end{aligned}
\end{equation}    
Similar computations (that are left to the reader) show that
\begin{equation}\label{a2}
\begin{aligned}
&\frac{1}{|\D h(0)|^2}\bigg|\sum_{j=1}^{d-1}\frac{\partial h}{\partial y_j}(0)\,\frac{\partial |\D h|}{\partial y_j}(0)\bigg|
\le C\Big(1+\frac{1}{R}\Big)\|\mathrm{Id}-\Phi\|_{C^2}\,.
\end{aligned}
\end{equation}
Therefore, \eqref{noia} follows from \eqref{curvmediafunzg}, \eqref{curvmediafunzPhig}, \eqref{a1} and \eqref{a2}\,.
\end{proof}
\subsection{Convergence of $k_{r}^s$-curvature flow to mean curvature flow}\label{ssc:convmcf}
We now prove that the viscosity solutions  to the  $k_{r}^s$-curvature flow converge to the classical mean curvature flow as $r\to 0^+$. To this end, we will adopt notation and we will use results in \cite{CDNP}.

We preliminarily notice that since the curvatures $\Ku^s_r$ defined in \eqref{defcurv} satisfy property (UC) in Proposition \ref{proprietàcurvature1}, they also satisfy property (UC') in Proposition \ref{procla} (with $\Ku^1$ replaced by $\Ku^s_r$). As a consequence $\Ku^1$ and $\Ku^s_r$ (for every $0<r<1$ and $s\ge 1$) satisfy the following continuity property:
\begin{itemize}
\item[(C)] Continuity: If $\{E_n\}_{n\in\N}\subset\mathfrak{C}$, $E\in\mathfrak{C}$ and $E_n\to E$ in $\mathfrak{C}$\,, then the corresponding curvatures of $E_n$ at $x$ converge to the curvature of $E$ at $x$ for every $x\in\partial E_n\cap \partial E$\,.
\end{itemize}
Such a property, together with properties (M) and (T) (see Propositions \ref{procla} and  \ref{proprietàcurvature1}), implies that the functionals $\Ku^1$ and $\Ku^s_r$ (for every $s\ge 1$ and for every $r\in (0,1)$) are non-local curvatures in the sense of \cite[Definition 2.1]{CDNP} (see also \cite{CMP}).

Moreover, since by Propositions \ref{procla} and \ref{proprietàcurvature1},  $\Ku^1$ and $\Ku^s_r$ satisfy also properties (B) and (UC') (referred to as (C') in \cite{CDNP}), they both satisfy the assumptions of \cite[Theorem 2.9]{CDNP} that guarantee existence and uniqueness of suitably defined viscosity solutions of the corresponding geometric flows. We refer to \cite[Definition 2.3]{CDNP} for the precise definition of viscosity solution in this setting, while for the reader's convenience we state the existence and uniqueness result specialized to the geometric evolutions considered in this paper.
\begin{proposition}
Let $s\ge 1$ and $r>0$\,. Let $u_0\in C(\R^d)$ be a uniformly continuous function with $u_0=C_0$ in $\R^d\setminus B(0,R_0)$ for some $C_0,R_0\in\R$ with $R_0>0$\,. Then, there exists a unique viscosity solution - in the sense of \cite[Definition 2.3]{CDNP} -  $u^s_r:\R^d\times [0,+\infty)\to\R$ to the Cauchy problem
\begin{equation}\label{cauchy}
\begin{cases}
\partial_t u(x,t)+|\mathrm{D}u(x,t)|\Ku^{s}_r(x,\{y\,:\,u(y,t)\ge u(x,t)\})=0\\
u(x,0)=u_0(x)\,.
\end{cases}
\end{equation}
Moreover, the same result holds true if $\Ku^s_r$ is replaced by (any multiple of) $\Ku^1$\,.
\end{proposition}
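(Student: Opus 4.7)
The plan is to verify that both curvatures $\Ku^s_r$ and $\Ku^1$ fall within the abstract framework developed in \cite{CDNP}, and then invoke \cite[Theorem 2.9]{CDNP} as a black box. Essentially, no new work is required at the level of viscosity solution theory; the task is to collect the structural properties already established in Propositions \ref{proprietàcurvature1} and \ref{procla} and check that they match the hypotheses of the abstract existence/uniqueness result.

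First, I would recall that, by Propositions \ref{proprietàcurvature1} and \ref{procla}, both $\Ku^s_r$ (for every $s\ge 1$, $0<r<1$) and $\Ku^1$ satisfy monotonicity (M), translational invariance (T), and the uniform continuity property (UC) (respectively, (UC')). In combination, (M), (T) and (UC)/(UC') yield the continuity property (C) stated in the paragraph preceding the proposition: if $E_n\to E$ in $\Reg$ then $\Ku(x,E_n)\to \Ku(x,E)$ at every $x\in\partial E_n\cap\partial E$. This is exactly the content of \cite[Definition 2.1]{CDNP}, so both families of functionals qualify as \emph{nonlocal curvatures} in the CDNP sense.

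Next I would note that the additional hypotheses required by \cite[Theorem 2.9]{CDNP} are precisely the lower bound on the curvature of balls (B) and the uniform continuity (UC')/(C'), both of which have been explicitly verified in our two propositions. Hence the abstract existence and uniqueness result applies verbatim: given an initial datum $u_0\in C(\R^d)$ which is uniformly continuous and constant outside a ball $B(0,R_0)$, there exists a unique viscosity solution, in the sense of \cite[Definition 2.3]{CDNP}, to the Cauchy problem \eqref{cauchy}, and the analogous statement with $\Ku^s_r$ replaced by any positive multiple of $\Ku^1$ holds as well (since scaling the curvature by a positive constant preserves all the structural properties (M), (T), (S), (B), (UC')).

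Since the proof is essentially a matter of citing the right abstract theorem with the right inputs, there is no genuine mathematical obstacle here: the real work was already done in Propositions \ref{proprietàcurvature1} and \ref{procla}. The only small subtlety to be careful about is making sure that the sign conventions and the geometric meaning of $\Ku^s_r(x,\{u(\cdot,t)\ge u(x,t)\})$ in the level set equation agree with those assumed in \cite{CDNP}; this amounts to observing that the symmetry property (S) ensures the equation is geometrically well-posed (i.e.\ invariant under relabeling of the level sets), which is precisely what the level set approach requires. With this observation the proposition follows directly from \cite[Theorem 2.9]{CDNP}.
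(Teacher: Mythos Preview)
Your proposal is correct and follows exactly the paper's approach: the proposition is stated without a separate proof, the justification being the two paragraphs immediately preceding it, which verify that $\Ku^s_r$ and $\Ku^1$ satisfy (M), (T), (C), (B), and (UC') and then invoke \cite[Theorem 2.9]{CDNP} as a black box. There is nothing to add.
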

We will show that, as $r\to 0^+$, the scaled $k^s_r$-curvatures $\frac{1}{\sca^s(r)}\Ku^s_r$ converge to $\omega_{d-1}\Ku^1$ on regular sets. 
In view of \cite[Theorem 3.2]{CDNP}\,, such a result will be crucial in order to prove the convergence of the corresponding geometric flows.
We first prove the following result by adopting the same strategy used in \cite[Proposition 2]{Imb}.
\begin{lemma}\label{lemconvcurv}
		Let $M, N \in \mathbb{R}^{(d-1)\times (d-1)}$ and let $\{M_r\}_{r>0},\{N_r\}_{r>0}\subset \mathbb{R}^{(d-1)\times (d-1)}$ be such that $M_r \rightarrow M, \; N_r \rightarrow N$ as $r \rightarrow 0^+$\,. Then, for every $\delta>0$ it holds
		\begin{equation} \label{limserveatutconvecur}
		\begin{split}
		\lim_{r \rightarrow 0^+}& \bigg(\frac{1}{\sigma^s(r)} \Big(\int_{\Funo}k_{r}^{s}(\vert y \vert)\ud y 
		- \int_{\Fdue}k_{r}^{s}(\vert y \vert)\ud y\Big)\bigg) \\
		=&  \int_{\mathbb{S}^{d-2}} \theta^*(N-M) \theta \ud \mathcal{H}^{d-2}(\theta),
		\end{split}
		\end{equation}
		where 
\begin{equation*}
\begin{aligned}
\Funo:=&\{y=(y',y_d) \in B(0,\delta)\,:\;  (y')^*M_r y'  \leq y_d \leq (y')^*N_r y' \}\\
\Fdue:=&\{y=(y',y_d) \in B(0,\delta)\,:\;  (y')^* N_r y' \leq y_d \leq (y')^*M_r y'\}\,.
\end{aligned}		
\end{equation*}
	\end{lemma}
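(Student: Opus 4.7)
The plan is to combine the two integrals into a single signed integral over the thin ``sliver'' between the two paraboloids, approximate $k^s_r(|y|)$ by $k^s_r(|y'|)$ on that sliver, and reduce the problem to a one-dimensional asymptotic computation. Observing that for each fixed $y'$ exactly one of the intervals $[(y')^*M_r y',(y')^*N_r y']$ and $[(y')^*N_r y',(y')^*M_r y']$ is non-empty, depending on the sign of $(y')^*(N_r-M_r)y'$, with the convention $\int_a^b = -\int_b^a$ when $a > b$ the difference of the two integrals equals
\begin{equation*}
\int_{B'(0,\delta)} \left(\int_{(y')^*M_r y'}^{(y')^*N_r y'} k^s_r(|y|)\,\chi_{B(0,\delta)}(y)\,\ud y_d\right)\ud y'.
\end{equation*}

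Setting $C := \sup_r \max(\|M_r\|,\|N_r\|) < \infty$, one chooses $\tilde\delta \in (0,\delta)$ with $\tilde\delta\sqrt{1+C^2\tilde\delta^2} < \delta$, so that for $|y'|\le\tilde\delta$ the entire $y_d$-interval lies in $B(0,\delta)$ and the characteristic function can be dropped; the contribution from $\tilde\delta < |y'| \le \delta$ stays bounded uniformly in $r$ and is therefore $o(\sca^s(r))$. On $\{|y'|\le\tilde\delta\}$ the relevant $y_d$ satisfies $|y_d|\le C|y'|^2$, so $|y| = |y'|(1+O(|y'|^2))$; exploiting that $k^s_r$ is constant on $[0,r]$ together with the bound $|(k^s_r)'(t)| \le (d+s)t^{-d-s-1}$ on $(r,\infty)$, we may replace $k^s_r(|y|)$ by $k^s_r(|y'|)$ up to an error controlled by $\int_0^{\tilde\delta}\rho^{d+1}k^s_r(\rho)\,\ud\rho$. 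Passing to polar coordinates $y' = \rho\theta$ with $\rho \in (0,\tilde\delta)$, $\theta \in \mathbb{S}^{d-2}$, the leading term becomes
\begin{equation*}
\int_0^{\tilde\delta} k^s_r(\rho)\,\rho^d\,\ud\rho \cdot \int_{\mathbb{S}^{d-2}} \theta^*(N_r-M_r)\theta\,\ud\mathcal{H}^{d-2}(\theta).
\end{equation*}

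Splitting the radial integral at $\rho = r$ produces
\begin{equation*}
\int_0^{\tilde\delta} k^s_r(\rho)\rho^d\,\ud\rho = \frac{r^{1-s}}{d+1} + \frac{r^{1-s}-\tilde\delta^{1-s}}{s-1} \qquad (s>1),
\end{equation*}
and $\tfrac{1}{d+1} + \log(\tilde\delta/r)$ for $s=1$. Comparing with the explicit form of $\sca^s(r)$ in \eqref{scalingper}, the quotient tends to $1$ in both cases. Since $M_r \to M$ and $N_r \to N$, the angular factor converges to $\int_{\mathbb{S}^{d-2}}\theta^*(N-M)\theta\,\ud\mathcal{H}^{d-2}(\theta)$, which is exactly \eqref{limserveatutconvecur}.

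The main technical point will be showing that the kernel-replacement error is $o(\sca^s(r))$ uniformly in $s \geq 1$. A slab-wise estimate gives error contributions of order $r^{3-s}$ on $\{|y'|\le r\}$ (where $k^s_r$ is constant and the set $\{|y|>r\}$ is very thin) and of order $\int_r^{\tilde\delta}\rho^{2-s}\,\ud\rho$ on $\{|y'|>r\}$; in each regime the ratio with $\sca^s(r) \sim r^{1-s}$, respectively with $|\log r|$ when $s=1$, is infinitesimal, so the approximation is justified across the whole range $s\geq 1$.
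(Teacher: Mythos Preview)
Your argument is correct and follows a genuinely different route from the paper's. The paper keeps the kernel exact: after passing to polar coordinates $y'=\rho\theta$ it substitutes $y_d=\rho^2 t$, obtaining the factor $(1+\rho^2 t^2)^{-(d+s)/2}$ inside a triple integral, and then invokes l'H\^opital's rule (differentiating in $r$) to evaluate
\[
\lim_{r\to 0^+}\frac{1}{\sca^s(r)}\int_r^\delta \frac{1}{\rho^s}\frac{1}{(1+\rho^2 t^2)^{(d+s)/2}}\,\ud\rho=\frac{d+1}{d+s}\,,
\]
after which the two summands in \eqref{limcurvutile123123} combine to give the claim. Your approach instead replaces $k^s_r(|y|)$ by $k^s_r(|y'|)$ at the outset (equivalently, sets the factor $(1+\rho^2 t^2)^{-(d+s)/2}$ to $1$), reduces everything to the single explicit radial integral $\int_0^{\tilde\delta}\rho^d k^s_r(\rho)\,\ud\rho$, and pays for this simplification with the pointwise error analysis in your final paragraph. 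That analysis is sound: on $\{|y'|>r\}$ the mean-value bound $|(k^s_r)'(t)|\le(d+s)t^{-d-s-1}$ together with $|y|-|y'|\le C'|y'|^3$ indeed gives an error of order $\int_r^{\tilde\delta}\rho^{2-s}\,\ud\rho$, and on $\{|y'|\le r\}$ the transition set $\{|y|>r\}$ has measure $O(r^{d+3})$, yielding $O(r^{3-s})$; both are $o(\sca^s(r))$ for every $s\ge 1$. (Your intermediate sentence that the error is ``controlled by $\int_0^{\tilde\delta}\rho^{d+1}k^s_r(\rho)\,\ud\rho$'' is off by one power of $\rho$ relative to the sharper bound you give at the end, but even that looser quantity is $o(\sca^s(r))$, so nothing is lost.) Your route is arguably more elementary, avoiding l'H\^opital and the bookkeeping of the sets $\Gbar^1_\alpha$, $A^1_r$, $A^2_r$ in the paper; the paper's route, by contrast, never approximates the kernel and would adapt more directly to situations where the second-order expansion $|y|-|y'|=O(|y'|^3)$ is unavailable.
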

	\begin{proof}
For every $\alpha>0$ we set	
\begin{equation*}	
\begin{split}
\Gbar^1_\alpha:=&
 \{y=(y',y_d) \in \R^{d-1}\times\R\,:\, y'=\rho \theta, \; 0\le\rho\le\alpha, \; \theta \in \mathbb{S}^{d-2}\,,\\
		&\rho^2 \theta^*M_r\theta \leq y_d \leq  \rho^2\theta^* N_r \theta \} 
\end{split}	
\end{equation*}	
Therefore, for $r$ small enough,
\begin{multline*}
\Funo=\Gbar^1_\delta\cap B(0,\delta)\,, \qquad \Funo\cap B(0,r)=\Gbar^1_r\cap B(0,r)\,,\\
\Funo\setminus B(0,r)=\big((\Gbar^1_\delta\setminus\Gbar^1_r)\cap (B(0,\delta)\big)\cup \big(\Gbar^1_r\setminus B(0,r)\big)\,.
\end{multline*}
It follows that
		\begin{equation}\label{svilimiteconvcurvatura}
		\begin{split}
		&\int_{\Funo } k_{r}^{s}(\vert y \vert)\ud y\\
		=&\int_{\Gbar^1_\delta}k_{r}^{s}(\vert y \vert)\ud y-\int_{\Gbar^1_\delta\setminus B(0,\delta)}k_{r}^{s}(\vert y \vert)\ud y\\
		=&\int_{\Gbar^1_r}k_{r}^{s}(\vert y \vert)\ud y+\int_{\Gbar^1_\delta\setminus\Gbar^1_r}k_{r}^{s}(\vert y \vert)\ud y	-\int_{\Gbar^1_\delta\setminus B(0,\delta)}\frac{1}{|y|^{d+s}}\ud y\\
		=&\int_{\Gbar^1_r\cap B(0,r)}k_{r}^{s}(\vert y \vert)\ud y+\int_{\Gbar^1_r\setminus B(0,r)}k_{r}^{s}(\vert y \vert)\ud y+\int_{\Gbar^1_\delta\setminus\Gbar_r}\frac{1}{|y|^{d+s}}\ud y	\\
		&-\int_{\Gbar^1_\delta\setminus B(0,\delta)}\frac{1}{|y|^{d+s}}\ud y\\
=&\int_{\Gbar^1_r\cap B(0,r)}\frac{1}{r^{d+s}}\ud y+\int_{\Gbar^1_r\setminus B(0,r)}\frac{1}{|y|^{d+s}}\ud y+\int_{\Gbar^1_\delta\setminus\Gbar^1_r}\frac{1}{|y|^{d+s}}\ud y	\\
	&	-\int_{\Gbar^1_\delta\setminus B(0,\delta)}\frac{1}{|y|^{d+s}}\ud y\\
		=&\int_{\Gbar^1_r}\frac {1}{r^{d+s}} \ud y+\int_{\Gbar^1_\delta\setminus\Gbar^1_r}\frac{1}{|y|^{d+s}}\ud y\\
		&-\int_{\Gbar^1_\delta\setminus B(0,\delta)}\frac{1}{|y|^{d+s}}\ud y-\int_{\Gbar^1_r\setminus B(0,r)}\frac {1}{r^{d+s}} \ud y+\int_{\Gbar^1_r\setminus B(0,r)}\frac{1}{|y|^{d+s}}\ud y\,.\\
			\end{split}
		\end{equation}
We  set
$$
A^1_r:=\{\theta \in \mathbb{S}^{d-2}\,: \, \theta^*(M_r -N_r) \theta\leq 0\}
$$		
and we notice that			
	\begin{equation}\label{svilimiteconvcurvatura10}
		\begin{split}
		&\int_{\Gbar^1_r}\frac {1}{r^{d+s}} \ud y+\int_{\Gbar^1_\delta\setminus\Gbar^1_r}\frac{1}{|y|^{d+s}}\ud y\\
		=&\frac{1}{r^{d+s}} \int_{A^1_r} \ud \mathcal{H}^{d-2}(\theta) \int_{0}^{r} \ud\rho\, \rho^{d-2} \int_{\rho^2 \theta^*M_r \theta}^{\rho^2\theta^* N_r \theta}\ud y_d\\
		&+\int_{A^1_r} \ud \mathcal{H}^{d-2}(\theta) \int_{r}^{\delta}  \ud\rho\,\rho^{d-2} \int_{\rho^2 \theta^*M_r \theta}^{\rho^2\theta^* N_r \theta}\frac{1}{(\rho^2+y_d^2)^{\frac{d+s}{2}}}\ud y_d \\
		=&  \frac{1}{d+1}\frac{r^{d+1}}{r^{d+s}}\int_{A^1_r}\theta^*(N_r-M_r) \theta \ud \mathcal{H}^{d-2}(\theta) \\
		&+ \int_{A^1_r} \ud \mathcal{H}^{d-2}(\theta) \int_{r}^{\delta}  \ud\rho\,\rho^{d-2}  \int_{\theta^* M_r \theta}^{\theta^*N_r \theta}\frac{\rho^2}{(\rho^2+\rho^4 t^2)^{\frac{d+s}{2}}}\ud t\\
		=&   \frac{r^{1-s}}{d+1} \int_{A^1_r}\theta^* (N_r-M_r) \theta \ud \mathcal{H}^{d-2}(\theta)\\
		& + \int_{A^1_r} \ud \mathcal{H}^{d-2}(\theta) \int_{r}^{\delta} \ud\rho \frac{1}{\rho^{s}}  \int_{ \theta^*M_r \theta}^{\theta^*N_r \theta} \frac{1}{(1+\rho^2t^2)^{\frac{d+s}{2}}}\ud t\,,
		\end{split}
		\end{equation}
where in the last but one equality we have used the change of variable $y_d= \rho^2 t$.

Moreover, trivially we have
\begin{equation}\label{svilimiteconvcurvatura100}
\int_{\Gbar^1_\delta\setminus B(0,\delta)}\frac{1}{|y|^{d+s}}\ud y\le C(\delta)\,,
\end{equation} 
for some $C(\delta)>0$\,.
Furthermore, it is easy to see that, for $r$ small enough,
$$\Gbar^1_r\setminus B(0,r)\subset \big(B'(0,r)\setminus B'(0,r-c r^2)\big)\times [-c r^2,c r^2]
$$
for some constant $c>0$ independent of $r$\,;
as a consequence,
$$
|\Gbar^1_r\setminus B(0,r)|\le Cr^{d+2}\,,
$$
whence we deduce that
\begin{equation}\label{svilimiteconvcurvatura1000}
\begin{aligned}
&\int_{\Gbar^1_r\setminus B(0,r)}\frac {1}{r^{d+s}} \ud y\le Cr^{2-s}\,,\\
&\int_{\Gbar^1_r\setminus B(0,r)}\frac {1}{|y|^{d+s}} \ud y\le \int_{\Gbar^1_r\setminus B(0,r)}\frac {1}{r^{d+s}} \ud y\le C r^{2-s}\,.
\end{aligned}
\end{equation}
Therefore, by \eqref{svilimiteconvcurvatura}, \eqref{svilimiteconvcurvatura10}, \eqref{svilimiteconvcurvatura100} and \eqref{svilimiteconvcurvatura1000}, we obtain
\begin{equation}\label{svilimiteconvcurvatura10000}
\begin{aligned}
&\frac{1}{\sca^s(r)}\int_{\Funo } k_{r}^{s}(\vert y \vert)\ud y\\
=&  \frac{r^{1-s}}{(d+1)\sca^s(r)} \int_{A^1_r}\theta^* (N_r-M_r) \theta \ud \mathcal{H}^{d-2}(\theta)\\
&+\frac{1}{\sca^s(r)} \int_{A^1_r} \ud \mathcal{H}^{d-2}(\theta) \int_{r}^{\delta} \ud\rho \frac{1}{\rho^{s}} \int_{ \theta^*M_r \theta}^{\theta^*N_r \theta} \frac{1}{(1+\rho^2t^2)^{\frac{d+s}{2}}}\ud t\\
&+f^1(r)\,,
\end{aligned}
\end{equation}
where $f^1(r)\to 0$ as $r\to 0^+$\,.	

Now we set
$$
A^2_r:=\{\theta \in \mathbb{S}^{d-2}\,: \, \theta^*(N_r -M_r) \theta\leq 0\}\,;
$$
by arguing as in the proof of \eqref{svilimiteconvcurvatura10000} we obtain
\begin{equation}\label{svilimiteconvcurvatura20000}
\begin{aligned}
&\frac{1}{\sca^s(r)}\int_{\Fdue } k_{r}^{s}(\vert y \vert)\ud y\\
=&  \frac{r^{1-s}}{(d+1)\sca^s(r)} \int_{A^2_r}\theta^* (M_r-N_r) \theta \ud \mathcal{H}^{d-2}(\theta)\\
&+\frac{1}{\sca^s(r)} \int_{A^2_r} \ud \mathcal{H}^{d-2}(\theta) \int_{r}^{\delta} \ud\rho \frac{1}{\rho^{s}} \int_{ \theta^*N_r \theta}^{\theta^*M_r \theta} \frac{1}{(1+\rho^2t^2)^{\frac{d+s}{2}}}\ud t\\
&+f^2(r)\,,
\end{aligned}
\end{equation}
where $f^2(r)\to 0$ as $r\to 0^+$\,.	
		Therefore by  formulas \eqref{svilimiteconvcurvatura10000} and \eqref{svilimiteconvcurvatura20000}, using that $A^1_r\cup A^2_r=\mathbb{S}^{d-2}$\,, we get
		\begin{equation}\label{limcurvutile123123}
		\begin{split}
		&\bigg(\frac{1}{\sigma^s(r)} \Big(\int_{\Funo}k_{r}^{s}(\vert y \vert)\ud y 
		- \int_{\Fdue}k_{r}^{s}(\vert y \vert)\ud y\Big)\bigg) \\
		=&  \bigg[ \frac{r^{1-s}}{(d+1)\sigma^{s}(r)}\bigg] \int_{\mathbb{S}^{d-2}} \theta^*(N_r-M_r) \theta \ud \mathcal{H}^{d-2}(\theta)\\
		& +\frac{1}{\sigma^s(r)} \int_{\mathbb{S}^{d-2}} \ud \mathcal{H}^{d-2}(\theta) \int_{r}^{\delta}  \ud\rho \frac{1}{\rho^{s}} \int_{\theta^* M_r \theta}^{\theta^*N_r \theta} \frac{1}{(1+\rho^2t^2)^{\frac{d+s}{2}}}\ud t\\
		&+f^1(r)-f^2(r)\,.
		\end{split}
		\end{equation}
Since
$$
 \frac{r^{1-s}}{(d+1)\sigma^s(r)} =\left\{\begin{array}{ll}
 \frac{1}{(d+1)|\log r|}&\textrm{ if }s=1\\
 \frac{s-1}{d+s}&\textrm{ if }s>1\,,
 \end{array}\right.
$$
and recalling that $M_r$ and $N_r$ converge to $M$ and $N$, respectively, we get
		\begin{equation}\label{convcurvatura2222}
		\begin{split}
		&\lim_{r \rightarrow 0^+}  \frac{r^{1-s}}{(d+1)\sigma^s(r)}  \int_{\mathbb{S}^{d-2}}\theta^*(N_r-M_r) \theta \ud \mathcal{H}^{d-2}(\theta)\\ &=
		\begin{cases}
		\displaystyle 0 &\text{if $s=1$}, \\
		 \displaystyle \frac{s-1}{d+s}\int_{\mathbb{S}^{d-2}} \theta^*(N-M) \theta \ud \mathcal{H}^{d-1}(\theta)   &\text{if $ s> 1$\,.}
		\end{cases}
		\end{split}
		\end{equation}
Moreover, for every $s\ge 1$\,, using de l'H\^opital rule (i.e., differentiating both terms in the product below with respect to $r$) and the very definition of $\sca^s(r)$ in \eqref{scalingper}, we have
\begin{equation*}
\begin{aligned}
&\lim_{r \rightarrow 0^+}\frac{1}{\sigma^s(r)}  \int_{r}^{\delta} \frac{1}{\rho^{s}}\frac{1}{(1+\rho^2t^2)^{\frac{d+s}{2}}}\ud \rho=\frac{d+1}{d+s}\,,
\end{aligned}
\end{equation*}
which, by the Dominate Convergence Theorem, yields
\begin{equation}\label{convcurvatura1111}
\begin{split}
		&\lim_{r \rightarrow 0^+}\frac{1}{\sigma^s(r)} \int_{\mathbb{S}^{d-2}} \ud \mathcal{H}^{d-2}(\theta) \int_{r}^{\delta}\ud\rho \frac{1}{\rho^{s}}  \int_{\theta^*M_r \theta}^{\theta^*N_r \theta} \frac{1}{(1+\rho^2t^2)^{\frac{d+s}{2}}}\ud t \\
		=&\lim_{r \rightarrow 0^+} \int_{\mathbb{S}^{d-2}} \ud \mathcal{H}^{d-2}(\theta)  \int_{\theta^*M_r \theta}^{\theta^*N_r \theta} \ud t \frac{1}{\sigma^s(r)} \int_{r}^{\delta} \frac{1}{\rho^{s}}\frac{1}{(1+\rho^2t^2)^{\frac{d+s}{2}}}\ud \rho\\
		=& \frac{d+1}{d+s}\int_{\mathbb{S}^{d-2}} \theta^*(N-M) \theta \ud \mathcal{H}^{d-2}(\theta)\,.
		\end{split}
		\end{equation}
		By formulas \eqref{limcurvutile123123}, \eqref{convcurvatura2222} and \eqref{convcurvatura1111} we obtain \eqref{limserveatutconvecur}.
	\end{proof}
		\begin{theorem}\label{convcurvthmprinc}
		Let $s\geq 1$. Let $\{E_r\}_{r>0} \subset \mathfrak{C}$ be such that $ E_{r} \rightarrow E $ in $\mathfrak{C}$ as $r \rightarrow 0^+$, for some $E \in \mathfrak{C}$. Then, for every $x \in \partial E \cap \partial E_r$ for every $r>0$\,, it holds
		\begin{equation*}
		\lim_{r \rightarrow 0^+} \frac{\Ku_{r}^s(x,E_r)}{\sigma^s(r)}=\omega_{d-1}\Ku^1(x,E).
		\end{equation*}
	\end{theorem}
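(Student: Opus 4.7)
The plan is to perform a careful local analysis around $x$, reducing the problem to the second-order (curvature-type) asymptotics computed in Lemma~\ref{lemconvcurv}. By the translational invariance (T) of $\Ku^s_r$ and the rotational invariance of the kernel $k^s_r(|\cdot|)$, and using that curvatures are intrinsic geometric quantities (so invariant under rigid motions for $\Ku^1$ as well), we may assume $x=0$ and $\nu_E(0)=e_d$. Since $E_r\to E$ in $\mathfrak{C}$ via diffeomorphisms converging to the identity in $C^2$, and $0\in\partial E\cap\partial E_r$, we may further compose with small rotations $R_r\to\mathrm{Id}$ to arrange $\nu_{E_r}(0)=e_d$ for all $r$. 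Thus there exist $\delta>0$ and $C^2$ functions $f,f_r\colon B'(0,\delta)\to\R$ with $f(0)=f_r(0)=0$, $\D f(0)=\D f_r(0)=0$, and $f_r\to f$ in $C^2(\overline{B'(0,\delta)})$, such that locally $E\cap B(0,\delta)=\{y_d\le f(y')\}\cap B(0,\delta)$ and $E_r\cap B(0,\delta)=\{y_d\le f_r(y')\}\cap B(0,\delta)$.

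Next I would split $\Ku^s_r(0,E_r)=I^{\mathrm{near}}_r+I^{\mathrm{far}}_r$, where $I^{\mathrm{near}}_r$ and $I^{\mathrm{far}}_r$ are the integrals in \eqref{defcurv} over $B(0,\delta)$ and $B^c(0,\delta)$ respectively. The far-field contribution is trivially bounded by $\int_{B^c(0,\delta)}|y|^{-d-s}\,\ud y$, which is finite and independent of $r$, so $I^{\mathrm{far}}_r/\sigma^s(r)\to 0$. For $I^{\mathrm{near}}_r$ I would use that the half-space integral vanishes by symmetry, i.e.\ $\int_{B(0,\delta)}(\chi_{H^+_{e_d}(0)}-\chi_{H^-_{e_d}(0)})k^s_r(|y|)\,\ud y=0$, to rewrite
\[
I^{\mathrm{near}}_r=2\int_{G_r^-}k^s_r(|y|)\,\ud y-2\int_{G_r^+}k^s_r(|y|)\,\ud y,
\]
where $G_r^+:=\{y\in B(0,\delta):0<y_d<f_r(y')\}$ and $G_r^-:=\{y\in B(0,\delta):f_r(y')<y_d<0\}$.

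To invoke Lemma~\ref{lemconvcurv}, I would replace $f_r$ by its quadratic Taylor polynomial $Q_r(y'):=\tfrac12 (y')^*\D^2 f_r(0)y'$, setting $M_r:=0$ and $N_r:=\tfrac12 \D^2f_r(0)$ (which converge to $M=0$ and $N=\tfrac12 \D^2 f(0)$). Then $\Funo$ and $\Fdue$ are, respectively, the ``upper'' and ``lower'' quadratic regions between $\{y_d=0\}$ and $\{y_d=Q_r(y')\}$ inside $B(0,\delta)$, and approximate $G_r^+$ and $G_r^-$. Applying Lemma~\ref{lemconvcurv} yields
\[
\lim_{r\to 0^+}\frac{2}{\sca^s(r)}\Bigl(\int_{\Fdue}k^s_r(|y|)\,\ud y-\int_{\Funo}k^s_r(|y|)\,\ud y\Bigr)=-\int_{\mathbb{S}^{d-2}}\theta^*\D^2f(0)\,\theta\,\ud\mathcal H^{d-2}(\theta),
\]
and the right-hand side equals $\omega_{d-1}\Ku^1(0,E)$ by \eqref{cueu} (using $\int_{\mathbb{S}^{d-2}}\theta_i\theta_j\,\ud\mathcal H^{d-2}=\omega_{d-1}\delta_{ij}$).

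The main obstacle is the error introduced by replacing $f_r$ by $Q_r$, that is, estimating the integrals of $k^s_r(|y|)$ over $G_r^\pm\triangle\mathcal{F}^{1,2}_{r,\delta}$. Since $f_r\to f$ in $C^2$, the uniform Taylor remainder estimate $|f_r(y')-Q_r(y')|\le \omega(|y'|)|y'|^2$ holds with some modulus $\omega(\rho)\to 0^+$ (uniform in $r$ small). This reduces the error bound to controlling
\[
\frac{1}{\sca^s(r)}\int_{B'(0,\delta)}\omega(|y'|)|y'|^2\,k^s_r(|y'|)\,\ud y',
\]
and a direct polar-coordinates computation (splitting into $|y'|\le r$ and $|y'|>r$, and using the explicit form of $k^s_r$) shows that $\int_{B'(0,\delta)}|y'|^2 k^s_r(|y'|)\,\ud y'$ grows at most like $\sca^s(r)$ in both regimes $s=1$ and $s>1$, so the extra factor $\omega(|y'|)$ forces the error to vanish. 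Combining this with the application of Lemma~\ref{lemconvcurv} closes the proof.
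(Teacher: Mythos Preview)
Your proposal is correct and follows essentially the same strategy as the paper: reduce by rigid motion to $x=0$, $\nu=e_d$, split into near-- and far--field, exploit the even symmetry of $k^s_r(|\cdot|)$ to isolate the ``between the graph and the tangent plane'' contribution, and then invoke Lemma~\ref{lemconvcurv} on the quadratic approximation. The only noteworthy difference is how the Taylor remainder $f_r-Q_r$ is handled. You propose a direct error bound $\frac{1}{\sca^s(r)}\int_{B'(0,\delta)}\omega(|y'|)\,|y'|^2\,k^s_r(|y'|)\,\ud y'$ and argue it is $o(1)$; the paper instead absorbs the remainder into Lemma~\ref{lemconvcurv} itself, by fixing $\eta>0$, shrinking $\delta$ so that $|f_r(y')-Q_r(y')|\le \eta|y'|^2$, and sandwiching $A(r),B(r)$ between sets defined by the matrices $\tfrac12\D^2\phi_r(0)\pm\eta\,\mathrm{Id}$, then sending $\eta\to 0$ after $r\to 0^+$. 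The paper's route is slightly slicker (no separate integral estimate is needed), while yours is a perfectly valid and more explicit alternative; both lead to the same conclusion.
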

	\begin{proof}
	Let $x \in \partial E \cap \partial E_r$ for all $r>0$. By Proposition \ref{proprietàcurvature1} we have that the curvatures $\Ku^s_r$ satisfy properties (S) and (T); moreover, it is easy to check that $\Ku^s_r$ are invariant by rotations. Therefore, we can assume without loss of generality that $E$ and $\{E_r\}_{r>0}$ are compact, and that $x=0$, $\nu_{E}(0)=\nu_{E_r}(0)=e_d$ for all $r>0$.
	Since $E_{r} \rightarrow E$ in $\mathfrak{C}$ as $r \rightarrow 0^+$ we have that there exist $\delta>0$ and functions $\phi,\phi_r:B'(0,\delta)\to\R$ such that $\phi_r \rightarrow \phi$ in $C^2$ as $r \rightarrow 0^+$, $\phi(0)=\phi_r(0)=0$\,, $\D \phi(0)= \D \phi_r (0)=0$ and
\begin{equation*}
\begin{split}
 \partial E \cap B(0,\delta) &= \{ (y',\phi(y')): \; y' \in B'(0,\delta) \}\cap B(0,\delta)\,,\\
 \partial E_r \cap B(0,\delta)&= \{ (y',\phi_r(y')): \; y' \in B'(0,\delta) \}\cap B(0,\delta) \,,\\
 E \cap B(0,\delta)&= \{ (y',y_d): \; y' \in B'(0,\delta)\,,\; y_d \leq \phi(y') \}\cap B(0,\delta)\,,\\
 E_r \cap B(0,\delta)&= \{ (y',y_d): \; y' \in B'(0,\delta)\,, \; y_d \leq \phi_r(y') \}\cap B(0,\delta)\,.
\end{split}
\end{equation*}	
Let $\eta>0$ be fixed\,; for $\delta$ small enough we have
		\begin{equation}\label{taylorconcurvtura}
		\Big\vert \phi_{r}(y')-\frac{1}{2} (y')^*\D^2 \phi_{r}(0)y'\Big \vert \leq \eta \vert y' \vert^2\quad\textrm{ for every }0<r<1\,,\; y' \in B'(0,\delta).
		\end{equation}
		We define the following sets
		\begin{align*}
	 A(r):=& \{y=(y',y_d)\in B(0,\delta)\, :\, -\phi_{r}(y') \leq y_d \leq \phi_r(y') \}\,, \\
	 B(r):= &\{y =(y',y_d)\in B(0,\delta)\, :\, \phi_{r}(y') \leq y_d \leq -\phi_{r}(y')    \} \,, \\
	 C(r):=&\big(E_r^c \setminus B(r)\big)\cap  B(0,\delta)\\
		=&\{y =(y',y_d)\in B(0,\delta)\, :\,y_d\ge\max\{\phi_r(y'),-\phi_r(y')\}\}\,, \\ 
	 D(r):=&\big(E_r \setminus A(r) \big)\cap  B(0,\delta)\\
		=&\{y =(y',y_d)\in B(0,\delta)\, :\,y_d\le\min\{\phi_r(y'),-\phi_r(y')\}\}\,,
		\end{align*}
where the equalities above are understood in the sense of measurable sets, i.e., up to negligible sets. 
We notice that 
\begin{multline*} 
E_r\cap B(0,\delta)=A(r)\cup D(r)\,,\qquad E^c_r\cap B(0,\delta)=B(r)\cup C(r)\,,\\
\int_{C(r)}k_r^s(\vert y \vert)\ud y= \int_{D(r)}k_r^s(\vert y \vert)\ud y\,,
\end{multline*}
whence we deduce that
		\begin{equation}\label{svilengconvcurvfin}
		\begin{split}
		\Ku_{r}^s(0,E_r)=& \int_{B(0,\delta)} (\chi_{E_r^c}(y)-\chi_{E_r}(y))k_r^s(\vert y \vert)\ud y \\
		& + \int_{B^c(0,\delta)}(\chi_{E_r^c}(y)-\chi_{E_r}(y))k_r^s(\vert y \vert)\ud y \\
		=&  \int_{\mathbb{R}^d} (\chi_{B(r)} (y)-\chi_{A(r)}(y))k_{r}^s(\vert y \vert)\ud y\\
		&+\int_{\mathbb{R}^d}(\chi_{C(r)}(y)-\chi_{D(r)}(y))k_{r}^s(\vert y \vert)\ud y\\
		& + \int_{B^c(0,\delta)}(\chi_{E_r^c}(y)-\chi_{E_r}(y))k_r^s(\vert y \vert)\ud y \\
		=&  \int_{\mathbb{R}^d}(\chi_{B(r)}(y)-\chi_{A(r)}(y)) k_{r}^s(\vert y \vert )\ud y\\
	& + \int_{B^c(0,\delta)}(\chi_{E_r^c}(y)-\chi_{E_r}(y))k_r^s(\vert y \vert)\ud y \,.
		\end{split}
		\end{equation}
Trivially,
		\begin{equation*}
		\Big|\int_{B^c(0,\delta)}(\chi_{E_r^c}(y)-\chi_{E_r}(y))k_r^s(\vert y \vert)\ud y\Big| \le d\omega_d\frac{\delta^{-s}}{s}\,.
		\end{equation*}
		In order to study the limit
		\begin{equation*}
		\lim_{r \rightarrow 0^+} \frac{1}{\sigma^{s}(r)}\int_{\mathbb{R}^d}(\chi_{B(r)}(y)-\chi_{A(r)}(y)) k_r^s(\vert y \vert)\ud y\,,
		\end{equation*}
		we define the following sets
		\begin{eqnarray}  \nonumber
		&&A^-(r):=\Big\{ y=(y',y_d) \in B(0,\delta)\,: \\ \nonumber
		&&\quad-\frac{1}{2}(y')^*\D^2 \phi_r(0)y' +\eta \vert y' \vert^2 \leq y_d \leq  \frac{1}{2}(y')^*\D^2 \phi_r(0)y' -\eta \vert y' \vert^2 \Big\}, \\  \nonumber 
		&&A^+(r) :=   \Big\{ y=(y',y_d)\in B(0,\delta)\,: \\ \nonumber
		&& \quad-\frac{1}{2}(y')^*\D^2 \phi_r(0)y' -\eta \vert y' \vert^2 \leq y_d \leq  \frac{1}{2}(y')^*\D^2 \phi_r(0)y'+\eta \vert y' \vert^2    \Big\}, \\ \nonumber 
		&&B^-(r):=  \Big\{ y=(y',y_d) \in B(0,\delta)\,: \\ \nonumber
		&&\quad \frac{1}{2}(y')^*\D^2 \phi_r(0)y' +\eta \vert y' \vert^2 \leq y_d \leq  -\frac{1}{2}(y')^*\D^2 \phi_r(0)y' -\eta \vert y' \vert^2    \Big\}, \\ \nonumber 
		&&B^+(r):= \Big\{ y =(y',y_d)\in B(0,\delta)\,: \\ \nonumber
		&& \quad\frac{1}{2}(y')^*\D^2 \phi_r(0)y'-\eta \vert y' \vert^2 \leq y_d \leq  -\frac{1}{2}(y')^*\D^2 \phi_r(0)y'  +\eta \vert y' \vert^2    \Big\}\,. 
		\end{eqnarray}
		By \eqref{taylorconcurvtura} we have that 
		$$ A^-(r) \subset A(r) \subset A^+(r), \quad B^-(r)\subset B(r) \subset B^+(r)\,,$$
		and hence
		\begin{equation}\label{teoconconvcurv}
		\begin{split}
		&\int_{\mathbb{R}^d}(\chi_{B^-(r)}(y)-\chi_{A^+(r)}(y)) k_r^s(\vert y \vert)\ud y \\ \leq& \int_{\mathbb{R}^d}(\chi_{B(r)}(y)-\chi_{A(r)}(y)) k_r^s(\vert y \vert)\ud y\\ \leq& \int_{\mathbb{R}^d}(\chi_{B^+(r)}(y)-\chi_{A^-(r)}(y)) k_r^s(\vert y \vert)\ud y. 
		\end{split}
		\end{equation}
		Then, by applying Lemma \ref{lemconvcurv} and using \eqref{teoconconvcurv}, we obtain
		\begin{equation}\label{limcurvform}
		\begin{split}
		&-\int_{\mathbb{S}^{d-2}} \theta^*(\D^2 \phi(0)+2\eta \mathrm{Id})\theta \ud \mathcal{H}^{d-2}(\theta) \\ 
		\leq& \liminf_{r \rightarrow 0^+} \frac{1}{\sigma^{s}(r)}\int_{\mathbb{R}^d}(\chi_{B(r)}(y)-\chi_{A(r)}(y)) k_r^s(\vert y \vert)\ud y\\ 
			\leq& \limsup_{r \rightarrow 0^+} \frac{1}{\sigma^{s}(r)}\int_{\mathbb{R}^d}(\chi_{B(r)}(y)-\chi_{A(r)}(y)) k_r^s(\vert y \vert)\ud y\\ 
		\leq& -\int_{\mathbb{S}^{d-2}} \theta^*(\D^2 \phi(0)-2\eta \mathrm{Id})\theta \ud \mathcal{H}^{d-2}(\theta)\,.
		\end{split}
		\end{equation}
		Therefore, by  \eqref{svilengconvcurvfin}  and \eqref{limcurvform}, we get
		\begin{equation*}
		\begin{split}
			&-\int_{\mathbb{S}^{d-2}} \theta^*(\D^2 \phi(0)+2\eta \mathrm{Id})\theta \ud \mathcal{H}^{d-2}(\theta) \\ 
			\leq& \liminf_{r \rightarrow 0^+} \frac{1}{\sigma^{s}(r)} \Ku_{r}^s(0,E_r) 
			\leq \limsup_{r \rightarrow 0^+} \frac{1}{\sigma^{s}(r)} \Ku_{r}^s(0,E_r)\\ 
			\leq& -\int_{\mathbb{S}^{d-2}} \theta^*(\D^2 \phi(0)-2\eta \mathrm{Id})\theta \ud \mathcal{H}^{d-2}(\theta)\,,
		\end{split}
		\end{equation*}
		which, sending $\eta$ to $0$ and using \eqref{cueu} implies the claim.
		\end{proof}
We are now in a position to state the main result of this section.
\begin{theorem}\label{maincur}
Let $s\ge 1$ be fixed. Let $u_0\in C(\R^d)$ be a uniformly continuous function with $u_0=C_0$ in $\R^d\setminus B(0,R_0)$ for some $C_0,R_0\in\R$ with $R_0>0$\,. For every $r>0$\,, let $u^s_r:\R^d\times [0,+\infty)\to\R$ be the viscosity solution to the Cauchy problem \eqref{cauchy}\,. Then, setting $v^s_r(x,t):=u^s_r(x,\frac{t}{\sca^s(r)})$ for all $x\in\R^d$\,, $t\ge 0$\,, we have that,  for every $T>0$\,,
$v^s_r$ uniformly converge to $u$ as $r\to 0^+$ in $\R^d\times [0,T]$\,, where $u:\R^d\times[0,+\infty)\to\R$ is the viscosity solution to the classical mean curvature flow
\begin{equation}\label{cauchycl}
\begin{cases}
\partial_t u(x,t)+|\mathrm{D}u(x,t)|\omega_{d-1}\Ku^{1}(x,\{y\,:\,u(y,t)\ge u(x,t)\})=0\\
u(x,0)=u_0(x)\,.
\end{cases}
\end{equation}
\end{theorem}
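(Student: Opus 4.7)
The plan is to reduce the claim to the abstract stability result for geometric flows driven by nonlocal curvatures established in \cite[Theorem 3.2]{CDNP}, applied to the family of rescaled curvatures $\widetilde{\Ku}^s_r := \frac{1}{\sca^s(r)}\Ku^s_r$ and to the limit functional $\omega_{d-1}\Ku^1$. First I would observe that, by a direct scaling check in the viscosity formulation, the time-rescaled function $v^s_r(x,t) = u^s_r(x, t/\sca^s(r))$ is the unique viscosity solution of the Cauchy problem
\[
\begin{cases}
\partial_t v(x,t) + |\mathrm{D}v(x,t)|\,\widetilde{\Ku}^s_r\bigl(x, \{y : v(y,t)\ge v(x,t)\}\bigr) = 0, \\
v(x,0) = u_0(x).
\end{cases}
\]
Since $\sca^s(r)>0$, properties (M), (T), (S), (B), (UC) established in Proposition \ref{proprietàcurvature1} are preserved by the positive rescaling, so each $\widetilde{\Ku}^s_r$ is a nonlocal curvature in the sense of \cite[Definition 2.1]{CDNP}, and the above Cauchy problem is well posed.

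Next I would verify the two hypotheses of \cite[Theorem 3.2]{CDNP}. The first is that the limit $\omega_{d-1}\Ku^1$ is itself a nonlocal curvature satisfying (M), (T), (S), (B), and the continuity properties required for well-posedness: this is the content of Proposition \ref{procla} together with the remark on property (C) at the beginning of Subsection \ref{ssc:convmcf}. The second, and crucial, hypothesis is the pointwise convergence of curvatures on regular sets: this is exactly Theorem \ref{convcurvthmprinc}, which asserts that $\widetilde{\Ku}^s_r(x,E_r) \to \omega_{d-1}\Ku^1(x,E)$ whenever $E_r\to E$ in $\mathfrak{C}$ and $x \in \partial E \cap \partial E_r$ for every $r>0$. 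Once both hypotheses are checked, the stability theorem yields the locally uniform convergence of the associated level-set flows, namely $v^s_r \to u$ locally uniformly in $\R^d\times[0,T]$ for every $T>0$, where $u$ is the unique viscosity solution of \eqref{cauchycl}.

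The main obstacle in this plan is ensuring that the uniform continuity modulus of Proposition \ref{proprietàcurvature1}(UC), once rescaled by $1/\sca^s(r)$, is controlled compatibly with the hypotheses of \cite[Theorem 3.2]{CDNP} uniformly as $r\to 0^+$. A careful tracking of the constants appearing in the proof of that property, together with the explicit form \eqref{scalingper} of $\sca^s(r)$, should suffice to extract an $r$-independent modulus on sufficiently regular sets; alternatively, one can localize the arguments on compact space-time slabs, where uniform $C^2$-bounds on the evolving sets coming from the comparison with evolving balls (via property (B)) allow one to bypass the issue. Apart from this technical point, the proof is then a direct application of the abstract machinery developed in \cite{CMP, CDNP}, and no further computation is required.
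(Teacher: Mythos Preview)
Your overall strategy is correct and matches the paper's: rescale time, observe that $v^s_r$ solves the level-set flow driven by $\frac{1}{\sca^s(r)}\Ku^s_r$, and then invoke \cite[Theorem 3.2]{CDNP} using Theorem \ref{convcurvthmprinc} for the convergence of curvatures and Propositions \ref{proprietàcurvature1}, \ref{procla} for the structural properties (M), (T), (S), (UC').

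However, you misidentify the remaining technical hypothesis. The stability theorem in \cite{CDNP} does \emph{not} require a uniform-in-$r$ modulus for (UC) or (UC'); each $\Ku^s_r$ having its own modulus $\omega_r$ is enough, so your proposed ``obstacle'' and the workarounds you sketch (tracking constants in the (UC) proof, or localizing on compact slabs via comparison with balls) are unnecessary. What the theorem \emph{does} require, and what you omit, is a uniform-in-$r$ bound on the rescaled curvatures of balls, labelled (UB) in the paper: one needs
\[
\sup_{r\in(0,1)} \frac{1}{\sca^s(r)}\Ku^s_r\bigl(x,\overline{B}(0,\rho)\bigr) < +\infty \quad\text{for every } \rho>0,\ x\in\partial B(0,\rho),
\]
together with a lower bound of the form $\inf_{r\in(0,1)} \frac{1}{\sca^s(r)}\Ku^s_r(x,\overline{B}(0,\rho)) \ge -K\rho$ for $\rho>1$. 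The paper gets the lower bound for free from property (B) (nonnegativity of the curvature of balls, Proposition \ref{proprietàcurvature1}), and the upper bound directly from Theorem \ref{convcurvthmprinc} applied to the constant family $E_r\equiv \overline{B}(0,\rho)$: since the rescaled curvatures converge, they are bounded. With (UB) in hand, \cite[Theorem 3.2]{CDNP} applies immediately and no further estimate is needed.
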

\begin{proof}
We preliminarily notice that, by an easy scaling argument, the functions $v^s_r$ are viscosity solution to
\begin{equation*}
\begin{cases}
\partial_t v(x,t)+|\D v(x,t)|\frac{1}{\sca^{s}(r)}\Ku^{s}_r(x,\{y\,:\,v(y,t)\ge v(x,t)\})=0\\
v(x,0)=u_0(x)\,.
\end{cases}
\end{equation*}
By Theorem \ref{convcurvthmprinc} we have that , as $r\to 0^+$ the scaled $k^s_r$-curvatures $\frac{1}{\sca^s(r)}\Ku^s_r$ converge to $\omega_{d-1}\Ku^1$ on regular sets. Moreover, by Propositions \ref{proprietàcurvature1} and \ref{procla}, $\Ku^s_r$ (for every $r\in (0,1)$) and $\Ku^1$ satisfy properties (M), (T), (S), (UC'). Furthermore, for every $\rho>0$ and for every $x\in \partial B(0,\rho)$, by Proposition \ref{proprietàcurvature1}, we have that $\Ku^s_r(x,\overline{B}(0,\rho)\ge 0$ whereas, by Theorem \ref{convcurvthmprinc} we get that $\sup_{r\in(0,1)}\Ku^s_r(x,\overline{B}(0,\rho))<+\infty$\,. 
This trivially implies the following property:
\begin{itemize}
\item[(UB)] There exists $K>0$ such that $\inf_{r\in (0,1)}\Ku_r^s(x,\overline{B}(0,\rho))\ge -K\rho$ for all $\rho>1$\,, $x\in \partial B(0,\rho)$ and $\sup_{r\in (0,1)}\Ku_r^s(x,\overline{B}(0,\rho))<+\infty$ for all $\rho>0$\,, $x\in \partial B(0,\rho)$\,.
\end{itemize}
Properties (M), (T), (S), (UC') (referred to as (C') in \cite{CDNP}) and (UB), together with the convergence of the curvatures on regular sets, are exactly the assumptions of \cite[Theorem 3.2]{CDNP}, which, in our case, establishes the convergence of $v^s_r$ to $u$ locally uniformly in $\R^d\times [0,T]$ for every $T>0$\,.
\end{proof}
\section{Stability as $r\to 0^+$ and $s\to 1^+$ simultaneously}\label{doli}
In this section we study $\Gamma$-convergence and compactness properties for the $s$-fractional perimeters $\tildeJ$ defined in \eqref{Jtildefrac} when $r\to 0^+$ and $s\to \bar s$ (with $\bar s\ge 1$) simultaneously. Moreover, we study the convergence of the corresponding geometric flows in such a case. 
In fact, we will consider only the critical case $\bar s=1$\,, the case $\bar s>1$ being  easier.

Let
 $\{r_n\}_{n \in \mathbb{N}} \subset (0,1)$
 and  $ \{ s_n\}_{n \in \mathbb{N}} \subset (1,+\infty)$ be such that $r_n\to 0^+$ and $s_n\to 1^{+}$  as $n\to +\infty$\,.
 Recalling the definitions of $\sigma^{s}(r)$ in \eqref{scalingper} and $\bm{\alpha}^s$ in \eqref{error}, we set
 \begin{equation}\label{defbetaultimo}
\bm{\beta}(r_n,s_n):=\sigma^{s_n}(r_n)+\bm{\alpha}^{s_n}= \frac{d+s_n}{d+1} \frac{r_n^{1-s_n}-1}{s_n-1}+ \frac{1}{d+1}
\end{equation}
and we notice that
\begin{equation}\label{limbeta}
\begin{aligned}
\lim_{n\to +\infty}\bm{\beta}(r_n,s_n)\ge&\lim_{n\to +\infty}  \frac{r_n^{1-s_n}-1}{s_n-1}
= \lim_{n\to +\infty}\int_{r_n}^{1}\rho^{-s_n}\ud \rho\\
\ge&  \lim_{n\to +\infty}\int_{r_n}^{1}\rho^{-1}\ud \rho
=\lim_{n\to +\infty}|\log r_n|=+\infty\,.
\end{aligned}
\end{equation}
\subsection{$\Gamma$-convergence and compactness }
In Theorem \ref{mainthm111} below we study the $\Gamma$-convergence of the functionals $\frac{1}{\bm{\beta}(r_n,s_n)}\tilde{J}_{r_n}^{s_n}$ as $n\to +\infty$\,. 
\begin{theorem}\label{mainthm111}
Let  $\{r_n\}_{n \in \mathbb{N}} \subset (0,1)$ and  $ \{ s_n\}_{n \in \mathbb{N}} \subset (1,+\infty)$ be such that $r_n\to 0^+$ and $s_n\to 1^+$ as $n\to +\infty$\,.
 The following $\Gamma$-convergence result holds true.
		\begin{itemize}
			\item[(i)] (Compactness) Let $U \subset \mathbb{R}^d $ be an open bounded set and let $\{ E_n\}_{n \in \mathbb{N}} \subset \mathrm{M}(\mathbb{R}^d)$ be such that $ E_n \subset U$ for every $n \in \mathbb{N}$ and
			\begin{equation*}
			\tilde{J}_{r_n}^{s_n}(E_n) \leq M \bm{\beta}(r_n,s_n) \quad \text{for every $n \in \mathbb{N}$,}
			\end{equation*}
			for some constant $M $ independent of $n $. Then up to a subsequence, $\chi_{E_n} \rightarrow \chi_{E}$ strongly in $\mathrm{L}^1(\mathbb{R}^d)$ for some set $E \in \mathrm{M}_f(\mathbb{R}^d)$ with $\Per(E) < + \infty$.
			\item[(ii)](Lower bound) Let $E\in\Mf$. For every $\{E_n\}_{n\in\N}\subset\Mf$ with $\chi_{E_n}\to\chi_E$ strongly in $L^1(\R^d)$ it holds
			\begin{equation*}
			\omega_{d-1}\Per(E)\le\liminf_{n\to +\infty}\frac{\tilde{J}_{r_n}^{s_n}(E_n)}{\bm{\beta}(r_n,s_n)}\,.
			\end{equation*}
			\item[(iii)] (Upper bound) For every $E\in\Mf$ there exists $\{E_n\}_{n\in\N}\subset\Mf$ such that $\chi_{E_n}\to\chi_E$ strongly in $L^1(\R^d)$ and
			\begin{equation*}
			\omega_{d-1}\Per(E)=\limsup_{n\to +\infty} \frac{\tilde{J}_{r_n}^{s_n}(E_n)}{\bm{\beta}(r_n,s_n)}\,.
			\end{equation*}
	\end{itemize}
\end{theorem}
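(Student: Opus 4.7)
The three parts of the theorem follow the strategy of the corresponding proofs of Theorem \ref{mainthm}, once the scaling $\sigma^s(r)$ is replaced by $\bm{\beta}(r_n,s_n)$. The central issue in each step is to keep all constants uniform as $s_n\to 1^+$; since by \eqref{limbeta} we have $\bm{\beta}(r_n,s_n)\ge |\log r_n|\to +\infty$, the scalar $\bm{\beta}(r_n,s_n)$ dominates any bounded-in-$s$ error term, and many of the estimates go through after this observation.

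For the upper bound (iii), I would begin by proving the pointwise analogue of Proposition \ref{prop:point}: for every smooth bounded $E$,
\[
\lim_{n\to+\infty}\frac{\tilde J^{s_n}_{r_n}(E)}{\bm{\beta}(r_n,s_n)}=\omega_{d-1}\Per(E).
\]
This is a direct consequence of the exact decomposition of Lemma \ref{porpsvilen}, whose leading term is by construction $\omega_{d-1}\Per(E)\bigl(\sigma^{s_n}(r_n)+\bm{\alpha}^{s_n}\bigr)=\omega_{d-1}\Per(E)\bm{\beta}(r_n,s_n)$. All other addends are shown to be $o(\bm{\beta}(r_n,s_n))$ by adapting the six estimates in the proof of Proposition \ref{prop:point}: Remark \ref{zeroor} handles $F^{s_n}_1(E)$, the change of variables $z=(x-y)/r_n$ together with dominated convergence handles the curvature remainders, and the logarithmic divergence of $\bm{\beta}(r_n,s_n)$ absorbs the most delicate term (the one that in the pure $s=1$ case required the geometric argument with the inner/outer osculating balls). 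A standard diagonal argument based on Theorem \ref{appro} then produces the recovery sequence and yields (iii).

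For the compactness (i), I would reproduce the three-step strategy of Subsection \ref{maincomp} with $l_n:=r_n^\alpha$ and $\alpha\in(1-1/\bar s,1)$ for a fixed $\bar s>1$ such that eventually $s_n<\bar s$. The Poincaré–Wirtinger inequality of Lemma \ref{lemmatipofracpoinc} has constant $2d^{(d+s)/2}l^s$, which is uniformly bounded for $s\in(1,\bar s)$ and $l\in(0,1)$. The localized isoperimetric inequality of Lemma \ref{quasiiso} must be upgraded to a uniform version in $s\in(1,\bar s)$: inspecting its proof, the dyadic bound \eqref{stimasututti} holds with a constant depending only on $d$ and $\bar s$, and the telescoping sum \eqref{persomI} satisfies $\sum_{i=0}^I (2^{s-1})^i\ge C(\bar s)\,\bm{\beta}(r,s)$ for $r$ small enough, uniformly in $s\in(1,\bar s)$ (this is where switching from $\sigma^s(r)$ to $\bm{\beta}(r,s)$ is important, since the latter remains comparable to $|\log r|$ as $s\to 1^+$). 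Once these uniform versions are in place, the proofs of \eqref{difsimmEeEtild} and \eqref{limitatezPcubet} go through verbatim, and compactness in $L^1$ toward a set of finite perimeter follows from Theorem \ref{compatezzainBV}.

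For the lower bound (ii), I would proceed as in the proof of Theorem \ref{mainthm}(ii) via the measures $\mu^{s_n}_n(A\times B):=\tfrac{1}{2\bm{\beta}(r_n,s_n)}\int_A\int_B k^{s_n}_{r_n}(|x-y|)|\chi_{E_n}(x)-\chi_{E_n}(y)|\,dy\,dx$, showing that their weak-star limit is concentrated on the diagonal and then blowing up at $\mathcal H^{d-1}$-a.e.\ point of $\partial^* E$. The crux is a uniform version of Lemma \ref{lemma18}: for every $\ep>0$ there exist $\delta_0,r_0>0$ and $\bar s>1$ such that, for all $s\in(1,\bar s)$, $r<r_0$, $\nu\in\mathbb{S}^{d-1}$, and every $E$ satisfying \eqref{hp},
\[
\int_{Q^\nu\cap E}\int_{Q^\nu\cap E^c}k^s_r(|x-y|)\,dy\,dx\ge \omega_{d-1}(1-\ep)\,\bm{\beta}(r,s).
\]
The proof mirrors that of Lemma \ref{lemma18}; the analogue of \eqref{start3} produces a remainder of the form $\omega_{d-1}\bigl(\bm{\beta}(r,s)-C(\delta_0,s)\bigr)$ with $C(\delta_0,s)=\delta_0^{1-s}/(s-1)$. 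Although $C(\delta_0,s)\to +\infty$ as $s\to 1^+$ for $\delta_0$ fixed, one first chooses $\delta_0$ to control the geometric defect $\eta(\delta_0)$, then restricts to $s\in(1,\bar s)$ so that $C(\delta_0,s)\le C(\delta_0,\bar s)<+\infty$, and finally takes $r_0$ small enough that $C(\delta_0,\bar s)/\bm{\beta}(r,s)\le \ep/2$ for all $r<r_0$ and $s\in(1,\bar s)$; this last step uses $\bm{\beta}(r,s)\ge|\log r|$. With this uniform Lemma \ref{lemma18} in hand, the blow-up computation \eqref{chavar}--\eqref{apple} applies with $k^s_{r_n/l}$, and a short computation shows that $l^{1-s_n}\sigma^{s_n}(r_n/l)/\bm{\beta}(r_n,s_n)\to 1$ uniformly, completing the liminf estimate. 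The principal obstacle throughout is precisely ensuring this uniformity in $s$ close to $1$, which is resolved by working with $\bm{\beta}$ rather than $\sigma^s$ and exploiting the divergence $\bm{\beta}(r,s)\ge|\log r|$ to absorb $s$-dependent error constants.
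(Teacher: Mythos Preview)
Your overall strategy matches the paper's, and parts (i) and (iii) are essentially correct. (In fact, your choice $l_n=r_n^\alpha$ in (i) works in all regimes and is arguably cleaner than the paper's case distinction on $\lambda:=\lim (s_n-1)|\log r_n|$; the paper uses $l_n=r_n^\alpha(s_n-1)$ when $\lambda\neq 0$.)

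There is, however, a genuine error in your treatment of the uniform Lemma~\ref{lemma18} for part (ii). You write that the analogue of \eqref{start3} yields a lower bound $\omega_{d-1}\bigl(\bm{\beta}(r,s)-C(\delta_0,s)\bigr)$ with $C(\delta_0,s)=\delta_0^{1-s}/(s-1)$, and then claim $C(\delta_0,s)\le C(\delta_0,\bar s)$ for $s\in(1,\bar s)$. Both statements are wrong. First, the computation in \eqref{start3} gives $\omega_{d-1}\bigl(\sca^s(r)-\delta_0^{1-s}/(s-1)\bigr)$; when you rewrite this in terms of $\bm{\beta}=\sca^s+\bm{\alpha}^s=\sca^s-\tfrac{1}{s-1}$, the remainder becomes
\[
\sca^s(r)-\frac{\delta_0^{1-s}}{s-1}=\bm{\beta}(r,s)-\frac{\delta_0^{1-s}-1}{s-1},
\]
so the correct error term relative to $\bm{\beta}$ is $\frac{\delta_0^{1-s}-1}{s-1}$, not $\frac{\delta_0^{1-s}}{s-1}$. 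Second, the function $s\mapsto \delta_0^{1-s}/(s-1)$ blows up as $s\to 1^+$ and is decreasing near $s=1$, so your inequality $C(\delta_0,s)\le C(\delta_0,\bar s)$ goes the wrong way and cannot hold on $(1,\bar s)$.

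The fix is precisely the ``$-1$'' you dropped: since $\frac{\delta_0^{1-s_n}-1}{s_n-1}\to|\log\delta_0|$ as $s_n\to 1^+$, this corrected error term is uniformly bounded (by, say, $2|\log\delta_0|$ for $n$ large), and then your final step using $\bm{\beta}(r,s)\ge|\log r|\to\infty$ genuinely absorbs it. This is exactly the mechanism the paper exploits and the real reason $\bm{\beta}$ is the right scaling. (A minor related slip: in your blow-up step you should track $l^{1-s_n}\bm{\beta}(r_n/l,s_n)/\bm{\beta}(r_n,s_n)$, not $l^{1-s_n}\sigma^{s_n}(r_n/l)/\bm{\beta}(r_n,s_n)$; the two differ by a term of order $1/(s_n-1)$.)
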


\subsubsection{Proof of compactness}	
We start by proving the compactness property Theorem \ref{mainthm111}(i). To this purpose, we first prove the following lemma which corresponds to Lemma \ref{quasiiso} when both $r$ and $s$ vary.
\begin{lemma}\label{quasiiosMod}
Let  $\{r_n\}_{n \in \mathbb{N}} \subset (0,1)$ and  $ \{ s_n\}_{n \in \mathbb{N}} \subset (1,+\infty)$ be such that $r_n\to 0^+$ and $s_n\to 1^+$ as $n\to +\infty$\,.
	 Let $\Omega \in \mathrm{M}_f(\mathbb{R}^d)$ be a bounded set with Lipschitz continuous boundary and $\vert \Omega \vert=1$. 
	For every $\eta\in(0,1)$ there exist a constant $C(\Omega,d,S,\eta)>0$ and $\bar n\in\N$ such that for every measurable set $A\subset\Omega$ with $\eta\le |A|\le 1-\eta$ it holds
	\begin{equation*}
	\quad \int_{A}\int_{\Omega\setminus A}k^{s_n}_{r_n}(|x-y|)\ud y\ud x\ge C(\Omega,d,S,\eta)\bm{\beta}(r_n,s_n)\qquad\textrm{for every }n\ge\bar{n}\,,
	\end{equation*}
	where $S:= \sup_{n \in \mathbb{N}} s_n$.
\end{lemma}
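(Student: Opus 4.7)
The plan is to adapt the proof of Lemma \ref{quasiiso} to the regime in which both $r_n \to 0^+$ and $s_n \to 1^+$, using the uniform upper bound $s_n \leq S$ to keep the constants in the dyadic decomposition independent of $n$. For each $n$ large enough that $r_n < 1$, I would select $I_n \in \N$ with $2^{-I_n - 1} \leq r_n \leq 2^{-I_n}$, so that $I_n \to +\infty$, and observe that the elementary kernel lower bound $k^{s_n}_{r_n}(|z|) \geq (2^{d+s_n})^{\min\{i, I_n\}}$ for $|z| \leq 2^{-i}$ still holds.

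Next, I would reproduce verbatim the three-regime argument leading to \eqref{stimasututti}, obtaining
\begin{equation*}
k^{s_n}_{r_n}(|z|) \geq C(\phi, d, S) \sum_{i=0}^{I_n} (2^{s_n})^i \, \phi_{2^{-i}}(z) \qquad \text{for every } z \in \R^d.
\end{equation*}
The only point requiring checking is that the constant $\frac{2^{d+s_n}}{2^{d+s_n}-1}\sup\phi$ emerging from the geometric sum in each regime is bounded uniformly in $n$, which it is since $s_n \leq S$ gives the uniform bound $\frac{2^{d+S}}{2^{d+1}-1}\sup\phi$. Plugging this pointwise estimate into the double integral and applying Lemma \ref{lm:l1ker} at each dyadic scale $\delta = 2^{-i}$, exactly as in the derivation of \eqref{endproof}, yields
\begin{equation*}
\int_A \int_{\Omega \setminus A} k^{s_n}_{r_n}(|x-y|) \ud y \ud x \geq C(\Omega, \phi, d, S, \eta) \sum_{i=0}^{I_n} (2^{s_n - 1})^i.
\end{equation*}

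The step specific to the double-limit regime is the comparison of the geometric sum with $\bm{\beta}(r_n, s_n)$. Using that $I_n + 1 \geq \log_2(1/r_n)$ and $s_n - 1 > 0$, one has
\begin{equation*}
\sum_{i=0}^{I_n} (2^{s_n - 1})^i = \frac{2^{(s_n-1)(I_n+1)} - 1}{2^{s_n-1} - 1} \geq \frac{r_n^{1-s_n} - 1}{2^{s_n-1} - 1},
\end{equation*}
and then by the mean value theorem applied to $x \mapsto 2^x$ on $[0, s_n - 1]$ one gets the elementary bound $2^{s_n-1} - 1 \leq (s_n - 1) \ln 2 \cdot 2^{S-1}$, so that the sum is bounded below by a constant (depending only on $S$) times $\frac{r_n^{1-s_n} - 1}{s_n - 1}$. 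In view of \eqref{defbetaultimo} and \eqref{limbeta}, this quantity diverges and absorbs both the prefactor $\frac{d+s_n}{d+1}$ and the additive term $\frac{1}{d+1}$ appearing in $\bm{\beta}(r_n, s_n)$, yielding $\sum_{i=0}^{I_n} (2^{s_n-1})^i \geq C(d, S)\, \bm{\beta}(r_n, s_n)$ for all $n \geq \bar{n}$, which gives the claim.

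The main obstacle is ensuring uniformity of the constants as $s_n \to 1^+$: the denominator $2^{s_n-1} - 1$ degenerates like $(s_n - 1)\ln 2$, which would a priori make the lower bound on the geometric sum blow up. The point is that this degeneration is precisely compensated by the factor $\frac{1}{s_n - 1}$ already present in $\bm{\beta}(r_n, s_n)$, which is exactly why the normalization $\bm{\beta}(r_n, s_n)$ (rather than $\sigma^{s_n}(r_n)$, whose leading part $\frac{r_n^{1-s_n}}{s_n - 1}$ misses the $-1$ correction that keeps things finite when $r_n$ and $s_n$ approach their critical values) is the correct one in this simultaneous limit.
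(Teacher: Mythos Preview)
Your proposal is correct and follows essentially the same approach as the paper: dyadic decomposition with $I_n$, reproduction of the pointwise kernel bound \eqref{stimasututti} with a constant uniform in $n$ thanks to $s_n\le S$, application of Lemma~\ref{lm:l1ker} at each scale, and comparison of the geometric sum $\sum_{i=0}^{I_n}(2^{s_n-1})^i$ with $\bm{\beta}(r_n,s_n)$. The only cosmetic difference is that you make the bound $2^{s_n-1}-1\le (s_n-1)\ln 2\cdot 2^{S-1}$ explicit via the mean value theorem, whereas the paper packages the same estimate by setting $m(S):=\inf_{s\in(1,S]}\frac{s-1}{2^{s-1}-1}>0$; these are equivalent.
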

\begin{proof}
The proof is fully analogous to the one of Lemma \ref{quasiiso}; we sketch only the main differences.
For every $n \in \mathbb{N}$, let $I_n \in \mathbb{N}$ be such that $2^{-I_n-1} \leq r_n \leq 2^{-I_n}$\,. Let $\phi$ and $\phi_\delta$ (for every $\delta>0$) be as in Lemma \ref{lm:l1ker}. 
 By arguing verbatim as in the proof of \eqref{stimasututti} (see \eqref{psst}, \eqref{ssst}, and \eqref{tsst}), for every $n\in\N$ and for every $z\in\R^d$ we have
\begin{equation}\label{quasisios_nform1}
\begin{aligned}
k_{r_n}^{s_n}(|z|)\ge& \frac{2^{d+s_n}-1}{2^{d+s_n}}  \frac{1}{\sup\phi}
\sum_{i=0}^{I_n} (2^{s_n})^{i} \phi_{2^{-i}}(z) \\
\ge&\frac{2^{d+1}}{2^{d+1}-1}  \frac{1}{\sup\phi}\sum_{i=0}^{I_n} (2^{s_n})^{i} \phi_{2^{-i}}(z)\,.
\end{aligned}
\end{equation}
Moreover, since 
$$
\frac{|\log r_n|}{\log 2}-1\le I_n\le \frac{|\log r_n|}{\log 2}\,,
$$
setting $ m(S):= \inf_{s \in (1,S]} \frac{s-1}{2^{s-1}-1}$,we get
\begin{equation}\label{sommai}
\begin{split}
\sum_{i=0}^{I_n} (2^{s_n-1})^i =& 
 \frac{\big(2^{s_n-1}\big)^{I_n+1} -1}{2^{s_n-1}-1} 
\geq \frac{r_{n}^{1-s_n} -1}{2^{s_n-1}-1} \\
 =& \frac{r_{n}^{1-s_n} -1}{s_n -1} \frac{s_n-1}{2^{s_n-1}-1} \geq \frac{m(S)}{2} \frac{d+1}{d+s_n} \Big(2 \frac{d+s_n}{d+1} \frac{r_{n}^{1-s_n} -1}{s_n -1} \Big) \\
\ge & \frac{m(S)}{2} \frac{d+1}{d+S}\Big( \frac{d+s_n}{d+1} \frac{r_{n}^{1-s_n} -1}{s_n -1}+\frac{1}{d+1} \frac{r_{n}^{1-s_n} -1}{s_n -1} \Big)\\
\geq& \frac{m(S)}{2} \frac{d+1}{d+S} \bm{\beta}(r_n,s_n)\,,
\end{split} 
\end{equation}
where in the last inequality we have used that, in view of \eqref{defbetaultimo},
$\frac{r_{n}^{1-s_n} -1}{s_n -1}\ge 1$\,.
Therefore, by arguing as in \eqref{endproof}, using \eqref{quasisios_nform1} and \eqref{sommai}, we get the claim. 
\end{proof}
With Lemma \ref{quasiiosMod} in hand, we can prove Theorem \ref{mainthm111}(i), whose proof closely follows the one of Theorem \ref{mainthm}(i). We sketch only the main differences.
\begin{proof}[Proof of Theorem \ref{mainthm111}(i)]
We preliminarily notice that, up to a subsequence, the following limit exists
 \begin{equation}\label{limpossibililambda}
   \lim_{n \rightarrow + \infty} (s_n-1)\vert \log r_n \vert=: \lambda \,;
  \end{equation}	
clearly, $\lambda\in [0,+\infty]$\,.	
We first prove the claim under the assumption $\lambda\neq 0$\,.
Let $\alpha \in (0,1)$ and for every $n\in\N$ we set $l_n:= r_n^{\alpha}(s_n-1)$\,; therefore, since $\lambda\in(0,+\infty]$\,,
\begin{equation*}
\lim_{n \rightarrow+ \infty} \frac{r_n}{l_n}=\lim_{n\to +\infty} \frac{r_n^{1-\alpha}}{s_n-1}=0\,.
\end{equation*}
By adopting the same notation as in Subsection \ref{maincomp} we set
\begin{equation*}
\tilde{E}_n:= \bigcup_{h=1}^{H(n)} Q_{h}^{n}\,,
\end{equation*} 
where $\{Q_{h}^n\}_{h\in\N}$ is a family of pairwise disjoint cubes of sidelength $l_n$ which covers the whole $\R^d$ and satisfies \eqref{famigliacubi}.
  
By arguing verbatim as in the proof of \eqref{difsimmEeEtild} one can show that there exists $n'\in\N$ such that
\begin{equation}\label{teocomps_nform1}
\vert \tilde{E}_n \triangle E_n \vert \leq 4 l_n^{s_n} \bm{\beta}(r_n,s_n) M \quad \text{for every $n\geq n'.$}
\end{equation}
We observe that
\begin{equation}\label{l_nbeta(r_n,s_n)}
\begin{aligned}
\lim_{n \rightarrow + \infty} l_n^{s_n} \bm{\beta}(r_n,s_n)=&\lim_{n\to +\infty}r_n^{\alpha s_n}(s_n-1)^{s_n}\Big(\frac{d+s_n}{d+1}\frac{r_n^{1-s_n}-1}{s_n-1}+\frac 1 {d+1}\Big)\\
=&\lim_{n \rightarrow + \infty} r_n^{1-s_n+\alpha s_n} \frac{d+s_n}{d+1} (s_n-1)^{s_n-1}=0\,.
\end{aligned}
\end{equation}
Now, setting $S:=\sup_{n\in\N}s_n$, we claim that there exists a constant $C(\alpha,d,S)$ such that for $n$ large enough
\begin{equation}\label{teocomps_nform2}
\Per(\tilde{E}_n)\leq C(\alpha, d,S)\frac{\tilde{J}_{r_n}^{s_n}(E_n)}{\bm{\beta}(r_n,s_n)}\,.
\end{equation}
In order to prove \eqref{teocomps_nform2}, we argue as in Step 2 in Subsection \ref{maincomp}. 
We define the family $\mathcal R$ of rectangles $R=\tilde Q^n_h\cup\hat Q^n_h$ such that $\tilde Q^n_h$ and $\hat Q^n_h$ are adjacent,
 $\tilde Q^n_h\subset\tilde E_n$ and $\hat Q^n_h\subset \tilde E_n^c$\,.

Notice that
\begin{equation}\label{primastimaPscubet2}
\begin{aligned}
\Per(\tilde{E}_n)\leq & 2d l_n^{d-1}\sharp\mathcal R\,,\\
 \frac{\tilde{J}_{r_n}^{s_n}(E_n)}{\bm{\beta}(r_n,s_n)}\ge &\frac{1}{2d\,\bm{\beta}(r_n,s_n)}\sum_{R\in\mathcal R}\int_{R\cap E_n}\int_{R\setminus E_n}k^{s_n}_{r_n}(|x-y|)\ud y\ud x\,.
 \end{aligned}
\end{equation} 
Moreover, by Lemma \ref{quasiiosMod}, for every rectangle $\bar R$ given by the union of two adjacent unitary cubes in $\R^d$, there exists $\bar n\in\N$ such that 
\begin{equation}\label{applem2}
\begin{aligned}
 C(d,\lambda):=& \inf \biggl\{ \frac{1}{\bm{\beta}(\rho_n,s_n)}\int_{F} \int_{\bar R\setminus F} k^{s_n}_{\rho_n}(\vert x-y \vert)\ud y \ud x: \\
&\qquad n\ge \bar n,\; F\in \Mf\,,\; F\subset \bar R\,, \; \frac 1 2\le \vert F \vert\le  \frac{3}{2} \biggr\}>0\,.
\end{aligned}
\end{equation}
Furthermore, by the very definition of $\bm{\beta}(r_n,s_n)$ in \eqref{defbetaultimo}, we have
\begin{equation*}
\begin{aligned}
\frac{\bm{\beta}(r_n,s_n)}{l_n^{1-s_n}\bm{\beta}\big(\frac{r_n}{l_n},s_n\big)}
=&1+\frac{\displaystyle (d+1)(l_n^{1-s_n}-1)}{\displaystyle (d+s_n)\big(r_n^{1-s_n}-l_n^{1-s_n}\big)+(s_n-1)l_n^{1-s_n}}\,,
\end{aligned}
\end{equation*}
whence, using that $l_n=r_n^{\alpha}(s_n-1)$ and \eqref{limpossibililambda}, we deduce
\begin{equation}\label{asinbetas_nl_n}
\begin{aligned}
\lim_{n \rightarrow+ \infty} \frac{\bm{\beta}(\frac{r_n}{l_n},s_n)}{\frac{\bm{\beta}(r_n,s_n)}{l_n^{1-s_n}}}=& \left\{\begin{array}{ll}
1+\frac{e^{\lambda\alpha}-1}{e^\lambda-e^{\lambda\alpha}}&\textrm{ if }\lambda\neq +\infty\\
1&\textrm{ if }\lambda=+\infty\,.
\end{array}\right.
\end{aligned}
\end{equation}
For every set $O\in\Mf$ we set $O^{l_n}:=\frac{O}{l_n}$.
By \eqref{primastimaPscubet2}, \eqref{scaling}, \eqref{asinbetas_nl_n} and by applying \eqref{applem2} with $\bar R=R^{l_n}$ for every $R\in\mathcal R$, for $n$ large enough we obtain
\begin{equation*}
\begin{split}
 &\frac{\tilde J^{s_n}_{r_n}(E_n)}{\bm{\beta}(r_n,s_n)}\\
 \ge 
& \frac{C(d)}{\bm{\beta}(r_n,s_n) }l_n^{2d}\sum_{R\in\mathcal R} \int_{R^{l_n}\cap E^{l_n}}\int_{R^{l_n}\setminus E^{l_n}}k^{s_n}_{r_n}(|l_n(x-y)|)\ud y\ud x
\\
= & C(d)\frac{l_n^{1-s_n}}{\bm{\beta}(r_n,s_n) }l_n^{d-1}\sum_{R\in\mathcal R} \int_{R^{l_n}\cap E^{l_n}}\int_{R^{l_n}\setminus E^{l_n}}k^{s_n}_\frac{r_n}{l_n}(|x-y|)\ud y\ud x\\
 \ge &C(\alpha,d,\lambda)l_n^{d-1}\sum_{R\in\mathcal R} \frac{1}{\bm{\beta}(\frac{r_n}{l_n},s_n)} \int_{R^{l_n}\cap E^{l_n}}\int_{R^{l_n}\setminus E^{l_n}}k^{s_n}_\frac{r_n}{l_n}(|x-y|)\ud y\ud x\\
\ge & C(\alpha,d,\lambda)l_n^{d-1}\sharp\mathcal R \,C(d,\lambda)
 \ge C(\alpha,d,\lambda) \Per(\tilde{E}_n)\,,
\end{split}
\end{equation*}
i.e., \eqref{teocomps_nform2}.
Therefore, using \eqref{teocomps_nform1}, \eqref{l_nbeta(r_n,s_n)} and \eqref{teocomps_nform2}, by arguing as in Step 3 of the proof of Theorem \ref{mainthm}(i), we get the claim whenever \eqref{limpossibililambda} is satisfied.

Finally, if 
$$
\lim_{n\to +\infty}(s_n-1)\vert \log r_n \vert=0\,,
$$
taking $l_n=r_n^\alpha$ (with $\alpha\in (0,1)$), one can show that
\begin{eqnarray*}
\lim_{n\to +\infty}l_n^{s_n}\bm{\beta}(r_n,s_n)
&=&0\,,\\
\lim_{n\to +\infty}\frac{\bm{\beta}(r_n,s_n)}{l_n^{1-s_n}\bm{\beta}\big(\frac{r_n}{l_n},s_n\big)}
&=&\frac{1}{1-\alpha}\,,
\end{eqnarray*}
which used in the above proof, in place of \eqref{l_nbeta(r_n,s_n)} and \eqref{asinbetas_nl_n}, respectively, imply the claim also in this case.
\end{proof}
The following result follows by arguing as in the proof of Proposition \ref{corcompthm}, using now the estimates in the proof of Theorem \ref{mainthm111}(i) instead of the ones in the proof of Theorem \ref{mainthm} (i).
\begin{proposition}\label{corcompthmbars>1} 
 Let $\{ E_n\}_{n \in \mathbb{N}} \subset \mathrm{M}_{f}(\mathbb{R}^d)$ be such that $ \chi_{E_n} \rightarrow \chi_{E}$ in $\mathrm{L}^1(\mathbb{R}^d)$ as $n \rightarrow + \infty$, for some $ E \in \mathrm{M}_f(\mathbb{R}^d)$.  If
	\begin{equation*} 
	\limsup_{n\to +\infty} \frac{\tilde{J}_{r_n}^{s_n}(E_n)}{\bm{\beta}(r_n,s_n)}  < +\infty,
		\end{equation*}
then $E$ is a set of finite perimeter.
\end{proposition}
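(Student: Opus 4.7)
The plan is to follow the strategy of Proposition \ref{corcompthm} almost verbatim, replacing the estimates from Steps 1 and 2 of Theorem \ref{mainthm}(i) by the ones established in the proof of Theorem \ref{mainthm111}(i). Up to a subsequence, I will assume that the limit $\lambda := \lim_{n\to +\infty} (s_n-1)|\log r_n| \in [0,+\infty]$ exists, as in \eqref{limpossibililambda}.

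First I would fix $\alpha\in (0,1)$ and set $l_n := r_n^\alpha(s_n-1)$ if $\lambda \neq 0$, or $l_n := r_n^\alpha$ if $\lambda = 0$; then, for each $n$, introduce a family $\{Q_h^n\}_{h \in \N}$ of pairwise disjoint cubes of sidelength $l_n$ covering $\R^d$ and set
$$\tilde E_n := \bigcup_{h=1}^{H(n)} Q_h^n,$$
where $H(n)$ indexes those cubes with $|Q_h^n \cap E_n| \geq l_n^d/2$ (note $H(n)<+\infty$ since $|E_n|<+\infty$). Picking any $M$ strictly larger than $\limsup_{n}\tilde J_{r_n}^{s_n}(E_n)/\bm{\beta}(r_n,s_n)$, for $n$ large enough the computations that lead to \eqref{teocomps_nform1} and \eqref{teocomps_nform2} give
$$|\tilde E_n \triangle E_n| \leq 4\, l_n^{s_n} \bm{\beta}(r_n,s_n)\, M \qquad \text{and} \qquad \Per(\tilde E_n) \leq C(\alpha,d,\lambda)\frac{\tilde J_{r_n}^{s_n}(E_n)}{\bm{\beta}(r_n,s_n)}.$$

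By \eqref{l_nbeta(r_n,s_n)} (and the analogous formula handled at the end of the proof of Theorem \ref{mainthm111}(i) when $\lambda = 0$), one has $l_n^{s_n} \bm{\beta}(r_n,s_n) \to 0$, so $|\tilde E_n \triangle E_n| \to 0$. Combined with the assumed $L^1$-convergence $\chi_{E_n} \to \chi_E$, this yields $\chi_{\tilde E_n} \to \chi_E$ in $L^1(\R^d)$. The claim then follows from the $L^1$-lower semicontinuity of the De Giorgi perimeter:
$$\Per(E) \leq \liminf_{n\to+\infty}\Per(\tilde E_n) \leq C(\alpha,d,\lambda)\, M <+\infty.$$
No substantive obstacle is anticipated: the argument is a direct transcription of Proposition \ref{corcompthm}, with the scale $l_n$ chosen according to the two regimes for $\lambda$ already isolated in the proof of Theorem \ref{mainthm111}(i); the only mild subtlety is verifying that the constants produced there depend on $\alpha$, $d$, $\lambda$ (and $S:=\sup_n s_n <+\infty$, which holds since $s_n\to 1^+$) but not on $n$, so that the bounds survive passing to the limit.
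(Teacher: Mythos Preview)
Your proposal is correct and follows essentially the same approach as the paper, which simply remarks that the result ``follows by arguing as in the proof of Proposition \ref{corcompthm}, using now the estimates in the proof of Theorem \ref{mainthm111}(i) instead of the ones in the proof of Theorem \ref{mainthm}(i).'' Your write-up is in fact more detailed than the paper's own: you make explicit the passage to a subsequence along which $\lambda$ exists (harmless, since the conclusion concerns the fixed limit set $E$), the choice of $l_n$ in the two regimes, and the observation that $H(n)<+\infty$ follows from $|E_n|<+\infty$ without needing the sets to lie in a common bounded $U$.
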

\subsubsection{Proof of the lower bound}
In order to prove the $\Gamma$-liminf inequality Theorem \ref{mainthm111}(ii), we first need the following result, which is the analogous to Lemma \ref{lemma18} under our assumptions on $\{s_n\}_{n\in\N}$ and $\{r_n\}_{n\in\N}$\,. 
\begin{lemma}\label{lemma18bis}
Let $\{r_n\}_{n\in\N}\subset(0,1)$ and $\{s_n\}_{n\in\N}\subset(1,+\infty)$ be such that $r_n\to 0^+$ and $s_n\to 1^+$ as $n\to +\infty$\,. For every $\ep>0$ there exist $\delta_{0}>0$ and $\bar n\in\N$ such that for every $\nu \in \mathbb{S}^{d-1}$, for every $E \in \mathrm{M}_f(\mathbb{R}^d)$ with 
$$ \vert (E \triangle H^-_{\nu}(0)) \cap Q^{\nu} \vert < \delta_{0}$$
and for  every $n\ge\bar n$ it holds
$$ 
\int_{Q^{\nu}\cap E} \int_{Q^{\nu}\cap E^c} k_{r_n}^{s_n}(\vert x-y \vert) \ud y \ud x\geq \omega_{d-1} (1-\ep)\bm{\beta}(r_n,s_n)\,.
$$
\end{lemma}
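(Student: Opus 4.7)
The plan is to adapt the proof of Lemma \ref{lemma18} essentially line by line, keeping the geometric arguments identical and reworking only the explicit integrations of the kernel $k^{s_n}_{r_n}$, whose answers must now be compared with $\bm{\beta}(r_n,s_n)$ rather than $\sigma^s(r)$. As in Lemma \ref{lemma18}, after a rotation we may assume $\nu=-e_d$, $Q^\nu=Q$, and $H^-_\nu(0)=\R^d_+$; the change of variable $y=x+z$ yields
\begin{equation*}
\int_{Q\cap E^c}\int_{Q\cap E}k^{s_n}_{r_n}(|x-y|)\ud y\ud x=\int_{\R^d}k^{s_n}_{r_n}(|z|)m(z)\ud z,\qquad m(z):=|E^c\cap(E-z)\cap Q|.
\end{equation*}
The geometric lower bound $m(z)\ge\lambda_0^{d-1}z_d-2|z|_\infty\sqrt{\delta_0}$ (for $z\in A^+_{\sqrt{\delta_0}}$, with $\lambda_0=1-4\sqrt{\delta_0}$) is purely combinatorial and independent of $s,r$, so it is still in force. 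Hence
\begin{equation*}
\int_{Q\cap E^c}\int_{Q\cap E}k^{s_n}_{r_n}(|x-y|)\ud y\ud x\ge\lambda_0^{d-1}\int_{A^+_{\sqrt{\delta_0}}}z_dk^{s_n}_{r_n}(|z|)\ud z-2\sqrt{\delta_0}\int_{A^+_{\sqrt{\delta_0}}}|z|_\infty k^{s_n}_{r_n}(|z|)\ud z.
\end{equation*}

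The heart of the matter is to compute the two remaining integrals. For the first, for $\delta_0\le\tfrac14$ (so that $B(0,\delta_0)\cap\R^d_+\subset A^+_{\sqrt{\delta_0}}$) and $r_n<\delta_0$, I would split on $B(0,r_n)$ vs. $B(0,\delta_0)\setminus B(0,r_n)$, use polar coordinates and the identity $\int_{\mathbb S^{d-1}\cap\R^d_+}\theta_d\,\ud\mathcal H^{d-1}(\theta)=\omega_{d-1}$ to obtain
\begin{equation*}
\int_{A^+_{\sqrt{\delta_0}}}z_dk^{s_n}_{r_n}(|z|)\ud z\ge\omega_{d-1}\bigg(\frac{r_n^{1-s_n}}{d+1}+\frac{r_n^{1-s_n}-\delta_0^{1-s_n}}{s_n-1}\bigg).
\end{equation*}
A short algebraic manipulation (recalling the definition \eqref{defbetaultimo}) shows that the quantity in parentheses is exactly $\bm{\beta}(r_n,s_n)-\frac{\delta_0^{1-s_n}-1}{s_n-1}$. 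The second integral is bounded from above by the analogous integral over $B(0,1)$ (since $|z|_\infty\le|z|$ and $A^+_{\sqrt{\delta_0}}\subset B(0,1)$), and the same kind of computation yields
\begin{equation*}
\int_{B(0,1)}|z|\,k^{s_n}_{r_n}(|z|)\ud z=\frac{d\omega_d}{d+1}r_n^{1-s_n}+d\omega_d\,\frac{r_n^{1-s_n}-1}{s_n-1}=d\omega_d\,\bm{\beta}(r_n,s_n),
\end{equation*}
the last equality being again a direct algebraic check from \eqref{defbetaultimo}.

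Putting the two estimates together and dividing by $\bm{\beta}(r_n,s_n)$ leads to
\begin{equation*}
\frac{1}{\bm{\beta}(r_n,s_n)}\int_{Q\cap E^c}\int_{Q\cap E}k^{s_n}_{r_n}\ud y\ud x\ge\omega_{d-1}\bigg[\lambda_0^{d-1}-2\sqrt{\delta_0}\frac{d\omega_d}{\omega_{d-1}}-\frac{\lambda_0^{d-1}}{\bm{\beta}(r_n,s_n)}\frac{\delta_0^{1-s_n}-1}{s_n-1}\bigg].
\end{equation*}
One then chooses $\delta_0\in(0,\tfrac14)$ so small that $\lambda_0^{d-1}-2\sqrt{\delta_0}\tfrac{d\omega_d}{\omega_{d-1}}\ge 1-\tfrac\ep2$ (exactly as in Lemma \ref{lemma18}) and, for this fixed $\delta_0$, one picks $\bar n$ so that the last negative contribution is $\le\tfrac\ep2$.

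The part I expect to be the main obstacle — and the only genuinely new ingredient compared to Lemma \ref{lemma18} — is the control of the remainder $\frac{1}{\bm{\beta}(r_n,s_n)}\frac{\delta_0^{1-s_n}-1}{s_n-1}$, since $s_n$ varies and the fixed-$s$ argument via $\gamma^s(r)$ is no longer available. The key observations are: (i) the function $s\mapsto\frac{\delta_0^{1-s}-1}{s-1}$ is monotone and converges to $|\log\delta_0|$ as $s\to 1^+$, so for $\delta_0$ fixed and $n$ large it is bounded (say by $2|\log\delta_0|$); (ii) by \eqref{limbeta} we have $\bm{\beta}(r_n,s_n)\to+\infty$. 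Combining (i) and (ii), for $\bar n$ large enough the remainder is arbitrarily small, uniformly in $E$ and $\nu$, which closes the proof.
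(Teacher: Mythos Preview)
Your proposal is correct and follows essentially the same route as the paper's own proof: reproduce the geometric estimate on $m(z)$ from Lemma~\ref{lemma18} verbatim, recompute the two radial integrals with $s=s_n$, identify the main term as $\bm{\beta}(r_n,s_n)$ and the remainder as $\frac{\delta_0^{1-s_n}-1}{s_n-1}$, and kill the latter using $\frac{\delta_0^{1-s_n}-1}{s_n-1}\to|\log\delta_0|$ together with $\bm{\beta}(r_n,s_n)\to+\infty$. Your computations are in fact slightly more explicit than the paper's (you track the exact constant $d\omega_d$ in the second integral, while the paper writes a generic $C(d)$); the monotonicity remark in (i) is inessential, since convergence to $|\log\delta_0|$ alone already gives the needed bound for $n$ large.
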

\begin{proof}
By arguing as in the proof of Lemma \ref{lemma18}  (see \eqref{incr2}) one can prove that
\begin{equation}\label{incr2bis}
\begin{aligned}
&\int_{Q^\nu\cap E}\int_{Q^\nu\cap E^c} k^{s_n}_{r_n}(|x-y|)\ud y\ud x\\
\ge&
\omega_{d-1}\bm{\beta}(r_n,s_n)\Big(1-\eta(\delta_0)-\frac{1-\eta(\delta_0)}{\bm{\beta}(r_n,s_n)}\frac{\delta_0^{{1-s_n}}-1}{s_n-1}-2C(d)\sqrt{\delta_0}\Big)\,,
\end{aligned}
\end{equation}
where $\eta(t)\to 0$ as $t\to 0$\,.
Notice that we can choose $0<\delta_0<1$ such that 
\begin{equation}\label{perr}
\eta(\delta_0)+2C(d)\sqrt{\delta_0}\le \frac{\ep}{2}\,.
\end{equation}
Furthermore, since
$$
\lim_{n\to +\infty}\frac{\delta_0^{{1-s_n}}-1}{s_n-1}=|\log\delta_0|\quad\textrm{and}\quad \lim_{n\to +\infty}\bm{\beta}(r_n,s_n)=+\infty\,,
$$
we have that there exists $\bar n\in\N$ such that 
\begin{equation}\label{serr}
\frac{1-\eta(\delta_0)}{\bm{\beta}(r_n,s_n)}\frac{\delta_0^{1-s_n}-1}{s_n-1}\le \frac{\ep}{2}\quad\textrm{for }n\ge\bar n\,.
\end{equation}
Therefore, by \eqref{incr2bis}, \eqref{perr} and \eqref{serr}, we get the claim.
\end{proof}
\begin{proof}[Proof of Theorem \ref{mainthm111}(ii)]
The proof closely follows the one of Theorem \ref{mainthm}(ii).
We can assume without loss of generality that
\begin{equation}\label{ben}
\frac{1}{2\bm{\beta}(r_n,s_n)}\int_{\R^d}\int_{\R^d}k^{s_n}_{r_n}(|x-y|)|\chi_{E_n}(x)-\chi_{E_n}(y)|\ud y\ud x\le C\,,
\end{equation}
for some constant $C>0$ independent of $n$. Then, by Corollary \ref{corcompthmbars>1} we have that $E$ has finite perimeter.
For every $n\in\N$ let $\mu_n$ be the measure on the product space $\R^d\times \R^d$ defined by
$$
\mu_n(A\times B):=\frac{1}{2\bm{\beta}(r_n,s_n)}\int_{A}\int_{B}k^{s_n}_{r_n}(|x-y|)|\chi_{E_n}(x)-\chi_{E_n}(y)|\ud y\ud x
$$
for every $A,B\in\mathrm{M}(\R^d)$. By arguing as in the proof of Theorem \ref{mainthm}(ii) we have that, up to a subsequence, $\mu_n\to\mu$ as $n\to +\infty$ for some measure $\mu$ concentrated on the set $D:=\{(x,x)\,:\,x\in\R^d\}$\,. Therefore, by using the same Radon-Nykodym argument exploited in the proof of Theorem \ref{mainthm}(ii), it is enough to show that for $\mathcal{H}^{d-1}$ - a.e. $x_0\in\partial^*E$ 
\begin{equation}\label{tesibis}
\liminf_{l\to 0^+}\frac{\mu(Q_l^\nu(x_0)\times Q_l^\nu(x_0)}{l^{d-1}}\ge\liminf_{l\to 0^+}\liminf_{n\to +\infty}\frac{\mu_n(Q_l^{\nu}(x_0)\times Q_l^{\nu}(x_0))}{l^{d-1}}\ge \omega_{d-1}\,,
\end{equation}
where we have set $\nu:=\nu_E(x_0)$ and
 $Q^l_{\nu}(x_0):=x_0+lQ^\nu$\,.
In order to prove \eqref{tesibis} we adopt the same strategy used to prove \eqref{tesi}.
More precisely, setting $F_{n,l}=x_0+lE_n$\,, in place of \eqref{chavar} we have
\begin{equation}\label{chavarbis}
\begin{aligned}
&\frac{1}{l^{d-1}}\mu_n(Q^\nu_l(x_0)\times Q^\nu_l(x_0))\\
=&\frac{l^{1-s_n}}{2\bm{\beta}(r_n,s_n)} \int_{Q^\nu}\int_{Q^\nu}k^{s_n}_{\frac{r_n}{l}}(|\xi-\eta|) |\chi_{F_{n,l}}(\xi)-\chi_{F_{n,l}}(\eta)|\ud\xi\ud\eta\,,
\end{aligned}
\end{equation}
and, in place of \eqref{apple},
\begin{multline}\label{applebis}
\frac 1 2\int_{Q^\nu}\int_{Q^\nu}k^{s_n}_{\frac{r_n}{l}}(|\xi-\eta|) |\chi_{F_{n,l}}(\xi)-\chi_{F_{n,l}}(\eta)|\ud\xi\ud\eta\\
\ge \omega_{d-1}(1-\ep)\bm{\beta}\Big(\frac{r_n}{l},s_n\Big)\,,
\end{multline}
which is a consequence of Lemma \ref{lemma18bis}.
Therefore, since
$$
\lim_{n \rightarrow + \infty} \frac{l^{1-s_n}}{\bm{\beta}(r_n,s_n)}\bm{\beta}\Big(\frac{r_n}{l},s_n\Big)=1\,,
$$
by \eqref{chavarbis} and \eqref{applebis}, we get
$$
\liminf_{l\to 0^+}\frac{\mu(Q_l^\nu(x_0)\times Q_l^\nu(x_0))}{l^{d-1}}\ge (1-\ep)\omega_{d-1}\,,
$$
whence \eqref{tesibis} follows by the arbitrariness of $\ep$\,.
\end{proof}
\subsubsection{Proof of the upper bound}
In order to prove the $\Gamma$-limsup inequality, we need the following result which is the analogous of Proposition \ref{prop:point} when both $r$ and $s$ vary.
\begin{proposition}\label{prop:points_n}
	Let $E\in\Mf$ be a smooth set.
	Then
	\begin{equation*}
	\lim_{n \to +\infty}\frac{\tilde{J}_{r_n}^{s_n}(E)}{\bm{\beta}(r_n,s_n)}=\omega_{d-1}\Per(E).
	\end{equation*}
\end{proposition}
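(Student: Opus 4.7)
The plan is to apply the decomposition of Lemma \ref{porpsvilen} with $r=r_n$ and $s=s_n>1$ and to exploit that the principal contribution on the right-hand side is exactly $\omega_{d-1}\Per(E)\,\bm{\beta}(r_n,s_n)$, thanks to the very definition \eqref{defbetaultimo} of $\bm{\beta}(r_n,s_n)$. After dividing by $\bm{\beta}(r_n,s_n)$, it then suffices to show that each of the five remaining addends in that expansion, after the same normalization, vanishes as $n\to+\infty$.

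Two quantitative preliminaries will drive the estimates. The first, already available from \eqref{limbeta}, is $\bm{\beta}(r_n,s_n)\to+\infty$. The second, which I would establish by a short case analysis on $\lambda:=\lim_n(s_n-1)|\log r_n|\in[0,+\infty]$ (passing to a subsequence if necessary), is
\begin{equation*}
\lim_{n\to+\infty}\frac{r_n^{1-s_n}}{\bm{\beta}(r_n,s_n)}=0.
\end{equation*}
This follows from the elementary bound $\bm{\beta}(r_n,s_n)\ge \frac{d+s_n}{d+1}\frac{r_n^{1-s_n}-1}{s_n-1}$, rewritten as $\frac{r_n^{1-s_n}}{\bm{\beta}(r_n,s_n)}\le \frac{d+1}{d+s_n}\frac{s_n-1}{1-e^{-(s_n-1)|\log r_n|}}$, and a separate treatment of the regimes $\lambda\in(0,+\infty)$, $\lambda=+\infty$ and $\lambda=0$ (the last one via a Taylor expansion of the denominator, which produces the extra factor $1/|\log r_n|$ needed for the decay).

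The routine addends are then handled as follows. The term $F_1^{s_n}(E)$ and the last addend of Lemma \ref{porpsvilen} are uniformly bounded by a multiple of $|E|$ (by Remark \ref{zeroor} and by $\mathcal{H}^{d-1}(\partial B(x,1))=d\omega_d$, respectively), so their contribution vanishes after division by $\bm{\beta}(r_n,s_n)\to+\infty$. For the third and fourth addends I would perform the same change of variable $z=(x-y)/r_n$ used in the proof of Proposition \ref{prop:point}, which extracts a factor $r_n^{1-s_n}$ while leaving an inner integral bounded uniformly (respectively by $\int_{B(0,1)}|z|^{1-d}\ud z$ and $\omega_d$); the second preliminary above then makes these terms vanish.

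The main obstacle will be the fifth addend, carrying the slowly decaying kernel $|x-y|^{-d-s_n}$ on $B(y,1)\setminus B(y,r_n)$: the naive bound via the Riesz-type tail $\int_{\R^d\setminus B(0,1)}|z|^{1-d-s_n}\ud z=d\omega_d/(s_n-1)$ blows up as $s_n\to 1^+$ and so is useless here. My plan to overcome this is to replicate the geometric argument developed for the critical case $s=1$ in the proof of Proposition \ref{prop:point}: since $E$ is smooth there exists $\delta>0$ such that $E\triangle H^-_{\nu_E(y)}(y)\subset (H^-_{\nu_E(y)}(y)\setminus B^-)\cup(H^+_{\nu_E(y)}(y)\setminus B^+)$ for every $y\in\partial E$, with tangent balls $B^\pm$ of radius $\delta$. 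Splitting the resulting effective region of integration into $B(0,1)\setminus B(0,\delta)$ (contributing $(\delta^{1-s_n}-1)/(s_n-1)\to|\log\delta|$) and the quadratic strip $\tilde A_\delta=\{z_d<c_\delta|z'|^2\}$ (contributing something of order $\delta^{3-s_n}/(3-s_n)$), one obtains a bound of the form $C(d,\delta)\Per(E)/s_n$, uniform in $n$. Division by $\bm{\beta}(r_n,s_n)\to+\infty$ then closes the argument.
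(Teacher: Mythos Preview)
Your proposal is correct and follows the same route as the paper: apply Lemma \ref{porpsvilen} with $r=r_n$, $s=s_n$, recognize the principal term as $\omega_{d-1}\Per(E)\,\bm{\beta}(r_n,s_n)$, and show the five remainders vanish after division by $\bm{\beta}(r_n,s_n)$. The paper's own proof is very terse (``arguing verbatim as in the proof of Proposition \ref{prop:point} and using \eqref{limbeta}''), and you have correctly spelled out the two points that require care beyond a literal repetition.

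There are two minor differences worth recording. For the third and fourth addends you use a cruder estimate (uniform bound on the rescaled inner integral together with the vanishing prefactor $r_n^{1-s_n}/\bm{\beta}(r_n,s_n)\to 0$), whereas the proof of Proposition \ref{prop:point} uses dominated convergence on the inner integral; both are valid here, and your version has the advantage of making explicit the fact $r_n^{1-s_n}/\bm{\beta}(r_n,s_n)\to 0$, which the paper only states later (just before \eqref{betconvcurvlef1}). For the fifth addend you rightly observe that the $s>1$ branch of Proposition \ref{prop:point} does not survive the passage $s_n\to 1^+$ (the dominating tail $|z|^{1-d-s_n}$ has $L^1$ norm $\sim (s_n-1)^{-1}$), and you fall back on the geometric tangent-ball argument of the $s=1$ branch, which yields a bound uniform in $n$; this is indeed the intended reading of the paper's ``verbatim'', and your quantitative bounds $(\delta^{1-s_n}-1)/(s_n-1)\to|\log\delta|$ and $\delta^{3-s_n}/(3-s_n)$ are the right ones.
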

\begin{proof}
By  Lemma \ref{porpsvilen} and by formula \eqref{deco} we have
\begin{align*}
&\frac{\tilde{J}_{r_n}^{s_n}(E)}{\bm{\beta}(r_n,s_n)}	
=\omega_{d-1} \Per(E)+\frac 1 {\bm{\beta}(r_n,s_n)}F^{s_n}_1(E) \\ 
& -\frac{1}{\bm{\beta}(r_n,s_n) } \int_{\partial E} \ud\mathcal{H}^{d-1}(y) \int_{(E \triangle H_{\nu_E(y)}^{-}(y)) \cap B(y,r_n)} \frac{1}{{r_n}^{s_n}} \frac{\vert (y-x)\cdot \nu_{E}(y) \vert }{\vert x-y \vert^{d}} \biggl( \frac{d+s_n}{ds_n}\biggr)\ud x \\
& +\frac{1}{\bm{\beta}(r_n,s_n)} \int_{\partial E} \ud\mathcal{H}^{d-1}(y) \int_{(E \triangle H_{\nu_E(y)}^{-}(y)) \cap B(y,r_n)} \frac{1}{{r_n}^{s_n}} \frac{\vert (y-x)\cdot \nu_{E}(y) \vert }{{r_n}^{d}}  \frac{1}{d}\ud x \\
& -\frac{1}{\bm{\beta}(r_n,s_n)} \frac 1 s_n\int_{\partial E} \ud\mathcal{H}^{d-1}(y) \int_{(E \triangle H_{\nu_E(y)}^{-}(y))\cap (B(y,1)\setminus B(y,r_n))} \frac{\vert (y-x)\cdot \nu_{E}(y)\vert }{\vert x-y \vert^{d+s_n}}\ud x \label{formula3lemmautilanches_n} \\
&-\frac{1}{\bm{\beta}(r_n,s_n)}\frac{1}{s_n}\int_{E} \mathcal{H}^{d-1}(E^c \cap \partial B(x,1))\ud x\,,
\end{align*} 
where $H_\nu^{-}(y)$ is the set defined in \eqref{semispaziotang}. 
Therefore, by arguing verbatim as in the proof of Proposition \ref{prop:point} and using \eqref{limbeta}, we deduce the claim.
\end{proof}
With Proposition \ref{prop:points_n} in hand, the proof of Theorem \ref{mainthm111}(iii) is fully analogous to the one of Theorem \ref{mainthm}(iii) and is omitted.
\subsection{Convergence of the $k_{r_n}^{s_n}$-curvature flows to the mean curvature flow}
Here we study the convergence of the curvatures $\Ku^{s_n}_{r_n}$ defined in \eqref{defcurv} to the classical mean curvature $\Ku^{1}$ in \eqref{cueu} when $r_n\to 0^+$ and $s_n\to 1^+$ simultaneously. As in Subsection \ref{ssc:convmcf} we use such a result to deduce the convergence of the corresponding geometric flows. 

First we prove the following lemma which is the analogous of Lemma \ref{lemconvcurv} in the case treated in this section.
\begin{lemma} \label{lemmtecccccc}
Let $\{s_n\}_{n\in\N}\subset(1,+\infty)$ and $\{r_n\}_{n\in\N}\subset(0,1)$ be such that $r_n\to 0^+$ and $s_n\to 1^+$ as $n\to +\infty$\,.
Let $M, N \in \mathbb{R}^{(d-1)\times (d-1)}$ and let $\{M_n\}_{n\in\N},\{N_n\}_{n\in\N}\subset \mathbb{R}^{(d-1)\times (d-1)}$ be such that $M_n \rightarrow M, \; N_n \rightarrow N$ as $n \to +\infty$\,.
		Then, for every $\delta>0$\,, it holds
		\begin{equation}  \label{limserveatutconvecubetarsn}
		\begin{split}
		\lim_{n\to +\infty}& \bigg(\frac{1}{\bm{\beta}(r_n,s_n)} \Big(\int_{\Funon}k_{r_n}^{s_n}(\vert y \vert)\ud y 
		- \int_{\Fduen}k_{r_n}^{s_n}(\vert y \vert)\ud y\Big)\bigg) \\
		=&  \int_{\mathbb{S}^{d-2}} \theta^*(N-M) \theta \ud \mathcal{H}^{d-2}(\theta)\,,
		\end{split}
		\end{equation}
		where
\begin{equation*}
\begin{aligned}
\Funon:=&\{y=(y',y_d) \in B(0,\delta)\,:\;  (y')^*M_n y'  \leq y_d \leq (y')^*N_n y' \}\\
\Fduen:=&\{y=(y',y_d) \in B(0,\delta)\,:\;  (y')^* N_n y' \leq y_d \leq (y')^*M_n y'\}\,.
\end{aligned}		
\end{equation*}
	\end{lemma}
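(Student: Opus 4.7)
The plan is to follow closely the argument of Lemma \ref{lemconvcurv}, keeping track of the joint dependence on $r_n$ and $s_n$ and replacing the normalization $\sigma^s(r)$ by $\bm{\beta}(r_n,s_n)$.

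\textbf{Step 1 (polar decomposition).} Repeating verbatim the polar decomposition used in the proof of Lemma \ref{lemconvcurv} --- writing $y'=\rho\theta$ with $\rho\in(0,\delta)$ and $\theta\in\mathbb{S}^{d-2}$, splitting the integrals over $\Funon$ and $\Fduen$ into the core ball $B(0,r_n)$ plus the annulus $B(0,\delta)\setminus B(0,r_n)$, and applying the change of variable $y_d=\rho^2 t$ --- one obtains the identity
\begin{equation*}
\begin{split}
&\frac{1}{\bm{\beta}(r_n,s_n)}\Big(\int_{\Funon}-\int_{\Fduen}\Big)k_{r_n}^{s_n}(|y|)\,\ud y \\
={}& \frac{r_n^{1-s_n}}{(d+1)\bm{\beta}(r_n,s_n)}\int_{\mathbb{S}^{d-2}}\theta^*(N_n-M_n)\theta\,\ud\mathcal{H}^{d-2}(\theta) \\
&+ \frac{1}{\bm{\beta}(r_n,s_n)}\int_{\mathbb{S}^{d-2}}\ud\mathcal{H}^{d-2}(\theta)\int_{r_n}^{\delta}\frac{\ud\rho}{\rho^{s_n}}\int_{\theta^* M_n\theta}^{\theta^* N_n\theta}\frac{\ud t}{(1+\rho^2 t^2)^{(d+s_n)/2}} \\
&+ o(1)
\end{split}
\end{equation*}
as $n\to+\infty$, where $o(1)$ collects the error terms corresponding to \eqref{svilimiteconvcurvatura100}--\eqref{svilimiteconvcurvatura1000}; these are controlled by $[C(\delta) + C r_n^{2-s_n}]/\bm{\beta}(r_n,s_n)$, which tends to zero thanks to \eqref{limbeta}.

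\textbf{Step 2 (asymptotics of the scaling factors).} Using the identity $\bm{\beta}(r_n,s_n) = \tfrac{d+s_n}{d+1}\tfrac{r_n^{1-s_n}-1}{s_n-1} + \tfrac{1}{d+1}$, I plan to show
\begin{equation*}
\frac{r_n^{1-s_n}}{(d+1)\bm{\beta}(r_n,s_n)}\;\longrightarrow\;0\qquad\text{and}\qquad\frac{1}{\bm{\beta}(r_n,s_n)}\int_{r_n}^{\delta}\rho^{-s_n}\,\ud\rho\;\longrightarrow\;1.
\end{equation*}
The first limit reduces to showing that $(s_n-1)/(r_n^{1-s_n}-1)\to 0$, which I verify by a short case analysis on $u_n:=(s_n-1)|\log r_n|$: if $u_n$ stays bounded away from $0$ then $r_n^{1-s_n}-1=e^{u_n}-1$ is bounded below, while if $u_n\to 0$ the expansion $r_n^{1-s_n}-1\sim u_n=(s_n-1)|\log r_n|$ combined with $|\log r_n|\to+\infty$ gives the same conclusion. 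The second limit follows by writing $\int_{r_n}^{\delta}\rho^{-s_n}\,\ud\rho=\tfrac{r_n^{1-s_n}-1}{s_n-1}-\tfrac{\delta^{1-s_n}-1}{s_n-1}$: the $\delta$-piece stays bounded (it tends to $|\log\delta|$) while the $r_n$-piece matches the leading term of $\bm{\beta}(r_n,s_n)$.

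\textbf{Step 3 (conclusion).} Since $M_n,N_n$ are uniformly bounded and $\rho\in(r_n,\delta)$ with $\delta$ small, the integrand $(1+\rho^2 t^2)^{-(d+s_n)/2}$ differs from $1$ by an amount $O(\delta^2)$ uniformly in $n$ and $\theta$; combined with Step 2 this allows me to replace it by $1$ in the double integral, yielding
\begin{equation*}
\lim_{n\to+\infty}\frac{1}{\bm{\beta}(r_n,s_n)}\Big(\int_{\Funon}-\int_{\Fduen}\Big)k_{r_n}^{s_n}(|y|)\,\ud y=\int_{\mathbb{S}^{d-2}}\theta^*(N-M)\theta\,\ud\mathcal{H}^{d-2}(\theta)
\end{equation*}
up to an error that vanishes as $\delta\to 0$; since the left-hand side is independent of $\delta$, this gives \eqref{limserveatutconvecubetarsn}. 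The main obstacle is the case analysis in Step 2: unlike the fixed-$s$ setting of Lemma \ref{lemconvcurv}, the normalization $\bm{\beta}(r_n,s_n)$ is a sum of two competing pieces whose relative size depends on whether $(s_n-1)|\log r_n|$ stays bounded or diverges, so the direct asymptotics used there are no longer available; ensuring that the singular first-order contribution $r_n^{1-s_n}\int\theta^*(N_n-M_n)\theta\,\ud\mathcal{H}^{d-2}$ is absorbed by $\bm{\beta}(r_n,s_n)$ in every regime is the technical core.
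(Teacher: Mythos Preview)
Your Steps 1 and 2 match the paper's argument essentially verbatim and are correct. The gap is in Step 3. The assertion ``since the left-hand side is independent of $\delta$'' is unjustified: the sets $\Funon$ and $\Fduen$ are intersected with $B(0,\delta)$, so the quantity you are computing \emph{does} depend on $\delta$. One can indeed prove that the limit is $\delta$-independent (the contribution from any annulus $B(0,\delta_2)\setminus B(0,\delta_1)$ is bounded independently of $n$, hence negligible after division by $\bm{\beta}(r_n,s_n)\to+\infty$), but you did not supply this argument, and without it your final $\delta\to 0$ step does not close.

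The paper avoids this detour entirely by sharpening your $O(\delta^2)$ estimate to a pointwise one: from $(1+\rho^2t^2)^{(d+s_n)/2}\le 1+C(d,S,t)\rho^2$ one gets
\[
0\le 1-(1+\rho^2 t^2)^{-(d+s_n)/2}\le C(d,S,t)\,\rho^2,
\]
so that, after multiplying by $\rho^{-s_n}$ and integrating,
\[
\frac{1}{\bm{\beta}(r_n,s_n)}\int_{r_n}^{\delta}\rho^{-s_n}\Big(1-(1+\rho^2t^2)^{-(d+s_n)/2}\Big)\ud\rho
\le \frac{C(d,S,t)}{\bm{\beta}(r_n,s_n)}\int_{r_n}^{\delta}\rho^{2-s_n}\ud\rho\longrightarrow 0,
\]
since $\int_{r_n}^{\delta}\rho^{2-s_n}\ud\rho$ stays bounded (because $2-s_n>0$ for $n$ large). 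Combined with your Step 2 this gives, for every fixed $t$ and every fixed $\delta$,
\[
\frac{1}{\bm{\beta}(r_n,s_n)}\int_{r_n}^{\delta}\frac{1}{\rho^{s_n}(1+\rho^2t^2)^{(d+s_n)/2}}\,\ud\rho\longrightarrow 1,
\]
and dominated convergence over $t\in[\theta^*M_n\theta,\theta^*N_n\theta]$ and over $\theta\in\mathbb{S}^{d-2}$ then yields \eqref{limserveatutconvecubetarsn} directly, with no $\delta\to 0$ limit needed.
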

\begin{proof}
By arguing verbatim as in the proof of \eqref{limcurvutile123123} one can show that
		\begin{equation}\label{limcurvutile123beta}
\begin{split}
&\frac{1}{\bm{\beta}(r_n,s_n)}\int_{\Funon} k_{r_n}^{s_n}(\vert y \vert)\ud y
-\frac{1}{\bm{\beta}(r_n,s_n)}\int_{\Fduen} k_{r_n}^{s_n}(\vert y \vert)\ud y \\
=&  \frac{{r_n}^{1-s_n}}{(d+1)\bm{\beta}(r_n,s_n)} \int_{\mathbb{S}^{d-2}} \theta^*(N_n-M_n) \theta \ud \mathcal{H}^{d-2}(\theta)\\
& +\frac{1}{\bm{\beta}(r_n,s_n)} \int_{\mathbb{S}^{d-2}} \ud \mathcal{H}^{d-2}(\theta) \int_{r_n}^{\delta}\ud\rho \frac{1}{\rho^{s_n}} \int_{\theta^* M_n \theta}^{\theta^*N_n \theta} \frac{1}{(1+\rho^2t^2)^{\frac{d+s_n}{2}}}\ud t\,,\\
&+\eta_n\,,
\end{split}
\end{equation}
where $\eta_n\to 0$ as $n\to +\infty$\,.
It is easy to see that
\begin{equation*}
\begin{aligned}
\lim_{n \rightarrow + \infty} \frac{{r_n}^{1-s_n}}{\bm{\beta}(r_n,s_n)}=0\,,
\end{aligned}
\end{equation*}	
whence we get
\begin{equation}\label{betconvcurvlef1}
\lim_{n \rightarrow + \infty}\frac{r_n^{1-s_n}}{(d+1)\bm{\beta}(r_n,s_n)} \int_{\mathbb{S}^{d-2}}\theta^* (N_n-M_n) \theta \ud \mathcal{H}^{d-2}(\theta)=0\,.
\end{equation}
Now we claim that for every $t\in\R$
\begin{equation}\label{betaconvcurvf2}
\lim_{n\to +\infty}\frac{1}{\bm{\beta}(r_n,s_n)} \int_{r_n}^{\delta} \frac{1}{\rho^{s_n}}  \frac{1}{(1+\rho^2t^2)^{\frac{d+s_n}{2}}}\ud\rho=1\,,
\end{equation}
which in view of
 \eqref{limcurvutile123beta} and \eqref{betconvcurvlef1} and of the Dominate Convergence Theorem, implies \eqref{limserveatutconvecubetarsn}.
In order to prove \eqref{betaconvcurvf2}, we first notice that 
\begin{equation*}
\begin{aligned}
&\frac{1}{\bm{\beta}(r_n,s_n)} \int_{r_n}^{\delta}  \frac{1}{\rho^{s_n}}  \frac{1}{(1+\rho^2t^2)^{\frac{d+s_n}{2}}}\\
=&\frac{1}{\bm{\beta}(r_n,s_n)} \int_{r_n}^{\delta}  \frac{1}{\rho^{s_n}}\ud\rho-\frac{1}{\bm{\beta}(r_n,s_n)} \int_{r_n}^{\delta}  \frac{1}{\rho^{s_n}}\bigg(1-\frac{1}{(1+\rho^2t^2)^{\frac{d+s_n}{2}}}\bigg)\ud \rho\\
=&\frac{1}{\bm{\beta}(r_n,s_n)} \frac{r_n^{1-s_n}-\delta^{1-s_n}}{s_n-1}-\frac{1}{\bm{\beta}(r_n,s_n)} \int_{r_n}^{\delta}  \frac{1}{\rho^{s_n}}\bigg(1-\frac{1}{(1+\rho^2t^2)^{\frac{d+s_n}{2}}}\bigg)\ud \rho\,,
\end{aligned}
\end{equation*}
so that, by the very definition of $\bm{\beta}$ in \eqref{defbetaultimo}, it is enough to show that
\begin{equation}\label{speuno}
\begin{aligned}
&\limsup_{n\to +\infty}\frac{1}{\bm{\beta}(r_n,s_n)} \int_{r_n}^{\delta} \frac{1}{\rho^{s_n}}\bigg(1-  \frac{1}{(1+\rho^2t^2)^{\frac{d+s_n}{2}}}\bigg)\ud\rho=0\,.
\end{aligned}
\end{equation}
As for the proof of \eqref{speuno} we argue as follows. First we notice that, setting $S:=\sup_{n\in\N}s_n$,
$$
(1+\rho^2t^2)^{\frac{d+s_n} 2}\le 1+C(d,S,t)\rho^2\,,
$$
so that, for $n$ large enough,
\begin{equation*}
\begin{aligned}
&\frac{1}{\bm{\beta}(r_n,s_n)} \int_{r_n}^{\delta} \frac{1}{\rho^{s_n}}\bigg(1-  \frac{1}{(1+\rho^2t^2)^{\frac{d+s_n}{2}}}\bigg)\ud\rho\\
=&\frac{1}{\bm{\beta}(r_n,s_n)} \int_{r_n}^{\delta}\frac{1}{\rho^{s_n}}\frac{(1+\rho^2t^2)^{\frac{d+s_n}{2}}-1}{(1+\rho^2t^2)^{\frac{d+s_n}{2}}}\ud\rho\\
\le&\frac{1}{\bm{\beta}(r_n,s_n)} \int_{r_n}^{\delta} C(d,S,t)\rho^{2-s_n}\ud \rho
\to 0\qquad\textrm{as }n\to +\infty\,,
\end{aligned}
\end{equation*}
thus concluding the proof of \eqref{speuno} and of the whole lemma.
\end{proof}
By using Lemma \ref{lemmtecccccc} in place of Lemma \ref{lemconvcurv} in the proof of Theorem \ref{convcurvthmprinc}, one can prove the following result.
	\begin{theorem}\label{convcurva}
	Let $\{r_n\}_{n\in\N}\subset(0,1)$ and $s_n\subset(1,+\infty)$ be such that $r_n\to 0^+$ and $s_n\to 1^+$ as $n\to +\infty$\,.
Let $\{E_n\}_{n\in \mathbb{N}} \subset \mathfrak{C}$ such that $ E_{n} \rightarrow E $ in $\mathfrak{C}$ as $n \rightarrow + \infty$, for some $E \in \mathfrak{C}$. Then for every $x \in \partial E \cap \partial E_n$ for every $n \in \mathbb{N}$,
	\begin{equation*}
	\lim_{n \rightarrow + \infty} \frac{\Ku_{r_n}^{s_n}(x,E_n)}{\bm{\beta}(r_n,s_n)}=\omega_{d-1}\Ku^1(x,E).
	\end{equation*}
\end{theorem}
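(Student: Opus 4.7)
The plan is to follow verbatim the strategy used in the proof of Theorem \ref{convcurvthmprinc}, with $\sigma^s(r)$ replaced by $\bm{\beta}(r_n,s_n)$ and Lemma \ref{lemconvcurv} replaced by Lemma \ref{lemmtecccccc}.

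First I would exploit the invariance properties of $\Ku^{s_n}_{r_n}$. By properties (S), (T) of Proposition \ref{proprietàcurvature1} together with the rotational invariance of the kernel $k^{s_n}_{r_n}$, I may assume without loss of generality that $E$ and each $E_n$ are compact, that $x=0$ and that $\nu_{E}(0)=\nu_{E_n}(0)=e_d$ for every $n$. Since $E_n\to E$ in $\mathfrak{C}$, there exist $\delta>0$ and $C^2$ functions $\phi,\phi_n:B'(0,\delta)\to\R$ with $\phi_n\to\phi$ in $C^2$, $\phi(0)=\phi_n(0)=0$, $\mathrm{D}\phi(0)=\mathrm{D}\phi_n(0)=0$, and $\partial E_n\cap B(0,\delta)$ is the graph of $\phi_n$ (analogously for $E$). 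Fixing $\eta>0$, by choosing $\delta$ sufficiently small, a Taylor expansion gives
\begin{equation*}
\Big|\phi_n(y')-\tfrac{1}{2}(y')^*\mathrm{D}^2\phi_n(0)\,y'\Big|\le \eta|y'|^2\qquad\textrm{for every }y'\in B'(0,\delta)
\end{equation*}
uniformly in $n$.

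Next I would reproduce the decomposition used in the proof of Theorem \ref{convcurvthmprinc}. Writing $\Ku^{s_n}_{r_n}(0,E_n)$ as an integral over $B(0,\delta)$ plus a tail on $B^c(0,\delta)$, I introduce the sets $A(r_n),B(r_n),C(r_n),D(r_n)$ exactly as in \eqref{svilengconvcurvfin}. Radial symmetry of $k^{s_n}_{r_n}$ yields $\int_{C(r_n)}k^{s_n}_{r_n}(|y|)\ud y=\int_{D(r_n)}k^{s_n}_{r_n}(|y|)\ud y$, so that
\begin{equation*}
\Ku^{s_n}_{r_n}(0,E_n)=\int_{\R^d}\big(\chi_{B(r_n)}(y)-\chi_{A(r_n)}(y)\big)k^{s_n}_{r_n}(|y|)\ud y+R_n,
\end{equation*}
where the remainder $R_n$ is bounded by $\int_{B^c(0,\delta)}k^{s_n}_{r_n}(|y|)\ud y\le \frac{d\omega_d}{s_n}\delta^{-s_n}$; since by \eqref{limbeta} $\bm{\beta}(r_n,s_n)\to +\infty$, we have $R_n/\bm{\beta}(r_n,s_n)\to 0$. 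Defining $A^\pm(r_n), B^\pm(r_n)$ using $\mathrm{D}^2\phi_n(0)\pm 2\eta\,\mathrm{Id}$ as in the proof of Theorem \ref{convcurvthmprinc}, the Taylor estimate above yields the inclusions $A^-(r_n)\subset A(r_n)\subset A^+(r_n)$ and $B^-(r_n)\subset B(r_n)\subset B^+(r_n)$.

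Then I apply Lemma \ref{lemmtecccccc} to the pairs $(M_n,N_n):=(\tfrac{1}{2}\mathrm{D}^2\phi_n(0)-\eta\,\mathrm{Id},-\tfrac{1}{2}\mathrm{D}^2\phi_n(0)-\eta\,\mathrm{Id})$ and $(\tfrac{1}{2}\mathrm{D}^2\phi_n(0)+\eta\,\mathrm{Id},-\tfrac{1}{2}\mathrm{D}^2\phi_n(0)+\eta\,\mathrm{Id})$, which converge to the corresponding matrices built from $\mathrm{D}^2\phi(0)$ since $\phi_n\to\phi$ in $C^2$. This produces the sandwich
\begin{equation*}
\begin{split}
&-\int_{\mathbb{S}^{d-2}}\theta^*(\mathrm{D}^2\phi(0)+2\eta\,\mathrm{Id})\theta\,\ud\mathcal{H}^{d-2}(\theta)\\
\le{}&\liminf_{n\to +\infty}\frac{\Ku^{s_n}_{r_n}(0,E_n)}{\bm{\beta}(r_n,s_n)}\le\limsup_{n\to +\infty}\frac{\Ku^{s_n}_{r_n}(0,E_n)}{\bm{\beta}(r_n,s_n)}\\
\le{}&-\int_{\mathbb{S}^{d-2}}\theta^*(\mathrm{D}^2\phi(0)-2\eta\,\mathrm{Id})\theta\,\ud\mathcal{H}^{d-2}(\theta).
\end{split}
\end{equation*}
Letting $\eta\to 0$ and recalling the representation \eqref{cueu} for the classical mean curvature concludes the proof. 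The only delicate point, namely ensuring that the scaling by $\bm{\beta}(r_n,s_n)$ produces the correct constant $\omega_{d-1}$ in the limit, is already handled by Lemma \ref{lemmtecccccc}; in particular, I do not expect any substantive new obstacle beyond a careful bookkeeping of the error terms already controlled in the single-parameter case.
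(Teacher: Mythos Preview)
Your proposal is correct and follows precisely the approach the paper indicates: the paper itself states that Theorem \ref{convcurva} is obtained by using Lemma \ref{lemmtecccccc} in place of Lemma \ref{lemconvcurv} in the proof of Theorem \ref{convcurvthmprinc}, and your write-up carries out exactly this substitution with the appropriate bookkeeping (reduction via (S), (T) and rotational invariance, the decomposition \eqref{svilengconvcurvfin}, the tail estimate using $\bm{\beta}(r_n,s_n)\to+\infty$, and the sandwich argument).
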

Finally, by  using Theorem \ref{convcurva} in place of Theorem \ref{convcurvthmprinc} in the proof of Theorem \ref{maincur}, one can prove the following result which provides the convergence of the $k^{s_n}_{r_n}$-nonlocal curvature flows when $r_n\to 0^+$ and $s_n\to 1^+$\,. 
\begin{theorem}\label{maincurbis}
Let $\{r_n\}_{n\in\N}\subset(0,1)$ and $s_n\subset(1,+\infty)$ be such that $r_n\to 0^+$ and $s_n\to 1^+$ as $n\to +\infty$\,.
Let $u_0\in C(\R^d)$ be a uniformly continuous function with $u_0=C_0$ in $\R^d\setminus B(0,R_0)$ for some $C_0,R_0\in\R$ with $R_0>0$\,. For every $n\in\N$\,, let $u^{s_n}_{r_n}:\R^d\times [0,+\infty)\to\R$ be the viscosity solution to the Cauchy problem \eqref{cauchy} (with $r$ and $s$ replaced by $r_n$ and $s_n$, respectively). Then, setting $v^{s_n}_{r_n}(x,t):=u^{s_n}_{r_n}(x,\frac{t}{\bm{\beta}(r_n,s_n)})$ for all $x\in\R^d$\,, $t\ge 0$\,, we have that,  for every $T>0$\,,
$v^{s_n}_{r_n}$ uniformly converge to $u$ as $n\to +\infty$ in $\R^d\times [0,T]$\,, where $u:\R^d\times[0,+\infty)\to\R$ is the viscosity solution to the classical mean curvature flow \eqref{cauchycl}.
\end{theorem}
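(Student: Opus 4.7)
The plan is to follow the strategy of Theorem \ref{maincur} essentially verbatim, with Theorem \ref{convcurva} playing the role of Theorem \ref{convcurvthmprinc} and the joint scaling $\bm{\beta}(r_n,s_n)$ replacing the one-parameter scaling $\sca^s(r)$. A direct scaling argument shows that each $v^{s_n}_{r_n}$ is the unique viscosity solution to
\begin{equation*}
\begin{cases}
\partial_t v(x,t) + |\D v(x,t)|\,\dfrac{1}{\bm{\beta}(r_n,s_n)}\Ku^{s_n}_{r_n}\bigl(x,\{y:\,v(y,t)\ge v(x,t)\}\bigr)=0,\\
v(x,0)=u_0(x).
\end{cases}
\end{equation*}
It therefore suffices to apply the stability result \cite[Theorem 3.2]{CDNP} to the family of rescaled curvatures $\{\frac{1}{\bm{\beta}(r_n,s_n)}\Ku^{s_n}_{r_n}\}_{n\in\N}$, whose target limit is $\omega_{d-1}\Ku^1$.

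I would then verify, one by one, the hypotheses of that stability theorem. Monotonicity (M), translational invariance (T), symmetry (S) and uniform continuity (UC) for each $\Ku^{s_n}_{r_n}$ come from Proposition \ref{proprietàcurvature1}; the analogous properties (M), (T), (S), (UC') for $\Ku^1$ come from Proposition \ref{procla}. All of these properties are preserved under multiplication by the positive constants $1/\bm{\beta}(r_n,s_n)$ and $\omega_{d-1}$. The pointwise convergence of the rescaled curvatures along any sequence $E_n \to E$ in $\mathfrak{C}$, applied at common boundary points, is exactly the content of Theorem \ref{convcurva}.

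For the uniform bound property (UB), property (B) in Proposition \ref{proprietàcurvature1} gives $\Ku^{s_n}_{r_n}(x,\overline{B}(0,\rho))\ge 0$ for every $\rho>0$ and every $x\in\partial B(0,\rho)$, so the required lower bound is immediate. For the upper bound, Theorem \ref{convcurva} applied to the constant sequence $E_n\equiv\overline{B}(0,\rho)$ yields
\begin{equation*}
\lim_{n\to+\infty}\dfrac{\Ku^{s_n}_{r_n}(x,\overline{B}(0,\rho))}{\bm{\beta}(r_n,s_n)}=\omega_{d-1}\Ku^1(x,\overline{B}(0,\rho))<+\infty,
\end{equation*}
so that the supremum over $n$ is finite. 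Combining these verifications, \cite[Theorem 3.2]{CDNP} gives the locally uniform convergence of $v^{s_n}_{r_n}$ to the viscosity solution $u$ of \eqref{cauchycl} on $\R^d\times[0,T]$ for every $T>0$.

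There is no genuine obstacle beyond what was already handled for Theorem \ref{maincur}; the substantive ingredient is Theorem \ref{convcurva}, which both supplies convergence on regular sets and justifies the choice of $\bm{\beta}(r_n,s_n)$ as the correct normalization, since it produces precisely the factor $\omega_{d-1}$ appearing in the limiting classical mean curvature flow \eqref{cauchycl}. The only point that requires any care is that Theorem \ref{convcurva} applies pointwise on sequences of sets $E_n\to E$ in $\mathfrak{C}$ rather than to a single parameter family, but this is exactly the form in which the convergence is consumed by \cite[Theorem 3.2]{CDNP}.
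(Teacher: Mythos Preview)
Your proposal is correct and follows essentially the same approach as the paper, which simply states that the result follows by repeating the proof of Theorem \ref{maincur} with Theorem \ref{convcurva} in place of Theorem \ref{convcurvthmprinc}. Your write-up in fact spells out the verification of the hypotheses of \cite[Theorem 3.2]{CDNP} in slightly more detail than the paper does.
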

\section{Anisotropic kernels and applications to dislocation dynamics}\label{anke}
In this section we study the asymptotic behavior of supercritical nonlocal perimeters and the corresponding geometric flows in the case of anisotropic kernels. Moreover, we present an application to the dynamics of dislocation curves in two dimensions.
\subsection{Anisotropic kernels}\label{sec:ani}
Let $g\in C(\mathbb{S}^{d-1};(0,+\infty))$ be such that $g(\xi)=g(-\xi)$ for every $\xi\in\mathbb{S}^{d-1}$\,. 	
For every $s\ge 1$ and for every $r>0$ we define the function $\kg:\R^{d}\setminus\{0\}\to (0,+\infty)$
as $\kg(x):=g\big(\frac{x}{|x|}\big)k^s_r(|x|)$\,, where $k^s_r$ is defined in \eqref{kernel}\,.
Here we study the asymptotic behavior, as $r\to 0^+$ of the functionals $\Jgsr:\Mf\to[0,+\infty)$ defined by
\begin{equation}\label{aniJ}
\Jgsr(E):=\int_{E}\int_{E^c}\kg(y-x)\ud y\ud x\,.
\end{equation}
In Proposition \ref{prop:pointani} below we will show that the functionals $\Jgsr$ scaled by $\sca^s(r)$ converge as $r\to 0^+$ to the anisotropic perimeter $\Per^g$ defined on finite perimeter sets as
\begin{equation}\label{aniper}
\Per^g(E):=\int_{\partial^{*}E}\ffi^g(\nu_E(x))\ud\mathcal{H}^{d-1}(x)\,,
\end{equation}
where the density $\ffi^g$ is given by
\begin{equation}\label{density}
\ffi^g(\nu):=\int_{\{\xi\in\mathbb{S}^{d-1}\,:\,\xi\cdot\nu\ge 0\}}g(\xi)\,\xi\cdot\nu\ud\mathcal{H}^{d-1}(\xi)\,,\qquad\textrm{ for every }\nu\in\mathbb{S}^{d-1}\,.
\end{equation}
\begin{proposition}\label{prop:pointani}
 For every $s\ge 1$ and for every set $E\in\Mf$ of finite perimeter it holds
\begin{equation*}
\lim_{r\to 0^+}\frac{\Jgsr(E)}{\sca^s(r)}=\Per^g(E)\,.
\end{equation*}
\end{proposition}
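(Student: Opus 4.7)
The strategy is to pass to polar coordinates in the translation variable $z = y - x$ and reduce the claim to a slicewise one-dimensional BV analysis, which allows the angular factor $g$ and the radial factor $k^s_r$ to be handled separately. By Fubini, setting $m_E(z) := |E^c \cap (E + z)|$ and $z = \rho \xi$ with $\rho > 0$ and $\xi \in \mathbb{S}^{d-1}$,
\begin{equation}\label{pp-polar}
\Jgsr(E) = \int_{\mathbb{S}^{d-1}} g(\xi)\, d\mathcal{H}^{d-1}(\xi) \int_0^\infty k^s_r(\rho)\, \rho^{d-1}\, m_E(\rho \xi)\, d\rho.
\end{equation}

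The main work is to establish, for each fixed $\xi \in \mathbb{S}^{d-1}$, the slicewise asymptotic
\begin{equation}\label{pp-slice}
\lim_{r \to 0^+} \frac{1}{\sca^s(r)} \int_0^\infty k^s_r(\rho)\, \rho^{d-1}\, m_E(\rho \xi)\, d\rho = p_E(\xi),
\end{equation}
where $p_E(\xi) := \int_{\partial^* E} (\xi \cdot \nu_E(y))^+\, d\mathcal{H}^{d-1}(y)$. The two key inputs are: (a) the sharp upper bound $m_E(\rho \xi) \le \rho\, p_E(\xi)$ valid for every $\rho > 0$; and (b) the pointwise limit $m_E(\rho \xi)/\rho \to p_E(\xi)$ as $\rho \to 0^+$. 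Both follow by slicing $\chi_E$ along affine lines parallel to $\xi$: for a.e.\ such line $\ell$, $E \cap \ell$ is a 1D set of finite perimeter, and an elementary distributional computation shows $(m_{E \cap \ell})'(\rho) = N_R(\rho) - N_L(\rho) \in [-n,n]$, where $n = \#\{$right endpoints of $E \cap \ell\}$, giving both $m_{E\cap\ell}(\rho) \le \rho n$ for all $\rho$ and $m_{E\cap\ell}(\rho)/\rho \to n$ as $\rho \to 0^+$. Integrating in $\xi^\perp$ and using the area formula for the projection $\pi_\xi : \partial^* E \to \xi^\perp$, which identifies the projected multiplicity with $\int_{\partial^* E} (\xi \cdot \nu_E)^+\, d\mathcal{H}^{d-1}$, yields (a) and (b). Given these, \eqref{pp-slice} is proved exactly as in the isotropic Proposition \ref{prop:point}: split the $\rho$-integral at a cutoff $\delta > 0$; the tail $\int_\delta^\infty k^s_r(\rho)\rho^{d-1} m_E(\rho\xi)\, d\rho \le |E|\delta^{-s}/s$ is negligible after dividing by $\sca^s(r)$, and on $(0,\delta)$ one writes $m_E(\rho\xi) = \rho p_E(\xi) + o(\rho)$ and uses the elementary identity $\sca^s(r)^{-1} \int_0^\delta k^s_r(\rho) \rho^d\, d\rho \to 1$ (computed exactly as in \eqref{engsvilint3}, \eqref{engsvilint2}).

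With \eqref{pp-slice} in hand, the bound in (a) together with the tail estimate above gives the uniform-in-$\xi$ domination $\sca^s(r)^{-1} \int_0^\infty k^s_r(\rho)\rho^{d-1} m_E(\rho\xi)\, d\rho \le 2 p_E(\xi) + 1$ for $r$ small. Since $g \in L^\infty(\mathbb{S}^{d-1})$ and $\int_{\mathbb{S}^{d-1}} p_E(\xi)\, d\mathcal{H}^{d-1}(\xi) \le \Per(E)\,\mathcal{H}^{d-1}(\mathbb{S}^{d-1}) < \infty$, Lebesgue's dominated convergence theorem applied to \eqref{pp-polar} yields
\begin{equation*}
\lim_{r\to 0^+} \frac{\Jgsr(E)}{\sca^s(r)} = \int_{\mathbb{S}^{d-1}} g(\xi)\, p_E(\xi)\, d\mathcal{H}^{d-1}(\xi) = \int_{\partial^* E} \ffi^g(\nu_E(y))\, d\mathcal{H}^{d-1}(y) = \Per^g(E),
\end{equation*}
where the middle identity is a final Fubini exchange using the definition \eqref{density} of $\ffi^g$.

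The principal obstacle is step (a), the sharp bound $m_E(\rho\xi) \le \rho p_E(\xi)$ for a general finite perimeter set: it hinges on the one-dimensional BV slicing theory and the coarea-type area formula for the projection onto $\xi^\perp$. A fully rigorous proof requires careful handling of the exceptional $\mathcal{H}^{d-1}$-null set of lines along which $\chi_E$ fails to be BV. An alternative, closer in spirit to the isotropic proof, is to first establish \eqref{pp-slice} for smooth $E$ by Taylor expansion in a tubular neighborhood of $\partial E$ (the anisotropic analogue of Lemma \ref{porpsvilen}), and then approximate general finite perimeter sets via Theorem \ref{appro} together with the sharp uniform bound.
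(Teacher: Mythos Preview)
Your proof is correct and takes a genuinely different route from the paper. The paper establishes the anisotropic analogue of the decomposition formula \eqref{ensvil1} by applying the divergence theorem to the vector field $g\big(\frac{x}{|x|}\big)\, T^s_r(x)$, exploiting the orthogonality $\nabla g\big(\frac{x}{|x|}\big) \cdot T^s_r(x) = 0$ (since $T^s_r$ is purely radial); one is then meant to rerun the estimates of Lemma~\ref{porpsvilen} and Proposition~\ref{prop:point} with the harmless angular weight $g$ carried along. Your approach instead factors $g$ out from the start via polar coordinates in the translation variable $z=y-x$, reducing the problem to a directional one-dimensional BV statement (the bound $m_E(\rho\xi)\le \rho\, p_E(\xi)$ and the limit $m_E(\rho\xi)/\rho\to p_E(\xi)$), after which the result follows by dominated convergence and a final Fubini swap. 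Your argument is more elementary, avoids the vector-field machinery entirely, and---notably---handles all finite perimeter sets and all $s\ge 1$ uniformly, whereas the paper's route via Proposition~\ref{prop:point} treats the case $s=1$ only for smooth $E$ (see the analysis of the fifth addend there). The paper's approach, on the other hand, yields a structural decomposition formula \eqref{ensvil1ani} that parallels the isotropic machinery already in place. One minor comment: you flag step (a) as the ``principal obstacle'', but the facts you label (a) and (b) are entirely classical and need no special care about exceptional lines---(a) is just the directional form of Theorem~\ref{bvpertrasthm}, and (b) is the standard identity $\lim_{h\to 0^+}\frac{1}{h}\int_{\R^d}|\chi_E(x+h\xi)-\chi_E(x)|\,\ud x = |D_\xi\chi_E|(\R^d) = 2p_E(\xi)$, which follows from the upper bound together with lower semicontinuity of the total variation.
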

 \begin{proof}
 First we claim the following anisotropic version of formula \eqref{ensvil1}:
 \begin{equation}\label{ensvil1ani}
	\begin{split}
&\int_{E}\int_{E^c\cap B(x,1)}\kg(y-x)\ud y\ud x\\
=&  \frac{d+s}{d s r^s}\int_{\partial^* E} \ud\mathcal{H}^{d-1}(y) \int_{E \cap B(y,r)}g\Big(\frac{y-x}{|x-y|}\Big)\frac{(y-x)}{|x-y|^d}\cdot \nu_{E}(y) \ud x\\
& - \frac{1}{dr^{d+s}}\int_{\partial^* E} \ud\mathcal{H}^{d-1}(y) \int_{E \cap B(y,r)}g\Big(\frac{y-x}{|x-y|}\Big)(y-x)\cdot \nu_{E}(y) \ud x\\
    &+\frac 1 s \int_{\partial^* E} \ud\mathcal{H}^{d-1}(y) \int_{E\cap (B(y,1)\setminus B(y,r))} g\Big(\frac{y-x}{|x-y|}\Big)\frac{(y-x)\cdot \nu_{E}(y)}{\vert x-y \vert^{d+s}}\ud x\\
    &-\frac 1 s\int_{E}\ud x\int_{E^c\cap \partial B(x,1)}g\Big(\frac{y-x}{|x-y|}\Big)\ud \mathcal{H}^{d-1}(y)\,.
	\end{split}
	\end{equation} 
If $g\in C^1(\mathbb{S}^{d-1})$, the proof of \eqref{ensvil1ani} is identical to the proof of \eqref{ensvil1}, noticing that $\nabla g\big(\frac{x}{|x|}\big)\cdot T^s_r(x)=0$ for every $x\in\R^d\setminus\{0\}$\,, with $T^s_r$ defined in \eqref{Trfunzdef}. The case $g\in C(\mathbb{S}^{d-1})$ follows by standard density arguments. 
 \end{proof}
In Theorem \ref{mainthmani} below we will see that the functionals $\Jgsr$ actually $\Gamma$-converge, as $r\to 0^+$, to $\Per^g$\,.
\begin{theorem}\label{mainthmani}
Let $s\ge 1$ and 
let $\{r_n\}_{n\in\N}\subset (0,+\infty)$ be such that $r_n\to 0$ as $n\to +\infty$. The following $\Gamma$-convergence result holds true.
\begin{itemize}
\item[(i)] (Compactness) Let $U\subset\R^d$ be an open bounded set and let $\{E_n\}_{n\in\N}\subset\Me$ be such that $E_n\subset U$ for every $n\in\N$ and
\begin{equation*}
 \Jgsn(E_n)\le M \sca^s(r_n)\qquad\textrm{for every }n\in\N,
 \end{equation*}
for some constant $M$ independent of $n$.
Then, up to a subsequence, $\chi_{E_n}\to \chi_E$ strongly in $L^1(\R^d)$ for some set $E\in\Mf$ with $\Per(E)<+\infty$.
\item[(ii)] (Lower bound) Let $E\in\Mf$. For every $\{E_n\}_{n\in\N}\subset\Mf$ with $\chi_{E_n}\to\chi_E$ strongly in $L^1(\R^d)$ it holds
\begin{equation*}
\Per^g(E)\le\liminf_{n\to +\infty}\frac{\Jgsn(E_n)}{\sca^s(r_n)}\,.
\end{equation*}
\item[(iii)] (Upper bound) For every $E\in\Mf$ there exists $\{E_n\}_{n\in\N}\subset\Mf$ such that $\chi_{E_n}\to\chi_E$ strongly in $L^1(\R^d)$ and
\begin{equation*}
\Per^g(E)=\lim_{n\to +\infty}\frac{\Jgsn(E_n)}{\sca^s(r_n)}\,.
\end{equation*}
\end{itemize}
\end{theorem}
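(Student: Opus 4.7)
The overall plan is to retrace the strategy used for Theorem \ref{mainthm}, treating the anisotropy $g$ essentially as a continuous positive weight on $\mathbb{S}^{d-1}$. Compactness and the upper bound follow with minor modifications, while the lower bound requires a genuinely anisotropic adaptation of Lemma \ref{lemma18}, which is the main technical point.

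\textbf{(i) Compactness.} Since $g \in C(\mathbb{S}^{d-1};(0,+\infty))$ and $\mathbb{S}^{d-1}$ is compact, there exist constants $0 < c_g \le C_g < +\infty$ such that $c_g \le g(\xi) \le C_g$ for every $\xi \in \mathbb{S}^{d-1}$. Consequently,
\begin{equation*}
c_g\, k^s_r(|z|) \le \kg(z) \le C_g\, k^s_r(|z|) \qquad \text{for every } z \in \R^d\setminus\{0\},
\end{equation*}
so that $c_g\,\tildeJ(E) \le \Jgsr(E) \le C_g\, \tildeJ(E)$ for every $E\in\Mf$. The hypothesis $\Jgsn(E_n) \le M \sca^s(r_n)$ then yields $\tildeJn(E_n) \le (M/c_g)\sca^s(r_n)$, and the claim follows from Theorem \ref{mainthm}(i).

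\textbf{(iii) Upper bound.} Given $E\in\Mf$ with finite perimeter, I would invoke Theorem \ref{appro} to obtain a sequence $\{F_k\}_{k\in\N}$ of open bounded sets with smooth boundary with $\chi_{F_k} \to \chi_E$ in $L^1(\R^d)$ and $\Per(F_k)\to\Per(E)$. By Reshetnyak continuity for continuous, positively $1$-homogeneous densities, the same convergences imply $\Per^g(F_k) \to \Per^g(E)$. For each fixed $k$, Proposition \ref{prop:pointani} gives $\lim_{n\to+\infty} \Jgsn(F_k)/\sca^s(r_n) = \Per^g(F_k)$, so a standard diagonal extraction produces a recovery sequence $E_n := F_{k(n)}$.

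\textbf{(ii) Lower bound.} This is where the anisotropy really enters. Following the blow-up/measure argument used in the proof of Theorem \ref{mainthm}(ii), I would introduce the Radon measures on $\R^d\times\R^d$
\begin{equation*}
\mu^{g,s}_n(A\times B) := \frac{1}{2\sca^s(r_n)} \int_A\int_B \kgn(y-x)\,|\chi_{E_n}(x)-\chi_{E_n}(y)|\ud y\ud x,
\end{equation*}
extract a weak-$*$ limit $\mu^{g,s}$ concentrated on the diagonal (as in the isotropic case, using the comparison with $k^s_{r_n}$ and the energy bound coming from compactness), and reduce the problem to proving that at $\mathcal H^{d-1}$-a.e. $x_0\in\partial^*E$, setting $\nu:=\nu_E(x_0)$,
\begin{equation*}
\liminf_{l\to 0^+}\liminf_{n\to+\infty}\frac{\mu^{g,s}_n(Q^\nu_l(x_0)\times Q^\nu_l(x_0))}{l^{d-1}} \ge \ffi^g(\nu).
\end{equation*}

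The key ingredient is an anisotropic counterpart of Lemma \ref{lemma18}: for every $\ep>0$ there exist $r_0,\delta_0>0$ such that for every $\nu\in\mathbb{S}^{d-1}$, every $E\in\Mf$ with $|(E\triangle H^-_\nu(0))\cap Q^\nu|\le\delta_0$, and every $r<r_0$,
\begin{equation*}
\int_{Q^\nu\cap E}\int_{Q^\nu\cap E^c}\kg(y-x)\ud y\ud x \ge (1-\ep)\,\ffi^g(\nu)\,\sca^s(r).
\end{equation*}
The proof would run exactly parallel to Lemma \ref{lemma18}: after rotating so that $\nu=-e_d$, one writes the left-hand side as $\int_{\R^d}\kg(z)m(z)\ud z$ with $m(z) = |E^c\cap(E-z)\cap Q^\nu|$, estimates $m(z)\ge \lambda_0^{d-1}z_d - 2|z|_\infty\sqrt{\delta_0}$ on the positive half of a small cube $A^+_{\sqrt{\delta_0}}$ (this step uses only geometric measure arguments and is anisotropy-free), and is reduced to computing
\begin{equation*}
\int_{A^+_{\sqrt{\delta_0}}} (z\cdot\nu)\,\kg(z)\ud z \quad \text{and}\quad \int_{A^+_{\sqrt{\delta_0}}}|z|_\infty\,\kg(z)\ud z.
\end{equation*}
Passing to polar coordinates $z=\rho\theta$ and recalling the definition \eqref{density} of $\ffi^g$, the first integral equals
\begin{equation*}
\int_{\mathbb{S}^{d-1}\cap\{\theta\cdot\nu>0\}} g(\theta)\,(\theta\cdot\nu)\ud\mathcal H^{d-1}(\theta)\cdot\int_0^{\sqrt{\delta_0}}\rho^d\, k^s_r(\rho)\ud\rho + (\text{boundary cut-off}),
\end{equation*}
and the same computation as in \eqref{start3} yields $\ffi^g(\nu)\sca^s(r) + \mathrm{o}(\sca^s(r))$; the second integral is controlled by $C_g\cdot(\text{isotropic bound})\le C(g,d,s)\sca^s(r)$. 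Uniformity in $\nu$ is delicate but comes for free because $\ffi^g$ is continuous on the compact set $\mathbb{S}^{d-1}$ and the remainder terms depend on $g$ only through $\|g\|_\infty$.

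Once the anisotropic lemma is in hand, the remainder of the proof of Theorem \ref{mainthm}(ii) transfers verbatim, simply replacing the constant $\omega_{d-1}$ by $\ffi^g(\nu)$ throughout the blow-up argument at the point $x_0$, and the integrated lower bound becomes $\int_{\partial^*E}\ffi^g(\nu_E(x))\ud\mathcal H^{d-1}(x) = \Per^g(E)$ via Radon--Nikodym. The hardest single point is verifying the uniform-in-$\nu$ control of the error terms in the anisotropic Lemma \ref{lemma18}: this requires uniform continuity of $g$ on $\mathbb{S}^{d-1}$, which is immediate, but needs to be combined carefully with the scaling $\sca^s(r)$ so that the error $C(\delta_0,s,g)$ can be absorbed in $\ep\sca^s(r)$ for $r$ small.
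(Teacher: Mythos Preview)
Your proposal is correct and follows essentially the same approach as the paper's own proof: compactness via the two-sided bound $c_g k^s_r \le k^{g,s}_r \le C_g k^s_r$, the lower bound by rerunning the blow-up/Radon--Nikodym argument of Theorem~\ref{mainthm}(ii) with an anisotropic version of Lemma~\ref{lemma18} whose angular integral produces $\varphi^g(\nu)$ in place of $\omega_{d-1}$, and the upper bound via smooth approximation combined with Proposition~\ref{prop:pointani}. Your explicit invocation of Reshetnyak continuity to pass from $\Per(F_k)\to\Per(E)$ to $\Per^g(F_k)\to\Per^g(E)$ is a detail the paper leaves implicit, and your notation in the anisotropic Lemma~\ref{lemma18} sketch mixes rotated and unrotated frames slightly (after rotating, $k^{g,s}_r$ becomes $k^{g\circ R^\nu,s}_r$), but the substance is right.
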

\begin{proof}
The proof of the compactness property (i) follows by Theorem \ref{mainthm}(i), once noticed that there exist two positive constants $c_1<c_2$ such that $c_1\le g(\theta)\le c_2$ for every $\theta\in\mathbb{S}^{d-1}$\,.
As for the proof of the $\Gamma$-liminf inequality in (ii) one can argue verbatim as in the proof of Theorem \ref{mainthm}(ii), using the following inequality
\begin{equation}\label{quasiliminfani}
\int_{Q^\nu}\int_{Q^\nu}g\Big(\frac{x-y}{|x-y|}\Big)k^s_r(|x-y|)|\chi_{E}(x)-\chi_{E}(y)|\ud y\ud x\ge (1-\ep)\sca^s(r)\ffi^g(\nu)\,,
\end{equation}
in place of \eqref{quasiliminf}.
The proof of \eqref{quasiliminfani} under the assumptions of Lemma \ref{lemma18} is identical to the proof of Lemma \ref{lemma18} (see \eqref{start3}).

Finally, the $\Gamma$-limsup inequality (iii) follows as in the isotropic case Theorem \ref{mainthm}(iii) using Proposition \ref{prop:pointani} in place of Proposition \ref{prop:point}.
\end{proof}
We introduce the notion of $\kg$ curvature and we study the convergence as $r\to 0^+$ of the corresponding geometric flows.
 
Let $s\ge 1$, $r>0$ and $E\in\Reg$\,.
 For every $x\in\partial E$ we define the {\it $\kg$-curvature of $E$ at $x$} as
\begin{equation}\label{defcurvg}
\Ku^{g,s}_r(x,E):=\int_{\R^d}(\chi_{E^c}(y)-\chi_{E}(y))\kg(x-y)\ud y.
\end{equation}
 \begin{remark}\label{rm:ovvi}
 \rm{
 We notice that for every $E\in\Reg$ and for every $x\in\partial E$ it holds
 \begin{equation}\label{form:ovvi}
 \begin{aligned}
 \Ku^{g,s}_r(x,E)=&\int_{\R^d}k^{g,s}_r(x-y)\ud y-2\int_{E}k^{g,s}_r(x-y)\ud y\\
 =& \int_{\R^d}k^{g,s}_r(z)\ud z-2 k^{g,s}_r\ast\chi_E(x)\\
=&\Big(-2 k^{g,s}_r+ \int_{\R^d}k^{g,s}_r(z)\ud z\,\bm{\delta}_0\Big)\ast \chi_{E}(x)\,,
 \end{aligned}
 \end{equation}
 where $\ast$ denotes the convolution operator and $\bm{\delta}_0$ is the Dirac delta centered at $0$\,.
By \eqref{form:ovvi} we have that $\Ku^{g,s}_r$ is exactly the type of curvatures considered in \cite[formula (1.4)]{DFM}.
We remark that the positive part of the curvature $\Ku^{g,s}_r$ is concentrated on a point. This is why, as already observed in \cite{DFM}, the curvature  $\Ku^{g,s}_r$, although having a positive contribution, still satisfies the desired monotonicity property with respect to set inclusion (see the proof of (M) in Proposition \ref{proprietàcurvature1}).  
 }
 \end{remark}
 We first show that $\Ku^{g,s}_r$ is the first variation of $\Jgsr$ in the sense specified by the following proposition, which is the anisotropic analogous of Proposition  \ref{curvfirstvarper}.
\begin{proposition}
	Let $s\geq 1$, $r>0$, and $E \in \mathfrak{C}$.   
	Let $\Phi: \mathbb{R} \times \mathbb{R}^d \rightarrow \mathbb{R}^d$ be as in Proposition \ref{curvfirstvarper}\,.
	Setting $E_t:= \Phi_t(E)$ and $\Psi(\cdot):= \frac{\partial }{\partial t}\Phi_t(\cdot) \big|_{t=0}$\,, we have
	\begin{equation*}
	\frac{\ud}{\ud t} \Jgsr(E_t){\bigg|_{t=0}}= \int_{\partial E} \,\Ku^{g,s}_r(x,E) \Psi(x)\cdot \nu_{E}(x) \ud\mathcal{H}^{d-1}(x)\,.
	\end{equation*}	
\end{proposition}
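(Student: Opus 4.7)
The plan is to follow verbatim the argument of Proposition \ref{curvfirstvarper}, replacing the isotropic kernel $k^s_r(|\cdot|)$ by the anisotropic one $K(z):=k^{g,s}_r(z)=g(z/|z|)k^s_r(|z|)$. The two structural properties of $K$ that allow the same argument to go through are: (a) $K$ is even, $K(-z)=K(z)$, since $g$ is even and $k^s_r(|\cdot|)$ is radial; and (b) $K$ is pinched between two positive multiples of $k^s_r(|\cdot|)$, so its singularity at the origin is of the same order as in the isotropic case.

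First, I would use the change of variables $x\mapsto \Phi_t(x)$, $y\mapsto \Phi_t(y)$ to rewrite
\begin{equation*}
\Jgsr(E_t)=\int_E\int_{E^c}K(\Phi_t(x)-\Phi_t(y))\,\J\Phi_t(x)\,\J\Phi_t(y)\ud y\ud x,
\end{equation*}
Taylor expand the Jacobians as $\J\Phi_t=1+t\Div\Psi+\mo(t)$, and reduce to proving the $\mo(t)$ estimate
\begin{equation*}
\int_E\int_{E^c}\bigl[K(\Phi_t(x)-\Phi_t(y))-K(x-y)-t\,\nabla K(x-y)\cdot(\Psi(x)-\Psi(y))\bigr]\ud y\ud x=\mo(t),
\end{equation*}
by the same fundamental-theorem-of-calculus plus dominated convergence argument used for \eqref{DerdiKrs1}. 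The dominating bound requires control on $\nabla K$, which is available pointwise a.e. on $\R^d\setminus\{0\}$ provided $g\in C^1(\mathbb{S}^{d-1})$; the general continuous case then follows by smoothing $g$ and passing to the limit, since both sides of \eqref{clai} depend continuously on $g$ in the uniform topology (by dominated convergence, given that $E\in\Reg$ is fixed and bounded perimeter).

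Once this is in place, the derivative at $t=0$ equals
\begin{equation*}
\int_E\int_{E^c}\nabla K(x-y)\cdot(\Psi(x)-\Psi(y))\ud y\ud x+\int_E\int_{E^c}K(x-y)\bigl(\Div\Psi(x)+\Div\Psi(y)\bigr)\ud y\ud x,
\end{equation*}
and I would apply the divergence theorem separately to the $x$-integral on $E$ and the $y$-integral on $E^c$ (and vice versa), exactly as at the end of the proof of Proposition \ref{curvfirstvarper}. Because $\nabla K$ is odd, the interior volume terms cancel in pairs and only the boundary integrals on $\partial E$ survive. Collecting them produces
\begin{equation*}
\int_{\partial E}\Psi(x)\cdot\nu_E(x)\Bigl[\int_{\R^d}(\chi_{E^c}(y)-\chi_E(y))K(x-y)\ud y\Bigr]\ud\mathcal{H}^{d-1}(x),
\end{equation*}
which is precisely $\int_{\partial E}\Ku^{g,s}_r(x,E)\,\Psi(x)\cdot\nu_E(x)\ud\mathcal{H}^{d-1}(x)$ by the definition \eqref{defcurvg}.

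The main obstacle, as in the isotropic case, is justifying the exchange of derivative and integral: one needs a uniform $L^1$ dominating function for the difference quotient $[K(\Phi_t(x)-\Phi_t(y))-K(x-y)]/t$ on $E\times E^c$. Since $\Phi_t=\mathrm{Id}$ outside the bounded set $A$ in \eqref{assu}, only pairs with $x$ or $y$ in $A$ contribute, and near the diagonal $\nabla K$ has the same integrability as in the isotropic setting (it vanishes on the ball $|z|<r$ where $k^s_r$ is constant in the radial variable, and in the angular variable it scales like $1/(r^{d+s}|z|)$ which is locally integrable in $d\ge 2$), so the bounds used in the proof of \eqref{DerdiKrs1} adapt with only cosmetic changes.
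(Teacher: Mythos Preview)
Your proposal is correct and matches the paper's approach essentially verbatim: treat first the case $g\in C^1(\mathbb{S}^{d-1})$ by repeating the isotropic argument of Proposition~\ref{curvfirstvarper} with $k^s_r(|\cdot|)$ replaced by $k^{g,s}_r$, and then pass to general $g\in C(\mathbb{S}^{d-1})$ by a density argument using the uniform continuity of both sides in $g$. Your observation about the extra angular singularity $\sim 1/(r^{d+s}|z|)$ of $\nabla K$ on $B(0,r)$ (absent in the isotropic case) and its harmless local integrability in $d\ge 2$ is the only genuinely new ingredient, and you handle it correctly.
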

\begin{proof}
If $g\in C^1$\,, then the proof is fully analogous to the proof of Proposition \ref{curvfirstvarper}\,.
The case when $g\in C^0$ follows by a density argument, using that if $\{g_n\}_{n\in\N}\subset C^1(\mathbb{S}^{d-1};(0,+\infty))$ uniformly converges to $g$, $E_n\to E$ in $\Reg$ and $x_n\to x$\,, then
$\Ku^{g_n,s}_r(x_n,E_n)$ converge to $\Ku^{g,s}_r(x,E)$\,.
Such a continuity property can be proved as in Proposition \ref{proprietàcurvature1} (UC).
\end{proof}
 By arguing as in the proof of Proposition \ref{proprietàcurvature1} one can show that the curvatures $\Ku^{g,s}_r$ satisfy properties (M), (T), (S), (B), (UC).
Now we introduce the (local) anisotropic curvatures $\Ku^{g,1}$ defined as follows.
Let $E\in\Reg$ and $x\in\partial E$; 
in a neighborhood of $x$,   $\partial E$ is the graph of function $f \in C^{2}(H_{\nu_E(x)}^{0}(x))$ (see \eqref{piatto} for the definition of $H^0_\nu(x)$) with $\D f(x)=0$ (here and below $\D f$ and $\D^2f$ are computed with respect to a given system of orthogonal coordinates on $H_{\nu_E(x)}^{0}(x)$).  The anisotropic mean curvature of $\partial E$ at $x$ is given by 
\begin{equation}\label{clameang} 
\Ku^{g,1}(x,E)=-\int_{H_{\nu_E(x)}^{0}(x) \cap \mathbb S^{d-1}}g(\xi)\xi^*\D^2 f(x)\xi  \ud\mathcal{H}^{d-2}(\xi)\,.
\end{equation}
One can check that $\Ku^{g,1}$ is the first variation of $\Per^g$ in the sense specified by Proposition \ref{curvfirstvarper} (we refer to \cite{Be} for the first variation formula of generic anisotropic perimeters, while we leave to the reader the computations for the specific anisotropic density $\ffi^g$ considered here, defined in \eqref{density}).
Notice that if $g\equiv 1$\,, then $\Ku^{g,1}=\omega_{d-1}\Ku^1$ where $\Ku^1$ is the classical mean curvature defined in \eqref{cueu}.
Moreover, one can check that $\Ku^{g,1}$ satisfies properties (M), (T), (S), (B), (UC') in  Proposition \ref{procla}.

In Proposition \ref{convecug} below we show that the curvatures $\Ku^{g,s}_r$ converge, as $r\to 0^+$\,, to the anisotropic curvature $\Ku^{g,1}$\,. 
\begin{theorem}\label{convecug}
	Let $s\geq 1$. Let $\{E_r\}_{r>0} \subset \mathfrak{C}$ be such that $ E_{r} \rightarrow E $ in $\mathfrak{C}$ as $r \rightarrow 0^+$, for some $E \in \mathfrak{C}$. Then, for every $x \in \partial E \cap \partial E_r$ for all $r>0$, it holds
	\begin{equation}\label{convcug}
	\lim_{r \rightarrow 0^+} \frac{\Ku_{r}^{g,s}(x,E_r)}{\sigma^s(r)}=\Ku^{g,1}(x,E).
	\end{equation}
\end{theorem}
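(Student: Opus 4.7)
The strategy mirrors that of Theorem \ref{convcurvthmprinc}, with the crucial difference that the curvatures $\Ku^{g,s}_r$ are \emph{not} rotation-invariant (since the weight $g$ is a fixed function on $\mathbb{S}^{d-1}$). By translation invariance we can still assume $x = 0$, but we must work with the actual normal $\nu := \nu_E(0)$ instead of reducing to $e_d$. Let $\{\tau_1,\dots,\tau_{d-1}\}$ be an orthonormal basis of $H^0_\nu(0)$ and write $y = (y',y_d)$ with $y' \in H^0_\nu(0)$ and $y_d = y\cdot \nu$. Since $E_r \to E$ in $\mathfrak{C}$, for $\delta > 0$ small there exist $C^2$ functions $\phi,\phi_r \colon B'(0,\delta)\subset H^0_\nu(0) \to \R$ with $\phi(0)=\phi_r(0)=0$, $\D\phi(0)=\D\phi_r(0)=0$, $\phi_r \to \phi$ in $C^2$, and such that $\partial E, \partial E_r$ are graphs of $\phi,\phi_r$ over $B'(0,\delta)$.

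Next I would split $\Ku^{g,s}_r(0,E_r)$ as the sum of integrals over $B(0,\delta)$ and its complement. Using the uniform bound $k^{g,s}_r \le \|g\|_\infty k^s_r$, the contribution over $B^c(0,\delta)$ is bounded by $\|g\|_\infty d\omega_d \delta^{-s}/s$, hence negligible after scaling by $\sca^s(r)\to +\infty$. For the integral over $B(0,\delta)$, I would introduce the sets $A(r), B(r), C(r), D(r)$ exactly as in the proof of Theorem \ref{convcurvthmprinc}, derive the analogue of \eqref{svilengconvcurvfin}, and then sandwich $A(r), B(r)$ between the sets $A^\pm(r), B^\pm(r)$ obtained by $C^2$-Taylor expansion $\phi_r(y') = \tfrac12 (y')^*\D^2\phi_r(0)y' + o(|y'|^2)$ with quadratic error $\pm \eta |y'|^2$.

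The key step is to establish the anisotropic version of Lemma \ref{lemconvcurv}: for $M,N,M_r,N_r \in \R^{(d-1)\times(d-1)}$ with $M_r \to M$, $N_r \to N$,
\begin{equation*}
\lim_{r\to 0^+} \frac{1}{\sca^s(r)}\Bigl(\int_{\Funo} k^{g,s}_r(y)\ud y - \int_{\Fdue} k^{g,s}_r(y)\ud y\Bigr) = \int_{\mathbb{S}^{d-2}} g(\xi)\,\xi^*(N-M)\xi\ud\mathcal{H}^{d-2}(\xi),
\end{equation*}
where the sphere $\mathbb{S}^{d-2}$ is understood inside $H^0_\nu(0)$ and the identification between column vectors and elements of $H^0_\nu(0)$ is via $\{\tau_1,\dots,\tau_{d-1}\}$. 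The computation follows line by line that of Lemma \ref{lemconvcurv}: use polar coordinates $y' = \rho\theta$ with $\theta \in H^0_\nu(0)\cap \mathbb{S}^{d-1}$, perform the change of variable $y_d = \rho^2 t$, and recognize that on the relevant integration range one has $|y_d| \le C\rho^2$, so $y/|y| = \theta + O(\rho)$ and by uniform continuity of $g$ on $\mathbb{S}^{d-1}$, $g(y/|y|) = g(\theta) + o(1)$ uniformly as $\rho \to 0^+$. The rest is an integration-by-parts/l'H\^opital computation identical to the isotropic case.

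Once the lemma is in place, the sandwich argument yields
\begin{equation*}
-\int_{\mathbb{S}^{d-2}} g(\xi)\,\xi^*(\D^2\phi(0)\pm 2\eta\mathrm{Id})\xi\ud\mathcal{H}^{d-2}(\xi)
\end{equation*}
as upper/lower bounds for $\liminf$/$\limsup$ of $\Ku^{g,s}_r(0,E_r)/\sca^s(r)$; sending $\eta \to 0$ and comparing with \eqref{clameang} gives \eqref{convcug}. The main obstacle is a technical one: verifying that the argument $y/|y|$ of $g$ can be replaced (with vanishing error) by the tangential direction $y'/|y'|$ uniformly in $r$, so that $g$ factors out of the inner integral at the limit. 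This is where the $C^2$-graph structure, ensuring $|y_d| \le C|y'|^2$ on the symmetric-difference sets, together with uniform continuity of $g$ on $\mathbb{S}^{d-1}$, is essential.
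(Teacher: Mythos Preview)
Your proposal is correct and follows essentially the same route as the paper: both argue ``verbatim as in Theorem \ref{convcurvthmprinc}'' after establishing the anisotropic variant of Lemma \ref{lemconvcurv} (the paper records it as \eqref{basta}). You are in fact more explicit than the paper on two points: the loss of rotation invariance (forcing you to work in the frame of the actual normal $\nu$), and the mechanism by which $g(y/|y|)$ may be replaced by $g(y'/|y'|)$ via the $C^2$-graph bound $|y_d|\le C|y'|^2$ and the uniform continuity of $g$.

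One small caveat worth making explicit in your write-up: when you ``derive the analogue of \eqref{svilengconvcurvfin}'', the exact cancellation $\int_{C(r)}k^s_r=\int_{D(r)}k^s_r$ used in the isotropic proof relies on invariance of the kernel under $y_d\mapsto -y_d$, which $k^{g,s}_r$ does not enjoy. The fix is to split $C(r)=C_+\setminus U(r)$ and $D(r)=C_-\setminus V(r)$, where $C_\pm$ are the upper/lower half-balls and $U(r),V(r)$ are the thin strips $\{0<\pm y_d<|\phi_r(y')|\}$. The half-ball contributions cancel exactly by the \emph{even} symmetry $g(-\xi)=g(\xi)$ under $y\mapsto -y$, while the strip contributions satisfy $|y_d|\le C|y'|^2$ and are handled by the very same $g(y/|y|)=g(y'/|y'|)+o(1)$ argument you already isolated. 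The paper's terse ``verbatim'' glosses over this, but it is straightforward once noticed.
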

 \begin{proof}
 The proof of \eqref{convcug} is fully analogous to the one of Theorem \ref{convcurvthmprinc}
 and in particular it is based on a suitable anisotropic variant of Lemma  \ref{lemconvcurv}.
 In fact,  Lemma \ref{lemconvcurv} can be extended also to the anisotropic case with \eqref{limserveatutconvecur} replaced by 
 \begin{equation}\label{basta}
 \begin{split}
		\lim_{r \rightarrow 0^+}& \bigg(\frac{1}{\sigma^s(r)} \Big(\int_{\Funo}g\Big(\frac{y}{|y|}\Big)k_{r}^{s}(\vert y \vert)\ud y 
		- \int_{\Fdue}g\Big(\frac{y}{|y|}\Big) k_{r}^{s}(\vert y \vert)\ud y\Big)\bigg) \\
		=&  \int_{\mathbb{S}^{d-2}} g(\theta,0)\theta^*(N-M) \theta \ud \mathcal{H}^{d-2}(\theta)\,.
		\end{split}
 \end{equation}
If $\nu_E(x)=e_d$, one can argue verbatim as in the proof of Theorem \ref{convcurvthmprinc}, clearly using \eqref{basta} in place of \eqref{limserveatutconvecur}. The same proof with only minor notational changes can be adapted also to the case  $\nu_E(x)\neq e_d$\,.
 \end{proof}
We are now in a position to state our result on the convergence of the geometric flows of $\Ku^{g,s}_r$ as $r\to 0^+$\,, whose proof is omitted, being fully analogous to the one of Theorem \ref{maincur}.
\begin{theorem}\label{maincurg}
Let $s\ge 1$ be fixed. Let $u_0\in C(\R^d)$ be a uniformly continuous function with $u_0=C_0$ in $\R^d\setminus B(0,R_0)$ for some $C_0,R_0\in\R$ with $R_0>0$\,. For every $r>0$\,, let $u^s_r:\R^d\times [0,+\infty)\to\R$ be the viscosity solution to the Cauchy problem 
\begin{equation*}
\begin{cases}
\partial_t u(x,t)+|\mathrm{D}u(x,t)|\Ku^{g,s}_r(x,\{y\,:\,u(y,t)\ge u(x,t)\})=0\\
u(x,0)=u_0(x)\,.
\end{cases}
\end{equation*}
Then, setting $v^s_r(x,t):=u^s_r(x,\frac{t}{\sca^s(r)})$ for all $x\in\R^d$\,, $t\ge 0$\,, we have that,  for every $T>0$\,,
$v^s_r$ uniformly converge to $u$ as $r\to 0^+$ in $\R^d\times [0,T]$\,, where $u:\R^d\times[0,+\infty)\to\R$ is the viscosity solution to the anisotropic mean curvature flow
\begin{equation*}
\begin{cases}
\partial_t u(x,t)+|\mathrm{D}u(x,t)|\Ku^{g,1}(x,\{y\,:\,u(y,t)\ge u(x,t)\})=0\\
u(x,0)=u_0(x)\,.
\end{cases}
\end{equation*}
\end{theorem}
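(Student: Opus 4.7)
The plan is to mirror the strategy used for Theorem \ref{maincur}, substituting Theorem \ref{convecug} for Theorem \ref{convcurvthmprinc} and exploiting the fact that the nonlocal curvatures $\Ku^{g,s}_r$ and the limiting anisotropic curvature $\Ku^{g,1}$ enjoy the structural properties required by the abstract stability result \cite[Theorem 3.2]{CDNP}. A standard scaling argument shows that the time-rescaled function $v^s_r(x,t)=u^s_r(x,t/\sca^s(r))$ is the viscosity solution of
\begin{equation*}
\begin{cases}
\partial_t v(x,t)+|\D v(x,t)|\,\dfrac{1}{\sca^s(r)}\Ku^{g,s}_r\big(x,\{y:v(y,t)\ge v(x,t)\}\big)=0\,,\\
v(x,0)=u_0(x)\,,
\end{cases}
\end{equation*}
so the problem reduces to the stable convergence (in the sense of \cite{CDNP}) of the rescaled nonlocal curvatures $\frac{1}{\sca^s(r)}\Ku^{g,s}_r$ to the local anisotropic curvature $\Ku^{g,1}$ as $r\to 0^+$.

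Next I would collect the qualitative properties of the curvatures. As already observed in the paragraph following \eqref{defcurvg}, the functionals $\Ku^{g,s}_r$ satisfy the properties (M), (T), (S), (B), (UC) of Proposition \ref{proprietàcurvature1}: the proofs go through verbatim, relying only on the positivity of $g$ and its symmetry $g(\xi)=g(-\xi)$ (the symmetry is what preserves property (B), since it permits the same reflection argument used in \eqref{curBpos2}). As noted in Subsection \ref{ssc:convmcf}, (UC) in turn implies the weaker (UC') required in \cite{CDNP}. Analogously, the paragraph following \eqref{clameang} asserts that the local anisotropic curvature $\Ku^{g,1}$ satisfies (M), (T), (S), (B), (UC'). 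Consequently both $\Ku^{g,s}_r$ and $\Ku^{g,1}$ fit the framework of nonlocal curvatures of \cite[Definition 2.1]{CDNP}, which guarantees existence and uniqueness of viscosity solutions of the corresponding Cauchy problems.

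To invoke \cite[Theorem 3.2]{CDNP} for the rescaled curvatures $\frac{1}{\sca^s(r)}\Ku^{g,s}_r$, one must further verify the uniform ball bound (UB) introduced in the proof of Theorem \ref{maincur}. Fix $\rho>0$ and $x\in\partial B(0,\rho)$. Property (B) for $\Ku^{g,s}_r$ yields $\frac{1}{\sca^s(r)}\Ku^{g,s}_r(x,\overline{B}(0,\rho))\ge 0$, which is stronger than the lower bound needed. For the upper bound, apply Theorem \ref{convecug} to the constant family $E_r\equiv \overline{B}(0,\rho)$: the convergence $\frac{1}{\sca^s(r)}\Ku^{g,s}_r(x,\overline{B}(0,\rho))\to \Ku^{g,1}(x,\overline{B}(0,\rho))\in\R$ as $r\to 0^+$ immediately implies $\sup_{r\in(0,1)}\frac{1}{\sca^s(r)}\Ku^{g,s}_r(x,\overline{B}(0,\rho))<+\infty$, so (UB) holds.

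Combining the properties (M), (T), (S), (UC'), (UB) with the convergence on regular sets supplied by Theorem \ref{convecug} verifies the full set of hypotheses of \cite[Theorem 3.2]{CDNP}; its conclusion yields the desired locally uniform convergence of $v^s_r$ to $u$ on $\R^d\times[0,T]$ for every $T>0$. There is no real obstacle here since all the preparatory work has been done in the previous subsections; the only mild point is the verification of (UB), which however is an immediate corollary of Theorem \ref{convecug} applied to the constant family of balls rather than an independent computation.
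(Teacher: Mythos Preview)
Your proposal is correct and follows exactly the approach the paper intends: the paper omits the proof of Theorem \ref{maincurg} entirely, stating that it is ``fully analogous to the one of Theorem \ref{maincur}'', and your argument reproduces that proof with Theorem \ref{convecug} substituted for Theorem \ref{convcurvthmprinc} and the anisotropic properties of $\Ku^{g,s}_r$ and $\Ku^{g,1}$ in place of those of $\Ku^s_r$ and $\Ku^1$. The verification of (UB) via Theorem \ref{convecug} applied to constant balls is precisely how the paper handles it in the isotropic case.
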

\subsection{Applications to dislocation dynamics}\label{ssc:appdis}
Here we apply the results in Subsection \ref{sec:ani} to the  motion of curved dislocations in the plane. 
To this purpose, we briefly recall and describe, in an informal way, some notions about the isotropic linearized elastic energy induced by planar dislocations; such notions are well known to experts and we refer to classic books such as \cite{HL} for an exhaustive monography on this subject. 

Let $E$ be a bounded region of the plane $\R^2=\R^3\cap \{z\in\R^3\,: \, z_3=0\}$, representing a plastic slip region with Burgers vector $b=e_1=(1,0,0)$. Formally, the elastic energy induced by such a dislocation is given by
\begin{equation}\label{hirlo}
J(E):=\frac{\bm{\mu}}{8\pi}\int_E\int_{E^c}\frac{1}{|x-y|^5}\Big(\frac{1+\bm{\nu}}{1-\bm{\nu}}x_1^2+\frac{1-2\bm{\nu}}{1-\bm{\nu}}x_2^2\Big) \ud y\ud x\,,
\end{equation}
where $\bm{\mu}>0$ and $\bm{\nu}\in (-1,\frac 1 2)$  are the shear modulus and the Poisson's ratio, respectively.
Formula \eqref{hirlo} can be deduced by \cite[formula (4-44)]{HL}, by integrating by parts.
Clearly, the energy $J$ in \eqref{hirlo} is always infinite whenever $E$ is non-empty.
It is well understood that such an infinite energy should be suitably truncated through ad hoc core regularizations, specific of the microscopic details of the underlying crystal. The specific choice of the core regularization, giving back the physically relevant (finite) elastic energy induced by the dislocation is, for our purposes, irrelevant. Here we adopt the energy-renormalization procedure introduced in \eqref{aniJ}. First we set 
\begin{equation}\label{defgdis}
g(\xi):=\frac{\bm{\mu}}{8\pi}\Big(\frac{1+\bm{\nu}}{1-\bm{\nu}}\xi_1^2+\frac{1-2\bm{\nu}}{1-\bm{\nu}}\xi_2^2\Big)\,,\quad\textrm{ for every }\xi\in\mathbb{S}^1\,,
\end{equation}
 and we notice that the energy in \eqref{hirlo} can be (formally) rewritten as
\begin{equation*}
\begin{aligned}
J(E)=\int_{E}\int_{E^c}g\Big(\frac{x-y}{|x-y|}\Big)\frac{1}{|x-y|^3}\ud y\ud x
=\int_{E}\int_{E^c}k^g(x-y)\ud y\ud x\,,
\end{aligned}
\end{equation*}
where $k^g$ is defined by $k^g(z):=g(\frac{z}{|z|})\frac{1}{|z|^3}$\,. 
The core-regularization of $J$ is given by the functional $\tilde{J}^{g,1}_r $ defined by \eqref{aniJ}, where the parameter $r>0$ plays the role of the core-size. 
Now, consider the dynamics of a dislocation curve, enclosing a (moving) bounded set $E$, with Burgers vector equal to $e_1$, governed by a self-energy release mechanism. We consider a geometric  evolution, that  can be formally understood as the gradient flow of the self-energy $\tilde J^{g,1}_r$ with respect to an $\mathrm{L}^2$ structure on the (graphs locally describing the) evolving dislocation curve. If the energy were the standard perimeter, this evolution would be nothing but the standard mean curvature flow. Notice that the energy considered here  is nonlocal; moreover, although it is derived  from isotropic linearized elasticity,  it has in fact an anisotropic  dependence  (induced by the direction of the given Burgers vector) on the normal to the curve. Another possible source of anisotropy is the so-called mobility, depending on the microscopic details of the underlying crystalline lattice; here, for simplicity, we assume that such a mobility is in fact isotropic, equal to one.   The dynamics discussed above corresponds 
to  the geometric evolution where the normal velocity of the evolving dislocation curve at any point $x$ is given by $-\Ku^{g,1}_r$, defined in \eqref{defcurvg}. 

In order to study the asymptotic behavior, as $r\to 0^+$, of the dynamics described above we use the results developed in Subsection \ref{sec:ani}.
First, we notice that that  the function $g$ defined in \eqref{defgdis} is continuous (actually, it is smooth) and even, so that it satisfies the assumptions required in Subsection \ref{sec:ani}. Moreover, recalling \eqref{density} and \eqref{clameang}, for the choice of $g$ in \eqref{defgdis}, an easy computation shows that
\begin{eqnarray*}
\ffi^{g}(\nu)&=&\frac{\bm{\mu}}{12\pi}\Big(\frac{1+\bm{\nu}}{1-\bm{\nu}}(1+\nu_1^2)+\frac{1-2\bm{\nu}}{1-\bm{\nu}}(1+\nu_2^2)\Big)\,,\textrm{ for every }\nu\in\mathbb{S}^1\,,\\ 
\Ku^{g,1}(x,E)&=&\frac{\bm{\mu}}{8\pi}\Big(\frac{1+\bm{\nu}}{1-\bm{\nu}}(\nu_E(x))_2^2+\frac{1-2\bm{\nu}}{1-\bm{\nu}}(\nu_E(x))_1^2\Big)\,,\textrm{ for every }E\in\Reg\,, x\in \partial E\,.
\end{eqnarray*}

Therefore,  
by Theorem \ref{maincurg}, the unique (in the level set sense) dislocation dynamics described above,  converges, as $r\to 0^+$, to a degenerate evolution where the dislocation disappear instantaneously. After a logarithmic in time reparametrization, the evolution converges to the anisotropic mean curvature flow governed by the  release  of the line tension energy $\Per^g$  \eqref{aniper}, corresponding to the anisotropic energy density $\ffi^g$  defined above. Such a dynamics is nothing but   the evolution $t\mapsto \partial E_t$, where the normal velocity of the evolving dislocation curve $\partial E_t$ at any point $x\in \partial E_t$ is given by the (opposite of the) anisotropic curvature $\Ku^{g,1}(x,E_t)$ defined above.

\end{document}